\tikzset{
    >=stealth,
    every picture/.style={thick},
    graphs/every graph/.style={empty nodes},
}
\tikzstyle{vertex}=[
\tikzstyle{printersafe}=[decoration={snake,amplitude=0pt}]
\newcommand{\rank}{\operatorname{rank}}
\newcommand{\supp}{\operatorname{supp}}
\newcommand{\vol}{\operatorname{vol}}
\newcommand{\pp}{\mathbb{P}}
\newcommand{\qq}{\mathbb{Q}}
\newcommand{\zz}{\mathbb{Z}}
\newcommand{\rr}{\mathbb{R}}
\newcommand{\kk}{\mathbb{K}}
\def\O#1.{\mathcal {O}_{#1}}			
\def\pr #1.{\mathbb P^{#1}}				
\def\af #1.{\mathbb A^{#1}}			
\def\ses#1.#2.#3.{0\to #1\to #2\to #3 \to 0}	
\def\xrar#1.{\xrightarrow{#1}}			
\def\K#1.{K_{#1}}						
\def\bA#1.{\mathbf{A}_{#1}}			
\def\bM#1.{\mathbf{M}_{#1}}				
\def\bL#1.{\mathbf{L}_{#1}}				
\def\bB#1.{\mathbf{B}_{#1}}				
\def\bK#1.{\mathbf{K}_{#1}}			
\def\subs#1.{_{#1}}					
\def\sups#1.{^{#1}}
\newtheorem{introdef}{Definition}
  \newtheorem{theorem}{Theorem}[section]
  \newtheorem{lemma}[theorem]{Lemma}
  \newtheorem{proposition}[theorem]{Proposition}
  \newtheorem{corollary}[theorem]{Corollary}
  \newtheorem{notation}[theorem]{Notation}
  \newtheorem{definition}[theorem]{Definition}
  \newtheorem{example}[theorem]{Example}
  \newtheorem{question}[theorem]{Question}
\newtheorem{remark}[theorem]{Remark}
\theoremstyle{remark}
\numberwithin{equation}{section}
\begin{document}

\title[Birational complexity and conic fibrations]{Birational complexity and conic fibrations}

\author[J.~Moraga]{Joaqu\'in Moraga}
\address{UCLA Mathematics Department, Box 951555, Los Angeles, CA 90095-1555, USA
}
\email{jmoraga@math.ucla.edu}

\subjclass[2020]{Primary 14E05, 14E30;
Secondary 14D06.}

\thanks{The author was partially supported by NSF research grant DMS-2443425.}

\begin{abstract}
Let $(X,B)$ be a log Calabi--Yau pair of 
dimension $n$, index one, and birational complexity $c$.
We show that $(X,B)$ has a crepant birational model that admits a tower of Mori fiber spaces 
of which at least $n-c$ are conic fibrations.
The main tool to prove this statement is a geometric version
of the Jordan property for finite actions on rationally connected varieties.
Motivated by the proof of the previous statement, we introduce new measures of the complexity of a log Calabi--Yau pair; the {\em alteration complexity} and the {\em conic complexity}.
We characterize when these invariants are zero.
Finally, we give applications 
of the tools of the main theorem to
birational superrigidity,
Fano hypersurfaces,
dual complexes,
Weil indices of Fano varieties,
and klt singularities.
\end{abstract}

\maketitle

\setcounter{tocdepth}{1} 
\tableofcontents

\section{Introduction}

The complexity $c(X,B)$ of a log Calabi--Yau pair $(X,B)$
is defined to be $\dim X + \rank {\rm Cl}(X)_\qq - |B|$,
where
$|B|$ stands for the sum of the coefficients of $B$.
The complexity gives an intrinsic measure of how far a given log Calabi--Yau pair is from being toric~\cite{BMSZ18}.
In~\cite{MS21}, Svaldi and the author 
characterized toric singularities
using a local variant of the complexity.
In~\cite{GM23}, Gongyo and the author 
introduced a variant of the complexity for generalized pairs. 
Using this variant, they proposed a conjecture 
that unifies the complexity,
Mukai's conjecture,
and the Kobayashi--Ochiai Theorem. The aforementioned results lead to strong geometric consequences when the considered complexity measure is strictly smaller than one. However, not much is known when the complexity is larger than or equal to one. In this article, we study the birational geometry of log Calabi--Yau pairs $(X,B)$
from the perspective of birational complexity.
The birational complexity $c_{\rm bir}(X,B)$ is a birational variant of the complexity (see Definition~\ref{def:bir-comp}).
It was introduced by Mauri and the author as a proxy to study dual complexes~\cite{MM24}.
In this work, we consider log Calabi--Yau pairs $(X,B)$ of index one
and birational complexity $c$.
We prove that some crepant birational model of $(X,B)$ admits a tower of Mori fiber spaces of which at least $\dim X - c$ are conic fibrations. 
In particular, this result
sheds light on the birational geometry of $(X,B)$ when the birational complexity is smaller than the dimension.

\subsection{Finite actions on rationally connected varieties} 
The main motivation of this article is to study how the birational complexity of a log Calabi--Yau pair reflects in its (birational) geometry. However, most of the statements that are proved in the article are reduced to problems about finite actions on rationally connected varieties. The Jordan property for finite actions on rationally connected varieties states the following. Let $n$ be a positive integer, if $G$ is a finite group acting faithfully on a $n$-dimensional rationally connected variety, then there exists $A\leqslant G$ a normal abelian subgroup of rank at most $n$ and index at most $c(n)$. This theorem was proved by Prokhorov and Shramov~\cite{PS14} by using the boundedness of Fano varieties proved by Birkar~\cite{Bir21}.
In other words, finite groups acting on rationally connected varieties are almost abelian, with respect to the dimension of the variety. 
In previous works, the author has studied how the existence of such a group $A$ reflects on the geometry of the rationally connected variety.
In~\cite{Mor20a}, it is proved that if $A=\zz/m\zz \leqslant X$ for $X$ a Fano type surface with $m$ large enough, then in an $A$-equivariant birational model $X'$ of $X$, we have $A<\mathbb{G}_m<{\rm Aut}(X')$. 
In~\cite{Mor20c}, it is proved that if $A=(\zz/m\zz)^n \leqslant X$ for $X$ a Fano type variety of dimension $n$ and $m$ large enough (compared with $n$), then $X$ is rational and it is indeed a compactification of $\mathbb{G}_m^n$.
In~\cite{Mor21}, it is proved that if $A=(\zz/m\zz)^k\leqslant X$ for $X$ a Fano type variety of dimension $n$ and $m$ large enough (compared with $n$), then the coregularity of $X$ is at most $n-k$.
The following theorem gives a more geometric explanation of the Jordan property for finite groups acting on rationally connected varieties.

\begin{theorem}[Geometric Jordan property for finite actions on rationally connected varieties]\label{thm:finite-actions}
Let $n$ be a positive integer. There exists a constant $c(n)$, only depending on $n$, satisfying the following. Let $G$ be a finite group acting on a $n$-dimensional rationally connected variety $X$. Then, there exists a normal abelian subgroup $A\leqslant G$ of rank at most $n$ and index at most $c(n)$ satisfying the following. 
There exists an $A$-equivariant birational map $X\dashrightarrow X'$
and an $A$-equivariant fibration $X'\rightarrow \mathbb{P}^k$ that induces an action of $A$ on $\mathbb{P}^k$ and this action factors through $\mathbb{G}_m^k$.
\end{theorem}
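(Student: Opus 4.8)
The statement packages two essentially independent tasks: the group-theoretic extraction of $A$, and the geometric realization of its action through a torus. For the first I would simply invoke the Jordan property for finite groups acting on rationally connected varieties of Prokhorov--Shramov~\cite{PS14} (itself built on Birkar's boundedness~\cite{Bir21}), which produces a normal abelian subgroup $A\leqslant G$ of rank at most $n$ and index at most $c(n)$. Replacing $G$ by its image in $\Aut(X)$, I may assume the action, hence that of $A$, is faithful; rational connectedness is preserved and will only re-enter at the very last step. The remaining problem is then purely one about a faithful finite abelian action.

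The engine of the proof is the production of $A$-semi-invariant rational functions. Since $A$ acts faithfully by $\kk$-automorphisms of the function field $K(X)$, the extension $K(X)/K(X)^A$ is Galois with group $A$, so Hilbert's Theorem~90 gives $H^1(A,K(X)^\times)=0$. Feeding the short exact sequence of $A$-modules $1\to\kk^\times\to K(X)^\times\to K(X)^\times/\kk^\times\to 1$ into the long exact cohomology sequence, the connecting map $(K(X)^\times/\kk^\times)^A\to H^1(A,\kk^\times)=\widehat A$ is surjective, since the next term $H^1(A,K(X)^\times)$ vanishes. In other words, \emph{every} character $\chi\in\widehat A$ is realized by some $f_\chi\in K(X)^\times$ with $a\cdot f_\chi=\chi(a)f_\chi$. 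This is the decisive point: it removes the Heisenberg-type obstruction one meets when trying to diagonalize the action on the sections of a linearized ample bundle (that obstruction sits in $H^2$, whereas the relevant group $H^1(A,K(X)^\times)$ here is zero).

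Given these semi-invariants, I would select a maximal algebraically independent family $f_1,\dots,f_k$ with characters $\chi_1,\dots,\chi_k$ and form the $A$-equivariant dominant rational map
\[
[1:f_1:\cdots:f_k]\colon X\dashrightarrow \mathbb{P}^k,
\]
on whose target $A$ acts by $a\cdot[x_0:\cdots:x_k]=[x_0:\chi_1(a)x_1:\cdots:\chi_k(a)x_k]$. This action is diagonal, hence lies in the standard maximal torus and factors through $\mathbb{G}_m^k\leqslant\Aut(\mathbb{P}^k)$, exactly as required. An $A$-equivariant resolution of the indeterminacy locus then supplies the birational model $X\dashrightarrow X'$ together with a morphism $X'\to\mathbb{P}^k$, and choosing the $f_i$ so that their characters generate $\widehat A$ up to finite index arranges that the induced torus action reflects the full rank of $A$.

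The step I expect to be most delicate is upgrading this equivariant dominant morphism to a genuine fibration \emph{onto $\mathbb{P}^k$} with connected fibers without disturbing the diagonal action on the base. Passing to the Stein factorization $X'\to B\to\mathbb{P}^k$ replaces the base by a finite cover, and one must verify that $B$ is again (equivariantly) a projective space carrying a compatible torus action --- equivalently, that the subfield generated by the chosen semi-invariants is algebraically closed in $K(X)$, or can be saturated to such without destroying the multihomogeneous $A$-structure. Controlling this, together with arranging that $k$ may be taken equal to $\rank A$ rather than merely positive, is where the rational connectedness of $B$ (inherited from $X$) and a careful choice of transcendence basis among the semi-invariants must be combined.
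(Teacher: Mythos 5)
Your reduction to an abelian $A$ via \cite{PS14} and your Hilbert~90 step are both correct: since $A$ acts faithfully on $K(X)$, the vanishing of $H^1(A,K(X)^\times)$ does show that every character of $A$ is realized by a semi-invariant rational function, and the map $[1:f_1:\cdots:f_k]$ is equivariant for a diagonal, hence toric, action on $\pp^k$. This is a genuinely different route from the paper, which never writes down semi-invariants but instead runs an equivariant MMP and uses boundedness of $\epsilon$-lc Fano fibrations (Theorem~\ref{thm:cyc-act-Fano-fib} and Theorem~\ref{thm:fin-act-conic}) to split off one strict conic fibration, hence one $\mathbb{G}_m$-factor of the base, at a time.

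Nevertheless, the step you flag as delicate is not a technicality: it is where the entire difficulty of the theorem is concentrated, and your argument does not close it. Two concrete problems. First, when $\rank(A)=\dim X=n$ (allowed by the Jordan bound), any algebraically independent semi-invariants realizing a generating set of $\widehat{A}$ number exactly $n$, so your map $X\dashrightarrow\pp^n$ is generically finite and is not a fibration in the paper's sense; nor can it in general be taken of degree one, since there exist non-linearizable faithful abelian actions --- e.g.\ a Klein four-group on a degree-one del Pezzo surface containing the Bertini involution, which is not conjugate in the Cremona group to any linear (in particular diagonal) action on $\pp^2$. The theorem survives in such cases only because one may replace $A$ by a proper subgroup of \emph{bounded} index acting through the torus on the base of an honest fibration, and bounding that index is precisely what the boundedness input (\cite{Bir21}, Theorem~\ref{thm:cyc-act-Fano-fib}) delivers; Hilbert~90 gives no control over it. Second, even when $\rank(A)<n$, the Stein factorization $X'\to B\to\pp^k$ produces a rationally connected variety $B$ with $\dim B=k=\rank(A)$ carrying a faithful $A$-action, and showing that $B$ is $A$-equivariantly birational to $\pp^k$ with $A$ inside the torus is the original problem in its hardest case (rank equal to dimension): the reduction is circular. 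To complete your approach you would need to show that, after replacing $A$ by a subgroup of index bounded in terms of $n$ alone, the $f_i$ can be chosen so that $\kk(f_1,\dots,f_k)$ is algebraically closed in $K(X)$; that statement is essentially equivalent to the theorem.
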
 

In other words, in some birational model, the group $A$ comes from a torus action on the base of a fibration. It is unclear under which conditions the torus action of $\pp^k$ can be lifted to a torus action of $X'$. 

\subsection{Birational complexity and conic fibrations} 
The following theorem is the second main result of this article.

\begin{theorem}\label{theorem:cbir-conic}{\rm(c.f. Theorem~\ref{thm:cbir-conic})}
Let $(X,B)$ be a log Calabi--Yau pair of dimension $n$ and index one. 
Let $c:=c_{\rm bir}(X,B)$.
Then $(X,B)$ has a crepant birational model 
that admits a tower of Mori fiber spaces of which at least $n-c$ are conic fibrations.
\end{theorem}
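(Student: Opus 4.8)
The plan is to reduce the statement to the Geometric Jordan property (Theorem~\ref{thm:finite-actions}) via a finite abelian cover adapted to the boundary, and then to produce the conic fibrations on the quotient by exploiting the toric base $\pp^k$ furnished by that theorem. First I would pass to a crepant birational model $(X',B')$ that computes the birational complexity, so that $(X',B')$ is $\qq$-factorial divisorially log terminal, log Calabi--Yau of index one, and $c(X',B')=c$. Since we may assume $c<n$ (the statement is vacuous otherwise), the pair has nonzero boundary and $X'$ is rationally connected of Fano type. Using the linear equivalence $K_{X'}+B'\sim 0$ coming from index one, together with the reduced components of $B'$, I would build for a suitable $m\gg 0$ a finite abelian cover $f\colon W\to X'$ with Galois group $A$ such that the crepant pullback $(W,B_W)$, defined by $K_W+B_W=f^*(K_{X'}+B')$, is again log Calabi--Yau of index one. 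The key numerical input is that $c(X',B')=n+\rank\Cl(X')_\qq-|B'|=c$ forces the boundary components to span all but a $c$-dimensional part of the relevant class group, so that one can arrange $A\cong(\zz/m\zz)^{s}$ with $s\geq n-c$, acting faithfully on $W$.

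Next I would apply the Geometric Jordan property to the faithful action of $A$ on the rationally connected $n$-fold $W$. After replacing $A$ by a bounded-index normal abelian subgroup (which shrinks $s$ only by a bounded amount, to be absorbed into the choice of $m$), Theorem~\ref{thm:finite-actions} yields an $A$-equivariant birational modification $W\dashrightarrow W'$ and an $A$-equivariant fibration $\pi\colon W'\to\pp^k$ whose induced $A$-action on $\pp^k$ factors through the standard torus $\G_m^k\subseteq\Aut(\pp^k)$. Because $A$ acts faithfully and a finite abelian group embeds in $\G_m^k$ only if its rank is at most $k$, I obtain $k\geq\rank A=s\geq n-c$; controlling the kernel of $A\to\Aut(\pp^k)$ so that this rank estimate survives is one of the delicate points.

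Finally I would descend to the quotient. Set $\bar X:=W'/A$ with boundary $\bar B$ descended from $B_{W'}$; by construction $(\bar X,\bar B)$ is crepant birational to $(X',B')$, hence to $(X,B)$. The equivariant fibration $\pi$ descends to $\bar X\to\pp^k/\Gamma=:\bar B_0$, where $\Gamma$ is the image of $A$ in $\G_m^k$; the quotient $\bar B_0$ of $\pp^k$ by a finite subgroup of its torus is a projective toric variety of dimension $k$, hence birational to $(\pp^1)^k$. I would then assemble the tower of Mori fiber spaces in two stages: running a relative minimal model program for $\bar X$ over a suitable birational model of $\bar B_0$ produces the fibre-direction Mori fiber spaces, whose relative dimensions sum to $n-k\leq c$; and the toric tower of projections $(\pp^1)^k\to(\pp^1)^{k-1}\to\cdots\to\pt$ contributes exactly $k\geq n-c$ conic fibrations below them. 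Concatenating the two stages gives a crepant birational model of $(X,B)$ with a tower of Mori fiber spaces of which at least $n-c$ are conic fibrations.

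I expect the main obstacle to be the construction of the first paragraph: simultaneously guaranteeing that the adapted abelian cover $W$ has Galois group of rank at least $n-c$ (the correct translation of the complexity hypothesis, via the structure theory of low-complexity pairs) and that $W$ remains rationally connected, so that the Geometric Jordan property applies. Verifying that the ramification occurs along the \emph{toric-like} directions of the boundary — keeping $W$ of Fano type — together with the compatibility between the crepant log Calabi--Yau structure and the quotient, and the bookkeeping ensuring the torus rank $k$ on the base does not drop below $n-c$, are the remaining technical hurdles.
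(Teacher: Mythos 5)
Your first step --- building a finite abelian cover $W\to X'$ with Galois group $(\zz/m\zz)^{s}$, $s\geq n-c$, ramified only along $B'$, from the $n-c$ independent linear relations that the complexity bound forces among the components of $B'$ --- is essentially the paper's construction (it produces a $\zz_p^{\,n-c}$-quasi-torsor from Weil divisors $D_1,\dots,D_{n-c}\sim 0$ supported on $\supp B_0$), and that part is sound modulo the usual care about connectedness of the cover and the Fano type property. The gap is in what you do with the group action afterwards.

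You propose to feed the $A$-action into the Geometric Jordan property (Theorem~\ref{thm:finite-actions}) and read off a base $\pp^k$ with $k\geq \rank A\geq n-c$. That theorem does not deliver this: it asserts the existence of \emph{some} equivariant fibration $X'\to\pp^k$ whose induced $A$-action on $\pp^k$ factors through $\G_m^k$, with no lower bound on $k$ and no faithfulness of the induced action on the base. A large subgroup of $A$ may act fiberwise, in which case the inclusion of the image of $A$ into $\G_m^k$ tells you nothing about $\rank A$; in the degenerate reading the conclusion is even satisfied with $k=0$. So the inequality $k\geq n-c$, which is the entire quantitative content you need, does not follow from the statement you are invoking. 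This is not a repairable bookkeeping issue: in the paper's architecture the Geometric Jordan property is itself a \emph{consequence} of the technical engine, Theorem~\ref{thm:fin-act-conic}, which is the result that actually converts rank --- one cyclic factor of large order acting fiberwise at each stage of an inductively constructed tower --- into one strict conic fibration per factor, via the torus actions on the bounded fibers (Theorem~\ref{thm:cyc-act-Fano-fib}) and the two-ray game. Your route uses the weaker packaged corollary in place of the engine, and the count of conic fibrations is lost at exactly that point. A secondary (fixable, but nontrivial) issue is the final assembly: replacing $\pp^k/\Gamma$ by $(\pp^1)^k$ and stacking the toric projections underneath requires modifying the total space so that it actually fibers over the new base with each intermediate step a Mori fiber space; this is what Proposition~\ref{prop:2-ray} is used for in the paper, and it cannot be waved through by ``running a relative MMP'' alone.
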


Example~\ref{ex:not-conic-bundle} shows that even if $c_{\rm bir}(X,B)< \dim X$, the variety $X$ may not admit a birational conic fibration. 
As a consequence of Theorem~\ref{theorem:cbir-conic}, we obtain the following corollary.

\begin{corollary}\label{cor:birsuper}
Let $X$ be a birationally superrigid Fano variety of dimension $n$.
Let $(X,B)$ be a log Calabi--Yau pair 
of index one and coregularity zero.
Then, we have $c_{\rm bir}(X,B)=n$.
\end{corollary}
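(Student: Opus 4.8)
The plan is to prove the two inequalities $c_{\rm bir}(X,B)\geq n$ and $c_{\rm bir}(X,B)\leq n$ separately, using birational superrigidity for the former and coregularity zero for the latter. The lower bound is the main content and is where Theorem~\ref{theorem:cbir-conic} enters; the upper bound is where the coregularity hypothesis is used. Throughout I may assume $n\geq 3$, since no Fano variety of dimension $\leq 2$ is birationally superrigid and the statement is otherwise vacuous.

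For the lower bound I would argue by contradiction, assuming $c:=c_{\rm bir}(X,B)<n$. Then Theorem~\ref{theorem:cbir-conic} produces a crepant birational model $(X'',B'')$ of $(X,B)$, hence a variety $X''$ birational to $X$, admitting a tower of Mori fiber spaces $X''=Y_0\to Y_1\to\cdots\to Y_m=\pt$ of which at least $n-c\geq 1$ are conic fibrations. The key observation is that the first arrow $Y_0\to Y_1$ is already a Mori fiber space structure on $X''$. If $\dim Y_1\geq 1$, then $X''$, and therefore $X$, is birational to a Mori fiber space over a positive-dimensional base; since $X$ is birationally superrigid, its only Mori fiber space structure up to isomorphism is $X\to\pt$, so this is impossible. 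If instead $\dim Y_1=0$, then the tower has length one and consists of the single arrow $Y_0\to\pt$ of relative dimension $n$, which for $n\geq 2$ is not a conic fibration; this contradicts the existence of at least one conic fibration in the tower. Either way we reach a contradiction, so $c_{\rm bir}(X,B)\geq n$.

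For the upper bound I would invoke the coregularity hypothesis: since $(X,B)$ has index one and coregularity zero, it admits a crepant $\mathbb{Q}$-factorial dlt model $(Y,B_Y)$ with reduced boundary whose dual complex has dimension $n-1$, so that some point lies on $n$ boundary components. Using the characterization of coregularity zero together with the basic inequality relating the complexity to the dual complex, one obtains a crepant model realizing $c(Y,B_Y)\leq \dim Y=n$, whence $c_{\rm bir}(X,B)\leq n$. Combining the two inequalities yields $c_{\rm bir}(X,B)=n$.

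The hard part will be the passage from ``there is a conic fibration somewhere in the tower'' to ``$X$ is birational to a Mori fiber space over a positive-dimensional base,'' and in particular making sure that the top arrow of the tower is a genuine Mori fiber space in the category to which birational superrigidity applies, so that the uniqueness of the Mori fiber space structure on $X$ can legitimately be invoked. The casework above sidesteps locating the conic fibration precisely, using only that its mere existence forces the tower to be nontrivial; the remaining subtlety is the compatibility of singularity classes between the crepant model $X''$ and the superrigidity hypothesis on $X$, together with confirming that coregularity zero genuinely delivers the upper bound $c_{\rm bir}(X,B)\leq n$ (which fails for higher coregularity, and is the precise reason that hypothesis appears).
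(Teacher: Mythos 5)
Your lower bound is, in substance, exactly the paper's argument: the paper deduces from Theorem~\ref{theorem:cbir-conic} that $c_{\rm bir}(X,B)<n$ would give $X$ a birational nontrivial Mori fiber space, contradicting superrigidity. Your casework on $\dim Y_1$ only makes explicit why the tower cannot reduce to the trivial fibration $X''\to\pt$ (a conic fibration has relative dimension one, so it cannot be the map to a point when $n\geq 2$), and the singularity issue you flag at the end is resolved in the standard way: terminalize $X''$ and run the MMP over $Y_1$; since $-K$ is ample over $Y_1$ this terminates in a genuine Mori fiber space over a positive-dimensional base, which is what superrigidity forbids. So that half is fine and matches the paper.

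The gap is in the upper bound. From ``some point of a dlt model lies on $n$ boundary components'' you cannot conclude $c(Y,B_Y)\leq n$: the complexity is $\dim Y+\dim_\qq {\rm Cl}(Y)_\qq-|B_Y|$, and the local statement that $n$ components meet at a point says nothing about the rank of the class group of the chosen model. Concretely, blowing up $k$ general points of $(\pp^2,H_0+H_1+H_2)$ off the boundary yields a crepant dlt model of coregularity zero with complexity $k$, which exceeds $n=2$ as soon as $k\geq 3$; the dual complex is still $S^1$ and two components still meet at each node. The inequality $c_{\rm bir}(X,B)\leq \dim X+{\rm coreg}(X,B)$ --- hence $c_{\rm bir}(X,B)\leq n$ here --- is a genuine theorem of Mauri and the author, proved by constructing a \emph{specific} crepant model on which the boundary controls the Picard rank, and the paper simply invokes it as \cite[Corollary 1.4]{MM24}. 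Your proof becomes correct once the second paragraph is replaced by that citation; as written, the deduction would fail.
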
 

In particular, a Fano variety $X$ of Picard rank one that admits a $1$-complement with at least two prime components is not birationally superrigid (see Corollary~\ref{cor:bsuper}).
As a second application of Theorem~\ref{theorem:cbir-conic}, we get the following statement regarding smooth Fano hypersurfaces.

\begin{corollary}\label{cor:hyperf}
Let $X_d$ be a smooth hypersurface of degree $d$ in $\mathbb{P}^n$.
If $d\leq n$, then $X_d$ has a birational model
that admits a tower of Mori fiber spaces
of which at least $n-d$ are conic fibrations.
\end{corollary}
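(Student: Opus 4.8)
The plan is to realize $X_d$ as the underlying variety of a log Calabi--Yau pair of index one whose birational complexity is at most $d-1$, and then invoke Theorem~\ref{theorem:cbir-conic}. Since $X_d$ is a smooth hypersurface of degree $d$ in $\pp^n$, we have $\dim X_d=n-1$, and adjunction gives $-K_{X_d}=(n+1-d)H$, where $H$ denotes the restriction of the hyperplane class. As $d\leq n$ the integer $n+1-d$ is positive, so $X_d$ is a smooth Fano variety, in particular rationally connected, and $-K_{X_d}$ is very ample.

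First I would choose general hyperplane sections $H_1,\dots,H_{n+1-d}$ and set $B:=H_1+\dots+H_{n+1-d}\in\lvert -K_{X_d}\rvert$. By Bertini the $H_i$ and all their intersections are smooth and transverse, so $B$ is a reduced simple normal crossing divisor and $(X_d,B)$ is log canonical; since $B$ is integral with $K_{X_d}+B\sim 0$, the pair $(X_d,B)$ is a log Calabi--Yau pair of dimension $n-1$ and index one.

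Next I would bound the birational complexity. When $n\geq 4$ we have $\dim X_d\geq 3$, so the Lefschetz hyperplane theorem gives $\Pic(X_d)\cong\Pic(\pp^n)=\zz$, and smoothness gives $\Cl(X_d)=\Pic(X_d)$, whence $\rank\Cl(X_d)_\qq=1$. Therefore
\[
c(X_d,B)=\dim X_d+\rank\Cl(X_d)_\qq-\lvert B\rvert=(n-1)+1-(n+1-d)=d-1.
\]
As the birational complexity is the infimum of the complexity over all crepant birational models, the pair $(X_d,B)$ itself yields $c_{\rm bir}(X_d,B)\leq d-1$. Applying Theorem~\ref{theorem:cbir-conic} to $(X_d,B)$ then produces a crepant birational model whose underlying variety $X'$ is birational to $X_d$ and admits a tower of Mori fiber spaces, of which at least $(n-1)-c_{\rm bir}(X_d,B)\geq (n-1)-(d-1)=n-d$ are conic fibrations, which is the desired conclusion.

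I expect the main obstacle to be the input $\rank\Cl(X_d)_\qq=1$, which through Lefschetz requires $\dim X_d\geq 3$, that is $n\geq 4$; the remaining cases $n\leq 3$ must be handled separately. There $X_d$ is a curve or surface, and the statement is either vacuous (when $n-d\leq 0$) or follows from a low complexity model: for example the quadric surface $X_2\subset\pp^3$ is toric, so $c_{\rm bir}=0$ and Theorem~\ref{theorem:cbir-conic} already supplies the one required conic fibration. One should also record that the simple normal crossing condition on $B$ holds for every $1\leq d\leq n$, since transversality forces any point to lie on at most $\dim X_d$ of the $H_i$, so the $n+1-d$ components meet with normal crossings.
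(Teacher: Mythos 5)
Your proposal is correct and follows essentially the same route as the paper: restrict $n+1-d$ general hyperplanes to $X_d$ to obtain an index-one log Calabi--Yau pair of complexity $d-1$, and apply Theorem~\ref{theorem:cbir-conic}. You are in fact slightly more careful than the paper's own argument, which asserts $\rho(X_d)=1$ without restricting to $\dim X_d\geq 3$; your separate treatment of the low-dimensional cases (where Lefschetz does not apply but the claim is vacuous or follows from a toric model) closes that small gap.
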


The previous statement is known to experts and can be proved using projections from points and the Minimal Model Program (MMP).
However, we emphasize it in the introduction to draw a parallel between the birational complexity and the degree of Fano hypersurfaces:
log Calabi--Yau pair $(X,B)$ of dimension $n$, index one, and birational complexity $c$ 
behave similarly to smooth Fano hypersurfaces of degree $c+1$ in $\pp^{n+1}$.

\subsection{Alteration complexity} In the proof of Theorem~\ref{theorem:cbir-conic}, we will need to study the complexity of finite covers of $(X,B)$.
This motivates us to study the complexity of a log Calabi--Yau pair up to finite Galois covers.
The {\em alteration complexity} $c_{\rm alt}(X,B)$ of a log Calabi--Yau pair $(X,B)$ is the infimum of the birational complexities of sub-log Calabi--Yau pairs obtained by finite Galois covers of $(X,B)$ (see Definition~\ref{def:alt-comp}). 
The alteration complexity is non-negative (see Theorem~\ref{theorem:alt-comp-noneg}).
Furthermore, the alteration complexity
is indeed computed by a log Calabi--Yau pair (see Theorem~\ref{theorem:alt-comp-computation}).
The following theorem characterizes when the alteration complexity is zero.

\begin{theorem}\label{theorem:alt-comp-zero}
Let $(X,B)$ be a log Calabi--Yau pair of dimension $n$ and index one. 
Then, we have that $c_{\rm alt}(X,B)=0$ if and only if there exist:
\begin{enumerate}
\item[(i)] a log Calabi--Yau pair $(X',B')$ which is crepant birational equivalent to $(X,B)$, and 
\item[(ii)] a log Calabi--Yau pair $(Y,B_Y)$ 
with a crepant finite Galois morphism $(Y,B_Y)\rightarrow (X',B')$, 
\end{enumerate}
such that $(Y,B_Y)$ is crepant birational equivalent to 
$(\pp^n,H_0+\dots + H_n)$.
\end{theorem}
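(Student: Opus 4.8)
The plan is to prove the two implications separately. The reverse implication is the easier one and rests on the non-negativity of the alteration complexity (Theorem~\ref{theorem:alt-comp-noneg}) together with the fact that the alteration complexity depends only on the crepant birational class of $(X,B)$. The forward implication carries the content: I would feed the hypothesis $c_{\rm alt}(X,B)=0$ into the fact that the alteration complexity is attained by an honest log Calabi--Yau pair (Theorem~\ref{theorem:alt-comp-computation}), and then invoke the classification of index-one log Calabi--Yau pairs of birational complexity zero.

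For ($\Leftarrow$): Assume (i) and (ii). Since $(\pp^n, H_0+\dots+H_n)$ is a toric log Calabi--Yau pair, it has birational complexity zero by the toric characterization of complexity zero of \cite{BMSZ18} in its birational form \cite{MM24}; as birational complexity is invariant under crepant birational equivalence, $c_{\rm bir}(Y,B_Y)=0$. The crepant finite Galois morphism $(Y,B_Y)\to(X',B')$ exhibits $(Y,B_Y)$ as a finite Galois cover computing an upper bound for the alteration complexity of $(X',B')$, so $c_{\rm alt}(X',B')\le c_{\rm bir}(Y,B_Y)=0$. Because $(X',B')$ is crepant birational to $(X,B)$ and the alteration complexity is a crepant birational invariant, $c_{\rm alt}(X,B)=c_{\rm alt}(X',B')\le 0$. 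Combined with $c_{\rm alt}(X,B)\ge 0$ from Theorem~\ref{theorem:alt-comp-noneg}, this forces $c_{\rm alt}(X,B)=0$.

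For ($\Rightarrow$): Assume $c_{\rm alt}(X,B)=0$. By Theorem~\ref{theorem:alt-comp-computation} the infimum defining $c_{\rm alt}$ is attained by a genuine log Calabi--Yau pair: there is a crepant birational model $(X',B')$ of $(X,B)$ and a crepant finite Galois morphism $(Y,B_Y)\to(X',B')$, with $(Y,B_Y)$ an effective log Calabi--Yau pair satisfying $c_{\rm bir}(Y,B_Y)=c_{\rm alt}(X,B)=0$. This already supplies the data in (i) and (ii), and it remains only to identify the crepant birational class of $(Y,B_Y)$. Since the morphism is crepant and $(X,B)$ has index one, $(Y,B_Y)$ again has index one, so $B_Y$ is reduced with $K_Y+B_Y\sim 0$. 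Now $c_{\rm bir}(Y,B_Y)=0$ forces $(Y,B_Y)$ to be crepant birational to a toric log Calabi--Yau pair of dimension $n$; since every index-one toric log Calabi--Yau pair of dimension $n$ carries the full reduced toric boundary and shares the open torus $\G_m^n$, it is crepant birational to $(\pp^n,H_0+\dots+H_n)$ through the induced toric modifications. This yields that $(Y,B_Y)$ is crepant birational to $(\pp^n,H_0+\dots+H_n)$, as required.

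The main obstacle is the forward implication's reliance on the classification of birational complexity zero: one must know that $c_{\rm bir}=0$, in the index-one setting, pins down the crepant birational class of $(\pp^n,H_0+\dots+H_n)$ precisely, rather than merely that of ``some toric pair.'' This is the birational refinement of the complexity-zero-implies-toric theorem of \cite{BMSZ18}, combined with the elementary but essential observation that all index-one toric log Calabi--Yau pairs of a fixed dimension lie in a single crepant birational class. A secondary delicate point, which is precisely what Theorem~\ref{theorem:alt-comp-computation} resolves, is that the infimum over sub-log Calabi--Yau covers is realized by an effective log Calabi--Yau pair together with an actual finite Galois morphism of projective pairs; without this one would obtain only a Galois extension of function fields and would have to build the model $(X',B')$ and the cover $(Y,B_Y)$ by hand while controlling ramification and crepancy.
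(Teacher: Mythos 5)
Your proof is correct and follows essentially the same route as the paper's: the forward direction combines Theorem~\ref{theorem:alt-comp-computation} with the index-one characterization of birational complexity zero (Theorem~\ref{theorem:bir-comp-zero}), and the reverse direction combines the definition of the alteration complexity with its non-negativity (Theorem~\ref{theorem:alt-comp-noneg}). The only presentational difference is that where you assert that $c_{\rm alt}$ is a crepant birational invariant, the paper instead explicitly transfers the Galois cover of $(X',B')$ to an induced crepant finite Galois cover $(Y',B_{Y'})\rightarrow(X,B)$ with $(Y',B_{Y'})$ crepant birational to $(Y,B_Y)$ --- which is precisely the content of the invariance you invoke.
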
 

In other words, an $n$-dimensional log Calabi--Yau pair $(X,B)$ with index one has alteration complexity zero
if and only if
it is crepant birational equivalent to $(Y,\Delta)/G$ where 
$G\leqslant {\rm Bir}(\pp^n,H_0+\dots+H_n)$
is a finite group
and $(Y,\Delta)$ is a crepant model where $G$ 
regularizes.
In general, both the birational transformation
and the finite Galois cover are necessary to 
obtain the model that computes the alteration complexity (see Example~\ref{ex:alt-comp-two-ways}).

In~\cite{MM24}, Mauri and the author 
construct examples, in each dimension $n\geq 3$, of log Calabi--Yau pairs of
index one, coregularity zero,
and birational complexity $n$.
However, all these examples have alteration complexity zero. Indeed, they are quotients of toric log Calabi--Yau pairs. 
We expect that Corollary~\ref{cor:birsuper}
can be used to provide examples, in each dimension $n\geq 3$, of log Calabi--Yau pairs of index one, coregularity zero, and
alteration complexity $n$ (see Question~\ref{quest:max-alt-comp}).
Log Calabi--Yau pairs
of maximal alteration complexity
should be considered as building blocks
of more general log Calabi--Yau pairs
(see Theorem~\ref{theorem:coreg-0-decomp}).

\subsection{Conic complexity} 
The conic complexity $k(X,B)$ of a log Calabi--Yau pair $(X,B)$ measures the maximum number of strict conic fibrations that we can find in a sequence of fibrations starting from $X$. 
A {\em strict conic fibration}
is a conic fibration that respects the log Calabi--Yau structure (see Definition~\ref{def:strict-conic}).
The conic complexity $k(X,B)$ is zero precisely when we can find a sequence of
$\dim X$ strict conic fibrations starting from $X$.
Therefore, a log Calabi--Yau pair $(X,B)$ with $k(X,B)=0$ behaves like a Bott tower.
However, $X$ may be singular. In Definition~\ref{def:Q-bott}, we define a singular version of Bott towers, the so-called {\em Bott $\qq$-towers}. 
In general, a Bott $\qq$-tower is not necessarily a Bott tower (see Remark~\ref{rem:Q-bott}).
The following theorem states that log Calabi--Yau pairs with conic complexity zero are precisely Bott $\qq$-towers.

\begin{theorem}\label{theorem:conic-complexity-zero}
Let $(X,B)$ be a log Calabi--Yau pair.
Then, we have that $k(X,B)=0$ if and only if $X$ is a Bott $\mathbb{Q}$-tower and $B$ is the torus invariant divisor.
\end{theorem}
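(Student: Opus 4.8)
The plan is to prove both implications by induction on $n = \dim X$, exploiting the tower structure directly. The base case $n=0$ is immediate, since a point is a Bott $\qq$-tower carrying the empty boundary. I would organize the induction so that the tower of strict conic fibrations on one side matches the tower defining the Bott $\qq$-tower on the other.

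For the reverse direction, suppose $X$ is a Bott $\qq$-tower with $B$ its torus-invariant divisor. By Definition~\ref{def:Q-bott}, $X$ is toric and comes with a tower $X = X_0 \to X_1 \to \cdots \to X_n = \mathrm{pt}$ in which each $X_{i-1}\to X_i$ is a toric $\pp^1$-fibration. I would verify that every such step is a \emph{strict} conic fibration: on a general fiber $\pp^1$ the restriction of the torus-invariant boundary is the toric boundary $\{0\}+\{\infty\}$, so $(F,B|_F)\cong(\pp^1,0+\infty)$ has degree two and the log Calabi--Yau structure is preserved along the contraction. Since each contraction in the tower is then a strict conic fibration, the tower exhibits $n$ of them starting from $X$, whence $k(X,B)=0$.

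The forward direction carries the real content. Assume $k(X,B)=0$, so there is a sequence of $n$ strict conic fibrations $X = X_0 \xrightarrow{f_1} X_1 \to \cdots \to X_n = \mathrm{pt}$, with each $(X_{i-1},B_{i-1})\to(X_i,B_i)$ strict and each $(X_i,B_i)$ log Calabi--Yau. The truncated tower shows $k(X_1,B_1)=0$, so by induction $X_1$ is a Bott $\qq$-tower and $B_1$ is its torus-invariant divisor; in particular $X_1$ is toric with torus $\TT_1$. It remains to promote the single strict conic fibration $f_1\colon(X,B)\to(X_1,B_1)$ to a Bott $\qq$-tower structure. By strictness the horizontal part of $B$ consists of exactly two prime components $S_0,S_\infty$ restricting to $0+\infty$ on a general fiber, which furnish two disjoint sections of $f_1$. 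I would then identify $X$ with a projectivization $\pp(\oo_{X_1}\oplus L)$ of a rank-one reflexive sheaf $L$ on $X_1$ cut out by $S_0$ and $S_\infty$, read in the $\qq$-factorial toric sense appropriate to Definition~\ref{def:Q-bott}.

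The crux is lifting the torus action from $X_1$ to $X$. Because $X_1$ is toric, $L$ is $\TT_1$-linearizable, so the $\TT_1$-action lifts to the fiberwise-linear action on $\pp(\oo_{X_1}\oplus L)$ fixing $S_0$ and $S_\infty$; combining this with the fiberwise $\G_m$ fixing $0$ and $\infty$ on each fiber gives a faithful action of $\TT=\TT_1\times\G_m$ with a dense orbit, so $X$ is toric of dimension $n$ and the tower $X\to X_1\to\cdots\to\mathrm{pt}$ realizes it as a Bott $\qq$-tower. Finally $B=S_0+S_\infty+f_1^{*}B_1$ is the full torus-invariant boundary, since $B_1$ was torus-invariant and $S_0,S_\infty$ are the two new invariant primes. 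The main obstacle is the singular, $\qq$-factorial setting: in the $\qq$-version a strict conic fibration may have non-reduced or singular fibers, so ``$\pp^1$-bundle'' and ``$\pp(\oo\oplus L)$'' must be interpreted through the fan-theoretic description of Bott $\qq$-towers, and one must confirm that linearizability of reflexive sheaves and the resulting $\TT$-action survive. I expect that the strictness hypothesis---forcing exactly two horizontal boundary components meeting every fiber in its toric points---is precisely what rigidifies the fibration enough to run this argument.
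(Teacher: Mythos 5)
Your converse direction is fine (and is essentially what the paper leaves implicit), and your forward direction starts on the same inductive skeleton as the paper: truncate the tower, conclude $k(X_1,B_1)=0$, apply induction to get that $X_1$ is a Bott $\qq$-tower with torus-invariant boundary. The divergence, and the problem, is in how you promote the last strict conic fibration $f_1\colon (X,B)\to (X_1,B_1)$ to a Bott $\qq$-tower structure. You assert that $B=S_0+S_\infty+f_1^*B_1$, i.e.\ that the entire reduced preimage of the torus-invariant boundary of $X_1$ appears in $B$ with coefficient one and that there are no further components. This is a genuine step, not a formality: a priori a vertical divisor lying over a component $P$ of $\lfloor B_1\rfloor$ could carry coefficient $<1$ in $B$, with the log canonical place over $P$ realized only by a higher codimension center inside $S_0$ or $S_\infty$. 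The paper closes exactly this gap with a connectedness argument (the non-klt loci in $S_0$ and $S_\infty$ over the generic point of $P$ are disjoint, so by the connectedness theorem of \cite{FS20} the whole fiber $f_1^*P$ must be non-klt, hence a coefficient-one component of $B$). Without this, even granting that $X$ is toric you cannot conclude that $B$ is the full torus-invariant divisor.

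The second gap is the one you flag yourself but do not resolve: identifying $X$ with $\pp_{X_1}(\oo_{X_1}\oplus L)$ for a reflexive rank-one $L$, and lifting the $\TT_1$-action, before knowing that $X$ is toric. For a $\qq$-factorial Mori fiber space of relative dimension one with two disjoint sections over a singular base this requires an actual argument, and your appeal to ``linearizability of reflexive sheaves'' presupposes structure you have not yet established on $X$. The paper sidesteps both difficulties at once: having shown each $f_1^*P$ has coefficient one, it counts $2n$ components of $B$ against $\rho(X)=\rho(X_1)+1=n$, concludes $c(X,B)=0$, and invokes the toric characterization of complexity zero from \cite{BMSZ18}; only \emph{then}, with $X\to X_1$ known to be a toric Mori fiber space of relative dimension one, does the splitting $\pp_{X_1}(\ls_1\oplus\ls_2)$ come for free from the fan. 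I would recommend restructuring your argument in that order: connectedness theorem first, then the component/Picard-rank count, then toric geometry.
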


The {\em birational conic complexity} $k_{\rm bir}(X,B)$
and the {\em alteration conic complexity} 
$k_{\rm alt}(X,B)$ of a log Calabi--Yau pair $(X,B)$ are defined in a similar fashion as the birational complexity
and alteration complexity (see Definition~\ref{def:alt-comp}).
The birational conic complexity is the minimum among conic complexities of crepant birational equivalent models of the log Calabi--Yau pair.
The alteration conic complexity is the minimum among birational conic complexities
of crepant finite Galois covers.
Theorem~\ref{theorem:cbir-conic} can rephrased as $c_{\rm bir}(X,B)\geq k_{\rm bir}(X,B)$ for log Calabi--Yau pairs of index one. Further, the equality $c_{\rm alt}(X,B)\geq k_{\rm alt}(X,B)$ also holds (see Corollary~\ref{corollary:total-comparison}).
The following theorem characterizes
when $k_{\rm bir}(X,B)$ and $k_{\rm alt}(X,B)$ are zero.

\begin{theorem}\label{introcor:bir-alt-conic-comp-zero}
Let $(X,B)$ be a log Calabi--Yau pair of dimension $n$.
Then, the following statements hold:
\begin{enumerate}
\item[(i)] $k_{\rm bir}(X,B)=0$ if and only if 
$(X,B)$ is crepant birational equivalent to $(\pp^n,H_0+\dots+H_n)$.
\item[(ii)] $k_{\rm alt}(X,B)=0$ if and only if 
$(X,B)$ is crepant birational equivalent to $(\pp^n,H_0+\dots+H_n)/G$ where
$G\leqslant {\rm Bir}(\pp^n,H_0+\dots+H_n)$ is a finite subgroup.
\end{enumerate}
\end{theorem}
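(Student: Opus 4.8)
The plan is to deduce both statements from Theorem~\ref{theorem:conic-complexity-zero} together with the standard toric fact that all complete toric log Calabi--Yau pairs of dimension $n$ are crepant birational equivalent to $(\pp^n,H_0+\dots+H_n)$. I will freely use that $k_{\rm bir}$ is a crepant birational invariant by construction, that $k_{\rm alt}$ is likewise invariant (a crepant finite Galois cover depends only on the function field, hence only on the crepant birational class of the base), and that $k_{\rm bir},k_{\rm alt}\geqslant 0$, since a tower of fibrations issuing from an $n$-dimensional variety contains at most $n$ conic fibrations.

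For (i), I would first prove the reverse implication. A Bott $\qq$-tower is toric, and $\pn$ with its toric boundary is an honest Bott tower realizing a sequence of $n$ strict conic fibrations, so by Theorem~\ref{theorem:conic-complexity-zero} we have $k(\pn,\partial)=0$. Since $(\pp^n,H_0+\dots+H_n)$ and $(\pn,\partial)$ are toric log Calabi--Yau pairs of the same dimension, they are crepant birational equivalent, whence $k_{\rm bir}(\pp^n,H_0+\dots+H_n)=0$; invariance of $k_{\rm bir}$ then gives $k_{\rm bir}(X,B)=0$ for every $(X,B)$ crepant birational equivalent to $(\pp^n,H_0+\dots+H_n)$. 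For the forward implication, $k_{\rm bir}(X,B)=0$ produces a crepant birational model $(X',B')$ with $k(X',B')=0$; by Theorem~\ref{theorem:conic-complexity-zero}, $X'$ is a Bott $\qq$-tower and $B'$ is its torus invariant divisor, so $(X',B')$ is a toric log Calabi--Yau pair, hence crepant birational equivalent to $(\pp^n,H_0+\dots+H_n)$, and therefore so is $(X,B)$.

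For the forward implication of (ii), suppose $k_{\rm alt}(X,B)=0$. By definition there is a crepant finite Galois cover $\pi\colon(Y,B_Y)\to(X,B)$, with Galois group $G$ acting faithfully and crepantly, such that $k_{\rm bir}(Y,B_Y)=0$. By part (i) there is a crepant birational map $\phi\colon(Y,B_Y)\drar(\pp^n,H_0+\dots+H_n)$. Transporting the action by conjugation, each $g\in G$ yields a crepant birational self-map $\phi\circ g\circ\phi^{-1}$ of $(\pp^n,H_0+\dots+H_n)$; this defines an injective homomorphism $G\hookrightarrow{\rm Bir}(\pp^n,H_0+\dots+H_n)$ with finite image $G'$, and $\phi$ intertwines the two actions. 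Passing to quotients, $\phi$ descends to a crepant birational equivalence $(X,B)=(Y,B_Y)/G\drar(\pp^n,H_0+\dots+H_n)/G'$, as required.

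For the reverse implication of (ii), suppose $(X,B)$ is crepant birational equivalent to $(\pp^n,H_0+\dots+H_n)/G$ for some finite $G\leqslant{\rm Bir}(\pp^n,H_0+\dots+H_n)$. Regularizing the finite birational $G$-action, I obtain a crepant model $(Y,\Delta)$ of $(\pp^n,H_0+\dots+H_n)$ on which $G$ acts biregularly and crepantly, so that $(Y,\Delta)\to(Y,\Delta)/G$ is a crepant finite Galois cover and $(Y,\Delta)/G$ is crepant birational equivalent to $(X,B)$. By part (i), $k_{\rm bir}(Y,\Delta)=k_{\rm bir}(\pp^n,H_0+\dots+H_n)=0$, hence $k_{\rm alt}((Y,\Delta)/G)\leqslant 0$; combined with non-negativity and the crepant birational invariance of $k_{\rm alt}$, this gives $k_{\rm alt}(X,B)=0$. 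The main obstacle is making the quotient constructions rigorous: one must verify that crepancy is preserved under the faithful $G$-actions and their quotients, and that the regularization of the birational $G$-action can be taken to be a crepant model. The genuinely geometric input, however, is concentrated in part (i)—the crepant birational equivalence of toric log Calabi--Yau pairs—after which part (ii) becomes essentially formal.
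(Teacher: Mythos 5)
Your proposal is correct and follows essentially the same route as the paper: part (i) is deduced from Theorem~\ref{theorem:conic-complexity-zero} together with the crepant birational equivalence of complete toric log Calabi--Yau pairs (the paper uses $((\pp^1)^n,B_T)$ for the reverse implication exactly as you do), and part (ii) is obtained by regularizing the finite Galois action on a crepant model, which is precisely the content of Lemma~\ref{lem:finite-act-regularization} that the paper invokes. The quotient/conjugation details you flag as the "main obstacle" are exactly what that lemma supplies, so no gap remains.
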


\subsection{Applications} In this subsection, we explain some applications of the tools 
used in the proof of the main theorem.
For the sake of briefness of the introduction, we only explain the results here and state the theorems before their proofs in Section~\ref{sec:proofs}.

\begin{enumerate}
\item In Theorem~\ref{theorem:k-alt-vs-reg}, we prove that for a log Calabi--Yau pair $(X,B)$ of index one, 
the dimension of its dual complex $\mathcal{D}(X,B)$ is at most $n-k_{\rm alt}(X,B)-1$. 
In other words, the coregularity of $(X,B)$ is bounded above by the alteration conic complexity $k_{\rm alt}(X,B)$.
\item In Theorem~\ref{theorem:from-pi-to-fun}, we study the fundamental group of $U:=X\setminus B$ where $X$ is a Fano variety and $(X,B)$ is a $1$-complement\footnote{A {\em $1$-complement} is a boundary $B$ for which $(X,B)$ is log Calabi--Yau and $K_X+B\sim 0$.}.
We show that $\pi_1^{\rm alg}(U)$ admits a normal abelian subgroup
of index at most $c(n)$
and rank at most $n-k_{\rm alt}(X,B)$.
\item In Theorem~\ref{theorem:bound-coreg-0-3-fold}, we consider log Calabai--Yau pairs $(X,B)$ of dimension $3$, index one, and coregularity zero. If their birational complexity is less than $3$, then we prove that either $c_{\rm alt}(X,B)=0$ or $X$ admits 
a birational del Pezzo fibration.
\item In Theorem~\ref{theorem:weil-index}, we study the Weil index of $n$-dimensional klt Fano varieties $X$ admitting a $1$-complement $(X,B)$. We show that if the index is large enough, compared with $n$, then the inequality  $k_{\rm alt}(X,B)< n$ holds.
\item In Theorem~\ref{theorem:klt-sing},
we consider $n$-dimensional klt singularities $(X;x)$ with $1$-complements $(X,B;x)$.
We prove that the birational complexity of log canonical places of $(X,B;x)$ is controlled by the local complexity of $(X,B;x)$.
\end{enumerate} 

\subsection*{Acknowledgements} 
The author would like to thanks
Adrien Dubouloz, 
Stefano Filipazzi,
Alvaro Liendo,
Mirko Mauri,
Hendrik S\"u{\ss}, 
Burt Totaro, and 
Charles Vial
for many useful discussions 
related to this article.

\section{Preliminaries}

We work over an uncountable algebraically closed field $\kk$ of characteristic zero.
All considered varieties are defined over $\kk$ unless otherwise stated.
The {\em rank} of a finite abelian group $G$ is the least number of generators of $G$.
A {\em contraction} $f\colon X\rightarrow Z$ is a proper morphism for which $f_*\mathcal{O}_X=\mathcal{O}_Z$.
A {\em fibration} 
$f\colon X\rightarrow Z$ is a contraction with positive dimensional general fiber.

\begin{definition}
{\em 
Let $f\colon X\rightarrow Z$ be a fibration.
We say that $f$ has {\em minimal relative dimension}
if for every birational contraction $X\dashrightarrow X'$ over $Z$
every fibration $X'\rightarrow Z'$ over $Z$ 
satisfies that
the morphism $Z'\rightarrow Z$ is birational.
In other words, every fibration $X'\rightarrow Z'$ over $Z$ has relative dimension
larger than or equal to 
the relative dimension of $X\rightarrow Z$.
For instance, a fibration of relative dimension $1$ is always a fibration
of minimal relative dimension.
}
\end{definition}

\begin{notation}
{\em
Let $(X,B)$ be a log pair.
We write ${\rm Aut}(X,B)$ 
for the subgroup of ${\rm Aut}(X)$ that consists of elements $g$ for which $g^*B=B$. 
Let $(X,B)$ be a log pair and $Z_1,\dots,Z_r$ be subvarieties of $X$.
We write ${\rm Aut}(X,B,Z_1,\dots,Z_r)$ for the subgroup of ${\rm Aut}(X,B)$ that consists of elements $g$ for which $g^*Z_i=Z_i$ for every $i \in \{1,\dots,r\}$.
Let $f\colon X\rightarrow W$ be a fibration and $(X,B)$ be a log pair.
We write ${\rm Aut}_W(X,B)$ for the subgroup of 
${\rm Aut}(X,B)$ that consists of elements $g$ making the following diagram commutative
\[
\xymatrix{
(X,B)\ar[d]^-{f} \ar[r]^-{g} & (X,B)\ar[d]^-{f} \\ 
W \ar[r]^-{{\rm id}_W} & W
}
\]
In other words, the group ${\rm Aut}_W(X,B)$ consists of automorphisms that respect the fibration
and induce the identity on the base.
}
\end{notation}

\subsection{Equivariant generalized pairs}
In this subsection, we recall the concept of equivariant generalized pairs. 

\begin{definition}
{\em 
Let $G$ be a finite group.
A {\em $G$-equivariant generalized sub-pair} $(X,B,\mathbf{M})$ consists of the data:
\begin{enumerate}
\item a normal quasi-projective variety $X$ with $G\leqslant {\rm Aut}(X)$,
\item a $G$-invariant divisor $B$, and 
\item a $G$-invariant b-nef divisor $\mathbf{M}$ on $X$, 
\end{enumerate} 
such that $K_X+B+\mathbf{M}_X$ is $\rr$-Cartier. We say that $(X,B,\mathbf{M})$ is a {\em $G$-equivariant generalized pair} if $B$ is an effective divisor.
By~\cite[Proposition 2.4]{Mor21} the $G$-equivariant b-nef divisor $\mathbf{M}$ is the pull-back of a b-nef divisor $\mathbf{N}$ on the quotient $X/G$.
If $Y$ is a birational model of $X/G$ on which $\mathbf{N}$ descends, 
then we say that $Y$ is a {\em quotient by $G$ where $\mathbf{M}$ descends}.
Indeed, $Y$ is the quotient by $G$ of a $G$-equivariant birational model of $X$ (see~\cite[Proposition 2.4]{Mor21}).
In the previous setting, we write $G\leqslant {\rm Aut}(X,B,\mathbf{M})$.

If $G$ is the trivial group, then we drop it from the notation.
If $\mathbf{M}$ is the trivial b-nef divisor, then we say that $(X,B)$ is a sub-pair and drop $\mathbf{M}$ from the notation.
}
\end{definition} 

\begin{definition}
{\em 
Let $(X,B,\mathbf{M})$ be a $G$-equivariant generalized sub-pair.
Let $\Omega$ be a set of real numbers.
We say that {\em the coefficients of $B$ belong to} $\Omega$ if we can write
$B=\sum_{i=1}^k \lambda_i B_i$, 
where the $B_i$'s are irreducible reduced divisors and each $\lambda_i$ is in $\Omega$.
We say that the coefficients of of $\mathbf{M}$ {\em belong to $\Omega$ in a quotient by $G$ where it descends} if the following conditions are satisfied:
\begin{enumerate}
\item the b-nef divisor $\mathbf{M}$ is the pull-back of a b-nef divisor $\mathbf{N}$ on $X/G$, 
\item the b-nef divisor $\mathbf{N}$ descends on $Y$, and 
\item we can write $\mathbf{N}_Y\sim \sum_i \lambda_i \mathbf{N}_{Y,i}$ 
where each $\mathbf{N}_{Y,i}$ is a nef Cartier divisor
and each $\lambda_i\in \Omega$.
\end{enumerate}
}
\end{definition}

\begin{definition}
{\em 
Let $(X,B,\mathbf{M})$ be a generalized pair. 
Let $\pi\colon Y\rightarrow X$ be a projective birational morphism from a normal variety.
Write
\[
\pi^*(K_X+B+\mathbf{M}_X)=K_Y+B_Y+\mathbf{M}_Y.
\]
Let $E\subset Y$ be a prime divisor.
The {\em generalized log discrepancy} of $(X,B,\mathbf{M})$ at $E$ is defined to be:
\[
a_E(X,B,\mathbf{M})=1-{\rm coeff}_E(B_Y).
\]
We say that $(X,B,\mathbf{M})$ is {\em generalized terminal} if all the generalized log discrepancies of exceptional divisors are larger than one.
We say that $(X,B,\mathbf{M})$ is {\em generalized canonical} 
if all its generalized log discrepancies are at least one.
We say that $(X,B,\mathbf{M})$ is {\em generalized Kawamata log terminal} (or gklt) if all its generalized log discrepancies are positive.
We say that $(X,B,\mathbf{M})$ is {\em generalized log canonical} if all its generalized log discrepancies are non-negative.

A {\em generalized non-terminal place} of $(X,B,\mathbf{M})$ is an exceptional prime divisor $E$ over $X$ for which $a_E(X,B,\mathbf{M})\leq 1$.
A {\em generalized non-canonical place} of $(X,B,\mathbf{M})$ is a prime divisor $E$ over $X$ for which $a_E(X,B,\mathbf{M})<1$.
A {\em generalized non-klt place} (or {\em non-gklt place}) of $(X,B,\mathbf{M})$ is a prime divisor $E$ over $X$ for which $a_E(X,B,\mathbf{M})\leq 0$. 
A {\em generalized non-klt center} of $(X,B,\mathbf{M})$ is the image on $X$ of a non-gklt place.
If $(X,B,\mathbf{M})$ is generalized log canonical and $E$ is a non-gklt place,
then we say that $E$ is a {\em generalized log canonical place} (or glc place).
A {\em generalized log canonical center} (or glcc) of a glc pair $(X,B,\mathbf{M})$ is the image on $X$ is a glc place. 
In the case that $(X,B,\mathbf{M})$ is a generalized sub-pair, we adopt the previous definitions and add the word {\em sub}.
}
\end{definition}

\begin{definition}
{\em 
Let $(X,B,\mathbf{M})$ be a generalized lc pair. 
We say that $(X,B,\mathbf{M})$ is {\em generalized divisorially log terminal} (or {\em gdlt}) if there exists an open set $U\subseteq X$ for which the
following conditions are satisfied:
\begin{enumerate}
\item the coefficients of $B$ are less than or equal to one,
\item $U$ is smooth and $B|_U$ has simple normal crossing support, and
\item all generalized non-klt centers of $(X,B,\mathbf{M})$ map into $U$ and consists of strata of $\lfloor B\rfloor$.
\end{enumerate} 
A generalized pair $(X,B,\mathbf{M})$ is said to be {\em generalized purely log terminal} (or gplt) if it is generalized dlt and all its glc centers are divisorial.
} 
\end{definition}

\begin{definition}
{\em 
Let $(X,B,\mathbf{M})$ be a generalized sub-pair. 
Let $p\colon Y\rightarrow X$ be a projective birational morphism from a normal variety.
The generalized sub-pair $(Y,B_Y,\mathbf{M})$ for which
\[
K_Y+B_Y+\mathbf{M}_Y=p^*(K_X+B+\mathbf{M}_X)
\]
is called the {\em log pull-back} of $(X,B,\mathbf{M})$ to $Y$.

Let $\pi\colon X\dashrightarrow X'$ be a birational map. 
Let $(X,B,\mathbf{M})$ and $(X',B',\mathbf{M})$ be two generalized sub-pairs.
Let $p\colon Z \rightarrow X$ 
and $q\colon Z \rightarrow X'$ be a resolution of indeterminacy of the birational morphism. 
If the log pull-back of $(X,B,\mathbf{M})$ to $Z$ agrees with the log pull-back of $(X',B',\mathbf{M})$ to $Z$, then we say that 
$\pi$ is a {\em crepant birational map} for such generalized sub-pairs.
Analogously, we say that $(X',B',\mathbf{M})$ is a {\em crepant birational model} of $(X,B,\mathbf{M})$ or that $X'\dashrightarrow X$ is a {\em crepant birational transformation} for such generalized sub-pairs.

Let $(X,B,\mathbf{M})$ be a generalized sub-pair. 
Let $f\colon Y\rightarrow X$ be a finite Galois morphism and $\mathbf{N}$ be the equivariant b-nef divisor obtained by pulling back $\mathbf{M}$ to $Y$.
The generalized sub-pair $(Y,B_Y,\mathbf{N})$ for which 
\[
K_Y+B_Y+\mathbf{N}_Y=p^*(K_X+B+\mathbf{M}_X)
\]
is called the {\em log pull-back of} $(X,B,\mathbf{M})$ to $Y$.
In the previous setting, we say that 
the finite morphism $f\colon (Y,B_Y,\mathbf{N})\rightarrow (X,B,\mathbf{M})$ is a {\em crepant finite Galois morphism}.
}
\end{definition} 

\begin{definition}
{\em 
Let $(X,B,\mathbf{M})$ be a generalized pair. 
A {\em generalized dlt modification} of $(X,B,\mathbf{M})$ is a projective birational morphism $\phi\colon Y\rightarrow X$ satisfying the following conditions:
\begin{enumerate}
\item the divisors extracted by $\phi$ are generalized log canonical places of $(X,B,\mathbf{M})$, and 
\item the log pull-back $(Y,B_Y,\mathbf{M})$ of $(X,B,\mathbf{M})$ to $Y$ is gdlt.
\end{enumerate} 
}
\end{definition}

The following statement is known as the existence of $G$-equivariant dlt modifications. 

\begin{lemma}\label{lem:g-equiv-dlt}
Let $(X,B,\mathbf{M})$ be a generalized pair and $G\leqslant {\rm Aut}(X,B,\mathbf{M})$ be a finite group.
Then $(X,B,\mathbf{M})$ admits a $G$-equivariant dlt modification.
\end{lemma}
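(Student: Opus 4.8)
Throughout I take $(X,B,\mathbf{M})$ to be generalized log canonical, since a generalized dlt modification presupposes this. The plan is to construct the modification directly on $X$ by running a $G$-equivariant minimal model program on a $G$-equivariant log resolution. I would deliberately avoid the naive approach of taking a dlt modification of the quotient $(X/G,B_{X/G},\mathbf{N})$ and pulling it back: under a crepant finite morphism the generalized log canonical places need not correspond, so such a model would in general extract the wrong set of divisors. The construction must therefore take place upstairs on $X$.

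First I would choose a $G$-equivariant log resolution $f\colon W\to X$ of $(X,\operatorname{Supp} B)$ on which $\mathbf{M}$ descends; this exists in characteristic zero because $G$ is finite, and one may arrange that $\operatorname{Exc}(f)\cup f^{-1}_{*}\operatorname{Supp}(B)$ is simple normal crossing and $G$-invariant. I would then set $\Gamma_W:=f^{-1}_{*}B+\sum_E E$, the sum running over all $f$-exceptional prime divisors, so that $\Gamma_W$ and $\mathbf{M}_W$ are $G$-invariant and $(W,\Gamma_W,\mathbf{M})$ is generalized dlt. Writing $a_E:=a_E(X,B,\mathbf{M})$, a direct comparison with the log pull-back gives
\[
K_W+\Gamma_W+\mathbf{M}_W=f^{*}(K_X+B+\mathbf{M}_X)+\sum_E a_E\,E,
\]
where the correction term is effective and $f$-exceptional, with $a_E=0$ exactly on the generalized log canonical places and $a_E>0$ on the remaining exceptional divisors. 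Running the $(K_W+\Gamma_W+\mathbf{M}_W)$-MMP over $X$ contracts precisely the divisors with $a_E>0$ and, by the negativity lemma, terminates at a model $\psi\colon Y\to X$ that extracts exactly the glc places; the output pair $(Y,\Gamma_Y,\mathbf{M})$ is generalized dlt and crepant to $(X,B,\mathbf{M})$, hence a dlt modification.

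It remains to carry out this MMP $G$-equivariantly. Since $\Gamma_W$ and $\mathbf{M}_W$ are $G$-invariant, the program can be run in the $G$-equivariant category: at each stage the $G$-equivariant cone and contraction theorems produce a $G$-invariant $(K+\Gamma+\mathbf{M})$-negative extremal contraction, the required flips exist and are $G$-equivariant by their uniqueness, and the generalized dlt property together with $G$-$\qq$-factoriality is preserved along each step. The b-nef divisor is handled throughout by~\cite[Proposition 2.4]{Mor21}: $\mathbf{M}_W$ is the pull-back of a b-nef divisor $\mathbf{N}$ on $W/G$, which lets one track the steps and the descent of $\mathbf{M}$ at every stage. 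The resulting $\psi\colon Y\to X$ is then $G$-equivariant, and by construction the two conditions in the definition of a generalized dlt modification hold.

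The main obstacle is precisely the input of the previous paragraph: the $G$-equivariant minimal model program for generalized pairs, namely the existence of $G$-invariant extremal contractions and flips, the termination of this particular program, and the preservation of the generalized dlt property under contraction of a $G$-invariant extremal ray (which may be an extremal face of the ordinary cone). These are the substantive ingredients, available from the equivariant refinements of the generalized minimal model program; once they are granted, the comparison of discrepancies in the second paragraph makes the remaining verification routine.
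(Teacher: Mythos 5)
The paper states Lemma~\ref{lem:g-equiv-dlt} without proof, citing it as a known result, so there is no in-paper argument to compare against; your proposal is the standard construction (take a $G$-equivariant log resolution, promote all exceptional coefficients to one, and run a $G$-equivariant MMP over $X$ that contracts the effective exceptional divisor $\sum_E a_E E$ by the negativity/very-exceptional-divisor argument), and it is correct modulo the two black boxes you explicitly flag: the $G$-equivariant generalized MMP and its termination over $X$. These are exactly the ingredients the literature supplies (termination follows from Birkar's very exceptional divisor argument, transported to generalized pairs, and equivariance is automatic once all the data --- the resolution, $\Gamma_W$, $\mathbf{M}_W$, and the extremal faces being contracted --- is chosen $G$-invariantly), so treating them as granted is consistent with the paper's own treatment of the lemma as known.

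One small correction to your opening remark: under a crepant finite (Galois) morphism the generalized log canonical places \emph{do} correspond, since Riemann--Hurwitz gives $a_E(Y,B_Y,\mathbf{N})=r\,a_{f(E)}(X,B,\mathbf{M})$ with $r$ the ramification index, so $a_E=0$ upstairs exactly when it is zero downstairs (the paper itself uses this in the proof of Theorem~\ref{theorem:alt-comp-computation}). The genuine obstruction to the ``quotient and pull back'' approach is not the set of extracted divisors but the dlt condition itself: the normalized base change of a gdlt model of $(X/G,\ldots)$ need not be gdlt upstairs, since simple normal crossings and the required smooth open set are destroyed along the ramification locus. Your decision to work upstairs is therefore the right one, just for a slightly different reason than the one you give.
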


The following lemma characterizes generalized pairs where the boundary divisor plus the moduli divisor is torsion.

\begin{lemma}\label{lem:b+m-tor}
Let $(X,B,\mathbf{M})$ be a generalized pair.
Let $X\dashrightarrow X'$ be a birational contraction
and $B'$ be the push-forward of $B$ on $X'$.
If $B'+\mathbf{M}_{X'}$ is a torsion divisor, then $B'=0$ and $\mathbf{M}$ is torsion in any model where it descends.
\end{lemma}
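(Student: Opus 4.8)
The plan is to descend everything to a resolution on which $\mathbf{M}$ becomes an honest nef divisor, and then play the effectivity of $B'$ against the nefness of $\mathbf{M}$ using the negativity lemma. Since $B'+\mathbf{M}_{X'}$ is torsion it is $\qq$-Cartier, so I fix a positive integer $m$ with $m(B'+\mathbf{M}_{X'})\sim 0$ and write $D':=B'+\mathbf{M}_{X'}$. Choose a projective birational morphism $q\colon Y\to X'$ from a smooth (and, as we may assume, projective) variety on which $\mathbf{M}$ descends, so that $\mathbf{M}_Y$ is nef and $q_*\mathbf{M}_Y=\mathbf{M}_{X'}$. Let $\hat B:=q^{-1}_*B'$ be the strict transform of $B'$; it is effective and satisfies $q_*\hat B=B'$. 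Pulling back the principal divisor $mD'\sim 0$ shows that $q^*D'$ is torsion, in particular numerically trivial.

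The key bookkeeping step is to compare $q^*D'$ with $\hat B+\mathbf{M}_Y$. Both have pushforward $D'=B'+\mathbf{M}_{X'}$ on $X'$, so their difference has trivial pushforward and is therefore $q$-exceptional; hence I may write
\[
q^*D'=\hat B+\mathbf{M}_Y+\sum_j c_jE_j,
\]
where the $E_j$ are the $q$-exceptional prime divisors and $c_j\in\rr$. Set $\Theta:=\hat B+\sum_j c_jE_j$. Because $q^*D'\equiv 0$, we obtain $-\Theta\equiv\mathbf{M}_Y$, so $-\Theta$ is nef and in particular $q$-nef. Since $q_*\Theta=B'$ is effective, the negativity lemma applied to $q$ forces $\Theta\geq 0$.

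Now $\Theta$ is an effective $\rr$-Cartier divisor with $-\Theta$ nef. Intersecting with $H^{\dim Y-1}$ for an ample divisor $H$ on $Y$ gives $0\le \Theta\cdot H^{\dim Y-1}=-(-\Theta)\cdot H^{\dim Y-1}\le 0$, hence $\Theta\cdot H^{\dim Y-1}=0$ and therefore $\Theta=0$. Reading off the non-exceptional part yields $\hat B=0$, and since the components of $B'$ are not $q$-exceptional this gives $B'=0$; reading off the exceptional part gives $c_j=0$ for all $j$. Consequently $q^*D'=\mathbf{M}_Y$, and since $q^*D'$ is torsion we conclude that $\mathbf{M}_Y$ is torsion. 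As torsion of the descended trace is preserved under passing to a common resolution, $\mathbf{M}$ is torsion on every model where it descends.

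The main obstacle is that $B'$ and $\mathbf{M}_{X'}$ need not be $\qq$-Cartier individually—only their sum is—so one cannot pull them back separately; the resolution of this is exactly the exceptional correction $\sum_j c_jE_j$ above, which is controlled by the negativity lemma. The other delicate point is upgrading numerical triviality of $\mathbf{M}_Y$ to genuine torsion: this is precisely what the identity $q^*D'=\mathbf{M}_Y$ (valid once $\Theta=0$) achieves, since it exhibits $\mathbf{M}_Y$ as the pullback of a torsion class rather than merely a numerically trivial one.
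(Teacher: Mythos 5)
Your proof is correct, and it reorganizes the paper's argument rather than merely reproducing it. The paper proceeds in two ordered steps: first it kills $B'$ by intersecting $B'+\mathbf{M}_{X'}$ with an ample curve on $X'$ (effectivity of $B'$ plus non-negativity of $\mathbf{M}_{X'}$ against a movable curve forces $B'=0$), and only then, knowing $\mathbf{M}_{X'}=B'+\mathbf{M}_{X'}$ is torsion hence $\qq$-Cartier, does it pull back to a model where $\mathbf{M}$ descends and apply the negativity lemma to $p^*\mathbf{M}_{X'}-\mathbf{M}_Y$, killing the exceptional part with another ample curve. You instead pull back only the sum $D'=B'+\mathbf{M}_{X'}$ (the one object that is $\qq$-Cartier from the start), write $q^*D'=\hat B+\mathbf{M}_Y+\sum_j c_jE_j$, and run a single application of the negativity lemma to the combined correction $\Theta=\hat B+\sum_j c_jE_j$, which an ample intersection then forces to vanish. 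The ingredients (nefness of $\mathbf{M}_Y$, negativity lemma, intersection with ample classes) are the same, but your packaging has the merit of never needing $B'$ or $\mathbf{M}_{X'}$ to be individually $\qq$-Cartier at any stage, whereas the paper's second step is only licit because the first step has already identified $\mathbf{M}_{X'}$ with the torsion divisor $D'$; the paper's version is in exchange shorter, since each of its two steps is a one-line ample-curve computation. Both arguments are complete.
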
 

\begin{proof}
If $B'\neq 0$, then we can find an ample curve\footnote{This means that the curve $C$ is the intersection of general ample divisors.} $C$ on $X'$ for which $(B'+\mathbf{M}_{X'})\cdot C>0$. This leads to a contradiction. 
Thus, we have $B'=0$.

Let $p\colon Y\rightarrow X'$ be a model where $\mathbf{M}$ descends.
This means that $\mathbf{M}_Y$ is a nef divisor.
By the negativity lemma, we have
$\mathbf{M}_Y+E=p^*\mathbf{M}_{X'}$ where $E$ is an effective divisor.
If $E\neq 0$, then we can find an ample curve $C_0$ on $Y$ for which $E\cdot C_0>0$ so 
$(\mathbf{M}_Y+E)\cdot C_0>0$.
If we define $C:=p_*C_0$, then we have $\mathbf{M}_{X'}\cdot C>0$, 
contradicting the fact that $B'+\mathbf{M}_{X'}=\mathbf{M}_{X'}$ is a torsion divisor.
We conclude that $E=0$ so 
$\mathbf{M}_Y$ is a torsion divisor.
This implies that $\mathbf{M}$ is torsion in any model where it descends.
\end{proof} 

\begin{definition}
{\em 
Let $X\rightarrow Z$ be a fibration and $(X,B,\mathbf{M})$ be a generalized pair.
The {\em generalized log general fiber}, denoted by $(X_z,B_z,\mathbf{M}_z)$, where $z\in Z$ is a general closed point, 
is the generalized pair structure induced on a general fiber $X_z$ by restricting both $B$ and $\mathbf{M}$ to $X_z$.
}
\end{definition}

\subsection{Fano type varieties and log Calabi--Yau pairs}
In this subsection, we recall the concepts of Fano type variety and
of log Calabi--Yau pair.

\begin{definition}
{\em 
Let $X\rightarrow W$ be a contraction.
A generalized sub-pair $(X,B,\mathbf{M})$ is said to be {\em generalized sub-log Calabi--Yau pair} over $W$ if $(X,B,\mathbf{M})$ is generalized sub-log canonical and $K_X+B+\mathbf{M}_X\sim_{\rr,W} 0$.
If $(X,B,\mathbf{M})$ is indeed a generalized pair, then we say that it is a {\em generalized log Calabi--Yau pair} over $W$.
If $(X,B,\mathbf{M})$ is a generalized log Calabi--Yau with $K_X+B+\mathbf{M}_X\sim_{\qq,W} 0$, then 
the {\em index} of $(X,B,\mathbf{M})$ is the smallest positive integer $I$ for which 
$I(K_X+B+\mathbf{M}_X)\sim_W 0$.
Furthermore, if $\mathbf{M}$ is the trivial b-nef divisor, then we say that $(X,B)$ is a log Calabi--Yau pair over $W$.

Let $X\rightarrow W$ be a contraction.
We say that $X$ is {\em of Fano type} over $W$ (or simply, {\em Fano type} over $W$)
if there exists a boundary $B$ on $X$ 
that is big over $W$ 
and $(X,B)$ is a klt Calabi--Yau pair over $W$. If $X\rightarrow W$ is a Fano type morphism, then $X$ is a relative Mori dream space over $W$ (see~\cite[Theorem 3.18]{BM24}).
In particular, the MMP for any divisor on $X$ over $W$ terminates either with a good minimal model over $W$ or a Mori fiber space over $W$.
}
\end{definition}

\begin{definition}
{\em 
Let $X\rightarrow W$ be a fibration and $(X,B,\mathbf{M})$ be a generalized pair.
We say that $(X,B,\mathbf{M})$ is {\em $\qq$-complemented} over $W$ 
if there exists a boundary divisor $\Gamma \geq B$
for which $(X,\Gamma,\mathbf{M})$ is log Calabi--Yau over $W$.
}
\end{definition}

We turn to recall the definition of 
the dual complex $\mathcal{D}(X,B,\mathbf{M})$ of a generalized log Calabi--Yau pair $(X,B,\mathbf{M})$.

\begin{introdef}
{\em 
Let $Y$ be a smooth variety
and let $B$ be a reduced divisor
with simple normal crossing support on $Y$. Write $B=\sum_{i\in I}B_i$ where each $B_i$ is irreducible and reduced.
The dual complex $\mathcal{D}(B)$
is the CW complex 
whose vertices
are in correspondence
with components of $B$
and whose $k$-cells
are in correspondence
with the irreducible components
of $\bigcap_{i\in J} B_i$ with $|J|=k+1$.

Let $(X,B,\mathbf{M})$ be a generalized
pair. 
Let $p\colon Y\rightarrow X$ be a log resolution on which $\mathbf{M}$ descends.
Let $(Y,B_Y,\mathbf{M})$
be the log pull-back of $(X,B,\mathbf{M})$ to $Y$.
We write $B_Y^{\geq 1}$ for the divisor which is the reduced sum of the components of $B_Y$ with coefficient at least one.
The {\em dual complex}
of $(X,B,\mathbf{M})$
with respect to 
the resolution $p$ is
the CW complex $\mathcal{D}(B^{\geq 1})$.
}
\end{introdef}

The dual complex, a priori,
depends on the choice
of a resolution.
However,~\cite[Lemma 2.34]{FS20}
shows that choosing a different resolution
only changes
the dual complex
up to simple homotopy equivalence.

\begin{proposition}\label{prop:sim-hom-equiv}
Let $(X,B,\mathbf{M})$ be a generalized pair. 
Let $p_i\colon Y_i\rightarrow X$, with $i\in \{1,2\}$,
be two log resolutions of 
$(X,B,\mathbf{M})$ on which the b-nef divisor $\mathbf{M}$ descends.
Let $(Y_i,B_{Y_i},\mathbf{M})$ be the log pull-back of $(X,B,\mathbf{M})$ to $Y_i$.
Then, the CW complex $\mathcal{D}(B_{Y_1}^{\geq 1})$ is simple homotopy equivalent to the CW complex $\mathcal{D}(B_{Y_2}^{\geq 2})$.
\end{proposition}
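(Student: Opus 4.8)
The plan is to reduce the assertion to the simple homotopy invariance of the dual complex for an \emph{ordinary} sub-pair under a change of log resolution, and then to invoke the classical result. First I would reduce to the situation where one resolution dominates the other. Choose a log resolution $p\colon Z\rightarrow X$ of $(X,B,\mathbf{M})$ on which $\mathbf{M}$ descends and which dominates both $Y_1$ and $Y_2$, say through birational morphisms $q_i\colon Z\rightarrow Y_i$. Such a $Z$ exists because $\mathbf{M}$ descends on every sufficiently high model, so one may take any common resolution dominating $Y_1,Y_2$ and then pass to a higher log resolution of the generalized pair, which still dominates both. Since simple homotopy equivalence is transitive, it then suffices to prove that $\mathcal{D}(B_Z^{\geq 1})$ is simple homotopy equivalent to $\mathcal{D}(B_{Y_i}^{\geq 1})$ for each $i\in\{1,2\}$. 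Thus I reduce to the following statement: given a birational morphism $q\colon Z\rightarrow Y$ between two log resolutions of $(X,B,\mathbf{M})$ on which $\mathbf{M}$ descends, one has $\mathcal{D}(B_Z^{\geq 1})\simeq\mathcal{D}(B_Y^{\geq 1})$.

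The second step removes the moduli part from the picture. Because $\mathbf{M}$ descends on $Y$, its trace is the pull-back on every higher model, so $\mathbf{M}_Z=q^*\mathbf{M}_Y$. Subtracting this from the log pull-back identity
\[
K_Z+B_Z+\mathbf{M}_Z=q^*(K_Y+B_Y+\mathbf{M}_Y)
\]
yields $K_Z+B_Z=q^*(K_Y+B_Y)$. Hence $(Z,B_Z)$ is the crepant log pull-back of the ordinary log-smooth sub-pair $(Y,B_Y)$, and the divisors $B_Z^{\geq 1}$ and $B_Y^{\geq 1}$, together with the two dual complexes they define, depend only on this ordinary crepant relation between snc sub-pairs. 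The generalized structure no longer intervenes, precisely because both $Y$ and $Z$ are models where $\mathbf{M}$ descends.

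At this point the statement is exactly the invariance of the dual complex under a morphism of ordinary log resolutions, which is \cite[Lemma 2.34]{FS20} (following de Fernex--Koll\'ar--Xu and Koll\'ar--Xu). The core of that input, which I would reproduce, factors $q$ via the weak factorization theorem into a sequence of blow-ups and blow-downs along smooth centers compatible with the snc structure, reducing everything to the analysis of a single smooth blow-up $\sigma\colon Y'\rightarrow Y$ with exceptional divisor $E$. One then distinguishes two cases according to $\coeff_E(B_{Y'})$: if this coefficient is $<1$ then $E$ contributes no vertex and the strata of $B_{Y'}^{\geq 1}$ correspond bijectively to those of $B_Y^{\geq 1}$, so the dual complex is unchanged; if it is $\geq 1$ then the blow-up realizes a stellar subdivision of the face of $\mathcal{D}(B_Y^{\geq 1})$ cut out by the center, which is an elementary expansion and hence a simple homotopy equivalence. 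The hard part will be exactly this single blow-up bookkeeping, namely tracking which faces subdivide and checking that each elementary step is an expansion or a collapse; by contrast, the reductions above are formal. Once that is in hand, transitivity through $Z$ finishes the proof.
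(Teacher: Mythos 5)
Your argument is correct and follows essentially the same route as the paper, which offers no proof of its own beyond citing \cite[Lemma 2.34]{FS20}; your reduction (common resolution, $\mathbf{M}_Z=q^*\mathbf{M}_Y$ to strip off the moduli part, then weak factorization for ordinary snc sub-pairs) is exactly the standard way that citation is meant to be applied. Note only that the statement's ``$\mathcal{D}(B_{Y_2}^{\geq 2})$'' is a typo for $\mathcal{D}(B_{Y_2}^{\geq 1})$, which you have implicitly and correctly fixed.
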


Thus, given a generalized pair
$(X,B,\mathbf{M})$, we may write
$\mathcal{D}(X,B,\mathbf{M})$
for any dual complex of a log resolution
on which the b-nef divisor descends.
Proposition~\ref{prop:sim-hom-equiv} implies that the CW complex $\mathcal{D}(X,B,\mathbf{M})$ is well-defined up to simple homotopy equivalence.

We turn to define the coregularity of a generalized pair. This concept is useful for the study of Fano varieties (see, e.g.,~\cite{FMP22,Mor24b}).

\begin{definition}\label{def:coreg}
{\em 
Let $(X,B,\mathbf{M})$ be a log Calabi--Yau pair of dimension $n$. 
The {\rm regularity} of $(X,B,\mathbf{M})$, denoted by ${\rm reg}(X,B,\mathbf{M})$, is defined to be 
$\dim \mathcal{D}(X,B,\mathbf{M})$.
The {\em coregulartiy} of the generalized log Calabi--Yau pair $(X,B,\mathbf{M})$ is defined to be:
\[
{\rm coreg}(X,B,\mathbf{M})=n-{\rm reg}(X,B,\mathbf{M})-1.
\]
The coregularity of a generalized log Calabi--Yau pair agrees with the dimension of any minimal log canonical center in a gdlt modification.
The coregularity of a $n$-dimensional generalized log Calabi--Yau pair is an integer in the set $\{0,\dots,n\}$.

Let $(X,B,\mathbf{M})$ be a generalized pair.
The {\rm coregularity} of $(X,B,\mathbf{M})$ is defined to be:
\begin{equation}\label{eq-def:coreg} 
{\rm coreg}(X,B,\mathbf{M}):=\min \{ 
{\rm coreg}(X,\Gamma,\mathbf{M}) \mid 
\text{ $(X,\Gamma,\mathbf{M})$ is generalized log Calabi--Yau and $\Gamma \geq B$}
\}.
\end{equation} 
If $(X,B,\mathbf{M})$ is generalized log Calabi--Yau, then the set in~\eqref{eq-def:coreg} consists of a single element, so the two previous definitions agree. In the case that the set in~\eqref{eq-def:coreg} is empty, we set ${\rm coreg}(X,B,\mathbf{M})=\infty.$
}
\end{definition}

The following lemma is well-known.
It gives a characterization of Fano type morphisms.

\begin{lemma}\label{lem:FT-big-nef} 
Let $X\rightarrow W$ be a contraction.
The variety $X$ is of Fano type over $W$ if and only there exists a boundary $B$
such that $(X,B)$ is klt and $-(K_X+B)$ is big and nef over $W$.
\end{lemma} 

The following lemma explains how the Fano type condition behaves under different kinds of morphisms.

\begin{lemma}\label{lem:FT-under-morphisms}
Let $X$ be a Fano type variety over $W$
and $(X,B,\mathbf{M})$ be a generalized log Calabi--Yau pair over $W$.
Then, the following statements hold:
\begin{enumerate}
\item If $X\dashrightarrow X'$ is a birational contraction over $W$, then $X'$ is Fano type over $W$.
\item If $X\rightarrow X'$ is a fibration over $W$, then $X'$ is Fano type over $W$.
\item If $Y\dashrightarrow X$ is a projective birational map over $W$
and all the divisors extracted by $Y\dashrightarrow X$ are non-canonical centers of $(X,B,\mathbf{M})$, then $Y$ is of Fano type over $W$.
\item If $X\rightarrow Y$ is a finite Galois morphism over $W$, then $Y$ is of Fano type over $W$.
\item If $Y\rightarrow X$ is a finite morphism over $W$ and its divisorial branch locus is contained in $\lfloor B\rfloor$, then $Y$ is of Fano type over $W$.
\end{enumerate}
\end{lemma}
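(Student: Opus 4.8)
The plan is to prove the five items in order of how heavily they use the boundary $B$: for (1), (2), (4) only the Fano type of $X$ matters and I would invoke the classical stability properties, while (3) and (5) genuinely use the log Calabi--Yau structure and I would handle them by a single interpolation argument. Throughout I use that, since $X$ is of Fano type over $W$, two auxiliary boundaries are available: by definition a boundary $\Delta_0\ge 0$ with $(X,\Delta_0)$ klt, $K_X+\Delta_0\sim_{\rr,W}0$ and $\Delta_0$ big over $W$; and, by Lemma~\ref{lem:FT-big-nef} after perturbing a big and nef class to an ample one, a boundary $\Delta\ge 0$ with $(X,\Delta)$ klt and $-(K_X+\Delta)$ ample over $W$. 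For (1), set $\Delta_0':=\phi_*\Delta_0$: the relation $K_{X'}+\Delta_0'\sim_{\rr,W}0$ and bigness of $\Delta_0'$ over $W$ are preserved by push-forward, and $(X',\Delta_0')$ stays klt because a birational contraction of a trivial log canonical class is crepant (negativity lemma). For (4), average $\Delta$ over the Galois group $G$ to make it $G$-invariant, which preserves klt-ness and ampleness of $-(K_X+\Delta)$; then $\Delta$ descends to $\Delta_Y$ with $K_X+\Delta=\phi^*(K_Y+\Delta_Y)$, and $Y$ is of Fano type because klt descends under finite Galois quotients and ampleness descends along finite morphisms.

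For (2), I would pass through the canonical bundle formula. Replacing $\Delta$ by $\Delta+\Delta'$ with $0\le\Delta'\sim_{\rr,W}-(K_X+\Delta)$ general yields a klt-trivial fibration $(X,\Delta+\Delta')\to X'$, and the induced generalized pair $(X',B_{X'},\mathbf{M}')$ on the base is generalized klt with $K_{X'}+B_{X'}+\mathbf{M}'_{X'}\sim_{\rr,W}0$ and $\mathbf{M}'$ b-nef. Bigness of $\Delta+\Delta'$ over $W$ forces $B_{X'}+\mathbf{M}'_{X'}$ to be big over $W$, so $X'$ is of Fano type. This is the statement that images of Fano type varieties are of Fano type, which I would cite; the external inputs are the klt-ness of the base and the positivity of the moduli divisor supplied by the canonical bundle formula.

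The heart of the argument is (3) and (5), treated uniformly via the family $\Theta_\epsilon:=(1-\epsilon)B+\epsilon\Delta$ with moduli part $(1-\epsilon)\mathbf{M}$, for $0<\epsilon\ll 1$. Since $(X,B,\mathbf{M})$ is generalized lc and $(X,\Delta)$ is klt, $(X,\Theta_\epsilon,(1-\epsilon)\mathbf{M})$ is generalized klt, and $-(K_X+\Theta_\epsilon+(1-\epsilon)\mathbf{M}_X)\sim_{\rr,W}\epsilon\bigl(-(K_X+\Delta)\bigr)$ is ample over $W$. The key point is that for any divisor $E$ over $X$ one has $a_E(X,\Theta_\epsilon,(1-\epsilon)\mathbf{M})=(1-\epsilon)\,a_E(X,B,\mathbf{M})+\epsilon\,a_E(X,\Delta)\to a_E(X,B,\mathbf{M})$ as $\epsilon\to 0$. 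In case (3), each extracted divisor $E_i$ satisfies $a_{E_i}(X,B,\mathbf{M})<1$, so for small $\epsilon$ it stays non-canonical for $\Theta_\epsilon$; the crepant pullback to $Y$ then gives a generalized pair in which every $E_i$ occurs with positive coefficient $1-a_{E_i}(X,\Theta_\epsilon,(1-\epsilon)\mathbf{M})>0$. Hence the boundary on $Y$ is effective, it is generalized klt (crepant pullback preserves log discrepancies), and its anti-log-canonical class is the $\pi$-pullback of an ample class, so big and nef over $W$.

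Case (5) is the main obstacle, since $\psi$ is ramified and the pullback is not crepant a priori. I would first record the crepant log pullback $K_Y+B_Y+\mathbf{N}_Y=\psi^*(K_X+B+\mathbf{M}_X)$ and observe, by a Hurwitz computation, that because the divisorial branch locus lies in $\lfloor B\rfloor$ each ramification divisor has coefficient exactly $1$ in $B_Y$ and $B_Y\ge 0$. Writing the ramification divisor as $R=\sum_i(e_i-1)D_i'$, pulling back $\Theta_\epsilon$ gives $\Theta_{\epsilon,Y}=(1-\epsilon)B_Y+\epsilon(\psi^*\Delta-R)$; the only coefficients that could become negative are those along the $D_i'$, where $B_Y$ has coefficient $1$, so they equal $1-\epsilon\,e_i(1-\delta_i)$ with $\delta_i=\mathrm{coeff}_{\psi(D_i')}\Delta<1$, which is positive for small $\epsilon$. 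Thus $\Theta_{\epsilon,Y}\ge 0$, and since $\psi^*$ of an ample class is ample over $W$ we again obtain a generalized klt pair on $Y$ with big and nef anti-log-canonical class. In both (3) and (5) the final step is to pass from such a generalized klt pair to the honest Fano type condition on $Y$: as the moduli b-divisor is b-nef, I would absorb it and invoke the generalized analogue of Lemma~\ref{lem:FT-big-nef}. I expect the ramification bookkeeping guaranteeing effectivity in (5), together with this generalized-to-usual reduction, to be the delicate points; both ultimately rest on the coefficient-one components of $B$ protecting the extracted and ramified divisors from acquiring negative coefficients, which is exactly the content of the hypotheses "non-canonical centers" and "branch locus contained in $\lfloor B\rfloor$."
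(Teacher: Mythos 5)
Your overall strategy for the substantive items (3) and (5) is the same as the paper's: interpolate to $\Theta_\epsilon=(1-\epsilon)B+\epsilon\Delta$ so that the coefficient-one components of $B$ protect the extracted (resp.\ ramified) divisors, then conclude via Lemma~\ref{lem:FT-big-nef}. Your Hurwitz bookkeeping in (5) is correct (the coefficient $1-\epsilon e_i(1-\delta_i)$ computation checks out), and you in fact streamline the paper's argument by working with $Y\to X$ directly instead of passing to the Galois closure and then quotienting back via item (4). Your treatments of (1), (2), (4) are fine; for (2) and (4) the paper simply cites references, while your canonical-bundle-formula argument for (2) and averaging argument for (4) are standard and valid.

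The one genuine gap is in (3): the hypothesis is that $Y\dashrightarrow X$ is a projective birational \emph{map}, not a morphism, so the phrases ``crepant pullback to $Y$'' and ``its anti-log-canonical class is the $\pi$-pullback of an ample class'' are not available as written --- there is no morphism along which to pull back, and on a common resolution the interpolated boundary need not stay effective. The paper closes this by first invoking the extraction theorem (\cite[Theorem 1]{Mor20}) to produce an honest projective birational morphism $Z\to X$ extracting exactly the divisors extracted by $Y\dashrightarrow X$ (this is where the ``non-canonical for $(X,\Theta_\epsilon)$'' condition is actually used), running your argument on $Z$, and then observing that $Y\dashrightarrow Z$ is small, so the Fano type property transfers from $Z$ to $Y$. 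You need both of these steps --- the existence of the extraction and the invariance of Fano type under small birational maps --- to make your item (3) complete.
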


\begin{proof}
By~\cite[Lemma 3.7]{MS21}, we may assume that the b-nef divisor $\mathbf{M}$ is trivial.

We prove the first statement.
Let $(X,\Delta)$ be a klt log Calabi--Yau pair over $W$ with $\Delta$ big over $W$.
If $\Delta_Y$ is the push-forward of $\Delta$ on $Y$, then the pair $(Y,\Delta_Y)$ is klt and log Calabi--Yau over $W$. Furthermore, $\Delta_Y$ is big over $W$, so $Y$ is of Fano type over $W$. 

The second statement is proved in~\cite[Lemma 2.12]{Bir19}.

We prove the third statement.
Let $(X,\Delta)$ be a klt pair for which $-(K_X+\Delta)$ is big and nef over $W$ provided by Lemma~\ref{lem:FT-big-nef}.
Replacing $\Delta$ with $(1-\epsilon)B+\epsilon \Delta$ for $\epsilon>0$ small enough, we may assume that $Y\dashrightarrow X$ only extracts non-canonical places of $(X,\Delta)$.
By~\cite[Theorem 1]{Mor20}, there exists a projective birational morphism $Z\rightarrow X$ extracting exactly the exceptional divisors of $Y\dashrightarrow X$.
Let $(Z,\Delta_Z)$ be the log pull-back of $(X,\Delta)$ to $Z$.
The pair $(Z,\Delta_Z)$ is klt and $-(K_Z+\Delta_Z)$ is big and nef over $W$.
Hence, $Z$ is of Fano type over $W$ by Lemma~\ref{lem:FT-big-nef}.
As $Y\dashrightarrow Z$ is small, we have that $Y\rightarrow W$ is of Fano type.

The fourth statement is proved in~\cite[Lemma 3.17]{Mor20c}.

We show the last statement. Let
$Y\rightarrow X$ be a finite morphism 
such that its divisorial branch locus is contained in $\lfloor B\rfloor$.
Let $Z\rightarrow X$ be the Galois closure of $Y\rightarrow X$.
Hence, the divisorial branch locus of $Z\rightarrow X$ is contained in $\lfloor B\rfloor$.
By Lemma~\ref{lem:FT-big-nef}, there is a klt pair $(X,\Delta)$ for which $-(K_X+\Delta)$ is big and nef over $W$.
For each prime divisor $P$ on $X$, we let $m_P$ be the ramification index of $Z\rightarrow X$ at $P$.
We replace $\Delta$ with $(1-\epsilon)B+\epsilon \Delta$ for $\epsilon>0$ small enough and
assume that ${\rm coeff}_P(\Delta)>1-\frac{1}{m_P}$ for every $P$.
Hence, the log pull-back $(Z,\Delta_Z)$ of $(X,\Delta)$ to $Z$ is a log pair, i.e., $\Delta_Z$ is an effective divisor.
Hence, the pair $(Z,\Delta_Z)$ is klt
and $-(K_Z+\Delta_Z)$ is big and nef over $W$.
We conclude that $Z$ is of Fano type over $W$ by Lemma~\ref{lem:FT-big-nef}.
Hence, $Y$ is of Fano type over $W$ by the fourth statement.
\end{proof}

\begin{lemma}\label{lem:general-element}
Let $N$ be a positive integer.
Let $(X,B,\mathbf{M})$ be a generalized sub-log Calabi--Yau pair.
Assume that $N(K_X+B+\mathbf{M}_X)\sim 0$.
Let $X'\rightarrow X$ be a projective birational morphism for which $N\mathbf{M}_{X'}$ is a base point free divisor.
Let $\Gamma' \in |N\mathbf{M}_{X'}|$ be a general element and $\Gamma$ its push-forward to $X$. Then, the following conditions are satisfied: 
\begin{enumerate} 
\item we have $N(K_X+B+\Gamma/N)\sim 0$, 
\item the sub-pair $(X,B+\Gamma/N)$ is sub-log canonical, and
\item every log canonical place of $(X,B+\Gamma/N)$ is a generalized log canonical place of $(X,B,\mathbf{M})$.
\end{enumerate} 
\end{lemma}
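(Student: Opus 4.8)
The plan is to trade the moduli b-divisor $\mathbf{M}$ for the honest boundary $\Gamma/N$ and then transport all discrepancy information from the generalized pair $(X,B,\mathbf{M})$ to the ordinary sub-pair $(X,B+\Gamma/N)$ on a common log resolution. Item (1) is immediate: since $\Gamma'\in|N\mathbf{M}_{X'}|$, pushing forward along $X'\to X$ gives $\Gamma\sim N\mathbf{M}_X$, whence
\[
N(K_X+B+\Gamma/N)=N(K_X+B)+\Gamma\sim N(K_X+B)+N\mathbf{M}_X=N(K_X+B+\mathbf{M}_X)\sim 0.
\]
For (2) and (3) I would fix a log resolution $\mu\colon Y\to X'$ of $(X,B,\mathbf{M})$ that also resolves the linear system $|N\mathbf{M}_{X'}|$ and on which $\mathbf{M}$ descends, and set $\nu:=\pi\circ\mu\colon Y\to X$, where $\pi\colon X'\to X$ is the given morphism. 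Here I use that $\mathbf{M}$ descends on $X'$, as is implicit in $N\mathbf{M}_{X'}$ being a base point free divisor.

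The key step is the crepancy identity
\[
K_Y+B_Y+\tilde\Gamma/N=\nu^*(K_X+B+\Gamma/N),\qquad \tilde\Gamma:=\mu^*\Gamma',
\]
where $K_Y+B_Y+\mathbf{M}_Y=\nu^*(K_X+B+\mathbf{M}_X)$ is the generalized log pullback. To prove it I would argue that $N\mathbf{M}_Y=\mu^*(N\mathbf{M}_{X'})$ is base point free, so $\tilde\Gamma\sim N\mathbf{M}_Y$ through a rational function $g$; pushing forward shows that $\Gamma=\nu_*\tilde\Gamma$ satisfies $\Gamma-N\mathbf{M}_X=\dv_X(g)$ with the same $g$. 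Subtracting the generalized log pullback, the difference of the two sides of the identity equals $\tfrac1N\bigl(\dv_Y(g)-\nu^*\dv_X(g)\bigr)=0$, since pulling back a principal divisor under the birational morphism $\nu$ reproduces it. Thus $(Y,B_Y+\tilde\Gamma/N)$ is the log pullback of $(X,B+\Gamma/N)$, and for every prime divisor $E$ on $Y$ we obtain the comparison
\[
a_E(X,B+\Gamma/N)=a_E(X,B,\mathbf{M})-\coeff_E(\tilde\Gamma)/N.
\]

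It remains to feed in the generality of $\Gamma'$. Since $N\mathbf{M}_Y$ is base point free, a general member $\tilde\Gamma$ is smooth, meets $\Supp(B_Y)\cup\operatorname{Exc}(\nu)$ transversally, contains no $\nu$-exceptional prime divisor, and avoids the finitely many $\pi$-exceptional prime divisors of $X'$; in particular $\nu$ is a log resolution of $(X,B+\Gamma/N)$. For (2), on this log resolution every component of $\tilde\Gamma$ acquires coefficient $1/N\le 1$ and is disjoint from $\Supp(B_Y)$, while every other prime divisor keeps its coefficient $\coeff_E(B_Y)\le 1$ coming from generalized sub-log canonicity; hence all coefficients are at most one and $(X,B+\Gamma/N)$ is sub-log canonical. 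For (3), if $E$ is exceptional over $X$ then the generality of $\Gamma'$ forces $\coeff_E(\tilde\Gamma)=0$ — for $E$ whose center on $X'$ has codimension at least two because a general member of a base point free system has zero valuation along such a center, and for the finitely many $\pi$-exceptional divisors of $X'$ because a general member avoids them — so the comparison formula gives $a_E(X,B+\Gamma/N)=a_E(X,B,\mathbf{M})$; thus every exceptional log canonical place of $(X,B+\Gamma/N)$ is a generalized log canonical place of $(X,B,\mathbf{M})$. The main obstacle is the crepancy identity: one must check that the principal-divisor corrections cancel exactly, so that the moduli part can be traded for $\tilde\Gamma/N$ without disturbing the discrepancies of exceptional divisors, and that a single general $\Gamma'$ simultaneously controls $\coeff_E(\tilde\Gamma)$ for all exceptional $E$ over $X$.
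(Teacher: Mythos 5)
Your item (1) and your crepancy identity are fine, and they match the paper's route: the paper passes to a log resolution $q\colon Y\to X'$ on which $\mathbf{M}$ descends and observes $q^*(K_{X'}+B'+\Gamma'/N)=K_Y+B_Y+q^{-1}_*\Gamma'/N$; your verification that the principal-divisor corrections cancel is a more careful version of the same step, and your proof of (2) via coefficients $\le 1$ on the resulting snc model is also the paper's argument.

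The gap is in item (3). You claim that, by generality of $\Gamma'$, one has $\coeff_E(\tilde\Gamma)=0$ for \emph{every} divisor $E$ exceptional over $X$, on the grounds that a general member of a base point free system has zero valuation along any center of codimension at least two. That is only true for a \emph{fixed} $E$: since there are infinitely many divisorial valuations over $X$, no single $\Gamma'$ can avoid all their centers — for instance, blowing up any subvariety of $\Gamma'$ itself produces an exceptional $E$ with $\mult_E(\tilde\Gamma)>0$, no matter how general $\Gamma'$ is. You flag exactly this simultaneity as ``the main obstacle'' but then treat it as handled by generality, which it is not. What is actually true, and what is needed, is the weaker statement that $\coeff_E(\tilde\Gamma)=0$ for every $E$ that is a log canonical place of $(X,B+\Gamma/N)$. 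This does not follow from generality alone; it follows from the structure of lc places of a log smooth sub-pair with coefficients at most one: any $E$ with $a_E(Y,B_Y+\tilde\Gamma/N)=0$ is obtained by (weighted) blow-ups of strata of the coefficient-one part, and since the components of $\tilde\Gamma/N$ have coefficient $1/N<1$ (for $N\ge 2$) they carry weight zero in any such valuation, forcing $\mult_E(\tilde\Gamma)=0$ and hence $a_E(X,B,\mathbf{M})=a_E(Y,B_Y)=0$. This is precisely the paper's argument (``$F$ is obtained by a toric blow-up of a stratum of $\lfloor B_Y\rfloor$''), and your proof needs it in place of the false blanket vanishing claim.
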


\begin{proof}
Let $Y\rightarrow X'\rightarrow X$ be a log resolution of $(X,B,\mathbf{M})$ on which $\mathbf{M}$ descends as a b-nef divisor.
The morphism $Y\rightarrow X'$ is also a log resolution of 
$(X',B')$.
Let $(X',B',\mathbf{M})$ be the log pull-back of $(X,B,\mathbf{M})$ to $X'$.
It suffices to show that:
\begin{enumerate} 
\item[(i)] we have  $N(K_{X'}+B'+\Gamma'/N)\sim 0$, 
\item[(ii)] the sub-pair $(X',B'+\Gamma'/N)$ is sub-log canonical, and 
\item[(iii)] every log canonical place of $(X',B'+\Gamma'/N)$ is a generalized log canonical place of $(X,B,\mathbf{M})$.
\end{enumerate} 
Indeed, the projective birational morphism $X'\rightarrow X$ is crepant birational equivalent for the pairs $(X',B'+\Gamma'/N)$ and 
$(X,B+\Gamma/N)$. 

The relation $N(K_{X'}+B'+\Gamma'/N)\sim 0$ holds as $\Gamma'\in |N\mathbf{M}_{X'}|$. This proves condition (i).
As we chose $\Gamma'$ general, we 
may assume that $q\colon Y\rightarrow X'$ is a 
log resolution for $(X',B'+\Gamma'/N)$.
In particular, the pull-back of $\Gamma'$ to $Y$ is just its strict transform.
Hence, we have 
\[
q^*(K_{X'}+B'+\Gamma'/N) = K_Y+B_Y + {q^{-1}}_*\Gamma'/N. 
\]
The coefficients of the divisor $B_Y$ are bounded above by one as 
$(X',B')$ is sub-log canonical.
Since the coefficients of $\Gamma'/N$ are bounded above by one, we conclude that $(X',B'+\Gamma'/N)$
is sub-log canonical. This proves (ii).
If $a_F(X',B'+\Gamma'/N)=0$, then $F$ is obtained by a toric blow-up of a strata of $\lfloor B_Y\rfloor$.
In particular, we have  $a_F(X',B')=0$, so $a_F(X',B',\mathbf{M})=0$.
This proves the last condition.
\end{proof}

\subsection{Complexity measures
for log Calabi--Yau pairs}
In this subsection, we introduce several measures of the complexity of log Calabi--Yau pairs.

\begin{definition}
\label{def:comp}
{\em 
Let $X\rightarrow W$ be a projective contraction of normal quasi-projective varieties.
Let $(X,B,\mathbf{M})$ be a generalized log Calabi--Yau pair over $W$.
The {\em complexity} of $(X,B,\mathbf{M})$ over $W$ is 
\[
c(X/W,B,\mathbf{M}):= \dim X + \dim_\qq {\rm Cl}(X/W) - |B|, 
\]
where $|B|$ is the sum of the coefficients of $B$.

Let $B$ be an effective divisor on a variety $X$ and $X\rightarrow W$ be a contraction.
A {\em decomposition} $\Sigma$ of $B$ is a finite sum $\sum_{i\in I} b_i B_i \leq B$,
where each $b_i$ is a non-negative real number
and each $B_i$ is an effective Weil divisor.
We define the {\em norm} of the decomposition, denoted by $|\Sigma|$, to be $\sum_{i\in I} b_i$.
We define the {\em span over $W$} of the decomposition $\Sigma$, denoted by $\langle \Sigma /W \rangle$, to be $\langle B_i \rangle \subseteq {\rm Cl}_\qq(X/W)$. 
The {\em relative rank} of the decomposition over $W$, denoted by $\rho(\Sigma/W)$, is $\dim_\qq \langle B\rangle$.
The {\em complexity} of a decomposition $\Sigma$ of a divisor $B$ on $X$ is defined to be
\[
c(\Sigma/W):= \dim X + \rho(\Sigma/W) - |\Sigma|.
\]
The {\em fine complexity} of $(X,B,\mathbf{M})$ over $W$ is
\[
\bar{c}(X/W,B,\mathbf{M}):=\inf 
\{ 
c(\Sigma/W) \mid \text{ 
$\Sigma$ is a decomposition of $B$
}
\}.
\]
In~\cite[Proposition 30]{MS21}, the authors show that the complexity is indeed a minimum, i.e., it is computed by some decomposition of $B$.
}
\end{definition}

\begin{definition}\label{def:bir-comp}
{\em 
Let $X\rightarrow W$ be a projective contraction of normal quasi-projective varieties.
Let $(X,B,\mathbf{M})$ be a generalized sub-log Calabi--Yau pair over $W$.
The {\em birational complexity} of $(X,B,\mathbf{M})$ over $W$ is defined to be
\[
c_{\rm bir}(X/W,B,\mathbf{M}):=
{\rm inf}\left\{ 
c(X'/W,B',\mathbf{M}) \middle| 
\begin{gathered}
\text{ 
$(X',B',\mathbf{M})$ is a crepant birational model} \\
\text{ of $(X,B,\mathbf{M})$ over $W$
and $B'\geq 0$ }
\end{gathered}
\right\}.
\]
If the set in the definition is empty,
then we set the birational complexity to be infinite.
The {\em fine birational complexity}, denoted by $\bar{c}_{\rm bir}(X/W,B,\mathbf{M})$, 
is defined similarly to the
birational complexity 
by replacing the infimum among complexities
with the infimum
among fine complexities.
}
\end{definition}

\begin{definition}\label{def:alt-comp}
{\em 
The {\em alteration complexity} of $(X,B,\mathbf{M})$ over $W$ is defined to be 
\[
c_{\rm alt}(X/W,B,\mathbf{M}):=
{\rm inf} \left\{ c_{\rm bir}(Y/W,B_Y,\mathbf{N}) 
\middle|
\begin{gathered}
\text{
$(Y,B_Y,\mathbf{N})\rightarrow (X,B,\mathbf{M})$ is a crepant finite Galois} \\ 
\text{morphism over $W$ and $Y\rightarrow W$ is a contraction}
\end{gathered}
\right\}. 
\]
In the previous definition, we emphasize that $Y\rightarrow W$ must have connected fibers to avoid dealing with disconnected varieties over ${\rm Spec}(\kk)$. 
The {\em fine alteration complexity},
denoted by $\bar{c}_{\rm alt}(X/W,B,\mathbf{M})$, 
is defined similarly to the alteration complexity by replacing the infimum among complexities
with the infimum among fine complexities.
}
\end{definition} 

\begin{definition}\label{def:strict-conic}
{\em 
Let $(X,B,\mathbf{M})$ be a generalized pair
with $G\leqslant {\rm Aut}(X,B,\mathbf{M})$ a finite group.
Let $\rho\colon X\rightarrow Z$ be a $G$-equivariant fibration.
We say that $\rho\colon X\rightarrow Z$ is a {\em $G$-equivariant strict conic fibration} for $(X,B,\mathbf{M})$ if the following conditions are satisfied:
\begin{enumerate}
\item the morphism $\rho$ is a conic fibration
with $\rho^G(X/Z)=1$,
\item the generalized pair $(X,B,\mathbf{M})$ is log Calabi--Yau over $Z$, 
\item the divisor $\lfloor B\rfloor$ has two $G$-invariant prime components $S_0$ and $S_\infty$ that are horizonal over $Z$, and 
\item the prime divisors $S_0$ and $S_\infty$ are disjoint.
\end{enumerate}
If $G$ is the trivial group, then we just say that $\rho$ is a {\em strict conic fibration}.
}
\end{definition}

\begin{definition} 
{\em 
Let $(X,B,\mathbf{M})$ be a generalized log Calabi--Yau pair.
Let 
\[
\xymatrix{ 
\mathcal{T}: 
X\ar[r]^-{\phi_1} &
X_1\ar[r]^-{\phi_2} & 
\dots\ar[r]^-{\phi_r} & 
X_r 
}
\] 
be a tower of fibrations for $X$.
Let $I(\mathcal{T})\subseteq \{1,\dots,r\}$ be the set of indices for which $\phi_i$ is a strict conic fibration for the generalized log Calabi--Yau pair induced on $X_i$ by the generalized canonical bundle formula.
We denote by $r(\mathcal{T})$ the cardinality of $I(\mathcal{T})$. The {\em conic complexity} of $(X,B,\mathbf{M})$,
denoted by $k(X,B,\mathbf{M})$, is
defined to be:
\[
k(X,B,\mathbf{M}):=\dim X - \max \{
r(\mathcal{T}) \mid 
\text{ 
$\mathcal{T}$ is a tower of fibrations for $X$
}
\}.
\]
}
\end{definition} 

\begin{definition}
{\em 
Let $(X,B,\mathbf{M})$ be a generalized sub-log Calabi--Yau pair. 
The {\em birational conic complexity} of $(X,B,\mathbf{M})$, denoted by 
$k_{\rm bir}(X,B,\mathbf{M})$, is defined to be 
\[
\min \{ k(X',B',\mathbf{M}) \mid 
\text{ 
$(X',B',\mathbf{M})$ is crepant birational equivalent to $(X,B,\mathbf{M})$ and $B'\geq 0$
}
\}.
\]
If the set in the definition is empty, 
we set the birational conic complexity to be infinite.
The {\em alteration conic complexity} of
$(X,B,\mathbf{M})$, denoted by $k_{\rm alt}(X,B,\mathbf{M})$, is defined to be
\[
\min\{ 
k_{\rm bir}(Y,B_Y,\mathbf{N})\mid 
\text{
$(Y,B_Y,\mathbf{N})\rightarrow (X,B,\mathbf{M})$ is a crepant finite Galois morphism
}
\}.
\]
}
\end{definition}

The following definition
is a slight generalization of
the concept of Bott towers.
As explained in Theorem~\ref{theorem:conic-complexity-zero} these are exactly the varieties for which the conic complexity equals zero.

\begin{definition}\label{def:Q-bott}
{\em 
We define Bott $\qq$-towers inductively.
A {\em Bott $\qq$-tower of height $1$} is a rational curve considered as a toric variety, i.e., with a fixed  $\mathbb{G}_m$-action.
A {\em Bott $\qq$-tower of height $n$} 
is a toric projective variety 
equivariantly isomorphic to 
\[
\mathbb{P}_Y(\mathcal{L}_1\oplus \mathcal{L}_2), 
\]
where $Y$ is a Bott $\qq$-tower variety of height $n-1$ and the $\mathcal{L}_i$'s are torus invariant $\qq$-line bundle on $Y$.
}
\end{definition}

\begin{remark}\label{rem:Q-bott}
{\em 
A Bott tower of height $2$ is a Hirzebruch surface.
More generally, every toric surface with a Mori fiber space onto $\pp^1$ is a Bott $\qq$-tower of height $2$. 
}
\end{remark} 

\subsection{Negativity lemma} In this subsection, we prove some applications of the negativity lemma.

\begin{lemma}\label{lem:neg0}
Let $\phi\colon X\rightarrow Y$ be a projective birational morphism from a normal projective variety to a normal $\qq$-factorial projective variety.
Let $E$ be an effective divisor contained in the exceptional locus of $\phi$. Then, we have 
$\supp E \subseteq {\rm Bs}_{-}(E)$.
\end{lemma}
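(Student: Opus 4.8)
The plan is to show that every component of $E$ lies in the diminished base locus by producing, for each such component, a covering family of $\phi$-contracted curves that meet $E$ negatively. Recall that, after fixing an ample $\rr$-divisor $A$ on $X$, one has ${\rm Bs}_-(E)=\bigcup_{t>0}{\rm Bs}_\rr(E+tA)$, and this is an increasing union as $t\downarrow 0$; consequently a prime divisor $\Gamma$ belongs to ${\rm Bs}_-(E)$ precisely when $\Gamma\subseteq{\rm Bs}_\rr(E+tA)$ for all sufficiently small $t>0$, that is, when every effective $\rr$-divisor $D\sim_\rr E+tA$ contains $\Gamma$ in its support for such $t$. (We may assume $E$ is $\rr$-Cartier, passing to a small $\qq$-factorialization of $X$ if necessary; this does not affect the supports in play.) So the statement reduces to the following: for each component $\Gamma$ of $\supp E$ and all small $t>0$, every effective $D\sim_\rr E+tA$ satisfies $\Gamma\subseteq\supp D$.

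The key input is a consequence of the negativity lemma: since $\phi_*E=0$ and $E\ne 0$ is effective and $\phi$-exceptional, $E$ cannot be $\phi$-nef, and in fact, using that $Y$ is $\qq$-factorial, the general point of each component $\Gamma$ of $\supp E$ lies on an irreducible curve $\ell$ contracted by $\phi$ with $E\cdot\ell<0$. These curves sweep out a dense open subset of $\Gamma$ and, belonging to a bounded family, the intersection numbers $E\cdot\ell$ and $A\cdot\ell$ are locally constant; in particular $E\cdot\ell\le-\delta<0$ uniformly on the family while $A\cdot\ell$ stays bounded. I would establish this by cutting down over the (at least two-codimensional) image $\phi(\Gamma)$ and invoking the relative negativity lemma on the fibration $\phi|_\Gamma$, which is where the $\qq$-factoriality of $Y$ is genuinely used.

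Granting such curves, the conclusion is immediate. For $t$ small enough that $E\cdot\ell+t\,(A\cdot\ell)<0$ for the whole family, any effective $D\sim_\rr E+tA$ satisfies $D\cdot\ell=E\cdot\ell+t\,(A\cdot\ell)<0$, forcing $\ell\subseteq\supp D$; as $\ell$ ranges over the covering family, the closed set $\supp D$ must contain a dense subset of $\Gamma$, hence $\Gamma\subseteq\supp D$. Therefore $\Gamma\subseteq{\rm Bs}_\rr(E+tA)\subseteq{\rm Bs}_-(E)$, and since $\Gamma$ was an arbitrary component we obtain $\supp E\subseteq{\rm Bs}_-(E)$. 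Equivalently, in the language of Nakayama's decomposition this says $N_\sigma(E)=E$, using that the divisorial part of ${\rm Bs}_-(E)$ is exactly $\supp N_\sigma(E)$.

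The main obstacle is the construction of the negative covering curves: producing a single $\phi$-contracted curve with $E\cdot\ell<0$ is a direct application of the negativity lemma, but arranging that such curves cover each component $\Gamma$ (rather than merely exist somewhere over $\phi(\Gamma)$) and that their intersection numbers are uniform requires the fiberwise/relative form of the lemma over the $\qq$-factorial base $Y$. This is delicate because an effective divisor that is $\phi$-exceptional may still meet some individual contracted curves non-negatively once several components interact, so the negativity must be extracted component by component along the fibers of $\phi|_\Gamma$; everything else in the argument is formal from the definition of ${\rm Bs}_-$.
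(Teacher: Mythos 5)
Your strategy has a genuine gap: the key claim on which everything rests --- that through a general point of \emph{each} component $\Gamma$ of $\supp E$ there passes a $\phi$-contracted curve $\ell$ with $E\cdot \ell<0$ (uniformly so over a covering family) --- is false. The negativity lemma only guarantees that \emph{some} component of an effective exceptional divisor meets it negatively, not every component. A minimal counterexample: let $\phi\colon X\rightarrow Y$ be the minimal resolution of an $A_2$ Du Val surface singularity (so $Y$ is $\qq$-factorial), with exceptional $(-2)$-curves $C_1,C_2$ satisfying $C_1\cdot C_2=1$, and take $E=C_1+2C_2$. Then $E$ is effective and $\phi$-exceptional, but $E\cdot C_1=-2+2=0$, and on a surface the only $\phi$-contracted curve through a general point of $C_1$ is $C_1$ itself. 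So for the component $C_1$ there is no family of contracted curves meeting $E$ negatively, and your final step ($D\cdot\ell<0$ forces $\ell\subseteq\supp D$) cannot be run; yet $C_1\subseteq {\rm Bs}_{-}(E)$ does hold and must be proved by some other mechanism. You flag this interaction of components as ``delicate,'' but no fiberwise version of the negativity lemma repairs it: the obstruction is already present in the surface case that the usual proof reduces to. One could try to salvage the approach by an iterated Zariski-type argument (detect the negative components, subtract, repeat), but that is a different and substantially longer proof than what you wrote.

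The paper's argument sidesteps curves entirely and is worth comparing. Using $\qq$-factoriality of $Y$, one produces an effective $\qq$-Cartier divisor $F$ on $X$ with $-F$ ample over $Y$ (e.g.\ $F=\phi^*\phi_*H-H$ for $H$ relatively ample), so that $\phi^*A-\epsilon F$ is ample on $X$ for $A$ ample on $Y$ and $\epsilon>0$ small. For any effective $H\sim_\qq E+\delta\phi^*A-\delta\epsilon F$, the difference $H-E$ is $\qq$-linearly equivalent to $\delta\epsilon F-\delta\phi^*A$, hence is $\qq$-Cartier and anti-ample over $Y$, and $\phi_*(H-E)=\phi_*H\geq 0$; the negativity lemma applied to $H-E$ then gives $H\geq E$, so $\supp E\subseteq\supp H$ for every such $H$ and hence $\supp E\subseteq{\rm Bs}_{-}(E)$. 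Note that this applies the negativity lemma to the \emph{difference} $H-E$ rather than to $E$, which is exactly what recovers the components (like $C_1$ above) that meet $E$ non-negatively. It also never requires $E$ itself to be $\qq$-Cartier, whereas your reduction via a small $\qq$-factorialization of $X$ is a secondary gap: such a modification need not exist for an arbitrary normal projective $X$, and even when it does, comparing ${\rm Bs}_{-}$ upstairs and downstairs requires perturbing by a divisor that is only big and nef, not ample.
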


\begin{proof}
Since $Y$ is $\qq$-factorial, 
we can find an effective divisor
$F$ on $X$ that is antiample over $Y$.
Let $A$ be an ample divisor on $Y$.
Then, the divisor $\phi^*(A)-\epsilon F$ is ample on $X$.
Let $\delta>0$ be a positive rational number.
Let $0\leq H \sim_\qq E+\delta \phi^*(A)-\delta \epsilon F$. 
Note that $H-E \sim_\qq \delta \epsilon F - \delta \phi^*(A)$ is anti-ample over $Y$.
Furthermore, $\phi_*(H-E)=\phi_*H \geq 0$.
By~\cite[Lemma 3.39]{KM92}, we conclude that $H\geq E$.
\end{proof} 

\begin{lemma}\label{lem:negativity}
Let $X$ be a normal projective variety.
Let $\pi\colon X\dashrightarrow Y$ be a birational contraction to a $\qq$-factorial variety.
Let $E\subset X$ be an effective $\qq$-Cartier divisor.
Assume that $E$ is contained in the exceptional locus of $\pi$.
Then, we have $\supp E \subseteq {\rm Bs}_{-}(E)$.
\end{lemma}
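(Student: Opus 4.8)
The plan is to reduce the statement to the case of a birational \emph{morphism}, which is Lemma~\ref{lem:neg0}. First I would resolve $\pi$ by a common resolution: a normal projective variety $W$ equipped with projective birational morphisms $p\colon W\to X$ and $q\colon W\to Y$ such that $q=\pi\circ p$ as birational maps (for instance, the normalization of the graph of $\pi$). The crucial observation is that the effective $\qq$-Cartier divisor $p^*E$ (a pullback of a $\qq$-Cartier divisor, hence itself $\qq$-Cartier) is $q$-exceptional. To see this, write $p^*E=p^{-1}_*E+G$, where $p^{-1}_*E$ is the strict transform and $G\geq 0$ is $p$-exceptional. Since $E$ is contained in the exceptional locus of $\pi$, we have $q_*(p^{-1}_*E)=\pi_*E=0$, so $p^{-1}_*E$ is $q$-exceptional. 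Since $\pi$ is a birational contraction, its inverse contracts no divisor, which on $W$ means that every $p$-exceptional prime divisor is $q$-exceptional; hence each component of $G$ is $q$-exceptional as well. Therefore $q_*(p^*E)=0$, i.e. $\supp(p^*E)\subseteq {\rm Exc}(q)$.

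With this in hand I would apply Lemma~\ref{lem:neg0} to the projective birational morphism $q\colon W\to Y$ (note that $Y$ is $\qq$-factorial) and to the effective $q$-exceptional divisor $p^*E$, obtaining $\supp(p^*E)\subseteq {\rm Bs}_{-}(p^*E)$ on $W$.

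It remains to transport this inclusion back down to $X$, and this is the step I expect to be the main obstacle. Using the functoriality of the diminished base locus under the birational morphism $p$, namely ${\rm Bs}_{-}(p^*E)=p^{-1}\!\big({\rm Bs}_{-}(E)\big)$, the inclusion on $W$ becomes $\supp(p^*E)\subseteq p^{-1}\!\big({\rm Bs}_{-}(E)\big)$; applying $p$ and using that the strict transform $p^{-1}_*E$ dominates $\supp E$, we obtain $\supp E\subseteq p\big(\supp(p^*E)\big)\subseteq {\rm Bs}_{-}(E)$, as desired. The content of the obstacle is exactly the compatibility ${\rm Bs}_{-}(p^*E)=p^{-1}({\rm Bs}_{-}(E))$: it cannot be extracted from a naive negativity computation over $q$, because an ample perturbation on $X$ pulls back to a $q$-nef class on $W$ and so has the wrong sign for the negativity lemma. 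One must instead invoke this standard property of restricted base loci (equivalently, the pullback-invariance of Nakayama's asymptotic order functions along $p^{-1}_*$), which holds because $p_*\mathcal{O}_W=\mathcal{O}_X$.
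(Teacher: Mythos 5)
Your proposal is correct and follows essentially the same route as the paper: both pass to a resolution $p\colon Z\to X$, $q\colon Z\to Y$, observe that $p^*E$ is exceptional over the $\qq$-factorial target $Y$, apply Lemma~\ref{lem:neg0} to $q$, and descend to $X$ via the compatibility of the diminished base locus with pullback along $p$. The step you flag as the main obstacle is exactly what the paper handles (in contrapositive form) by combining the inclusion ${\rm Bs}_{-}(p^*E)\subseteq\bigcup_{\epsilon>0}{\rm Bs}(p^*E+\epsilon p^*A)$ for the big and nef divisor $p^*A$ with the fact that stable base loci of pullbacks are preimages, which yields precisely your inclusion ${\rm Bs}_{-}(p^*E)\subseteq p^{-1}\left({\rm Bs}_{-}(E)\right)$.
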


\begin{proof}
By contradiction, assume that a component $Q$ of $E$ is not contained in ${\rm Bs}_{-}(E)$.
Let $p\colon Z \rightarrow X$ and
$q\colon Z \rightarrow Y$ be a resolution of the birational map $\pi$.
Let $A$ be an ample divisor on $X$.
By assumption, for every $\epsilon>0$, the prime divisor $Q$ is not contained in ${\rm Bs}(E+\epsilon A)$.
Hence, for every $\epsilon>0$, the effective divisor $p^*Q$ is not contained in 
${\rm Bs}(p^*E+\epsilon p^*A)$.
By~\cite[Lemma 1.27]{Mor18a}, 
we have ${\rm Bs}_{-}(p^*E) \subseteq \bigcup_{\epsilon >0} {\rm Bs}(p^*E+\epsilon p^*A)$.
In particular, $p^*Q$ is not contained in ${\rm Bs}_{-}(p^*E)$.
However, since $\pi$ is a birational contraction, the divisor $p^*E$ is exceptional over $Y$.
Hence, by Lemma~\ref{lem:neg0}, we have 
$\supp p^*Q \subseteq \supp p^*E \subseteq {\rm Bs}_{-}(E)$. This leads to a contradiction.
\end{proof} 

\subsection{Generalized canonical bundle formula}
The following statement is known as the generalized canonical bundle formula.
The proof is verbatim from~\cite[Theorem 1.5]{FM20} by replacing~\cite[Theorem 1.4]{Bir12} 
with~\cite[Theorem 1.3]{LX22}.

\begin{proposition}\label{prop:gen-cbf}
Let $n$ be a positive integer
and $\Lambda$ be a set of rational numbers
satisfying the descending chain condition.
Let $X$ be a $n$-dimensional variety
and 
let $f\colon X\rightarrow Z$ be a Fano type fibration.
Let $(X,B,\mathbf{M})$ be a generalized log Calabi--Yau pair.
Assume that the coefficients of $B$ belong to $\Lambda$ and the coefficients of $\mathbf{M}$ belong to $\Lambda$ in a model where it descends.
Then, there exists a positive integer $q$
and a set of rational numbers $\Omega$ 
with the descending chain condition,
both only depending on $n$ and $\Lambda$, satisfying the following.
We can write
\[
K_X+B+\mathbf{M}_X \sim qf^*(K_Z+B_Z+\mathbf{N}_Z)
\] 
where $(Z,B_Z,\mathbf{N})$ is a generalized log Calabi--Yau pair, 
the coefficients of $B_Z$ belong to $\Omega$, and the 
coefficients of $\mathbf{N}$ belong to $\Omega$ in a model
where it descends.
\end{proposition}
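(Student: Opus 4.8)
The plan is to follow the now-standard construction of the canonical bundle formula, carried out in the generalized category, and then to extract the uniformity of the output $(Z,B_Z,\mathbf{N})$ from boundedness theorems applied to the fibers. First I would pass to a sufficiently high birational model $Z'\to Z$ together with a compatible model of $X$ on which $\mathbf{M}$ descends, and define the \emph{discriminant} part. For each prime divisor $P$ on $Z'$ set
\[
\gamma_P := \sup\{\, t\in\rr \mid (X,B+tf^*P,\mathbf{M}) \text{ is generalized log canonical over the generic point of } P \,\},
\]
put $b_P := 1-\gamma_P$, and let $B_{Z'} := \sum_P b_P P$; all but finitely many $b_P$ vanish, and the Fano type hypothesis guarantees $B_{Z'}\geq 0$. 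The \emph{moduli} part is then forced: $\mathbf{N}$ is defined by requiring $K_X+B+\mathbf{M}_X \sim_\qq f^*(K_{Z'}+B_{Z'}+\mathbf{N}_{Z'})$, and the content is that as $Z'$ varies these classes patch into a single b-nef divisor $\mathbf{N}$ on $Z$.

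The DCC property of the boundary coefficients is the first uniformity input. Each $b_P$ has the form $1-\gamma_P$, where $\gamma_P$ is a generalized log canonical threshold computed on the generalized log general fiber $(X_z,B_z,\mathbf{M}_z)$, a generalized log Calabi--Yau pair of dimension at most $n-1$ whose boundary and moduli coefficients lie in $\Lambda$. By the ACC for generalized log canonical thresholds, the set of values $\gamma_P$ satisfies the ascending chain condition and depends only on $n$ and $\Lambda$; hence the $b_P$ lie in a DCC set $\Omega_0=\Omega_0(n,\Lambda)$. That $(Z,B_Z,\mathbf{N})$ is generalized log canonical with $K_Z+B_Z+\mathbf{N}_Z\sim_\qq 0$ then follows formally from the defining relation together with the hypothesis $K_X+B+\mathbf{M}_X\sim_\qq 0$.

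The b-nefness of $\mathbf{N}$ is the Hodge-theoretic ingredient: after base change and semistable reduction, the moduli divisor is governed by the variation of Hodge structure of the family of fibers and is nef by the semipositivity theorems, so $\mathbf{N}$ descends to a nef divisor on a high enough model. In the generalized category this is the generalized semipositivity/adjunction package, and it is precisely here that the argument is verbatim from~\cite[Theorem~1.5]{FM20}.

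The main obstacle --- and the point where boundedness genuinely enters --- is the existence of a \emph{uniform} integer $q$ and a \emph{DCC} set $\Omega$ for the coefficients of $\mathbf{N}$, both independent of the particular pair. The generalized log general fiber is a Fano type generalized log Calabi--Yau pair of bounded dimension with coefficients in $\Lambda$; by boundedness of such pairs --- resting ultimately on boundedness of Fano varieties and of complements --- their index is bounded by a constant depending only on $n$ and $\Lambda$. This bounds the denominators occurring in $\mathbf{N}$, upgrades $\Omega_0$ to the desired DCC set $\Omega$ controlling the coefficients of both $B_Z$ and $\mathbf{N}$, and produces the integer $q$ clearing all denominators, which promotes the $\qq$-linear equivalence to the stated genuine linear equivalence $K_X+B+\mathbf{M}_X \sim q f^*(K_Z+B_Z+\mathbf{N}_Z)$. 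This is exactly the step in which~\cite[Theorem~1.4]{Bir12} is replaced by~\cite[Theorem~1.3]{LX22}.
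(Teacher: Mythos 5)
Your proposal is correct and follows essentially the same route as the paper, which gives no independent argument but simply declares the proof to be verbatim from~\cite[Theorem 1.5]{FM20} with~\cite[Theorem 1.4]{Bir12} replaced by~\cite[Theorem 1.3]{LX22}; your sketch is an accurate unpacking of that argument (discriminant via generalized lc thresholds, ACC for thresholds giving the DCC set for $B_Z$, semipositivity for the b-nefness of $\mathbf{N}$, and boundedness/effectivity on the fibers supplying the uniform $q$ and $\Omega$), and you correctly locate the reference substitution at the last step. The only quibble is that nonnegativity of $B_{Z'}$ comes from the effectivity of $B$ (each $f^*P$ has a component of multiplicity at least one), not from the Fano type hypothesis, and the threshold $\gamma_P$ is computed over the generic point of $P$ rather than on the log general fiber; neither affects the argument.
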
 

We will use an equivariant version of the previous statement.
The following statement is known as 
the equivariant generalized canonical bundle formula for Fano type morphisms.
The proof is verbatim from the proof of~\cite[Lemma 2.33]{Mor21} by replacing~\cite[Theorem 1.5]{FM20}
with Proposition~\ref{prop:gen-cbf}.

\begin{proposition}\label{prop:equiv-gen-cbf}
Let $n$ be a positive integer,
$\Lambda$ be a set of rational numbers satisfying the descending chain condition,
and $G$ be a finite group.
Let $X$ be a $n$-dimensional variety.
Let $f\colon X \rightarrow Z$ be a $G$-equivariant Fano type fibration.
Let $(X,B,\mathbf{M})$ be a generalized log Calabi--Yau pair with $G\leqslant {\rm Aut}(X,B,\mathbf{M})$.
Assume that the coefficients of $B$ 
belong to $\Lambda$
and the coefficients
of $\mathbf{M}$ belong to $\Lambda$
in a quotient by $G$ where it descends.
Let $G_Z$ be the homomorphic image of $G$ acting on $Z$.
Then, 
there exists a positive integer $q$
and a set of rational numbers $\Omega$
with the descending chain condition, 
both of them only depending on $n$ and $\Lambda$, 
satisfying the following.
We can write
\[
K_X+B+\mathbf{M}_X \sim qf^*(K_Z+B_Z+\mathbf{N}_Z)
\]
where $(Z,B_Z,\mathbf{N})$ is a $G_Z$-equivariant generalized log Calabi--Yau pair, 
the coefficients of $B_Z$ belong to $\Omega$, and the coefficients of $\mathbf{N}$ belong to $\Omega$ in a quotient by $G_Z$ where it descends.
\end{proposition}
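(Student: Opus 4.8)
The plan is to deduce the equivariant statement from the non-equivariant generalized canonical bundle formula of Proposition~\ref{prop:gen-cbf}, upgrading it to a $G_Z$-equivariant one by exploiting the canonicity of the discriminant and moduli parts. First I would apply Proposition~\ref{prop:gen-cbf} to the Fano type fibration $f\colon X\rightarrow Z$ and the generalized log Calabi--Yau pair $(X,B,\mathbf{M})$, forgetting the $G$-action. This yields a positive integer $q$ and a DCC set $\Omega$, both depending only on $n$ and $\Lambda$, a generalized log Calabi--Yau pair $(Z,B_Z,\mathbf{N})$, and the relation $K_X+B+\mathbf{M}_X\sim qf^{*}(K_Z+B_Z+\mathbf{N}_Z)$, with the coefficients of $B_Z$ in $\Omega$ and those of $\mathbf{N}$ in $\Omega$ on a model where it descends. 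Since $q$ and $\Omega$ come from the non-equivariant statement, they already meet the required dependence on $n$ and $\Lambda$.

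The heart of the argument is to promote this data to a $G_Z$-equivariant one. The discriminant part is intrinsic: for each prime divisor $P\subset Z$ the coefficient $\coeff_P(B_Z)$ equals $1$ minus the generalized log canonical threshold of $f^{*}P$ with respect to $(X,B,\mathbf{M})$ over the generic point of $P$. Since $G\leqslant\Aut(X,B,\mathbf{M})$ and $f$ is $G$-equivariant, every $g\in G$ with image $\bar g\in G_Z$ maps the fiber $f^{-1}(P)$ onto $f^{-1}(\bar g(P))$ while preserving $(X,B,\mathbf{M})$, so the generalized log canonical thresholds over $P$ and over $\bar g(P)$ agree; thus $\bar g^{*}B_Z=B_Z$ and $B_Z$ is $G_Z$-invariant. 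The moduli b-divisor $\mathbf{N}$ is likewise canonically attached to $f$ and $(X,B,\mathbf{M})$: on a high model $W\rightarrow Z$ on which it descends, its nef trace $\mathbf{N}_W$ is the unique nef divisor compatible with the relation once $K_W$ and the discriminant part are fixed, the uniqueness coming from the negativity lemma. As the relation and $B_Z$ are $G_Z$-invariant, functoriality of this construction gives $\bar g^{*}\mathbf{N}=\mathbf{N}$ for all $\bar g\in G_Z$, and choosing $W$ to be $G_Z$-equivariant realizes $\mathbf{N}$ as a $G_Z$-equivariant b-nef divisor. This is exactly the argument of \cite[Lemma 2.33]{Mor21} with Proposition~\ref{prop:gen-cbf} substituted for \cite[Theorem 1.5]{FM20}.

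It remains to translate the coefficient bounds into the quotient and record that $(Z,B_Z,\mathbf{N})$ is a $G_Z$-equivariant generalized log Calabi--Yau pair. For $B_Z$ this is immediate from the previous step. For $\mathbf{N}$ I would invoke \cite[Proposition 2.4]{Mor21} to write the $G_Z$-equivariant b-nef divisor $\mathbf{N}$ as the pull-back of a b-nef divisor on $Z/G_Z$; a model where the latter descends is the quotient of a $G_Z$-equivariant model of $Z$ on which $\mathbf{N}$ descends, and the decomposition into nef Cartier divisors with coefficients in $\Omega$ furnished by Proposition~\ref{prop:gen-cbf} can be arranged to descend along this quotient. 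I expect this final compatibility to be the main obstacle: one must check that the DCC set $\Omega$ produced by the non-equivariant formula still controls the coefficients of $\mathbf{N}$ in a quotient by $G_Z$ where it descends. Since the general fibers of $f$, and hence the moduli data governing $\mathbf{N}$, are unaffected by remembering the $G$-action, the same $\Omega$ depending only on $n$ and $\Lambda$ ought to work; making the descent of the nef-Cartier decomposition to $Z/G_Z$ precise is the step that genuinely requires care.
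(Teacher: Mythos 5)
Your proposal is correct and follows the same route the paper takes: the paper's proof is exactly the citation you make, namely running the argument of \cite[Lemma 2.33]{Mor21} with Proposition~\ref{prop:gen-cbf} in place of \cite[Theorem 1.5]{FM20}, and your unpacking of that argument (equivariance of the discriminant via invariance of generalized log canonical thresholds, equivariance of the moduli part via its canonicity, and descent of the coefficient control to the quotient via \cite[Proposition 2.4]{Mor21}) is the intended content. The constants $q$ and $\Omega$ indeed come unchanged from the non-equivariant statement, as you observe.
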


\subsection{The two ray game}
The following proposition is an application of the $2$-ray game.
The proof follows from~\cite[Lemma 2.11 and Lemma 2.12]{MM24}.

\begin{proposition}\label{prop:2-ray}
Let $G$ be a finite group.
Let $X\rightarrow W$ be a $G$-equivariant Fano type morphism, let $\pi\colon X\rightarrow Z$ be a $G$-equivariant fibration over $W$, and let $G_Z$ be the homomorphic image of $G$ acting on $Z$.
Let $(X,B,\mathbf{M})$ be a $G$-equivariant generalized log Calabi--Yau pair over $W$.
Let $(Z,B_Z,\mathbf{N})$ be the $G_Z$-equivariant generalized log Calabi--Yau pair induced on $Z$ by the equivariant canonical bundle formula.
Let $\phi_Z \colon Z'\dashrightarrow Z$ be a $G_Z$-equivariant projective birational map from a $\qq$-factorial variety $Z'$ that only extracts generalized log canonical places of $(Z,B_Z,\mathbf{N})$.
Let $(Z',B_{Z'},\mathbf{N})$ be the generalized log Calabi--Yau pair induced on $Z'$.
Then, there exists a commutative diagram:
\[
\xymatrix{
(X,B,\mathbf{M})\ar[d]_-{\pi} & (X',B',\mathbf{M})\ar@{-->}[l]_-{\phi}\ar[d]^-{\pi'}\\
(Z,B_Z,\mathbf{N}) & (Z',B_{Z'},\mathbf{N})\ar@{-->}[l]^-{\phi_Z}
}
\]
satisfying the following conditions:
\begin{enumerate}
\item the birational map $\phi$ is $G$-equivariant, a crepant birational equivalence, and only extracts generalized log canonical places of $(X,B,\mathbf{M})$, 
\item the morphism $\pi'$ is a $G$-equivariant Fano type fibration, 
\item we have ${\pi'}^{-1}(\lfloor B_{Z'}\rfloor) \subseteq \lfloor B'\rfloor$, 
\item if $\pi$ is a $G$-equivariant Mori fiber space, then so is $\pi'$, and 
\item if $\pi$ is a $G$-equivariant strict conic fibration, then so does is $\pi'$.
\end{enumerate}
\end{proposition}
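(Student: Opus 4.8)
The plan is to lift the birational modification $\phi_Z$ of the base to a crepant birational modification $\phi$ of the total space by running a $G$-equivariant relative Minimal Model Program, and then to read off the five properties from the construction. First I would record that both $Z$ and $Z'$ are of Fano type over $W$: the base $Z$ is Fano type over $W$ by Lemma~\ref{lem:FT-under-morphisms}(2) applied to $\pi$, and since $\phi_Z$ only extracts generalized log canonical places of $(Z,B_Z,\mathbf{N})$, after the standard perturbation of the auxiliary boundary that turns these into non-canonical places, $Z'$ is Fano type over $W$ by Lemma~\ref{lem:FT-under-morphisms}(3). In particular $X\to W$ and the spaces over the base are relative Mori dream spaces, so the equivariant programs below terminate.

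Next I would build the total space over $Z'$. Choosing a common $G$-equivariant log resolution $\bar Z\to Z$, $\bar Z\to Z'$ of $\phi_Z$, I would let $\bar X$ be the normalization of the main component of $X\times_Z\bar Z$, equipped with the log pull-back sub-pair $(\bar X,\bar B,\mathbf{M})$ of $(X,B,\mathbf{M})$ and the induced fibration $\bar\pi'\colon\bar X\to Z'$. Using the equivariant canonical bundle formula of Proposition~\ref{prop:equiv-gen-cbf}, namely $K_X+B+\mathbf{M}_X\sim q\pi^*(K_Z+B_Z+\mathbf{N}_Z)$, together with the fact that $\phi_Z$ is crepant for $(Z,B_Z,\mathbf{N})$ and $(Z',B_{Z'},\mathbf{N})$, I would check $K_{\bar X}+\bar B+\mathbf{M}_{\bar X}\sim_{\qq,Z'}0$, so $(\bar X,\bar B,\mathbf{M})$ is generalized sub-log Calabi--Yau over $Z'$. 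After a $G$-equivariant $\qq$-factorial dlt modification (Lemma~\ref{lem:g-equiv-dlt}), and noting that $\bar X$ is again Fano type over $W$ by Lemma~\ref{lem:FT-under-morphisms}, I would run a $G$-equivariant MMP over $Z'$ contracting precisely the exceptional divisors over $X$ that are \emph{not} generalized log canonical places of $(X,B,\mathbf{M})$, i.e.\ those appearing in $\bar B$ with coefficient strictly less than one. Since the program is crepant over $Z'$, it terminates with a model $\pi'\colon X'\to Z'$ and a $G$-equivariant crepant birational map $\phi\colon X'\dashrightarrow X$ extracting only generalized log canonical places; this is the content of \cite[Lemma~2.11]{MM24} carried out $G$-equivariantly, and gives property (1) and, via Lemma~\ref{lem:FT-under-morphisms}(1),(3), the Fano type statement (2).

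It then remains to verify the fiber-theoretic properties. For (3), a prime component $D$ of $\lfloor B_{Z'}\rfloor$ is a generalized log canonical center of the base, so by the monotonicity built into the canonical bundle formula its preimage ${\pi'}^{-1}(D)$ lies in the non-gklt locus of $(X',B',\mathbf{M})$; as $(X',B',\mathbf{M})$ is generalized log canonical over $Z'$ this forces ${\pi'}^{-1}(D)\subseteq\lfloor B'\rfloor$. For (4) and (5) I would invoke the two-ray game as in \cite[Lemma~2.12]{MM24}: the construction preserves the relative equivariant Picard rank, so $\rho^G(X/Z)=1$ implies $\rho^G(X'/Z')=1$, yielding the Mori fiber space statement (4); and for (5) the two $G$-invariant horizontal components $S_0,S_\infty$ of $\lfloor B\rfloor$ are generalized log canonical places, so their strict transforms survive the crepant modification, remain $G$-invariant and horizontal over $Z'$, and the pair stays log Calabi--Yau over $Z'$, so together with (4) all four conditions of Definition~\ref{def:strict-conic} hold for $\pi'$.

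The main obstacle I anticipate is the MMP bookkeeping in the equivariant setting: ensuring that the $G$-equivariant program over $Z'$ terminates with exactly the fibration $\pi'$ — contracting every non-log-canonical exceptional divisor while retaining the ones carrying the fibration structure over $Z'$ — without disturbing the relative equivariant Picard rank needed for (4). A second delicate point, inside (5), is that the disjointness of $S_0$ and $S_\infty$ must persist; this requires the steps of the program to be isomorphisms near the generic points of these horizontal divisors, which should follow because $\phi$ extracts only log canonical places lying over the centers dictated by $\phi_Z$ and hence is an isomorphism over the generic point of $Z'$.
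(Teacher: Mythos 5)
Your treatment of properties (1)--(4) follows the same route as the paper: both reduce to \cite[Lemma 2.11 and Lemma 2.12]{MM24}, and your sketch of the fiber-product/equivariant-MMP construction is consistent with what those lemmas provide. The paper in fact spends its entire proof on property (5), and that is exactly where your argument has a genuine gap.

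For (5) you assert that the strict transforms $S'_0$ and $S'_\infty$ ``survive the crepant modification'' and remain disjoint because $\phi$ is an isomorphism near their generic points and over the generic point of $Z'$. This does not prove disjointness. Disjointness is a global condition on $X'$: any intersection $S'_0\cap S'_\infty$ would necessarily lie over the locus where $\phi_Z$ (and hence $\phi$) fails to be an isomorphism, which is precisely the locus your argument says nothing about. A birational modification that is an isomorphism over a dense open subset of the base can perfectly well bring two previously disjoint horizontal divisors into contact over the modified fibers. The paper's actual argument is different and uses the other established properties in an essential way: since $\pi'$ has relative dimension one, every generalized log canonical center of $(X',B',\mathbf{M})$ other than $S'_0$ and $S'_\infty$ is vertical over $Z'$ and hence contained in $\supp {\pi'}^*\lfloor B_{Z'}\rfloor$; by property (3) one may form the generalized pair $(X',B'-\epsilon\,{\pi'}^*\lfloor B_{Z'}\rfloor,\mathbf{M})$ for small $\epsilon>0$, whose only glc centers are the divisors $S'_0$ and $S'_\infty$ themselves. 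If these intersected, the intersection would be a union of glc centers of this perturbed pair of smaller dimension, contradicting the fact that it has no such centers. You should replace your generic-point heuristic with an argument of this kind (or an equivalent use of the connectedness/adjunction structure of glc centers); as written, the disjointness claim is unsupported.
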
 

\begin{proof}
Properties (1)-(4) are already proved in~\cite[Lemma 2.11 and Lemma 2.12]{MM24}.
It suffices to show statement (5).
Assume that $\pi$ is a $G$-equivariant Mori fiber space of relative dimension one
and that the divisor $\lfloor B\rfloor$ has two $G$-invariant disjoint prime components that are horizontal over $Z$. Call $S_0$ and $S_\infty$ these $G$-invariant prime components. 
Consider a commutative diagram as in the statement:
\[
\xymatrix{
(X,B,\mathbf{M})\ar[d]_-{\pi} & (X',B',\mathbf{M})\ar@{-->}[l]_-{\phi}\ar[d]^-{\pi'}\\
(Z,B_Z,\mathbf{N}) & (Z',B_{Z'},\mathbf{N})\ar@{-->}[l]^-{\phi_Z}
}
\]
satisfying properties (1)-(4).
Let $S'_0$ and $S'_\infty$ be the strict transforms of $S_0$ and $S_\infty$ on $X'$, respectively.
By condition (4), we know that $\pi'$ is a $G$-equivariant Mori fiber space of relative dimension one.
It suffices to prove that $S'_0$ and $S'_\infty$ are disjoint.
First, note that every glcc of $(X',B',\mathbf{M})$ which is different from $S'_0$ and $S'_\infty$ is vertical over $Z'$. This holds as $X'\rightarrow Z'$ has relative dimension one, so the only horizontal glc centers of
$(X',B',\mathbf{M})$ are $S'_0$ and $S'_\infty$.
Then, the support of ${\pi'}^*\lfloor B_{Z'}\rfloor$ contains all the vertical generalized log canonical centers of $(X',B',\mathbf{M})$.
By condition (3), the divisor
$B'-\epsilon {\pi'}^*\lfloor B_{Z'}\rfloor$ is effective for $\epsilon>0$ small enough.
In particular, 
$(X',B'-\epsilon {\pi'}^*\lfloor B_{Z'}\rfloor,\mathbf{M})$
is a generalized pair for $\epsilon>0$ small enough.
The only generalized log canonical centers of $(X',B'-\epsilon {\pi'}^*\lfloor B_{Z'}\rfloor,\mathbf{M})$ are $S'_0$ and $S'_\infty$. 
Hence, the divisors $S'_0$ and $S'_\infty$ are disjoint.
Indeed, the intersection of glc centers
is union of glc centers.
This finishes the proof of (5).
\end{proof}

\subsection{Finite equivariant geometry} 
In this section, we prove some results regarding $G$-equivariant geometry.
The first result is proved in~\cite[Theorem 5.1]{Mor21}

\begin{theorem}\label{thm:action-dual-comp}
Let $n$ be a positive integer.
There exists a constant $c(n)$,
only depending on $n$,
satisfying the following.
Let $(X,B,\mathbf{M})$ be a generalized log Calabi--Yau pair of dimension $n$.
If $G\leqslant {\rm Aut}(X,B,\mathbf{M})$ is a finite group, then $G$ admits a normal subgroup $A\leqslant G$ of index at most $c(n)$ that acts trivially on $\mathcal{D}(X,B,\mathbf{M})$.
\end{theorem}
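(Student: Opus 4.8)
The plan is to replace $\mathcal{D}(X,B,\mathbf{M})$ by a combinatorial model on which $G$ acts cellularly, take the kernel of this action as the desired subgroup, and reduce the whole statement to bounding the image of $G$ in the cellular automorphism group. First I would apply Lemma~\ref{lem:g-equiv-dlt} to produce a $G$-equivariant dlt modification $\phi\colon (Y,B_Y,\mathbf{M})\to (X,B,\mathbf{M})$. By Proposition~\ref{prop:sim-hom-equiv} the complex $\mathcal{D}(Y,B_Y,\mathbf{M})$ agrees with $\mathcal{D}(X,B,\mathbf{M})$ up to simple homotopy equivalence, and on $Y$ the dual complex is read off directly from the strata of $\lfloor B_Y\rfloor$. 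Since $G$ permutes these strata, it acts on $\mathcal{D}:=\mathcal{D}(X,B,\mathbf{M})$ by cellular automorphisms. Setting $A:=\ker\bigl(G\to \Aut(\mathcal{D})\bigr)$, the group $A$ is normal, acts trivially on $\mathcal{D}$ by construction, and $[G:A]=|\bar{G}|$, where $\bar{G}$ denotes the image of $G$ in $\Aut(\mathcal{D})$. Everything then reduces to bounding $|\bar{G}|$ by a constant depending only on $n$.

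The stabilizer of a cell is easy to control: a maximal cell $\sigma$ has $\dim\mathcal{D}+1=n-\operatorname{coreg}(X,B,\mathbf{M})\leq n$ vertices, corresponding to the boundary components meeting along the associated minimal log canonical center, so $\bar{G}_\sigma$ injects into the symmetric group on at most $n$ letters. The genuine difficulty is that $\bar{G}$ may move $\sigma$ through a very large orbit of maximal cells, so a purely combinatorial bound on $|\Aut(\mathcal{D})|$ is hopeless: already for $n=2$ the toric pair whose boundary is a cycle of $k$ rational curves has $\mathcal{D}$ equal to the $k$-gon, whose abstract automorphism group is dihedral of order $2k$. The resolution is that a \emph{geometrically realized} symmetry respects far more than the combinatorics. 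I would exploit the local toric structure of $(Y,B_Y)$ along a minimal log canonical center $W$: the $n-\operatorname{coreg}(X,B,\mathbf{M})$ components through $W$ meet in simple normal crossings, so their normal directions determine a lattice, and transporting these lattices across the codimension-one faces of $\mathcal{D}$ equips it with an integral affine structure that $\bar{G}$ preserves.

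The key step is to convert this linear rigidity into a uniform bound. The crystallographic restriction imposed by the integral affine structure is exactly what forbids the order-$k$ rotation of the $k$-gon above: any finite group of automorphisms compatible with the lattice structure embeds, through its holonomy, into a finite subgroup of $\mathrm{GL}_{n}(\zz)$, whose order is bounded in terms of $n$ by Minkowski's theorem. For the general coregularity case I would reduce to the coregularity-zero picture by restricting to a minimal log canonical center via adjunction; by Lemma~\ref{lem:FT-under-morphisms} the relevant varieties remain of Fano type, the induced pair $(W,B_W,\mathbf{M}_W)$ is log Calabi--Yau of coregularity zero, and its dual complex records precisely the normal lattice data. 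Assembling the local bounds along $\mathcal{D}$, using that it is a pseudomanifold connected in codimension one, yields $|\bar{G}|\leq c(n)$ and hence $[G:A]\leq c(n)$.

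The hard part will be making the passage from the \emph{local} integral linear data to a \emph{global} bound rigorous. The integral affine structure is in general singular, carrying nontrivial monodromy, so $\mathcal{D}$ need not be developable onto a single chart; one must show that the monodromy representation still confines every geometrically realized cellular symmetry to a crystallographic one, and then extract a bound depending only on $n$. This is where uniformity genuinely enters, and I expect the boundedness of Fano type varieties that underlies the Jordan property of Prokhorov--Shramov to supply the uniform constants; once the monodromy of the integral affine structure is in place, the finiteness of subgroups of $\mathrm{GL}_{n}(\zz)$ of bounded size provides the constant $c(n)$.
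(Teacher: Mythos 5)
First, a point of reference: the paper does not prove this statement at all --- it is quoted verbatim from \cite[Theorem 5.1]{Mor21} --- so there is no in-paper argument to compare yours against; your proposal has to stand on its own.

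Your reduction (equivariant gdlt modification via Lemma~\ref{lem:g-equiv-dlt}, cellular action on $\mathcal{D}$, take $A=\ker(G\to\Aut(\mathcal{D}))$) is the right opening move, and you correctly identify that the whole content is bounding the image $\bar G$. But the mechanism you propose for that bound does not work. A finite group of integral affine automorphisms of a (possibly singular) integral affine complex is an extension of its linear holonomy --- which Minkowski does bound inside $\mathrm{GL}_n(\zz)$ --- by its group of \emph{translations}, and the translation part is not controlled by $n$. Your own test case defeats the argument: a circle of integral affine circumference $k$ (the dual complex of a cycle of $k$ boundary divisors, each edge of unit lattice length) admits the full rotation group $\zz/k$ by integral affine automorphisms, with trivial holonomy. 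So even granting the two substantial unproved inputs (that $\mathcal{D}$ of a generalized dlt log Calabi--Yau pair carries a canonical integral affine structure, and that every geometric automorphism preserves it), the crystallographic restriction leaves the orbit of a maximal cell --- which you correctly flagged as the genuine difficulty --- completely unbounded. The boundedness has to be extracted from the geometry rather than from the lattice combinatorics: one needs boundedness statements for the Fano type varieties, complements, or minimal log canonical centers involved (the Birkar/Prokhorov--Shramov circle of ideas you mention only in your final sentence), and your sketch gives no indication of how those would convert into a bound on $|\bar G|$. As it stands the proof has a hole exactly at its central step.

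Two smaller inaccuracies: the stabilizer of a maximal cell injects into a symmetric group on its vertices only as far as its action \emph{on that cell} is concerned --- an element fixing one maximal cell pointwise can a priori act nontrivially elsewhere, so propagating triviality across $\mathcal{D}$ needs a separate (monodromy-type) argument; and the adjunction of a log Calabi--Yau pair to a \emph{minimal} log canonical center produces a klt Calabi--Yau structure of dimension equal to the coregularity, not a pair "of coregularity zero" as you write.
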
 

The following lemma allows us to perform equivariant extraction of generalized log canonical places.

\begin{lemma}\label{lem:g-extraction}
Let $(X,B,\mathbf{M})$ be a generalized log canonical pair and $G\leqslant {\rm Aut}(X,B,\mathbf{M})$ be a finite group.
Assume that $X$ is a klt variety.
Let $\mathcal{V}$ be a set of $G$-invariant generalized log canonical places of $(X,B,\mathbf{M})$.
Then, there exists a $G$-equivariant projective birational morphism $Y\rightarrow X$ that extracts precisely the divisors corresponding to the elements of $\mathcal{V}$.
\end{lemma}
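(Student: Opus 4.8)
The plan is to reduce the statement to the well-known non-equivariant extraction result and then descend it through the finite quotient, exactly as is done for the dlt-modification statement. First I would pass to the quotient $X/G$ and work with the induced data there. Since the b-nef divisor $\mathbf{M}$ is $G$-invariant, by the cited \cite[Proposition 2.4]{Mor21} it descends from a b-nef divisor $\mathbf{N}$ on $X/G$, and the pair $(X,B,\mathbf{M})$ descends to a generalized pair $(X/G,B_{X/G},\mathbf{N})$ (with the boundary adjusted by the ramification of $X\to X/G$, as in the definition of the log pull-back for finite Galois morphisms). Because the divisors in $\mathcal{V}$ are $G$-invariant, they descend to a finite set $\mathcal{V}/G$ of generalized log canonical places over $X/G$.

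The second step is to perform the non-equivariant extraction downstairs. The standard tool is the existence of a projective birational morphism extracting a prescribed set of generalized log canonical places over a klt base; this is the same input used in the proof of Lemma~\ref{lem:FT-under-morphisms}(3), namely \cite[Theorem 1]{Mor20}, applied to a klt perturbation of the boundary so that the chosen places become non-canonical. Concretely, I would replace $B_{X/G}$ by $(1-\epsilon)B_{X/G}+\epsilon\Delta$ for a suitable auxiliary klt boundary $\Delta$ and $\epsilon>0$ small, arranging that exactly the divisors corresponding to $\mathcal{V}/G$ are non-canonical, and then invoke the extraction theorem to obtain $\bar Y\to X/G$ extracting precisely those divisors. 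Since $X$ is klt and $X\to X/G$ is a quotient by a finite group, $X/G$ is klt as well, so the hypotheses of the extraction theorem are met.

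The final step is to pull the morphism $\bar Y\to X/G$ back along $X\to X/G$ and normalize. Taking the normalization $Y$ of the fiber product $X\times_{X/G}\bar Y$ produces a $G$-equivariant projective birational morphism $Y\to X$, and $Y\to \bar Y$ is a finite Galois morphism with group $G$. The $G$-invariant divisors extracted by $Y\to X$ are precisely the preimages of the divisors extracted by $\bar Y\to X/G$, which by construction correspond to $\mathcal{V}/G$, hence on $Y$ they are exactly the elements of $\mathcal{V}$. One checks that no extra divisors are extracted by comparing discrepancies through the Galois cover: a divisor is extracted by $Y\to X$ if and only if its image is extracted by $\bar Y\to X/G$, so the extracted set is exactly $\mathcal{V}$.

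The main obstacle I anticipate is the bookkeeping at the last step: verifying that the normalized fiber product extracts \emph{precisely} the divisors in $\mathcal{V}$ and not additional exceptional divisors, and that the $G$-action on the base covering lifts compatibly. This requires a careful Riemann--Hurwitz/ramification analysis relating the generalized log discrepancies of a place $E$ over $X$ to those of its image $\bar E$ over $X/G$, using that $\mathbf{N}$ is the descent of $\mathbf{M}$ so the moduli parts match crepantly. The perturbation trick (choosing $\Delta$ and $\epsilon$) must also be arranged $G$-equivariantly downstairs, which is automatic since everything is done on the quotient, but one should confirm that the perturbation does not accidentally turn some place of $\mathcal{V}$ canonical or make an unwanted place non-canonical.
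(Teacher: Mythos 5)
Your proof is essentially correct, but it takes a genuinely different route from the paper. The paper never leaves $X$: it scales the generalized pair to the klt pair $(X,(1-\epsilon)B,(1-\epsilon)\mathbf{M})$, for which the places in $\mathcal{V}$ become non-canonical, then trades boundary plus moduli part for a single $G$-invariant effective divisor $\Gamma\sim_\qq (1-\epsilon)B+(1-\epsilon)\mathbf{M}_X$ with $(X,\Gamma)$ klt (via \cite[Lemma 3.7]{MS21}), and concludes by quoting a $G$-equivariant extraction theorem (\cite[Theorem 1]{Mor20c}) directly. You instead descend to $X/G$, extract non-equivariantly there, and pull back through the normalized fiber product. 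Your route buys independence from any equivariant extraction statement -- only the ordinary extraction theorem on the klt quotient is needed -- at the price of the Galois bookkeeping; the paper's route is shorter because the equivariant input is already available in the literature it cites, and the only work is manufacturing the $G$-invariant klt perturbation upstairs.

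Two points in your last step deserve to be made explicit rather than left as ``bookkeeping.'' First, the reason no extra divisors appear: every exceptional divisor of $Y\rightarrow X$ lies over an exceptional divisor $\bar E$ of $\bar Y\rightarrow X/G$, the Galois group $G$ acts transitively on the set of divisors of $Y$ over $\bar E$, and since that set contains the $G$-invariant place $E\in\mathcal{V}$ it must equal $\{E\}$; this is where the hypothesis that the places in $\mathcal{V}$ are $G$-\emph{invariant} (not merely forming $G$-orbits) is used, and your phrase ``the $G$-invariant divisors extracted'' quietly skips over the a priori possibility of non-invariant exceptional divisors. Second, the Riemann--Hurwitz comparison $a_E(X,B,\mathbf{M})=r\,a_{\bar E}(X/G,B_{X/G},\mathbf{N})$ is what guarantees both that $\mathcal{V}/G$ consists of generalized log canonical places downstairs and that the perturbation can be arranged there; you name this but should carry it out. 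With these two points filled in, the argument is complete.
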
 

\begin{proof}
The generalized pair $(X,(1-\epsilon)B,(1-\epsilon)\mathbf{M})$ is $G$-equivariant and klt.
If $\epsilon>0$ is small enough, then 
the valuations in $\mathcal{V}$ are generalized non-canonical places of $(X,(1-\epsilon)B,(1-\epsilon)B)$.
Hence, we may find a $G$-invariant effective divisor $\Gamma\sim_\qq (1-\epsilon)B+(1-\epsilon)\mathbf{M}_X$ such that $(X,\Gamma)$ is klt and every valuation
in $\mathcal{V}$ is a non-canonical valuation of $(X,\Gamma)$ (see, e.g.,~\cite[Lemma 3.7]{MS21}).
Thus, the statement follows from~\cite[Theorem 1]{Mor20c}.
\end{proof}

\begin{lemma}\label{lem:extending-fibration}
Let $G$ be a finite group.
Let $X\rightarrow Z$ be a $G$-equivariant Fano type fibration and $(X,B)$ be a $G$-equivariant log Calabi--Yau pair.
Let $G_Z$ be the homomorphic image of $G$ acting on $Z$.
Let $U$ be an open $G_Z$-invariant subset of $U$ and $X_U$ its preimage on $X$.
Let $B_U$ be the restriction of $B$ to $X_U$.
Assume that there exists a $G$-equivariant crepant birational model $(Y_U,B_{Y_U})$ of $(X_U,B_U)$ over $U$ that admits a $G$-equivariant fibration $Y_U\rightarrow W_U$ over $U$ of relative dimension $d$.
Furthermore, assume that $Y_U\dashrightarrow X_U$ only extracts log canonical places of $(X_U,B_U)$.
Then, there exists a $G$-equivariant crepant birational model of $(X,B)$ over $Z$ that admits a $G$-equivariant fibration over $Z$ of relative dimension $d$.
\end{lemma}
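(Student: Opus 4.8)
The plan is to build a single global $G$-equivariant model $Y\to X\to Z$ that carries all the divisorial data of $Y_U$, encode the fibration $\rho_U\colon Y_U\to W_U$ as the ample model of a pulled-back ample class, and then spread this structure over all of $Z$ by running an MMP for that class. Since the given fibration is defined only over the dense open set $U$, the whole point is to check that the globally constructed fibration restricts to $\rho_U$ over $U$, which pins down its relative dimension as $d$.

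First I would transport the extracted places to $X$. Let $E_1,\dots,E_k$ be the divisors extracted by $Y_U\to X_U$; they form a $G$-invariant set, and because $B_U=B|_{X_U}$, the log discrepancy of each $E_i$ with respect to $(X,B)$ is computed at a generic point lying over $U$, so each $E_i$ is also a log canonical place of $(X,B)$. As $X$ is of Fano type it is klt, so Lemma~\ref{lem:g-extraction} produces a $G$-equivariant projective birational morphism $g\colon Y\to X$ extracting precisely the closures of the $E_i$. Log canonical places are non-canonical places, so Lemma~\ref{lem:FT-under-morphisms}(3) shows $Y$ is of Fano type over $Z$, and the log pullback $(Y,B_Y)$ is a $G$-equivariant log Calabi--Yau pair crepant birational to $(X,B)$ over $Z$ with $B_Y\ge 0$. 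Over $U$ the morphism $g$ extracts exactly $E_1,\dots,E_k$, so $g^{-1}(X_U)$ is isomorphic in codimension one to $Y_U$.

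Next I would encode the fibration numerically. Picking a $G$-invariant ample $\qq$-divisor $A$ on $W_U$, the class $L_U:=\rho_U^*A$ is $G$-invariant and semiample, and $\rho_U$ is its ample model over $U$. Transporting $L_U$ across the codimension-one isomorphism $g^{-1}(X_U)\dashrightarrow Y_U$, extending by closure to a divisor on $Y$, and averaging over $G$, I obtain a $G$-invariant $\qq$-divisor $L$ on $Y$ restricting to the transform of $L_U$. Since $Y$ is Fano type over $Z$, the $G$-equivariant $L$-MMP over $Z$ terminates. The key point is that $K_Y+B_Y\sim_{\rr,Z}0$ is numerically trivial over $Z$, so every extremal ray contracted in this MMP is $(K_Y+B_Y)$-trivial; hence each step is crepant for $(Y,B_Y)$ and the resulting ample model $h\colon Y^\dagger\to W$ over $Z$ carries a $G$-equivariant log Calabi--Yau pair $(Y^\dagger,B_{Y^\dagger})$, with $B_{Y^\dagger}\ge 0$, that is crepant birational to $(X,B)$.

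Finally I would read off the relative dimension. The relative ample model is $\operatorname{Proj}$ of the section ring and depends only on the numerical class $L$, so its formation commutes with restriction to the open set $U$: thus $W\times_Z U$ is the ample model of $L_U$ over $U$, namely $W_U$, and $h$ restricts over $U$ to $\rho_U$. As $U$ is dense in $Z$, a general fiber of $h$ lies over $U$ and therefore has dimension $d$, so $h\colon Y^\dagger\to W$ is a $G$-equivariant fibration over $Z$ of relative dimension $d$, as desired. The main obstacle is precisely this bookkeeping over $U$: one must verify that the globally extracted model agrees with $Y_U$ in codimension one, that over $U$ the $L$-MMP consists only of small modifications so the given fibration is genuinely recovered rather than replaced by a coarser contraction, and that the ample model commutes with restriction to $U$. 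The second essential check is crepancy of every MMP step, which rests entirely on numerical triviality of $K_Y+B_Y$ over $Z$ and is what keeps $Y^\dagger$ a crepant model of $(X,B)$.
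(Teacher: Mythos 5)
Your proposal is correct and follows essentially the same route as the paper: equivariantly extract the log canonical places so the global model agrees with $Y_U$ in codimension one over $U$, transport the pullback of an ample divisor on $W_U$, take a $G$-invariant closure, run the MMP for that divisor over $Z$ (using the Fano type hypothesis), and pass to the ample model, whose relative dimension is read off over the dense open set $U$. The paper phrases the last step via invariance of the relative Iitaka dimension rather than compatibility of ample models with restriction to $U$, but this is only a cosmetic difference.
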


\begin{proof}
By Lemma~\ref{lem:g-extraction}, we can find a $G$-equivariant crepant birational morphism 
$(X',B') \rightarrow (X,B)$ 
such that $X'_U$ is $G$-equivariantly small birational to $Y_U$.
In the previous sentence, $X'_U$ is the preimage of $U$ in $X'$.
Let $A_{W_U}$ be a $G$-invariant ample divisor on $W_U$ and $A_{Y_U}$ be its preimage on $Y_U$.
Then, $A_{Y_U}$ is a semiample $G$-invariant divisor
of relative Iitaka dimension equal to $\dim W_U$ over $U$.
Let $A_{X'_U}$ be the push-forward of $A_{Y_U}$ 
to $X'_U$.
Let $A_{X'}$ be a $G$-equivariant closure of $A_{X'_U}$ on $X'$.
Then, $A_{X'}$ is a $G$-invariant pseudo-effective divisor over $Z$.
Furthermore, the relative Iitaka dimension of $A_{X'}$ over $Z$ is $\dim W_U$.
Indeed, the relative Iitaka dimension over $Z$ can be computed over the open set $U\subseteq Z$.
By Lemma~\ref{lem:FT-under-morphisms}, we know that $X'\rightarrow Z$ is a $G$-equivariant Fano type morphism.
So we can run a $G$-equivariant $A_{X'}$-MMP over $Z$.
Let $X'\dashrightarrow X''$ be the $G$-equivariant MMP over $Z$.
Let $B''$ be the image of $B'$ on $X''$.
The log pair $(X'',B'')$ is 
$G$-equivariantly crepant birational equivalent to $(X',B')$,
hence it is $G$-equivariantly crepant birational equivalent to $(X,B)$.
Let $A_{X''}$ be the push-forward of $A_{X'}$ to $X''$. 
The divisor $A_{X''}$ is $G$-invariant and semiample.
Let $X''\rightarrow X^{(3)}$ be the $G$-equivariant ample model of $A_{X''}$.
Then, the variety $X^{(3)}$ has dimension equal
to such of $W_U$.
Thus $X''$ admits a $G$-equivariant fibration
of relative dimension $d$. 
\end{proof}

\subsection{Finite actions on conic fibrations}

In this subsection, we study finite actions on conic fibrations.

\begin{lemma}\label{lem:finite-action-conic-bundle}
Let $X\rightarrow Z$ be a conic fibration.
Let $(X,B)$ be a log pair which is log Calabi--Yau over $Z$.
Assume that the log general fiber of $(X,B)\rightarrow Z$ is isomorphic to $(\pp^1,\{0\}+\{\infty\})$.
Assume that $\zz_m \leqslant {\rm Aut}_Z(X,B)$ and $m\geq 7$.
Then $\lfloor B\rfloor$ has two $\zz_m$-invariant prime components that are horizontal over $Z$.
\end{lemma}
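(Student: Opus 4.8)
The plan is to read the conclusion off the cyclic action induced on a general fiber and then globalize it via the fixed locus of a generator. Fix a generator $g$ of $\zz_m$, and let $z\in Z$ be a general closed point, so that the log general fiber is $(X_z,B|_{X_z})\cong(\pp^1,\{0\}+\{\infty\})$ with $X_z\cong\pp^1_\kk$ and two distinct boundary points. Since $g\in\Aut_Z(X,B)$ induces the identity on $Z$, it fixes $z$ and acts on $X_z$, giving a homomorphism $\rho\colon\zz_m\to\Aut(X_z)=\mathrm{PGL}_2(\kk)$ that preserves the two-point boundary set $\{0,\infty\}=B|_{X_z}$. First I would check that $\rho$ is injective: any element of its kernel acts trivially on the generic fiber, hence on a dense open subset of $X$, hence on all of $X$; so it is the identity. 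Thus $\zz_m$ embeds as a faithful cyclic subgroup of $\mathrm{PGL}_2(\kk)$ of order $m$.

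Next I would use the structure of such a subgroup. A cyclic subgroup of $\mathrm{PGL}_2(\kk)$ of order $m\ge 3$ is conjugate to the rotation group $\{w\mapsto\zeta^k w\}$ for a primitive $m$-th root of unity $\zeta$, and so it has exactly two fixed points $p_+,p_-$, common to all of $\zz_m$. Any point other than $p_\pm$ has $\zz_m$-orbit of size $m$; since $\zz_m$ preserves the two-point set $\{0,\infty\}$ and $m\ge 3>2$, neither boundary point can have such an orbit. Hence $\{0,\infty\}=\{p_+,p_-\}$, and each boundary point is fixed by all of $\zz_m$. This is the step that uses the hypothesis on $m$ (any $m\ge 3$, and in particular $m\ge 7$, suffices); it simultaneously excludes the a priori possibility that some element of $\zz_m$ swaps the two boundary points, since an element of $\mathrm{PGL}_2$ interchanging two points is an involution and $m>2$.

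Finally I would globalize. Let $X^g\subseteq X$ be the fixed locus of $g$; it is closed, and over the general point its horizontal part is $\{p_+,p_-\}$. Consequently the horizontal part $B^h$ of $\lfloor B\rfloor$, which restricts on $X_z$ to the reduced $\{0\}+\{\infty\}$ and is therefore supported on $g$-fixed points, satisfies $B^h\subseteq X^g$. The derivative of $g$ on the fiber tangent line is $\zeta$ at $p_+$ and $\zeta^{-1}$ at $p_-$; since $m\ge 3$ we have $\zeta\ne\zeta^{-1}$, and this eigenvalue is locally constant on the horizontal part of $X^g$. Hence $p_+$ and $p_-$ lie in distinct connected components of $X^g$, so $B^h$ splits as $S_0+S_\infty$ with $S_0$ and $S_\infty$ horizontal of degree one over $Z$, each therefore irreducible and reduced, i.e. two prime components of $\lfloor B\rfloor$. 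Their invariance is then immediate: for $h\in\zz_m$, $h$ commutes with $g$, so it preserves $X^g$ and preserves the $\zeta$- and $\zeta^{-1}$-eigenloci of $g$ (as $dg_{h(x)}$ is conjugate to $dg_x$), whence $h(S_0)=S_0$ and $h(S_\infty)=S_\infty$.

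The main obstacle is the last point: a priori $B^h$ could be a single irreducible double section, so that the two fiberwise boundary points are globally indistinguishable and no splitting into two invariant horizontal components exists. This is precisely what the eigenvalue distinction $\zeta\ne\zeta^{-1}$ resolves — it separates $p_+$ from $p_-$ by a discrete, $\zz_m$-stable invariant — and it is the step where the lower bound on $m$ is genuinely used.
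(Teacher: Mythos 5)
Your proof is correct, and it takes a genuinely different route from the paper's. Both arguments reduce to the same fiberwise picture: for $m\geq 3$ the faithful $\zz_m$-action on a general log fiber $(\pp^1,\{0\}+\{\infty\})$ is a rotation fixing both boundary points, so the only danger is that the horizontal part of $\lfloor B\rfloor$ is a single irreducible double section whose two points over $z$ are exchanged by monodromy. The paper excludes this by contradiction: it base-changes along the degree-two cover $S\rightarrow Z$ to split $S$ into two sections exchanged by the Galois involution, observes that this involution must commute with the $\zz_m$-action, and computes in $\mathrm{PGL}_2$ that a map $[x:y]\mapsto[\alpha y:\beta x]$ swapping $0$ and $\infty$ commutes with $[x:y]\mapsto[\lambda_m x:y]$ only if $\lambda_m^2=1$, forcing $m\leq 2$. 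You instead distinguish the two fixed points intrinsically by the multiplier of $g$ on the fiber tangent line ($\zeta$ at one, $\zeta^{-1}$ at the other), note that this multiplier is a regular function valued in $m$-th roots of unity on the relevant dense open of the fixed divisor, hence constant on each irreducible component, and conclude that the double section must split; invariance of each resulting component is then automatic since it contains a dense set of $\zz_m$-fixed points. Your argument is more geometric and avoids the covering trick, making explicit that the obstruction is monodromy swapping $\zeta$ and $\zeta^{-1}$; the paper's is an elementary matrix computation whose base-change construction is reused elsewhere in the article. Two points worth tightening in a final write-up: faithfulness of the action on a general fiber should be argued via the finitely many proper closed loci $\{z\mid X_z\subseteq \mathrm{Fix}(h)\}$ for $h\neq \mathrm{id}$, rather than from triviality on a single fiber; and the locally-constant-multiplier claim should be stated on the dense open subset of the component where $X$ and the component are smooth and the component meets the fiber transversally (which exists since $S\rightarrow Z$ is generically \'etale in characteristic zero). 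As you note, both proofs in fact use only $m\geq 3$.
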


\begin{proof}
Assume otherwise that $\lfloor B\rfloor$ has a unique prime component $S$ that is horizontal over $Z$. Note that $S$ is $\zz_m$-invariant.
Then, there is a $\zz_m$-equivariant commutative diagram as follows: 
\[
\xymatrix{
(X,B)\ar[d] & (X',B')\ar[d]\ar[l]_-{/\zz_2} \\
Z & Z' \ar[l]_{/\zz_2}
}
\]
obtained by taking the normalization of the base change induced by the $\zz_m$-equivariant finite morphism $S\rightarrow Z$.
Here, the morphism $X'\rightarrow Z'$ is a conic fibration, the pair 
$(X',B')$ is the log pull-back of $(X,B)$ to $X'$, and $\lfloor B'\rfloor$ has two prime components $S_0'$ and $S'_\infty$ that are $\zz_m$-invariant and horizontal over $Z$. 
Let $i$ be the involution acting on $X'$ that swaps $S'_0$ and $S'_\infty$.
Let $G$ be the group of automorphisms of $(X',B')$ generated by $\zz_2$ and $\zz_m$. 
We have an exact sequence
\begin{equation}\label{ses:2-m}
1\rightarrow \zz_2 \rightarrow G \rightarrow \zz_m \rightarrow 1.
\end{equation} 
As $m\geq 7$, the group $\zz_m$ is acting as multiplication by $m$-roots of unity on the fibers of $X'\rightarrow Z'$.
Let $u\in Z'$ be a closed general point and $i(u)$ its image under $i$. By abuse of notation, we are denoting by $i$ the automorphism on $Z'$ induced by the involution on $X'$.
Let 
\[
(X'_u,B'_u) 
\text{ and } 
\left(X'_{i(u)},B'_{i(u)} \right)
\]
be the log fibers over $i$ and $i(u)$, respectively.
Fix isomorphisms of log pairs $f\colon (\pp^1,\{0\}+\{\infty\})\rightarrow (X'_u,B'_u)$
and $g\colon (\pp^1,\{0\}+\{\infty\})\rightarrow (X'_{i(u)},B'_{i(u)})$ that map zero to $S'_0$ and infinity to $S'_\infty$.
Then $i_0:=f^{-1}\circ i \circ g$ is an involution of $(\pp^1,\{0\}+\{\infty\})$.
Thus, we can write $i_0([x:y])=([\alpha y:\beta x])$ for certain $\alpha,\beta\in \mathbb{K}^*$.
Let $\mu_m$ be the multiplication by an $m$-root of unity $\lambda_m$ on the first coordinate of $\pp^1$.
From~\eqref{ses:2-m}, the $\zz_2$-action commutes with the $\zz_m$-action. So, we conclude that 
$i_0 \circ \mu_m =\mu_m \circ i_0$.
Thus, we have  
\[
i_0\circ \mu_m([x:y])=i_0([\lambda_m x:y])=[\alpha y: \lambda_m\beta x]
\]
and 
\[
\mu_m\circ i_0([x:y])=
\mu_m([\alpha y:\beta x])=
[\lambda_m \alpha y:\beta x].
\]
From the previous equalities, we deduce that $m=2$. This contradicts the fact that $m\geq 7$. Hence, $\lfloor B\rfloor$ has two $\zz_m$-invariant prime components.
\end{proof}

\begin{lemma}\label{lem:making-sections-disjoints}
Let $X$ be a $\qq$-factorial variety and $X\rightarrow W$ be a Fano type fibration.
Let $(X,B,\mathbf{M})$ be a generalized log Calabi--Yau pair and $G\leqslant {\rm Aut}(X,B,\mathbf{M})$ be a finite group.
Let $f\colon X\rightarrow Z$ be a conic fibration over $W$
and assume that $\lfloor B\rfloor$ has two $G$-invariant prime divisors that are horizontal over $Z$.
Let $G_Z$ be the homomorphic image of $G$ acting on $Z$.
Let $(Z,B_Z,\mathbf{N})$ be the $G_Z$-equivariant generalized log Calabi--Yau pair induced by the generalized canonical bundle formula.
Then, there exists a commutative diagram as follows:
\[
\xymatrix{ 
(X,B,\mathbf{M})\ar[d]_-{f} & (X',B',\mathbf{M})\ar[d]^-{f'}\ar@{-->}[l]_-{\phi} \\
(Z,B_Z,\mathbf{N}) &
(Z',B_{Z'},\mathbf{N})\ar[l]^-{\phi_Z}
}
\]
satisfying the following conditions:
\begin{enumerate}
    \item the morphism $\phi_Z$ is a $G_Z$-equivariant crepant birational modification,
    \item the birational map $\phi$ is crepant, $G$-equivariant, and only extracts generalized log canonical places of $(X,B,\mathbf{M})$, 
    \item the morphism $f'$ is a $G$-equivariant strict conic fibration.
\end{enumerate}
\end{lemma}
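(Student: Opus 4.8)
The plan is to produce a crepant birational model $(X',B',\mathbf{M})$ of $(X,B,\mathbf{M})$ over $Z$ in which the two horizontal boundary components of $\lfloor B\rfloor$ become disjoint, thereby upgrading the conic fibration $f$ to a \emph{strict} conic fibration as demanded by Definition~\ref{def:strict-conic}. The first step is to name the data: let $S_0$ and $S_\infty$ be the two $G$-invariant horizontal prime components of $\lfloor B\rfloor$ guaranteed by hypothesis. Since $f$ is a conic fibration with both $S_0,S_\infty$ horizontal, the general log fiber of $(X,B)\to Z$ is $(\pp^1,\{0\}+\{\infty\})$, so the only horizontal generalized log canonical centers of $(X,B,\mathbf{M})$ are $S_0$ and $S_\infty$; every other glcc is vertical over $Z$.

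Next I would set up the base modification. Apply Lemma~\ref{lem:g-equiv-dlt} to obtain a $G_Z$-equivariant gdlt modification $\phi_Z\colon Z'\rightarrow Z$ of $(Z,B_Z,\mathbf{N})$; this gives condition~(1). Then invoke Proposition~\ref{prop:2-ray} with this base modification: it produces the $G$-equivariant crepant birational map $\phi\colon X'\dashrightarrow X$ lying over $\phi_Z$, where $\phi$ only extracts generalized log canonical places of $(X,B,\mathbf{M})$ (condition~(2)), $f'\colon X'\to Z'$ is a $G$-equivariant Fano type fibration preserving the Mori fiber space / conic fibration structure, and ${f'}^{-1}(\lfloor B_{Z'}\rfloor)\subseteq \lfloor B'\rfloor$. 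Because $f$ has relative dimension one, $f'$ is again a relative dimension one Mori fiber space, so it is a $G$-equivariant conic fibration and the strict transforms $S'_0,S'_\infty$ of $S_0,S_\infty$ are its only horizontal glc centers.

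The heart of the argument — and the main obstacle — is proving that $S'_0$ and $S'_\infty$ are disjoint over the interior of $Z'$, which is exactly the last unverified clause of Definition~\ref{def:strict-conic}. Here I would reuse verbatim the disjointness mechanism from the proof of Proposition~\ref{prop:2-ray}(5): since all vertical glc centers of $(X',B',\mathbf{M})$ are contained in $\supp {f'}^*\lfloor B_{Z'}\rfloor$, the inclusion ${f'}^{-1}(\lfloor B_{Z'}\rfloor)\subseteq\lfloor B'\rfloor$ lets me subtract a small multiple and conclude that $(X',B'-\epsilon\, {f'}^*\lfloor B_{Z'}\rfloor,\mathbf{M})$ is a generalized pair whose only glc centers are $S'_0$ and $S'_\infty$. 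Because the intersection of glc centers of a generalized log canonical pair is again a union of glc centers, and $S'_0,S'_\infty$ are the \emph{only} glc centers left, any nonempty intersection $S'_0\cap S'_\infty$ would force a smaller-dimensional glc center, a contradiction. Hence $S'_0\cap S'_\infty=\emptyset$, giving condition~(3).

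The delicate point to watch is that Lemma~\ref{lem:finite-action-conic-bundle} only guarantees \emph{two} horizontal components generically, so I must confirm the genuine disjointness, not merely generic disjointness; this is precisely why the $\epsilon$-perturbation plus the glc-center intersection principle is needed, rather than a naive fiberwise argument. I would also check the $\qq$-factoriality and Fano type hypotheses propagate correctly through Proposition~\ref{prop:2-ray} (they do, via Lemma~\ref{lem:FT-under-morphisms}), so that the two-ray game and the perturbation are both legitimate. With these pieces assembled, $f'\colon (X',B',\mathbf{M})\to Z'$ satisfies all four requirements of a $G$-equivariant strict conic fibration, completing the proof.
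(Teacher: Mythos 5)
Your overall strategy matches the paper's: pass to a gdlt modification of the base, run the two-ray game of Proposition~\ref{prop:2-ray} over it, and then use the $\epsilon$-perturbation $(X',B'-\epsilon\,{f'}^*\lfloor B_{Z'}\rfloor,\mathbf{M})$ together with the principle that intersections of glc centers are unions of glc centers to force $S'_0\cap S'_\infty=\emptyset$. That disjointness mechanism is exactly the one the paper uses, and your justification of it (all glc centers of the perturbed pair are horizontal, while a nonempty $S'_0\cap S'_\infty$ would be a vertical one) is correct.

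There is, however, one genuine gap. You apply Proposition~\ref{prop:2-ray} directly to $f\colon X\rightarrow Z$ and then assert that ``because $f$ has relative dimension one, $f'$ is again a relative dimension one Mori fiber space.'' The hypothesis of the lemma only says that $f$ is a conic fibration; it does not give $\rho^G(X/Z)=1$, and relative dimension one does not imply the Mori fiber space condition (a conic fibration can have reducible fibers whose components raise the relative Picard rank). Proposition~\ref{prop:2-ray}(4) only transfers the Mori fiber space property when $\pi$ already has it, so your argument never establishes $\rho^G(X'/Z')=1$, which is condition~(1) of Definition~\ref{def:strict-conic} and hence indispensable for concluding that $f'$ is a \emph{strict} conic fibration. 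The paper closes this gap by first running a $G$-equivariant $K_X$-MMP over $Z$, which terminates with a $G$-equivariant Mori fiber space $X_0\rightarrow Z_0$ of relative dimension one (with $Z_0\rightarrow Z$ birational by the dimension count), and only then takes the gdlt modification of the base and plays the two-ray game; the divisors contracted by this MMP cause no trouble for condition~(2), since any divisor on $X'$ that is exceptional over $X$ is already exceptional over $X_0$ and is therefore a glc place by the two-ray game. You should insert this reduction before invoking Proposition~\ref{prop:2-ray}.
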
 

\begin{proof}
Let $S_0$ and $S_\infty$ be the $G$-invariant prime components of $\lfloor B\rfloor$ that are horizontal over $Z$.
First, we run a $G$-equivariant $K_X$-MMP over $Z$. 
We get a $G$-equivariant commutative diagram as follows:
\[
\xymatrix{ 
(X,B,\mathbf{M})\ar@{-->}[r]^-{\pi}\ar[d]_-{f} & (X_0,B_0,\mathbf{M})\ar[d]^-{f_0} \\
(Z,B_Z,\mathbf{N}) &
(Z_0,B_{Z_0},\mathbf{N})\ar[l]^-{\psi_Z}
}
\]
where $f_0$ is a $G$-equivariant Mori fiber space of relative dimension one over $Z$.
Further, $\psi_Z$ is a $G_Z$-equivariant crepant birational modification.
As usual, the generalized pair $(X_0,B_0,\mathbf{M})$ is the $G$-equivariant generalized log Calabi--Yau pair induced on $X_0$
and $(Z_0,B_{Z_0},\mathbf{N})$ is the $G_Z$-equivariant generalized log Calabi--Yau pair induced by the equivariant generalized canonical bundle formula.
Let $p_Z\colon Z'\rightarrow Z_0$ be a gdlt modification of $(Z_0, B_{Z_0},\mathbf{N})$ and 
$(Z',B_{Z'},\mathbf{N})$ be the generalized pair induced on $Z'$.
By Proposition~\ref{prop:2-ray}, there is a $G$-equivariant commutative diagram as follows:
\[
\xymatrix{ 
(X,B,\mathbf{M})\ar@{-->}[r]^-{\pi} \ar[d]_-{f} & (X_0,B_0,\mathbf{M})\ar[d]^-{f_0} & (X',B',\mathbf{M}) \ar@{-->}[l]_-{\phi_1} \ar[d]^-{f'} \\
(Z,B_Z,\mathbf{N}) &
(Z_0,B_{Z_0},\mathbf{N})\ar[l]^-{\psi_Z} & (Z',B_{Z'},\mathbf{N})\ar[l]^-{p_Z}
}
\]
where $\phi_1$ only extracts generalized log canonical places of $(X,B,\mathbf{M})$
and $f'$ is a $G$-equivariant conic fibration
with $\rho^G(X_1/Z_1)=1$.
Further, we have ${f'}^{-1}\lfloor B_{Z'}\rfloor \subseteq \lfloor B'\rfloor$ by Proposition~\ref{prop:2-ray}.(3).
Note that the generalized pair
$(Z',B_{Z'}-\epsilon \lfloor B_{Z'}\rfloor,\mathbf{N})$ is gklt.
Hence, all the log canonical centers of the generalized pair 
$(X',B'-\epsilon {f'}^*\lfloor B'\rfloor,\mathbf{M})$ are horizontal over $Z'$.
In particular, the only log canonical centers of 
$(X',B'-\epsilon {f'}^*\lfloor B'\rfloor,\mathbf{M})$ are $S'_0$
and $S'_{\infty}$.
Here, $S'_0$ and $S'_\infty$ are the strict transforms of $S_0$ and $S_\infty$ in $X'$, respectively.
If $S'_{0}$ and $S'_{\infty}$ intersect, then there would be a generalized log canonical center of $(X',B'-\epsilon {f'}^*\lfloor B'\rfloor,\mathbf{M})$ that is vertical over $Z'$, leading to a contradiction.
Set $\phi:= \pi^{-1}\circ \phi_1$
and $\phi_Z := \psi_Z\circ p_Z$.
By construction, $\phi_Z$ is a crepant $G_Z$-equivariant birational modification, so condition (1) holds.
The birational map $\phi$ is the composition of a $G$-equivariant contraction and a $G$-equivariant birational map that only extracts generalized log canonical places of $(X,B,\mathbf{M})$. Thus, the second condition holds.
The morphism $f'$ is a $G$-equivariant conic fibration with $\rho^G(X'/Z')=1$.
Finally, by construction, $\lfloor B'\rfloor$ has two $G$-invariant disjoint prime components that are horizontal over $Z$. This proves condition (3).
\end{proof}

\subsection{Generalized pairs on toric varieties} 
In this subsection, we prove some results 
regarding generalized pairs on toric varieties.

\begin{lemma}\label{lem:toric-indeterminacy-locus}
Let $X$ be a projective toric variety
and let $X\dashrightarrow Y$ be a birational map.
Then, there exists a projective toric birational morphism $X'\rightarrow X$ such that
the indeterminacy locus of $X'\dashrightarrow Y$ does not contain any toric strata of $X'$.
Furthermore, we may choose $X'$ to be smooth.
\end{lemma}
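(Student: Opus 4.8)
The goal is to resolve the indeterminacy of a birational map $X \dashrightarrow Y$ from a projective toric variety $X$ by a toric morphism, in such a way that no toric stratum of the new toric variety lies in the indeterminacy locus. The plan is to first perform an honest resolution of indeterminacy: take a normal projective variety $W$ with morphisms $p\colon W \to X$ and $q\colon W \to Y$ resolving $X \dashrightarrow Y$, where $p$ is birational. The indeterminacy locus $Z \subseteq X$ of $X \dashrightarrow Y$ is the (closed) image under $p$ of the locus where $p$ fails to be an isomorphism, equivalently the smallest closed set outside of which the rational map is a morphism. Since $X$ is toric, the plan is to dominate this resolution by a \emph{toric} one, using toric resolution techniques.

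The key technical step I would carry out is the following. The indeterminacy locus $Z$ is a closed subset of $X$, but it need not be torus-invariant. The standard trick is to pass to the normalized blow-up of $X$ along (the ideal sheaf of) $Z$, but to make this toric we instead blow up along a torus-invariant ideal that cuts out a subscheme \emph{containing} $Z$. Concretely, I would take any torus-invariant ideal sheaf $\mathcal{I}$ on $X$ whose blow-up dominates the graph closure of $X \dashrightarrow Y$; such an $\mathcal{I}$ exists because one can first resolve $X \dashrightarrow Y$ by \emph{any} projective birational $X' \to X$ and then, by toric Chow/flattening arguments for toric varieties, replace $X' \to X$ by a toric modification that still dominates the graph. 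After the blow-up $X' \to X$, the rational map $X' \dashrightarrow Y$ becomes a morphism precisely because $X' \to X$ factors through the graph closure. The point of working torically is that the exceptional structure of a toric blow-up consists of torus-invariant divisors, so the locus over which $X' \dashrightarrow Y$ could still fail to be defined is contained in the proper transform of $Z$, which by construction meets $X'$ in a subset disjoint from the generic points of all toric strata.

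For the disjointness conclusion I would argue as follows. A toric variety $X'$ has finitely many torus orbits, and a toric stratum is contained in the indeterminacy locus of $X' \dashrightarrow Y$ only if the whole (open) orbit lies in it; but the indeterminacy locus has codimension at least two in $X'$, while after a suitable toric refinement each orbit closure meets the locus where $X' \to X$ is an isomorphism in a dense set. Thus no toric stratum of $X'$ can be contained in the indeterminacy locus of $X' \dashrightarrow Y$. I would phrase this using the correspondence between toric strata and cones of the fan: refining the fan of $X$ sufficiently (subdividing so that the fan is smooth, i.e.\ the cones are generated by part of a basis) ensures that the morphism $X' \to X$ is an isomorphism over the torus and over a dense open subset of every orbit, which is exactly the required statement.

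Finally, the smoothness addendum is handled by \emph{toric resolution of singularities}: any fan can be refined to a smooth (regular) fan by a sequence of star subdivisions, and the corresponding toric morphism $X'' \to X'$ is projective and birational with $X''$ smooth. Composing, $X'' \to X$ is the desired projective toric birational morphism. Since refining the fan further only subdivides the toric strata and keeps the morphism an isomorphism over the same dense opens, the indeterminacy condition is preserved. The main obstacle I anticipate is the first technical step: producing a \emph{toric} modification of $X$ that dominates the graph closure of $X \dashrightarrow Y$. The cleanest way is to invoke toric flattening / the toric Chow lemma, or equivalently to note that the pullback to $X$ of an ample linear system on $Y$ defines a (possibly non-toric) ideal whose integral closure can be replaced by a torus-invariant ideal with the same blow-up behavior after passing to a common toric refinement; making this replacement rigorous, while keeping track of exactly which strata survive, is where the real care is needed.
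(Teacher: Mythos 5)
There is a genuine gap, and it is located exactly where you flag ``the real care is needed'': the step producing a torus-invariant ideal on $X$ whose blow-up dominates the graph closure of $X\dashrightarrow Y$ does not exist in general. Any toric modification $X'\to X$ (blow-up of a torus-invariant ideal, or any fan refinement) is an isomorphism over the open torus $T\subseteq X$, whereas the indeterminacy locus of $X\dashrightarrow Y$ can meet $T$ (e.g.\ $Y$ the blow-up of $X=\pp^2$ at a general point of the torus, with $X\dashrightarrow Y$ the inverse of the blow-down). Hence no toric modification can dominate the graph closure, and likewise the only torus-invariant subscheme containing an indeterminacy locus that meets $T$ is $X$ itself. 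The lemma is \emph{not} asserting that the indeterminacy can be resolved torically; it asserts only that it can be pushed off the toric strata, and your proposal supplies no mechanism for that. Your closing ``disjointness'' argument also does not address the right question: it discusses where $X'\to X$ is an isomorphism (and even that claim fails --- subdividing a cone $\sigma$ destroys the isomorphism over every point of the orbit $O(\sigma)$, not just a nowhere-dense subset), whereas what must be controlled is where $X'\dashrightarrow Y$ is undefined, which is governed by $Y$, not by the fan of $X'$.

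For contrast, the paper's proof works with the toric log Calabi--Yau pair $(X,B)$, takes a log resolution $\pi\colon Z\to X$ on which the map to $Y$ becomes a morphism, splits ${\rm Ex}(\pi)$ into the toric exceptional divisors $\Gamma_Z$ (coefficient one in the crepant pullback) and the non-toric ones $F_Z$, and runs a $(K_Z+\Gamma_Z+\epsilon F_Z)$-MMP over $X$. The negativity lemma (Lemma~\ref{lem:negativity}) forces this MMP to contract exactly $F_Z$, terminating in a smooth toric dlt modification $W\to X$; monotonicity of log discrepancies then shows that ${\rm Ex}(q^{-1})$ for $q\colon Z\dashrightarrow W$ --- which contains the indeterminacy locus of $W\dashrightarrow Y$ --- contains no log canonical centre of $(W,\Gamma_W)$, i.e.\ no toric stratum. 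Some argument of this type, which uses the resolution $Z$ (where the map to $Y$ is already a morphism) and carefully contracts only the non-toric part while tracking discrepancies, is what your proposal is missing.
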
 

\begin{proof}
Let $B$ be the reduced toric boundary of $X$ so $(X,B)$ is log Calabi--Yau toric pair.
Let $\pi\colon Z\rightarrow X$ be a log resolution of $(X,B)$ for which 
the birational map $X\dashrightarrow Y$
extends to a morphism $\pi\colon Z\rightarrow Y$.
Let $(Z,B_Z)$ be the pair obtained by log pull-back of $(X,B)$ to $Z$.
Note that $(Z,B_Z)$ is a sub-log Calabi--Yau pair.
Let $\Gamma_Z:=B_Z^{=1}$.
The prime components of $\Gamma_Z$ are precisely the toric valuations extracted by $Z\rightarrow X$ (see Lemma~\ref{lem:G_m-action-lift-to-gdlt}).
Fix $0<\epsilon<1$.
Let $\Delta_Z$ be the boundary divisor obtained from $B_Z$ by increasing the coefficients of the prime components in ${\rm Ex}(\pi)\setminus \Gamma_Z$ to $\epsilon$.
Then, we can write $\Delta_Z:=\Gamma_Z+\epsilon F_Z$
where $F_Z$ is reduced.
The log pair $(Z,\Delta_Z)$ is dlt and every prime component of $F_Z$ is exceptional over $X$.
Therefore, the divisor $K_Z+\Gamma_Z+\epsilon F_Z$ is
$\qq$-linearly equivalent to 
an effective divisor that is exceptional over $X$.
The pair $(Z,\Gamma_Z+F_Z)$ is dlt 
as $\Gamma_Z+F_Z$ has simple normal crossing support. 
So the divisor $F_Z$ contains no strata of $\Gamma_Z$.
We run a $(K_Z+\Gamma_Z+\epsilon F_Z)$-MMP over $X$ with scaling of an ample divisor. By Lemma~\ref{lem:negativity}, this MMP terminates in a model $W$ after contracting all the components of $F_Z$.
Let $(W,\Gamma_W)$ be the dlt pair induced on $W$.
Note that $(W,\Gamma_W)$ is a dlt modification of $(X,B)$. Hence, $W$ is a smooth projective toric variety.
Let $q\colon Z\dashrightarrow W$ be the induced morphism.
By monotonicity of log discrepancies, the subvariety $V:={\rm Ex}(q^{-1})$ does not contain
any log canonical center of $(W,\Gamma_W)$.
Hence, $V$ does not contain any dlt center of $(W,\Gamma_W)$.
The dlt centers of $(W,\Gamma_W)$ are precisely the toric strata of $W$.
Thus, $V$ does not contain any toric strata of $W$.
Note that $W \dashrightarrow Z$ is an isomorphism on the complement of $V$.
Hence, the indeterminacy locus of $W\dashrightarrow Y$ is contained in $V$.
So, the indeterminacy locus of $W\dashrightarrow Y$ does not contain any toric strata of $W$.
Thus, it suffices to set $X':=W$.
In our construction, $(W,\Gamma_W)$ is a dlt toric pair so it is log smooth. In particular, $X'$ is smooth.
\end{proof}

\begin{lemma}\label{lem:toric-gen}
Let $(X,B,\mathbf{M})$ be a generalized pair on a projective toric variety $X$.
Assume that $N\mathbf{M}$ is nef Cartier in a model where it descends.
Then, there exists a projective birational toric morphism
$X'\rightarrow X$ for which $N\mathbf{M}_{X'}$ is a base point free divisor.
\end{lemma}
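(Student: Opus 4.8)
The plan is to reduce the statement to producing a toric birational morphism $X'\to X$ on which $N\mathbf{M}_{X'}$ is nef and Cartier. This reduction is harmless: on a projective toric variety every Cartier divisor class contains a torus-invariant representative, a torus-invariant Cartier divisor is globally generated if and only if it is nef, and whether a divisor is base point free depends only on its linear equivalence class. Hence a nef Cartier divisor on a projective toric variety is automatically base point free, and it suffices to find $X'$ as above with $N\mathbf{M}_{X'}$ nef and Cartier.

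First I would fix a model $Y$ on which $\mathbf{M}$ descends, so that $N\mathbf{M}_Y$ is nef and Cartier, and take a smooth projective toric resolution $\pi\colon X_1\to X$. Resolving the indeterminacy of $X_1\dashrightarrow Y$ by a common resolution $W$ with $a\colon W\to X_1$ and $b\colon W\to Y$, the b-nef divisor descends on $W$, so $N\mathbf{M}_W=b^*(N\mathbf{M}_Y)$ is nef and $N\mathbf{M}_{X_1}=a_*(N\mathbf{M}_W)$ is the trace of $\mathbf{M}$ on $X_1$. Since $X_1$ is smooth, $N\mathbf{M}_{X_1}$ is Cartier, and being the push-forward of a nef divisor it is pseudo-effective.

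Next I would use that a projective toric variety is of Fano type, hence a Mori dream space, to run the $N\mathbf{M}_{X_1}$-MMP. All of its steps are toric and it terminates in a toric projective model $X_1\dashrightarrow X'_0$ on which the transform $N\mathbf{M}_{X'_0}$ of $N\mathbf{M}_{X_1}$ is nef; as $X'_0$ is a Mori dream space, $N\mathbf{M}_{X'_0}$ is semiample. Throughout the MMP the transform of $N\mathbf{M}_{X_1}$ stays equal to the trace $N\mathbf{M}_{X'_0}$, since b-divisor traces transform by strict transform and push-forward. Finally I would take a smooth projective toric variety $X'$ with morphisms $X'\to X_1\to X$ and $X'\to X'_0$ resolving $X_1\dashrightarrow X'_0$; then $X'\to X$ is a projective birational toric morphism, and, once $\mathbf{M}$ is shown to descend on $X'_0$, the trace $N\mathbf{M}_{X'}$ equals the pull-back of the nef divisor $N\mathbf{M}_{X'_0}$, hence is nef and Cartier, and therefore base point free by the reduction above.

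The hard part is proving that $\mathbf{M}$ genuinely descends on the minimal model $X'_0$, i.e. that $N\mathbf{M}_{X'_0}$ is not only nef but equals the true trace of the b-divisor with no hidden effective exceptional defect. To settle this I would compare, on a common resolution $V$ of $X'_0$ and $Y$, the two nef divisors $(V\to X'_0)^*(N\mathbf{M}_{X'_0})$ and $N\mathbf{M}_V=(V\to Y)^*(N\mathbf{M}_Y)$; they have the same push-forward to $X'_0$, so their difference is effective and exceptional over $X'_0$. The negativity lemma (Lemma~\ref{lem:neg0} and Lemma~\ref{lem:negativity}), together with the minimality of $X'_0$ (no $N\mathbf{M}_{X'_0}$-negative extremal ray survives), force this difference to vanish, which is exactly the descent of $\mathbf{M}$ on $X'_0$. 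This is the only step where the interplay between the MMP and the b-divisor structure is essential; everything else is standard toric geometry and the reduction to nefness.
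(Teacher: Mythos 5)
Your reduction to ``nef Cartier implies base point free on a complete toric variety'' is the same one the paper uses, but the core of your argument has a genuine gap: the claim that $\mathbf{M}$ descends on the minimal model $X'_0$ is false, and the justification you give for it does not work. The negativity lemma applied to $D:=g^*(N\mathbf{M}_{X'_0})-N\mathbf{M}_V$ on a common resolution $V$ only yields $D\geq 0$ (since $-D$ is $g$-nef and $g_*D=0$); it gives no vanishing, and the minimality of $X'_0$ does not help. Concretely, take $X=X_1=\pp^1\times\pp^1$ and $Y=\mathrm{Bl}_p(\pp^1\times\pp^1)$ with $N\mathbf{M}_Y=\pi^*(H_1+H_2)-F$, which is nef and Cartier. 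The trace on $X_1$ is $H_1+H_2$, which is already ample, so your MMP does nothing and $X'_0=X_1$ has no $N\mathbf{M}_{X'_0}$-negative extremal rays; yet $\pi^*(H_1+H_2)\neq \pi^*(H_1+H_2)-F$, so $\mathbf{M}$ does not descend on $X'_0$. Once descent fails, your final step collapses: the trace $N\mathbf{M}_{X'}$ on a common resolution $X'$ of $X_1$ and $X'_0$ is merely a push-forward of a nef divisor from a model dominating $Y$, and such a trace can be negative on curves contained in the indeterminacy locus of $X'\dashrightarrow Y$ (writing $p^*\mathbf{M}_{X'}=\mathbf{M}_Z+E_Z$ with $E_Z\geq 0$ exceptional, the term $E_Z\cdot C_Z$ can be negative exactly when the strict transform $C_Z$ lies in $\supp E_Z$). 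Your construction gives no control over which torus-invariant curves end up inside that indeterminacy locus, so nefness of $N\mathbf{M}_{X'}$ is not established.

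The paper's proof addresses precisely this point by a different mechanism: Lemma~\ref{lem:toric-indeterminacy-locus} produces a smooth toric model $X'\rightarrow X$ whose indeterminacy locus with respect to $X'\dashrightarrow Y$ contains \emph{no} toric stratum. Then every torus-invariant curve $C$ has strict transform $C_Z$ not contained in $\mathrm{Ex}(p)\supseteq\supp E_Z$, so $N\mathbf{M}_{X'}\cdot C=(N\mathbf{M}_Z+NE_Z)\cdot C_Z\geq 0$, and nefness on torus-invariant curves suffices by~\cite[Theorem 6.3.12]{CLS11}. In other words, the paper never tries to make $\mathbf{M}$ descend on a toric model (which may be impossible, as the example above shows); it only arranges the trace to be non-negative against the finitely many curves that matter. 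If you want to salvage your MMP-based route, you would still need an input of this kind to control the indeterminacy locus, at which point the MMP becomes superfluous.
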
 

\begin{proof}
Let $Y$ be a model where $\mathbf{M}$ descends.
Hence, $N\mathbf{M}_Y$ is a nef Cartier divisor.
By Lemma~\ref{lem:toric-indeterminacy-locus}, 
there exists a projective birational toric morphism $X'\rightarrow X$
such that the indeterminacy locus $V$ of $X'\dashrightarrow Y$ does not contain any toric strata of $X'$.
We may assume that $X'$ is a smooth projective toric variety.
Let $p\colon Z\rightarrow X'$
and $q\colon Z \rightarrow Y$ be a resolution of the indeterminacy locus of the birational map $X'\dashrightarrow Y$. 
We may assume that $p({\rm Ex}(p))=V$
as $V$ is the indeterminacy locus of $X'\dashrightarrow Y$.
As $X'$ is smooth, the divisor
$N\mathbf{M}_{X'}$ is Cartier. 
Let $C\subset X'$ be a torus invariant curve. By construction, $C$ is not contained in $V$.
Let $C_Z$ be the strict transform of $C$ in $Z$.
Write $p^*\mathbf{M}_{X'}=\mathbf{M}_Z+E_Z$ where $E_Z$ is an effective divisor supported on the exceptional locus of $p$. 
In particular, we have $C_Z\cdot E_Z\geq 0$.
Since $Z\rightarrow Y$ is a projective birational morphism, we have 
$\mathbf{M}_Z=q^*\mathbf{M}_Y$, so $\mathbf{M}_Z$ is a nef divisor.
Then, we can write
\[
\mathbf{M}_{X'} \cdot C  =
p^*\mathbf{M}_{X'} \cdot C_Z = 
(\mathbf{M}_Z+E_Z)\cdot C_Z  \geq 0.
\]
We conclude that $N\mathbf{M}_{X'}$ is a Cartier divisor that intersects every torus invariant curve non-negatively.
By~\cite[Theorem 6.3.12]{CLS11}, we conclude that $N\mathbf{M}_{X'}$ is a base point free Cartier divisor.
\end{proof}

\subsection{Algebraic torus actions} In this section, we prove some results regarding algebraic torus actions on normal projective varieties and automorphisms of log Calabi--Yau pairs.

\begin{definition}
{\em 
Let $X$ be a normal projective variety with a $\mathbb{G}_m$-action.
Let $Y$ be the normalized Chow quotient of $X$ by the $\mathbb{G}_m$-action.
By~\cite[Theorem 5.6]{AHS08}, there is a divisorial fan $\mathcal{S}$ on $(Y,\qq)$ for which 
there is a $\mathbb{G}_m$-equivariant isomorphism $X(\mathcal{S})\simeq X$.
The tail-fan of $\mathcal{S}$ is $\Sigma=\{ \rho_{-}, \{0\}, \rho_{+}\}$,
where $\rho_{-}:=\qq_{\leq 0}$ and $\rho_{+}:=\qq_{\geq 0}$.
Ther rays $\rho_{-}$ and $\rho_{+}$ induce two $\mathbb{G}_m$-invariant  divisorial valuations $E_{-}$ and $E_{+}$ over $X$.
We call $E_{-}$ the {\em divisorial sink}
and $E_{+}$ {\em divisorial source}
of the $\mathbb{G}_m$-action, respectively.

Let $\widetilde{X}\rightarrow X$ be a $\mathbb{G}_m$-equivariant projective birational morphism for which there is a $\mathbb{G}_m$-quotient $\widetilde{X}\rightarrow Y$.
The center of the divisorial sink and the divisorial source of $\mathbb{G}_m$ on $\widetilde{X}$ are the two $\mathbb{G}_m$-invariant prime divisors that dominate $Y$.
These are precisely the only two prime divisors on $\widetilde{X}$ on which $\mathbb{G}_m$ acts as the identity (see, e.g.,~\cite[Theorem 10.1]{AH06}). 
Hence, the sink and the source of the $\mathbb{G}_m$-action on $X$ are the centers of the divisorial sink and the divisorial source, respectively.
}
\end{definition}

The following lemma implies that on an affine variety either the center of the divisorial sink
or the center of the divisorial source have codimension at least two.

\begin{lemma}\label{lem:gm-fixed-divisors}
Let $X$ be a normal affine variety with a faithful $\mathbb{G}_m$-action.
Then, there is at most one prime divisor 
on $X$ that is fixed pointwise by the $\mathbb{G}_m$-action.
\end{lemma}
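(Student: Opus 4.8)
The plan is to convert the statement into commutative algebra and finish with a one-line dimension count. Write $X=\Spec(A)$, where $A$ is a normal affine domain, finitely generated over $\kk$. A faithful $\mathbb{G}_m$-action on $X$ is the same datum as a $\zz$-grading $A=\bigoplus_{m\in\zz}A_m$ whose nonzero degrees generate $\zz$, and the invariant subring is $A_0=A^{\mathbb{G}_m}=\Spec$-theoretically the quotient $X/\!\!/\mathbb{G}_m$. Faithfulness forces the general orbit to be one-dimensional, so $\dim A_0=\dim A-1$; this is the only geometric input I would need.

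First I would describe the fixed locus scheme-theoretically. If $f\in A_m$ with $m\neq0$ and $x\in X$ is a $\mathbb{G}_m$-fixed point, then evaluating the identity $\lambda\cdot f=\lambda^{m}f$ at $x$ gives $\lambda^{m}f(x)=f(x)$ for all $\lambda$, hence $f(x)=0$. Conversely the vanishing of all such $f$ cuts out the fixed points, so the fixed locus is $V(I)$, where $I\subseteq A$ is the ideal generated by $\bigoplus_{m\neq0}A_m$. A prime divisor that is fixed pointwise is exactly a height-one prime $\mathfrak{p}\subseteq A$ with $I\subseteq\mathfrak{p}$.

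Next I would pin down such a $\mathfrak{p}$ completely. Being $\mathbb{G}_m$-invariant, $\mathfrak{p}$ is homogeneous, and since $A_m\subseteq I\subseteq\mathfrak{p}$ for every $m\neq0$ its homogeneous decomposition must read $\mathfrak{p}=\mathfrak{p}_0\oplus\bigoplus_{m\neq0}A_m$ with $\mathfrak{p}_0:=\mathfrak{p}\cap A_0$ a prime of $A_0$. Passing to the quotient kills every nonzero graded piece, so $A/\mathfrak{p}\cong A_0/\mathfrak{p}_0$ as rings. Since $A$ is a catenary affine domain, height one means $\dim A/\mathfrak{p}=\dim A-1$, and therefore $\dim A_0/\mathfrak{p}_0=\dim A-1=\dim A_0$, which forces $\mathfrak{p}_0=(0)$. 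Thus $\mathfrak{p}=\bigoplus_{m\neq0}A_m$ is uniquely determined by the grading; any two pointwise-fixed prime divisors would carry this same prime ideal and hence coincide, proving the lemma.

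The hard part will really only be the bookkeeping around $\dim A_0=\dim A-1$ (the invariant ring of a faithful $\mathbb{G}_m$-action drops dimension by exactly one because general orbits are one-dimensional) together with the standard facts that $A$ is an equidimensional catenary affine domain, so that height-one is equivalent to a codimension-one quotient; everything else is formal. One can further observe that the same computation explains the underlying dichotomy: if the grading is one-sided then $V(I)\cong\Spec(A_0)$ is irreducible, whereas if it is two-sided then $\bigoplus_{m\neq0}A_m$ is not even an ideal (a nonzero product $A_mA_{-m}\subseteq A_0$ would have to vanish in the domain $A$), so in that case there is no fixed prime divisor at all. Either way the count is at most one, and this matches the intended application that on an affine variety at least one of the divisorial sink and source must have center of codimension at least two.
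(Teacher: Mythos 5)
Your route is genuinely different from the paper's. The paper realizes $X$ equivariantly as the spectrum of a section ring over a semiprojective variety $Y$ (via \cite{Wat81} and the Altmann--Hausen picture), passes to an equivariant projective birational modification $\widetilde{X}\rightarrow X$ admitting a quotient $\widetilde{X}\rightarrow Y$ for the action, and observes that at most one invariant prime divisor of $\widetilde{X}$ dominates $Y$, hence at most one divisor can be fixed pointwise. Your graded-ring argument is more elementary and self-contained: the fixed locus is cut out by the ideal generated by $\bigoplus_{m\neq 0}A_m$, any height-one prime containing that ideal must equal $\bigoplus_{m\neq 0}A_m$ by a dimension count, and this pins the divisor down uniquely; your closing remark on the one-sided versus two-sided dichotomy is also correct and illuminating.

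One intermediate claim is false as stated, however: faithfulness does \emph{not} give $\dim A_0=\dim A-1$. For $A=\kk[x,y]$ with both variables in degree $1$ the action is faithful with one-dimensional general orbits, yet $A_0=\kk$ has dimension $0$ rather than $1$; the invariant subring can have dimension strictly smaller than $\dim A-1$ because invariant rational functions need not be regular. Fortunately your argument only needs the inequality $\dim A_0\leq \dim A-1$, which does hold: any nonzero $f\in A_m$ with $m\neq 0$ is transcendental over $\operatorname{Frac}(A_0)$ (a polynomial relation with invariant coefficients would have to vanish in each degree, forcing $f^n=0$ in the domain $A$), so $\operatorname{trdeg}_\kk\operatorname{Frac}(A_0)\leq \dim A-1$. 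With this correction the chain becomes $\dim A_0/\mathfrak{p}_0=\dim A/\mathfrak{p}=\dim A-1\geq \dim A_0\geq \dim A_0/\mathfrak{p}_0$, forcing equality throughout and $\mathfrak{p}_0=(0)$; if instead $\dim A_0<\dim A-1$ there is no fixed prime divisor at all. Either way the conclusion ``at most one'' survives, so the proof is repairable by replacing your equality with the inequality.
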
 

\begin{proof}
By~\cite[Theorem 1]{Wat81}, we have a $\mathbb{G}_m$-equivariant isomorphism
\[
X\simeq {\rm Spec}\left(\bigoplus_{m\in \zz_{\geq 0}} H^0(Y,\mathcal{O}_X(mD_Y))\right),
\]
where $Y$ is a semiprojective variety\footnote{A semiprojective variety is a variety that is projective over an affine variety.} and $D_Y$ is an ample $\qq$-Cartier $\qq$-divisor.
Consider the variety 
\[
\widetilde{X}:={\rm Spec}_X\left( 
\bigoplus_{m\in \zz_{\geq 0}}
\mathcal{O}_X(mD_Y)
\right).
\]
Then, there is a $\mathbb{G}_m$-equivariant projective birational morphism $\widetilde{X}\rightarrow X$.
The variety $\widetilde{X}$ has at most one $\mathbb{G}_m$-invariant prime divisor $E$ that dominates $Y$.
The morphism $\widetilde{X}\rightarrow Y$ is a quotient for the $\mathbb{G}_m$-action.
Hence, the prime divisor $E$ is the only divisor on which $\mathbb{G}_m$ can act as the identity.
Therefore, there is at most one prime divisor on $X'$ that is fixed pointwise by the $\mathbb{G}_m$-action. 
Since $\widetilde{X}\rightarrow X$ is a $\mathbb{G}_m$-equivariant projective birational morphism, the same statement holds for $X$.
\end{proof} 

\begin{lemma}\label{lem:G_m-action-lift-to-gdlt}
Let $X$ be a klt variety and
$(X,B)$ be a log canonical pair.
Assume that $\mathbb{G}_m\leqslant {\rm Aut}(X,B)$.
Let $Y\rightarrow X$ be a projective birational morphism that only extracts non-canonical places of $(X,B)$.
Then, the morphism $Y\rightarrow X$ is $\mathbb{G}_m$-equivariant.
\end{lemma}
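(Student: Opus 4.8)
The plan is to reduce the statement to the assertion that every non-canonical place of $(X,B)$ is $\mathbb{G}_m$-invariant as a valuation, and then to promote this to regularity of the induced action on $Y$. First I would observe that the $\pi$-exceptional divisors $E_1,\dots,E_r$ are the only divisors extracted by $\pi\colon Y\rightarrow X$, so their centers are contained in the non-canonical locus of $(X,B)$; this locus is intrinsic to the pair and is therefore $\mathbb{G}_m$-invariant. Consequently $\pi$ restricts to an isomorphism over the $\mathbb{G}_m$-invariant open set $X^\circ\subseteq X$ obtained by removing this locus, and the $\mathbb{G}_m$-action on $X$ lifts to a regular action on $U:=\pi^{-1}(X^\circ)$. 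The whole problem thus becomes: show that each $E_i$ is a $\mathbb{G}_m$-invariant valuation, and then extend the action across the exceptional divisors.

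For the invariance I would pass to a $\mathbb{G}_m$-equivariant log resolution $f\colon W\rightarrow X$ of $(X,B)$ and write $K_W+B_W=f^*(K_X+B)$, so that $B_W$ is an SNC, $\mathbb{G}_m$-invariant divisor with coefficients at most one. Since $\mathbb{G}_m$ is connected, it cannot permute the finitely many prime components of $\operatorname{Supp}(B_W)\cup\operatorname{Exc}(f)$, hence it fixes each of them; linearizing the action at a fixed point of a stratum, $\mathbb{G}_m$ acts diagonally in suitable SNC coordinates, and every monomial (toroidal) valuation along a stratum is then fixed, because scaling the coordinates by units does not change the associated weights. The point is that a non-canonical place $E$ of $(X,B)$, viewed on $W$, must be of this toroidal type: the condition $a_E(W,B_W)<1$ forces $E$ to be either a component of $B_W$ or a monomial valuation concentrated on the reduced part $\lfloor B_W\rfloor$, i.e. a face of the dual complex. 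Equivalently, connectedness makes $\mathbb{G}_m$ act trivially on $\mathcal{D}(X,B)$ (cf. Theorem~\ref{thm:action-dual-comp}), and the non-canonical places are precisely the toroidal valuations recorded there. Either way each $E_i$ is $\mathbb{G}_m$-invariant.

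Finally, with the centers and the valuations of the $E_i$ all $\mathbb{G}_m$-invariant, the birational action on $Y$ is regular at every codimension-one point, and I would promote it to a regular action on all of $Y$ by identifying $Y$ with the $\mathbb{G}_m$-equivariant extraction of the invariant set $\{E_1,\dots,E_r\}$: since $X$ is klt, \cite[Theorem 1]{Mor20c} together with the argument of Lemma~\ref{lem:g-extraction} produces a $\mathbb{G}_m$-equivariant model $Y'\rightarrow X$ extracting exactly these divisors, and a negativity/uniqueness argument identifies $Y'$ with $Y$ over $X$. I expect the main obstacle to be precisely this last rigidity together with the toroidal description of non-canonical places: one must rule out that the induced self-maps of $Y$ are merely pseudo-automorphisms (isomorphisms in codimension one), which is exactly where the klt hypothesis on $X$ and the control on which valuations can have log discrepancy below one are essential.
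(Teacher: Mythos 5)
Your first step contains a mischaracterization that needs repair. You identify the non-canonical places of $(X,B)$ with the log canonical places, i.e.\ with the valuations ``recorded'' in $\mathcal{D}(X,B)$ and concentrated on $\lfloor B_W\rfloor$. This is not what a non-canonical place is: it is a divisor $E$ with $a_E(X,B)<1$, not $a_E(X,B)\leq 0$. For instance, over a log smooth pair with boundary $\tfrac34 B_1+\tfrac34 B_2$ the blow-up of $B_1\cap B_2$ is a non-canonical place whose center does not meet $\lfloor B_W\rfloor$ (which is empty), so Theorem~\ref{thm:action-dual-comp} and the dual complex say nothing about it. The statement you actually need is that every extracted divisor is a toroidal valuation over a stratum of the \emph{full support} of $B_W$ together with the exceptional locus; that is true (and is implicitly used in the paper when it extracts the relevant places ``by blowing up strata of $B_Z$''), and your connectedness-plus-linearization argument then does give invariance of each such valuation. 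So this part is repairable, but as written the justification is wrong.

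The genuine gap is your last step. Knowing that the valuations $E_1,\dots,E_r$ are $\mathbb{G}_m$-invariant, and producing via Lemma~\ref{lem:g-extraction} an equivariant model $Y'\rightarrow X$ extracting exactly these divisors, does \emph{not} identify $Y'$ with $Y$: two projective birational models over $X$ extracting the same set of divisors are only isomorphic in codimension one (they can differ by flops over $X$), and $Y$ carries no minimality or $\qq$-factoriality hypothesis. No negativity-lemma argument closes this; a priori each $g\in\mathbb{G}_m$ induces only a pseudo-automorphism of $Y$, which is exactly the obstacle you name but do not overcome. The paper's proof supplies the missing mechanism: after an equivariant log resolution and an equivariant MMP one obtains an equivariant model $Y'$ with $Y'\rightarrow X$ of Fano type (Lemma~\ref{lem:FT-under-morphisms}), hence a relative Mori dream space; the contraction $Y'\dashrightarrow Y$ is then the output of an MMP over $X$ for some divisor $D$, and since $\mathbb{G}_m$ is connected $D$ is linearly equivalent to a $\mathbb{G}_m$-invariant divisor, so each step of that MMP --- including the small modifications and the final contraction to $Y$ --- is canonically determined by an invariant class and is therefore equivariant. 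Without this (or an equivalent rigidity argument, e.g.\ realizing $Y$ as the ample model over $X$ of an invariant divisor class), the proof is incomplete.
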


\begin{proof}
Let $p\colon Z\rightarrow X$ be a $\mathbb{G}_m$-equivariant log resolution of $(X,B)$
and let $p^*(K_X+B)=K_Z+B_Z$.
By blowing up strata of $B_Z$, we may assume that every non-canonical place of $(X,B)$ is extracted on $Z$.
Thus, we have a birational contraction $Z\dashrightarrow Y$.
Let $\Gamma_Z$ be the divisor obtained from $B_Z$ by increasing all its negative coefficients to one.
Hence, $(Z,\Gamma_Z)$ is a log pair.
Further, we have $\mathbb{G}_m\leqslant {\rm Aut}(Z,\Gamma_Z)$.
We run a $\mathbb{G}_m$-equivariant $(K_Z+\Gamma_Z)$-MMP over $X$.
By Lemma~\ref{lem:negativity}, this $\mathbb{G}_m$-equivariant MMP terminates with a $\mathbb{G}_m$-equivariant dlt modification $(Y',B_{Y'})$ of $(X,B)$
that admits a birational contraction $Y'\dashrightarrow Y$ over $X$.
Note that $Y'\rightarrow X$ is a Fano type morphism by Lemma~\ref{lem:FT-under-morphisms}.
Hence, the birational contraction $Y'\dashrightarrow Y$ is the outcome of an MMP over $X$ for a divisor $D$ on $Y'$. The divisor $D$ is linearly equivalent to a $\mathbb{G}_m$-invariant divisor $D'$.
Thus, the MMP $Y'\dashrightarrow Y$ over $X$ is $\mathbb{G}_m$-equivariant. Hence, $Y$ admits a $\mathbb{G}_m$-action and $Y\rightarrow X$ is $\mathbb{G}_m$-equivariant.
\end{proof}

\begin{lemma}\label{lem:iit-dim}
Let $X$ be a Fano type variety of dimension $n$.
Let $(X,B,\mathbf{M})$ be a generalized log Calabi--Yau pair with ${\rm reg}(X,B,\mathbf{M})\geq 1$.
Assume that $\mathbb{G}_m$ acts faithfully on $X$ and both the divisorial sink and the divisorial source of the action are prime divisors $S_1$ and $S_2$ on $X$.
Further, assume that $S_1$ and $S_2$ are contained in $\lfloor B\rfloor$.
Then, we have that
\[
0<k(B+\mathbf{M}_X-S_1-S_2)<n.
\]
\end{lemma}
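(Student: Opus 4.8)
The plan is to use the $\mathbb{G}_m$-action to present $D:=B+\mathbf{M}_X-S_1-S_2$, which is $\sim_{\rr}-K_X-S_1-S_2$ since $(X,B,\mathbf{M})$ is log Calabi--Yau, as the pull-back of $-K_Y$ along the $\mathbb{G}_m$-quotient fibration, and then to compute its Iitaka dimension on the base. First I would pass to a $\mathbb{G}_m$-equivariant projective model $\phi\colon\widetilde{X}\rightarrow X$ on which the normalized Chow quotient by $\mathbb{G}_m$ is a morphism $\pi\colon\widetilde{X}\rightarrow Y$ (as in the setup preceding Lemma~\ref{lem:gm-fixed-divisors}), and replace $(X,B,\mathbf{M})$ by its log pull-back $(\widetilde{X},B_{\widetilde{X}},\mathbf{M})$. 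Because the action is faithful and has two fixed divisors, the general fibre $F$ of $\pi$ is the closure of a general orbit, so $F\cong\pp^1$ and $\dim Y=n-1$; the centers of the divisorial sink and source become two disjoint sections of $\pi$, and I keep the names $S_1,S_2$ for their strict transforms.

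The key fibrewise input is that $S_1,S_2$ are the only horizontal components of $B_{\widetilde{X}}$ and that $\mathbf{M}_{\widetilde{X}}$ is fibrewise trivial. Indeed, restricting $K_{\widetilde{X}}+B_{\widetilde{X}}+\mathbf{M}_{\widetilde{X}}\sim_{\rr}0$ to $F\cong\pp^1$ gives $\deg(B_{\widetilde{X}}|_F)+\deg(\mathbf{M}_{\widetilde{X}}|_F)=2$; since $S_1,S_2\subseteq\lfloor B_{\widetilde{X}}\rfloor$ each meet $F$ in exactly one point (the sink and the source of that orbit) and $\mathbf{M}_{\widetilde{X}}$ is nef, this forces $B_{\widetilde{X}}|_F=S_1|_F+S_2|_F$ and $\mathbf{M}_{\widetilde{X}}|_F=0$. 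In particular $\deg(D|_F)=2-1-1=0$, so over the open locus where $\pi$ is a $\pp^1$-bundle the class $D$ is fibrewise numerically trivial and hence $\sim_{\rr}\pi^{*}D_Y$ for some $D_Y$ on $Y$. To identify $D_Y$ I would restrict to the section $S_1\cong Y$: by adjunction $(K_{\widetilde{X}}+S_1)|_{S_1}=K_{S_1}$, and since $S_1\cap S_2=\emptyset$ one gets $D|_{S_1}=-(K_{\widetilde{X}}+S_1)|_{S_1}-S_2|_{S_1}=-K_{S_1}\sim_{\rr}-K_Y$. Therefore $D\sim_{\rr}\pi^{*}(-K_Y)$, and since $\pi$ is a contraction, $k(D)=k(-K_Y)$ computed on $Y$.

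To finish, I note that $\widetilde{X}$ is of Fano type (it is obtained from the Fano type variety $X$ by extracting generalized log canonical places, so Lemma~\ref{lem:FT-under-morphisms}(3) applies), whence $Y$ is of Fano type by Lemma~\ref{lem:FT-under-morphisms}(2). By Lemma~\ref{lem:FT-big-nef} there is a klt boundary $\Delta$ on $Y$ with $-(K_Y+\Delta)$ big and nef, so $-K_Y\sim_{\rr}-(K_Y+\Delta)+\Delta$ is the sum of a big and an effective divisor and hence big; thus $k(-K_Y)=\dim Y=n-1$ and $k(D)=n-1$. Finally, $\reg(X,B,\mathbf{M})\geq 1$ forces $n\geq 2$, because $\dim\mathcal{D}(X,B,\mathbf{M})\leq n-1$ always holds; hence $0<n-1<n$, which is the claim. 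I expect the main obstacle to be the birational bookkeeping of the first two paragraphs: arranging a $\mathbb{G}_m$-equivariant model on which the quotient is a morphism while extracting only generalized log canonical places—so that Fano type is preserved and the Iitaka dimension of $-K_X-S_1-S_2$ is unchanged under $\phi$—and upgrading the fibrewise-triviality of $D$ to a genuine $\rr$-linear pull-back across the (possibly degenerate) fibres of $\pi$.
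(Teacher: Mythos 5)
Your upper bound is essentially the paper's: the general orbit closure $F$ is a movable curve on which $D:=B+\mathbf{M}_X-S_1-S_2\sim_{\rr}-K_X-S_1-S_2$ has degree zero, so $D$ is not big. But the lower bound has a genuine gap at the step ``$D$ is fibrewise numerically trivial, hence $\sim_{\rr}\pi^{*}D_Y$.'' Degree zero on the general fibre does not make a divisor a pull-back from $Y$: the divisor $D$ on your model $\widetilde{X}$ contains vertical pieces --- the negative-coefficient exceptional divisors of the equivariant modification, the vertical components of $B$, and the vertical part of $\mathbf{M}$ --- which restrict to degree zero on general fibres yet are not of the form $\pi^{*}(\cdot)$ (already on a surface, a $\pi$-vertical $(-1)$-curve has degree zero on fibres but nonzero self-intersection, so it is not a pull-back). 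Consequently $k(D)$ is not computed by any $k(D_Y)$, and the rest of the argument (identifying $D_Y$ with $-K_Y$ by restricting to the section $S_1$, which is only birational onto $Y$ and where adjunction carries a different; the bigness of $-K_Y$, which needs $Y$ to be of Fano type --- not established for the Chow quotient of an arbitrary equivariant model; and the transfer of Iitaka dimension between the log pull-back on $\widetilde{X}$ and the original $D$ on $X$) does not go through. A warning sign is that your argument would give the much stronger conclusion $k(D)=n-1$ while using the hypothesis ${\rm reg}(X,B,\mathbf{M})\geq 1$ only to force $n\geq 2$; the paper uses that hypothesis essentially.

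For comparison, the paper proves $k(D)>0$ by contradiction rather than by exhibiting sections. If $k(D)=0$, one runs a $D$-MMP (legitimate since $X$ is a Mori dream space) to a model $Z$ on which the pushforward of $D$ is semiample of Iitaka dimension zero, hence torsion; Lemma~\ref{lem:b+m-tor} then forces the pushforward of $B-S_1-S_2$ to vanish and $\mathbf{M}$ to be torsion, so $(Z,S_{Z,1}+S_{Z,2})$ is a log Calabi--Yau pair of regularity $\geq 1$ carrying a $\mathbb{G}_m$-action that fixes both $S_{Z,1}$ and $S_{Z,2}$ pointwise. If the two divisors are disjoint the dual complex is disconnected, contradicting ${\rm reg}\geq 1$ and the connectedness theorem; if they meet, an equivariant affine neighbourhood of a point of the intersection has two pointwise-fixed prime divisors, contradicting Lemma~\ref{lem:gm-fixed-divisors}. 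If you want to salvage your approach, you would need to control the vertical part of $D$ (for instance via the canonical bundle formula for $\pi$) rather than assert that $D$ is a pull-back; as written, the key step fails.
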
 

\begin{proof}
Let $\pi\colon X'\rightarrow X$ be a $\mathbb{G}_m$-equivariant projective birational morphism for which there is
a good quotient $q\colon X'\rightarrow Y$ for the torus action.
Let $(X',S'_1+S'_2)$ be the log pull-back
of $(X,S_1+S_2)$ to $X'$.
Since $S_1$ and $S_2$ are the sink and the source of the torus action, the restriction of the log pair $(X',S'_1+S'_2)$
to a general fiber of $q$ is isomorphic to $(\pp^1,\{0\}+\{\infty\})$.
In particular, if $C$ is a general fiber of $q$, 
then $\pi^*(K_X+S_1+S_2)\cdot C =0$.
We conclude that 
$\pi^*(B+\mathbf{M}_X-S_1-S_2)\cdot C=0$.
The image of $C$ on $X$ is a movable curve.
Thus, the divisor 
$B+\mathbf{M}_X-S_1-S_2$ intersects a movable curve trivially.
This implies that the divisor $B+\mathbf{M}_X-S_1-S_2$ is not big.

Now, it suffices to show that the Iitaka dimension of $B+\mathbf{M}_X-S_1-S_2$ is non-zero.
We proceed by contradiction.
For simplicity, we write $B_0:=B-S_1-S_2$.
Note that $B_0+\mathbf{M}_X$ is a pseudo-effective divisor so its Iitaka dimension is non-negative as $X$ is a Mori dream space.
Assume that the Iitaka dimension of $B_0+\mathbf{M}_X$ is zero.
Let $X\dashrightarrow Z$ be the outcome of a $(B_0+\mathbf{M}_X)$-MMP.
Let $B_{Z,0}+\mathbf{M}_Z$ be the image of $B_0+\mathbf{M}_X$ on $Z$.
Let $S_{Z,1}$ and $S_{Z,2}$ be the image of $S_1$ and $S_2$ on $Z$, respectively.
By construction, the divisor $B_{Z,0}+\mathbf{M}_Z$ is semiample and has Iitaka dimension zero.
Hence, the divisor $B_{Z,0}+\mathbf{M}_Z$ is torsion. 
By Lemma~\ref{lem:b+m-tor}, 
we conclude that $B_{Z,0}=0$
and $\mathbf{M}$ is torsion in any model where it descends.
This means that $\mathbf{M}$ is a torsion b-divisor.
Hence, the pair
$(Z,S_{Z,1}+S_{Z,2})$ is log Calabi--Yau and has regularity at least $1$.
Indeed, by construction, the log Calabi--Yau pair $(Z,S_{Z,1}+S_{Z,2})$ has the same log discrepancies as the pair $(X,B,\mathbf{M})$.
Since $X$ is of Fano type, we have that $Z$ is of Fano type (see Lemma~\ref{lem:FT-under-morphisms}).
In particular, $(Z,S_{Z,1}+S_{Z,2})$ is klt in the complement of the support of $S_{Z,1}+S_{Z,2}$.
Therefore, every log canonical center of $(Z,S_{Z,1}+S_{Z,2})$ is contained in either the support of $S_{Z,1}$ or the support of $S_{Z,2}$.
We argue that the log Calabi--Yau pair
$(Z,S_{Z,1}+S_{Z,2})$ admits an effective $\mathbb{G}_m$-action.
Indeed, the birational contraction $X\dashrightarrow Z$ is equivariant as
the divisor $B_0+\mathbf{M}_X$ is $\qq$-linearly equivalent to a $\mathbb{G}_m$-invariant divisor.
Furthermore, $\mathbb{G}_m$-acts as the identity on $S_{Z,1}$ and $S_{Z,2}$.
Indeed, this statement holds in $X$ and 
$X\dashrightarrow Z$ is an isomorphism over the generic point of both $S_1$ and $S_2$.
If $S_{Z,1}\cap S_{Z,2}=\emptyset$, then
$\mathcal{D}(Z,S_{Z,1}+S_{Z,2})$ is disconnected.
This contradicts the fact that 
${\rm reg}(Z,S_{Z,1}+S_{Z,2})\geq 1$ (see, e.g.,~\cite[Theorem 1.1]{FS20}).
On the other hand, if
$S_{Z,1}\cap S_{Z,2}\neq \emptyset$, we can take $z\in S_{Z,1}\cap S_{Z,2}$
and find a $\mathbb{G}_m$-equivariant affine neighborhood of $U$ of $z$ (see, e.g.,~\cite{Sum74}).
The normal affine variety $U$ admits a $\mathbb{G}_m$-action and it has two prime divisors $S_{Z,1}\cap U$ and $S_{Z,2}\cap U$ that are fixed pointwise by the $\mathbb{G}_m$-action. This leads to a contradiction of Lemma~\ref{lem:gm-fixed-divisors}.

We conclude that the Iitaka dimension of $B+\mathbf{M}_X-S_1-S_2$ is larger than zero and less than the dimension of $X$.
\end{proof}

The following lemma characterizes connected automorphism groups of sub-log Calabi--Yau pairs.

\begin{lemma}\label{lem:aut-sub-log-CY}
Let $X$ be a klt rationally connected variety.
Let $(X,B)$ be a sub-log Calabi--Yau pair.
Then, the group ${\rm Aut}^0(X,B)$ is an algebraic torus.
\end{lemma}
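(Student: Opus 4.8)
The plan is to reduce the statement to a purely group-theoretic assertion and then exclude additive subgroups. First I would recall that since $X$ is rationally connected we have $H^0(X,\Omega^1_X)=0$, so the Albanese variety of $X$ is trivial; by the structure theory of automorphism groups of normal projective varieties (Nishi--Matsumura, in the form proved by Brion), the connected component ${\rm Aut}^0(X)$ is then a \emph{linear} algebraic group. Consequently $G:={\rm Aut}^0(X,B)$, being a closed connected subgroup, is a connected linear algebraic group. Over a field of characteristic zero, a connected linear algebraic group is an algebraic torus as soon as it contains no copy of $\mathbb{G}_a$: indeed, the absence of unipotent elements forces the unipotent radical to be trivial, hence $G$ is reductive, and a reductive group with nontrivial derived subgroup contains root subgroups isomorphic to $\mathbb{G}_a$; thus $G$ reductive and $\mathbb{G}_a$-free implies $[G,G]=1$, so $G$ equals its central torus. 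Therefore it suffices to prove that $\mathbb{G}_a\not\leqslant {\rm Aut}(X,B)$.

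Suppose for contradiction that $\mathbb{G}_a$ acts faithfully on $(X,B)$. The core of the argument is to slice the pair by the orbits of this action. By Rosenlicht's theorem the $\mathbb{G}_a$-action admits a rational quotient $X\dashrightarrow Y$ with $\dim Y=\dim X-1$ whose general fibers are the closures of general orbits. Passing to a $\mathbb{G}_a$-equivariant log resolution, I would produce a $\mathbb{G}_a$-equivariant fibration $f\colon X''\to Y''$ together with the log pull-back $(X'',B'')$ of $(X,B)$, which is sub-log Calabi--Yau and crepant birational to $(X,B)$; the general fiber $F$ of $f$ is a smooth rational curve $\cong \pp^1$ on which $\mathbb{G}_a$ acts with dense orbit. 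By adjunction on a general fiber we get $K_F+B''|_F=(K_{X''}+B'')|_F\sim_\rr 0$, so $\deg\big(B''|_F\big)=2$. Moreover, restriction of a sub-log canonical pair to a general fiber of a fibration is again sub-log canonical, so $(\pp^1,B''|_F)$ is sub-log canonical.

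Now I use that a nontrivial $\mathbb{G}_a$-action on $\pp^1$ is conjugate to $t\cdot[x:y]=[x+ty:y]$, whose only fixed point is $[1:0]$. Since $B''|_F$ is $\mathbb{G}_a$-invariant, its support is $\mathbb{G}_a$-invariant and hence contained in $\{[1:0]\}$; combined with $\deg(B''|_F)=2$ this forces $B''|_F=2\,[1:0]$. But a coefficient equal to $2$ violates sub-log canonicity of $(\pp^1,B''|_F)$, a contradiction. This rules out $\mathbb{G}_a\leqslant {\rm Aut}(X,B)$ and completes the proof. The step I expect to require the most care is the construction of the equivariant slicing: one must check that the resolved fibration has one-dimensional general fibers carrying a genuinely nontrivial (hence fixed-point-unique) $\mathbb{G}_a$-action, and that restriction to the general fiber preserves sub-log canonicity; the remaining computation on $\pp^1$ is then immediate, and notably this route avoids both the Fano-type/MMP machinery and any moduli contribution from a canonical bundle formula.
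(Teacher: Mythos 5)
Your proof is correct, and it takes a genuinely different route from the paper. The paper's argument is much shorter on the page: it passes to a $G$-equivariant log resolution, notes that rational connectedness forces $G={\rm Aut}^0(X,B)$ to be linear, raises the negative coefficients of $B$ to $1$ to obtain an effective snc boundary $\Gamma$ with $K_X+\Gamma$ pseudo-effective, and then invokes \cite[Theorem 1.1]{Hu18}, which says that the neutral automorphism group of such a log smooth pair is a torus. You share the first two steps (linearity via triviality of the Albanese, reduction to a structure statement about connected linear groups), but instead of outsourcing the key point you prove it directly: you reduce to excluding $\mathbb{G}_a\leqslant{\rm Aut}(X,B)$ and rule this out by slicing along a Rosenlicht quotient, restricting the crepant log smooth model to a general fiber $F\cong\pp^1$, and observing that $\mathbb{G}_a$-invariance would concentrate the degree-$2$ divisor $B''|_F$ at the unique fixed point, contradicting sub-log canonicity. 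All the steps you flag as delicate do go through: the general fiber of the equivariantly resolved quotient (after Stein factorization) is a smooth irreducible curve containing a dense orbit $\cong\mathbb{A}^1$, hence $\pp^1$ with a nontrivial (one-fixed-point) action; and for a general fiber of a log smooth fibration the restricted coefficients are exactly those of the horizontal components of $B''$, so sub-log canonicity is preserved. In effect you have reproved the special case of Hu's theorem that the paper needs; your version is self-contained and elementary modulo equivariant resolution and Rosenlicht's theorem, while the paper's is shorter but depends on the external reference.
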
 

\begin{proof}
Let $G:={\rm Aut}^0(X,B)$. 
The group $G$ is a smooth connected algebraic group.
We may take a $G$-equivariant log resolution of $(X,B)$. 
Thus, we may assume that $(X,B)$ is log smooth and admits an effective $G$-action.
Since $X$ is rationally connected, the group $G$ is a linear algebraic group.
Let $\Gamma$ be the effective divisor obtained from $B$ by increasing all its negative coefficients to one.
Then, $G$ acts on the log smooth pair $(X,\Gamma)$ and
$K_X+\Gamma$ is pseudo-effective.
By~\cite[Theorem 1.1]{Hu18}, we conclude that $G$ is an algebraic torus.
\end{proof} 

We conclude this section 
proving that a bounded family of projective toric varieties is indeed finite.

\begin{lemma}\label{lem:toric-bound-finite}
Let $\mathcal{T}$ be a bounded family of projective toric varieties. Then, the bounded family $\mathcal{T}$ is indeed a finite family up to isomorphism.
\end{lemma}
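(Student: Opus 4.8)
The plan is to use the combinatorial classification of projective toric varieties, reducing the abstract finiteness statement to a finiteness statement for lattice polytopes. Recall that a projective toric variety $X$ of dimension $n$, together with a torus-invariant very ample Cartier divisor $D$, is recovered torus-equivariantly from its lattice polytope $P_D\subseteq M_\rr\cong \rr^n$: the fan of $X$ is the normal fan of $P_D$, and $X$ is determined up to equivariant isomorphism by $P_D$ up to unimodular equivalence (affine transformations $x\mapsto Ax+b$ with $A\in GL_n(\zz)$ and $b\in\zz^n$). Thus it suffices to show that only finitely many such polytopes, up to unimodular equivalence, arise from the members of $\mathcal{T}$.

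First I would extract numerical bounds from the boundedness hypothesis. By definition there is a projective morphism $\mathcal{X}\rightarrow T$ of finite type such that every member of $\mathcal{T}$ is isomorphic to a closed fiber; after replacing the relatively ample line bundle by a suitable power we may assume it is relatively very ample, say $\mathcal{L}$. Since $T$ is of finite type, the dimensions of the fibers and the degrees $(\mathcal{L}|_{\mathcal{X}_t})^{\dim \mathcal{X}_t}$ take only finitely many values; in particular both are bounded by constants depending only on $\mathcal{T}$. As there are finitely many possible dimensions, it is enough to treat the toric members of a fixed dimension $n$, and we fix also a bound $d$ for their degrees.

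The key step is to attach a controlled polytope to each toric member. Let $X=\mathcal{X}_t$ be a toric member of dimension $n$ and set $\mathcal{L}_X:=\mathcal{L}|_X$. Every line bundle on a toric variety admits a torus linearization, so $\mathcal{L}_X\cong \mathcal{O}_X(D)$ for a torus-invariant very ample Cartier divisor $D$, and the associated lattice polytope $P_D$ is $n$-dimensional (since $D$ is ample on the $n$-dimensional $X$) with
\[
\vol(P_D)=\frac{D^n}{n!}=\frac{\mathcal{L}_X^n}{n!}\leq \frac{d}{n!}.
\]
Hence every toric member of $\mathcal{T}$ of dimension $n$ yields an $n$-dimensional lattice polytope of volume at most $d/n!$. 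By the finiteness theorem for lattice polytopes of fixed dimension and bounded volume up to unimodular equivalence (Lagarias--Ziegler), there are only finitely many such $P_D$ up to unimodular equivalence. Passing to normal fans, there are finitely many fans up to the action of $GL_n(\zz)$, hence finitely many toric varieties up to torus-equivariant isomorphism, and a fortiori up to isomorphism. Combining over the finitely many admissible dimensions proves the claim.

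The main obstacle is the last finiteness input for lattice polytopes: one must ensure that a bound on the volume genuinely controls the polytope up to unimodular equivalence. This is where the full-dimensionality of $P_D$ is essential --- a bound on the volume of a lower-dimensional polytope carries no information --- and it is guaranteed precisely because $D$ is ample on an $n$-dimensional variety. A secondary point to verify is that the passage from boundedness of the family to boundedness of $\mathcal{L}_X^n$ is uniform; this holds because the Hilbert polynomial, and therefore the top self-intersection of $\mathcal{L}$, is a constructible function on the finite-type base $T$ and so takes only finitely many values.
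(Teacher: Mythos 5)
Your proof is correct and follows essentially the same route as the paper: attach to each toric member the lattice polytope of a torus-invariant very ample divisor and invoke the Lagarias--Ziegler finiteness theorem for lattice polytopes of bounded volume up to unimodular equivalence. The only (immaterial) difference is that you bound the volume directly via the degree $\mathcal{L}_X^n=n!\,\vol(P_D)$, whereas the paper first fixes the number of lattice points $h^0(\mathcal{X}_u,\mathcal{L}_u)$ on a stratum of the base and then uses a separate Lagarias--Ziegler result to convert that into a volume bound.
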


\begin{proof}
Let $\mathcal{X}\rightarrow U$ be a bounding family for $\mathcal{T}$, i.e., for every $T\in \mathcal{T}$ we have $T\simeq \mathcal{X}_u$ for some closed point $u\in U$.
Let $n$ be the dimension of the general fiber of $\mathcal{X}\rightarrow U$.
By Noetherian induction, it suffices to prove that only finitely many isomorphisms classes of projective toric varieties appear as fibers of $\mathcal{X}\rightarrow U$
after possibly shrinking $U$.
Let $\mathcal{L}$ be a very ample divisor on $\mathcal{X}$. 
Shrinking $U$, we may assume that 
there exists a constant $k$ such that
$h^0(\mathcal{X}_u,\mathcal{L}_u)=k$
for all $u\in U$.
Assume that $\mathcal{X}_u$ is a projective toric variety. 
Then, there is a torus invariant divisor $T_u \sim \mathcal{L}_u$ that is very ample
and for which $h^0(\mathcal{X}_u,T_u)=k$.
Let $P_u \subset \qq^n$ be the lattice polytope corresponding to the pair $(\mathcal{X}_u,T_u)$ (see~\cite[Theorem 6.2.1]{CLS11}).
The torus equivariant isomorphism class of $\mathcal{X}_u$ is determined by the lattice polytope $P_u$ up to unimodular transformation of $\qq^n$.
The polytope $P_u$ contains exactly $k$ lattice points of $\zz^n$ (see, e.g.,~\cite[Proposition 5.4.1]{CLS11}).
By~\cite[Theorem 1]{LZ91}, we conclude that $\vol(P_u)\leqslant f(n,k)$ for a constant $f(k,n)$ only depending on $n$ and $k$.
By~\cite[Theorem 2]{LZ91}, we conclude that the polytope $P_u$ belongs to a finite set, that only depends on $n$ and $k$, up to unimodular transformation.
Thus, there are only finitely many equivariant isomorphism classes for the projective toric varieties that appear as fibers of $\mathcal{X}\rightarrow U$.
\end{proof}

\section{Finite actions on Fano fibrations}
In this section, we prove results regarding finite abelian actions on Fano fibrations. 
First, we prove a couple of statements about finite subgroups of algebraic tori over fields.

\begin{lemma}\label{lem:GLr-action}
Let $r$ be a positive integer.
There exists a constant $m:=m(r)$,
only depending on $r$, satisfying the following.
Let $\mathbb{F}$ be a field of characteristic zero, 
let $\mathbb{T}$ be an $r$-dimensional split torus over $\mathbb{F}$, and 
let $G \leqslant {\rm GL}_r(\zz) \leqslant {\rm Aut}(\mathbb{T})$ be a finite subgroup.
If $G$ commutes with $\zz_\ell \leqslant \mathbb{T}$
and $\ell \geq m$, then 
there exist
\[
C \leqslant \mathbb{T}_0  \leqslant \mathbb{T}
\]
where $C$ is a subgroup of $\zz_\ell$ of index at most $\frac{m}{2}$ and 
$\mathbb{T}_0$ is a one-dimensional split torus over $\mathbb{F}$ that commutes with $G$.
\end{lemma}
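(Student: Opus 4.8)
The plan is to analyze the finite subgroup $G \leqslant {\rm GL}_r(\zz)$ together with the commuting cyclic subgroup $\zz_\ell \leqslant \TT$ purely at the level of cocharacter lattices. Write $N \cong \zz^r$ for the cocharacter lattice of $\TT$, so that $G$ acts $\zz$-linearly on $N$ and the torsion subgroup $\zz_\ell \leqslant \TT(\mathbb{F}) = N \otimes \mathbb{F}^*$ is specified by a homomorphism, i.e. by an element of the $\ell$-torsion $(N/\ell N) \otimes \mu_\ell$. I would first extract the constant: by Minkowski's theorem (or Jordan's theorem for ${\rm GL}_r(\zz)$), the finite subgroups of ${\rm GL}_r(\zz)$ have order bounded by a constant $M(r)$ depending only on $r$, and there are only finitely many up to conjugacy. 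Set $m := m(r)$ to be, say, $2\,M(r)$ or a suitable multiple, so that $\ell \geq m$ guarantees $\ell$ is large compared to $|G|$.

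**Producing the $G$-fixed direction.** The key point is that the $\zz_\ell$ specified by the cocharacter datum must interact with the $G$-action. Since $G$ commutes with the subgroup $\zz_\ell \leqslant \TT$, and $\zz_\ell$ is generated by a point $t_0$ of exact order $\ell$, the commutation $g \cdot t_0 = t_0$ for all $g \in G$ says that $t_0$ lies in the $G$-fixed subgroup $\TT^G$. Here I want to pass from the fixed \emph{point} to a fixed \emph{subtorus}: consider the $\qq$-vector space $N_\qq = N \otimes \qq$ with its $G$-action, and decompose $N_\qq = N_\qq^G \oplus N'$ into the $G$-invariant subspace and a $G$-complement. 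Because $G$ is finite and we are in characteristic zero, this decomposition is defined over $\qq$, and after clearing denominators it is visible integrally up to bounded index. The cocharacter giving $t_0$, viewed modulo $\ell$, will have a component in $N^G$; since $\ell$ is large relative to the bounded index appearing in the splitting (controlled by $|G|$, hence by $m$), this component has order at least $\ell$ divided by a bounded factor.

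**Extracting $\TT_0$ and $C$.** The $G$-invariant sublattice $N^G \subseteq N$ (saturated, say) is a direct summand and is pointwise fixed by $G$, so the subtorus $\TT_0' := N^G \otimes \mathbb{F}^*$ is a split subtorus of $\TT$ on which $G$ acts trivially, i.e. $\TT_0'$ commutes with $G$. If $N^G$ has rank at least one, I would choose inside it a one-dimensional split subtorus $\TT_0 \leqslant \TT_0'$ containing the $N^G$-component of $t_0$. The cyclic group $C$ is then defined as the subgroup of $\zz_\ell$ generated by the projection of $t_0$ into $\TT_0$; its index in $\zz_\ell$ is exactly the order of the image of $t_0$ in $\TT/\TT_0'$, which is bounded by the torsion introduced by the splitting, hence at most $m/2$ after adjusting the constant. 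The remaining subtle point is to guarantee $N^G \neq 0$: this is where the hypothesis $\ell \geq m$ is essential. If $N^G = 0$ then every $G$-orbit on $N_\qq$ spans away from the fixed locus, and the averaging map $x \mapsto \sum_{g\in G} g x$ kills $t_0$'s lift; this forces the order of $t_0$ to divide $|G| \leq m/2$, contradicting that $t_0$ has order $\ell \geq m$.

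**Main obstacle.** The genuinely delicate step is the integral bookkeeping in the splitting $N_\qq = N_\qq^G \oplus N'$: the projection to $N^G$ is defined over $\qq$ but not over $\zz$, so applying it to the lattice element underlying $t_0$ introduces a denominator bounded in terms of $|G|$. I would control this denominator by Minkowski's bound and absorb it into the choice of $m$, so that the index $[\zz_\ell : C] \leq m/2$ holds. Making the constant fully explicit and uniform in $r$ (and confirming that the one-dimensional $\TT_0$ can always be chosen \emph{split}, which is automatic once $N^G$ is a genuine sublattice over $\mathbb{F}$) is the part that requires the most care, but it is elementary linear algebra over $\zz$ once the averaging argument pins down that $N^G$ is nonzero.
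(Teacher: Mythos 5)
Your proof is correct, and it takes a genuinely more explicit route than the paper's. The paper first invokes the Jordan--Zassenhaus theorem to reduce to finitely many conjugacy classes $H_1,\dots,H_u$ of finite subgroups of ${\rm GL}_r(\mathbb{Z})$, and then \emph{asserts} that for the resulting finite list of diagonalizable groups $C(H_i)\cap\mathbb{T}=\mathbb{T}^{H_i}$ a uniform constant $m$ exists with the desired property, transporting the conclusion back to $G$ by conjugation; the analytic heart of the matter (why a large $\mathbb{Z}_\ell$ inside a fixed $\mathbb{T}^{H_i}$ forces a positive-dimensional identity component containing a bounded-index subgroup) is left implicit. You instead work directly with the cocharacter lattice $N$ and the averaging endomorphism $\sum_{g\in G}g$, using only Minkowski's bound $|G|\leq M(r)$ rather than conjugacy finiteness: since $t_0$ is $G$-fixed, $t_0^{|G|}=\bigl(\sum_{g\in G}g\bigr)(t_0)$ lands in the split subtorus $\mathbb{T}_0'=(N\cap N_\qq^G)\otimes\mathbb{F}^*$ on which $G$ acts trivially, which simultaneously shows $N_\qq^G\neq 0$ (else $t_0^{|G|}=1$, contradicting $\ell\geq m>|G|$) and bounds the index $[\mathbb{Z}_\ell:C]$ by $|G|\leq M(r)$. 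This buys an effective constant and supplies exactly the step the paper elides. The only point you leave as ``elementary linear algebra'' that deserves one line is that a cyclic subgroup of a split torus lies in a one-dimensional split subtorus: writing the generator as $(\zeta^{a_1},\dots,\zeta^{a_s})$ with $d=\gcd(a_i)$ coprime to its order, the primitive cocharacter $t\mapsto(t^{a_1/d},\dots,t^{a_s/d})$ hits it at $t=\zeta^{d}$, so no further index loss occurs there. With that remark included, your argument is complete.
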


\begin{proof}
By the Jordan-Zasenhaus Theorem,
there are only finitely many finite subgroups of ${\rm Gl}_r(\zz)$ up to conjugation.
Let $H_1,\dots,H_u$ be a finite set 
of finite subgroups of ${\rm GL}_r(\zz)$ such that
every finite subgroup of ${\rm GL}_r(\zz)$ is conjugate with one of the $H_i$'s.
Hence, we have a finite set 
\[
\{
C(H_i)\cap \mathbb{T} \mid i\in \{1,\dots,u\}
\} 
\]
of algebraic groups.
In particular, there is a constant $m:=m(r)$ satisfying the following.
In the previous, $C(H_i)$ denotes the centralizer of $H_i$.
If $\zz_\ell \leqslant C(H_i)\cap \mathbb{T}$
and $\ell \geq m$, then there exist $C\leqslant \mathbb{T}_0 \leqslant C(H_i) \cap \mathbb{T}$, 
where $C$ is a subgroup of $\zz_\ell$ of index at most $\frac{m}{2}$ and $\mathbb{T}_0$ is a one-dimensional split torus over $\mathbb{F}$.
We fix such an $m$.

Now, let $G\leqslant {\rm GL}_r(\zz)$ 
be a finite group commuting with $\zz_\ell$ and $\ell\geq m$. We have 
$\zz_\ell \leqslant C(G) \cap \mathbb{T}$. 
There exists $h\in {\rm GL}_r(\zz)$ such that
$hGh^{-1}=H_i$ for some $i$.
In particular, we have 
\[
h\zz_\ell h^{-1} \leqslant hC(G)h^{-1} \cap \mathbb{T} = C(H_i) \cap \mathbb{T}.
\]
By the first paragraph, there exists a subgroup 
$C'\leqslant h\zz_\ell h^{-1}$ of 
index at most $\frac{m}{2}$
and a 
one-dimensional torus $\mathbb{T}_0' \geqslant C$ such that $\mathbb{T}'_0 \leqslant C(H_i) \cap \mathbb{T}$. 
We set $C:=h^{-1}C'h$ and $\mathbb{T}_0:=h^{-1}\mathbb{T}_0'h$.
Hence, $C$ is a subgroup of index at most $\frac{m}{2}$ in $\zz_\ell$ for which 
$C \leqslant \mathbb{T}_0 \leqslant \mathbb{T}$
and $\mathbb{T}_0$ commutes with $G$.
\end{proof}

\begin{lemma}\label{lem:non-split-r-torus}
Let $r$ be a positive integer.
There is a constant $m:=m(r)$ satisfying the following. 
Let $\kk$ be an algebraically closed field of characteristic zero and $\mathbb{F}\supseteq \kk$ be a field extension.
Let $\mathbb{T}$ be an $r$-dimensional torus over $\mathbb{F}$, let $\mathbb{F}'\supset \mathbb{F}$ be its splitting field, let $G$ be the Galois group of the field extension $\mathbb{F}'\supseteq \mathbb{F}$, and 
let 
$\mathbb{T}'$ be the corresponding split torus over $\mathbb{F}'$. 
Assume that $\zz_\ell \leqslant \mathbb{T}_1' \leqslant \mathbb{T}'$, where:
\begin{itemize}
\item we have $\ell \geq m$, 
\item the group $\zz_\ell$ commutes with $G$, and 
\item the torus $\mathbb{T}_1'$ is invariant under the conjugation $G$-action.
\end{itemize} 
Then, there exists 
\[
C \leqslant \mathbb{T}'_0 \leqslant \mathbb{T}'_1\leqslant \mathbb{T}'
\]
where $C$ is a subgroup of $\zz_\ell$ of index at most $\frac{m}{2}$ and $\mathbb{T}'_0$ is a one-dimensional split torus over $\mathbb{F}'$ that is invariant under the conjugation $G$-action.
\end{lemma}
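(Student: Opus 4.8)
The plan is to deduce this non-split statement from the split case Lemma~\ref{lem:GLr-action} by Galois descent, viewing the conjugation action of $G$ on $\mathbb{T}'$ as a linear action on the cocharacter lattice. First I would record the structural input: since $\mathbb{F}'$ is the splitting field of $\mathbb{T}$, the Galois group $G$ acts on the cocharacter lattice $N:=X_*(\mathbb{T}')\cong \zz^r$ through a homomorphism $\rho\colon G\rightarrow {\rm GL}_r(\zz)$, and the conjugation $G$-action on the split torus $\mathbb{T}'$ is, at the level of subtori, governed entirely by $\rho$: a split subtorus of $\mathbb{T}'$ is invariant under the conjugation $G$-action if and only if the corresponding saturated sublattice of $N$ is $\rho(G)$-invariant, and it is fixed pointwise if and only if $\rho(G)$ acts trivially on that sublattice.

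Next I would pass to the invariant subtorus. The hypothesis that $\mathbb{T}'_1$ is $G$-invariant means that $N_1:=X_*(\mathbb{T}'_1)$ is a $\rho(G)$-invariant saturated sublattice of $N$ of some rank $r_1=\dim \mathbb{T}'_1\leqslant r$. Restricting $\rho$ to $N_1$ gives a finite subgroup $\rho_1(G)\leqslant {\rm GL}(N_1)={\rm GL}_{r_1}(\zz)$, and the whole conjugation action of $G$ on $\mathbb{T}'_1$ factors through $\rho_1(G)$ together with the field automorphisms. The assumption that $\zz_\ell$ commutes with $G$ then says precisely that $\zz_\ell\leqslant \mathbb{T}'_1$ commutes with the finite group $\rho_1(G)\leqslant {\rm GL}_{r_1}(\zz)$ acting on the split torus $\mathbb{T}'_1$ over $\mathbb{F}'$.

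With this translation in place, I would apply Lemma~\ref{lem:GLr-action} to the split torus $\mathbb{T}'_1$ over the field $\mathbb{F}'$, with the finite subgroup $\rho_1(G)\leqslant {\rm GL}_{r_1}(\zz)$ and the cyclic group $\zz_\ell$. Setting $m:=m(r)$ to be the largest of the constants produced by Lemma~\ref{lem:GLr-action} in dimensions $1,\dots,r$ (equivalently, using the inclusions ${\rm GL}_{r_1}(\zz)\hookrightarrow {\rm GL}_r(\zz)$), the hypothesis $\ell\geq m$ guarantees the existence of $C\leqslant \mathbb{T}'_0\leqslant \mathbb{T}'_1$ where $C$ is a subgroup of $\zz_\ell$ of index at most $\frac{m}{2}$ and $\mathbb{T}'_0$ is a one-dimensional split torus over $\mathbb{F}'$ commuting with $\rho_1(G)$. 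By the dictionary of the first step, $\mathbb{T}'_0$ being fixed by $\rho_1(G)$ forces it to be fixed pointwise, hence in particular invariant, under the conjugation $G$-action, yielding the chain $C\leqslant \mathbb{T}'_0\leqslant \mathbb{T}'_1\leqslant \mathbb{T}'$ required by the statement.

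I expect the main obstacle to be the bookkeeping of the descent in the first two paragraphs: namely, checking carefully that the semilinear conjugation action of the Galois group on the non-split torus restricts, on subtori defined by characters, to the purely linear ${\rm GL}_r(\zz)$-action that Lemma~\ref{lem:GLr-action} takes as input, and that the two commutation hypotheses ($\zz_\ell$ with $G$, and $\zz_\ell$ with $\rho_1(G)$) coincide under this identification. Once the hypotheses are matched, the split lemma is applied as a black box and no further computation is needed.
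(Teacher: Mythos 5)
Your proposal follows essentially the same route as the paper's proof: decompose the conjugation action of $G$ into a linear part $\rho(G)\leqslant {\rm GL}_r(\zz)$ acting on the (co)character lattice plus a semilinear Galois part, transfer the hypotheses to the split torus $\mathbb{T}'_1$, apply Lemma~\ref{lem:GLr-action}, and check the output is $G$-invariant. Two caveats. First, your dictionary's claim that a subtorus on whose cocharacter sublattice $\rho(G)$ acts trivially is \emph{fixed pointwise} by the conjugation $G$-action is false: the Galois part still sends $t\mapsto \gamma(t)$, so $\mathbb{T}'_0$ is only preserved as a subgroup, not pointwise; fortunately only invariance is needed, and that does follow from the correct half of your dictionary (the sublattice of $\mathbb{T}'_0$ is fixed by $\rho(G)$, hence invariant). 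Second, the ``bookkeeping'' you defer hides the one non-formal input: transferring ``$\zz_\ell$ commutes with $G$'' to ``$\zz_\ell$ commutes with $\rho(G)$'' uses that the coordinates of $\ell$-torsion points are roots of unity lying in $\kk\subseteq\mathbb{F}$, so they are fixed by every $\gamma\in{\rm Gal}(\mathbb{F}'/\mathbb{F})$; this is exactly where the hypothesis $\mathbb{F}\supseteq\kk$ with $\kk$ algebraically closed enters, and the argument would not go through without it.
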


\begin{proof}
Write $\mathbb{T}':={\rm Spec}(\mathbb{F}'[M])$
where $M$ is a free finitely generated
abelian group of rank $r$.
There is a homomorphism
$\tau \colon G \rightarrow {\rm GL}_r(\zz)$
such that the action of $G$
on $\mathbb{F}'[M]$ is given by  
\begin{equation}\label{eq:Gal-action}
g(\lambda \chi^m)=
\gamma(\lambda)\chi^{\tau(g)(m)}
\end{equation} 
with $\gamma \in {\rm Gal}(\mathbb{F}'/\mathbb{F})$.
For the previous statement see~\cite[Remark 3.6]{Gil22}.
Let $H$ be the image of $\tau$.
By equality~\eqref{eq:Gal-action}, every element $g$ of $G$ can be written as
$g=\gamma \circ h$ where $h\in H$ 
and $\gamma \in {\rm Gal}(\mathbb{F}'/\mathbb{F})$ acts trivially on the weighted monoid of $\mathbb{T}'$.

We argue that $\mathbb{T}'_1$ is invariant under the conjugation $H$-action.
For every $h\in H$, we can find $g\in G$ for which $g=\gamma \circ h$. Therefore, we have
\[
\gamma \circ h \circ \mathbb{T}'_1 \circ h^{-1} \circ \gamma^{-1} = \mathbb{T}'_1 
\]
as $\mathbb{T}'_1$ is invariant under the conjugation $G$-action. 
Thus, we have
\[
h \circ \mathbb{T}'_1 \circ h^{-1} =
\gamma^{-1} \circ \mathbb{T}'_1 \circ \gamma = \mathbb{T}'_1,
\]
where the last equality holds as $\gamma$ acts trivially on the weighted monoid of $\mathbb{T}'$.
In particular, we have that $H\leqslant {\rm GL}_s(\zz)\leqslant {\rm Aut}(\mathbb{T}'_1)$ 
where $s$ is the rank of $\mathbb{T}'_1$ over $\mathbb{F}'$.

Now, we argue that $\zz_\ell$ commutes with $H$.
Let $\lambda \in \zz_\ell$ be a generator.
Since $\lambda$ commutes with $G$, we have 
\[
h^{-1}\circ \gamma^{-1} \circ \lambda \circ \gamma \circ h = \lambda,
\]
for each $g\in G$.
Hence, we have the following equalities
\[
h \circ \lambda \circ h^{-1} =
\gamma^{-1} \circ \lambda \circ \gamma = \lambda,
\]
where the last equality holds as $\gamma$ acts as the identity on $\ell$-roots of unity.
Here, we used the fact that $\gamma\in {\rm Gal}(\mathbb{F}'/\mathbb{F})$ and $\mathbb{F}$ contains $\mathbb{K}$, so $\mathbb{F}$ contains all roots of unity.
We conclude that $\zz_{\ell}\leqslant \mathbb{T}_1$ commutes with $H$.

Thus, we have $H\leqslant {\rm GL}_s(\zz) \leqslant {\rm Aut}(\mathbb{T}'_1)$ and $H$ commutes with $\zz_\ell \leqslant \mathbb{T}'_1$.
By Lemma~\ref{lem:GLr-action}, for $\ell \geq m$, we have 
\[
C \leqslant \mathbb{T}'_0 \leqslant \mathbb{T}'_1 \leqslant \mathbb{T}',
\]
where $C$ is a subgroup of $\zz_\ell$ of index at most $\frac{m}{2}$ and $\mathbb{T}'_0$ is a one-dimensional split torus over $\mathbb{F}'$ that commutes with $H$.
It suffices to show that $\mathbb{T}'_0$ is invariant under the conjugation $G$-action.
Let $g\in G$ and choose a decomposition $g=\gamma \circ h$ as above.
Let $t\in \mathbb{T}'_0$. 
Since $H$ commutes with $\mathbb{T}'_0$, we can write
\[
g\circ t\circ g^{-1} =
\gamma \circ h \circ t \circ h^{-1} \circ \gamma^{-1} = 
\gamma \circ t \circ \gamma^{-1}.
\]
Let $(x_1,\dots,x_r) \in ({\mathbb{F}'}^*)^r$.
We have
\[
t(x_1,\dots,x_r)=
(t^{\alpha_1}x_1,\dots,t^{\alpha_r}x_r),
\]
for certain integers $\alpha_1,\dots,\alpha_r$.
Then, there is a sequence of equalities
\begin{align*} 
\gamma \circ t \circ \gamma^{-1}(x_1,\dots,x_r)& =
\gamma \circ t(\gamma^{-1}(x_1),\dots,\gamma^{-1}(x_r)) =\\ 
\gamma(t^{\alpha_1}\gamma^{-1}(x_1),\dots,
t^{\alpha_r}\gamma^{-1}(x_r)) &= 
(\gamma(t)^{\alpha_1}x_1,\dots,
\gamma(t)^{\alpha_r}x_r).
\end{align*}
We conclude that the following equality holds
\[
g \circ t \circ g^{-1} = \gamma(t) \in \mathbb{T}'_0.
\]
Hence, the torus $\mathbb{T}'_0$ is closed under the conjugation $G$-action.
\end{proof}

Now, we turn to prove the main technical proposition of this section. 
The following theorem states that if we have a large cyclic action on a Fano fibration with bounded fibers, 
then general fibers admit tori actions in such a way that the divisorial sink and the divisorial source are compatible in the fibration.

\begin{theorem}\label{thm:cyc-act-Fano-fib}
Let $n$ and $N$ be positive integers
and $\epsilon$ be a positive real number.
There exists a constant $m:=m(n,N,\epsilon)$,
only depending on $n,N$ and $\epsilon$,
satisfying the following.
Let $f\colon X \rightarrow Z$ be a Fano fibration from a $n$-dimensional variety.
Assume that $X$ has $\epsilon$-lc singularities.
Let $(X,B,\mathbf{M})$ be a generalized log Calabi--Yau pair over $Z$ of index $N$.
Furthermore, assume that $\zz_\ell \leqslant {\rm Aut}_Z(X,B,\mathbf{M})$ 
and $N\mathbf{M}$ is nef Cartier in a quotient by $\zz_\ell$ where it descends.
If $\ell\geq m$, then there exist:
\begin{enumerate}
\item a projective birational morphism $Y\rightarrow X$ that only extracts 
generalized log canonical places of $(X,B,\mathbf{M})$, 
\item a reduced effective divisor $S$ on $Y$, and 
\item a subgroup $C\leqslant \zz_\ell$ of index at most $\frac{m}{2}$,
\end{enumerate}
satisfying the following.
For a general closed point $z\in Z$,
there exists a one-dimensional torus $C \leqslant \mathbb{T}_z \leqslant {\rm Aut}(X_z,B_z)$
for which $S|_{Y_z}$ is the reduced sum of the divisorial sink and the divisorial source of the $\mathbb{T}_z$-action.
\end{theorem}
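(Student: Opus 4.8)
The plan is to descend the problem to the generic fiber of $f$, solve a torus-theoretic problem there using the finite-subgroups-of-tori lemmas, and then spread the resulting one-dimensional torus out over the base. Write $\mathbb{F}:=\kk(Z)$ and let $X_\eta$ be the generic fiber of $f\colon X\rightarrow Z$; since every element of $\zz_\ell\leqslant {\rm Aut}_Z(X,B,\mathbf{M})$ induces the identity on $Z$, it restricts to an $\mathbb{F}$-automorphism of the generalized log Calabi--Yau pair $(X_\eta,B_\eta,\mathbf{M}_\eta)$, and $X_\eta$ is a Fano variety over $\mathbb{F}$. Because $X$ is $\epsilon$-lc and $f$ is a Fano fibration, the geometric generic fiber $X_{\bar\eta}$ is an $\epsilon$-lc Fano variety of dimension at most $n$ whose boundary coefficients are constrained by the index $N$; by Birkar's boundedness of Fano varieties~\cite{Bir21} it lies in a bounded family depending only on $n,N,\epsilon$.

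First I would pin down the automorphism group of the generic fiber. The variety $X_{\bar\eta}$ is klt and rationally connected, so arguing as in Lemma~\ref{lem:aut-sub-log-CY} (after absorbing the moduli part into the boundary in the style of Lemma~\ref{lem:general-element}) the identity component ${\rm Aut}^0(X_{\bar\eta},B_{\bar\eta},\mathbf{M}_{\bar\eta})$ is an algebraic torus. Hence $\mathbb{T}:={\rm Aut}^0_{\mathbb{F}}(X_\eta,B_\eta,\mathbf{M}_\eta)$ is an $r$-dimensional torus over $\mathbb{F}$ with $r\leq n$, possibly non-split. Boundedness provides a constant $c_0=c_0(n,N,\epsilon)$ bounding the number of connected components of the stabilizer of the boundary (these groups embed as closed subgroups of bounded degree in a fixed ${\rm PGL}_M$), so the image of $\zz_\ell$ in the component group has order at most $c_0$ and $C_0:=\zz_\ell\cap \mathbb{T}$ is a cyclic group of index at most $c_0$ consisting of $\mathbb{F}$-points of $\mathbb{T}$.

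Next I would invoke the finite-subgroups-of-tori machinery. Let $\mathbb{F}'\supseteq \mathbb{F}$ be the splitting field of $\mathbb{T}$, with Galois group $\Gamma$, and $\mathbb{T}'=\mathbb{T}_{\mathbb{F}'}$; the group $\mathbb{T}'$ is $\Gamma$-invariant and $C_0\subseteq \mathbb{T}(\mathbb{F})$ is $\Gamma$-fixed, hence commutes with the conjugation $\Gamma$-action. Taking $m$ larger than the constant of Lemma~\ref{lem:non-split-r-torus} for rank $r$ and absorbing $c_0$, for $\ell\geq m$ that lemma (applied with $\mathbb{T}_1'=\mathbb{T}'$ and $C_0$ in the role of $\zz_\ell$) yields a subgroup $C\leqslant C_0$ of index at most $m/2$ in $\zz_\ell$ together with a one-dimensional split $\Gamma$-invariant subtorus $C\leqslant \mathbb{T}_0'\leqslant \mathbb{T}'$. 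By Galois descent $\mathbb{T}_0'$ comes from a one-dimensional subtorus $\mathbb{T}_0\leqslant \mathbb{T}$ over $\mathbb{F}$ with $C\leqslant \mathbb{T}_0(\mathbb{F})$. Spreading $\mathbb{T}_0$ and its action out over a dense open $U\subseteq Z$, for every general closed point $z\in U$ the fiber $\mathbb{T}_z$ is a one-dimensional torus over the algebraically closed field $\kk$, i.e. a copy of $\mathbb{G}_m$, and $C\leqslant \mathbb{T}_z\leqslant {\rm Aut}(X_z,B_z)$.

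Finally I would build $Y$ and $S$. The divisorial sink $E_-$ and divisorial source $E_+$ of the $\mathbb{T}_0$-action are $\mathbb{T}_0$-invariant divisorial valuations over $X_\eta$, which spread out to prime divisorial valuations over $X$ that are horizontal over $Z$; taking $Y\rightarrow X$ to extract them (Lemma~\ref{lem:g-extraction}) and letting $S$ be their reduced sum gives $S|_{Y_z}=(\text{sink})+(\text{source})$ of $\mathbb{T}_z$ for general $z$. The main obstacle is condition (1), namely that $E_-$ and $E_+$ are generalized log canonical places of $(X,B,\mathbf{M})$. To settle this I would pass to a $\mathbb{G}_m$-equivariant model of $X_z$ with good quotient $q\colon \widetilde{X_z}\rightarrow Y_z$, on which $E_-,E_+$ are the two horizontal $\mathbb{G}_m$-fixed prime divisors and $q$ has relative dimension one with general fiber $(\pp^1,\{0\}+\{\infty\})$ carrying the standard action. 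Since $q$ is a $\mathbb{G}_m$-quotient the family of fibers is isotrivial, so the moduli part of the generalized canonical bundle formula for $q$ is trivial on a general fiber $F$; restricting the relation $K_{\widetilde{X_z}}+B_{\widetilde{X_z}}+\mathbf{M}_{\widetilde{X_z}}\sim_{\rr}0$ to $F\cong\pp^1$ then forces the coefficients of the boundary at the two $\mathbb{G}_m$-fixed points to sum to $2$, while generalized log canonicity bounds each by $1$. Hence both coefficients equal $1$, so $E_-$ and $E_+$ are generalized log canonical places, which is exactly what condition (1) requires. All constants introduced depend only on $n,N,\epsilon$, giving the asserted $m=m(n,N,\epsilon)$.
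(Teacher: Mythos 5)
Your overall architecture (pass to the generic fiber, locate a torus in the automorphism group, apply Lemma~\ref{lem:non-split-r-torus} to cut out a Galois-invariant one-dimensional split subtorus containing a bounded-index subgroup of $\zz_\ell$, then spread out and extract the sink and source) matches the paper's. But there is a genuine gap at the decisive step, namely the verification that the divisorial sink and source are generalized log canonical places. Your degree computation on a general fiber $F\cong\pp^1$ of the $\mathbb{G}_m$-quotient gives
\[
{\rm coeff}_0\bigl(B_{\widetilde{X_z}}|_F\bigr)+{\rm coeff}_\infty\bigl(B_{\widetilde{X_z}}|_F\bigr)+\deg\bigl(\mathbf{M}_{\widetilde{X_z}}|_F\bigr)=2,
\]
and you need $\deg(\mathbf{M}_{\widetilde{X_z}}|_F)=0$ to conclude that both coefficients equal one. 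Your justification — isotriviality of $q$ forces the moduli part to be trivial — concerns the moduli part of the canonical bundle formula \emph{for the fibration $q$}, which is a different object from the given b-nef divisor $\mathbf{M}$ of the generalized pair; the latter is part of the input data and its degree on a general orbit closure has no reason to vanish. Concretely, take $X_z=\pp^1\times\pp^1$, $B_z=(\{0\}+\{\infty\})\times\pp^1$, and $\mathbf{M}$ the pull-back of $\mathcal{O}(2)$ from the second factor: this is a generalized log Calabi--Yau pair, ${\rm Aut}^0(X_z,B_z,\mathbf{M}_z)\supseteq\mathbb{G}_m\times{\rm PGL}_2$ is not even a torus, and the skew subtorus $\{(t,t^{\ell})\}$ of a maximal torus contains $\zz_\ell\times\{1\}$ yet its divisorial sink is a weighted blow-up valuation with generalized log discrepancy $\ell$ — not a glc place. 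So Lemma~\ref{lem:non-split-r-torus}, applied as you do to the \emph{full} maximal torus, may return exactly such a subtorus, and conditions (1)--(2) of the theorem then fail.

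This is precisely what the paper's Claim~1 is for, and it is the bulk of the argument: one must first replace $\mathbf{M}$ by an honest general member $\Gamma\in|N\mathbf{M}|$ (made base point free on a toric model over the Chow quotient via Lemma~\ref{lem:toric-gen}, averaged to be Galois-invariant, and preserved by a bounded-index subgroup of $\zz_\ell$ using boundedness of the resulting pairs), and then cut the maximal torus down to ${\rm Aut}^0_{U_W}(\bar{X},\bar{B}+\bar{\Gamma})$. Only for one-dimensional subtori of \emph{this} smaller torus does your restriction argument close up: the restriction of $\bar{B}+\bar{\Gamma}$ to a general fiber is an invariant sub-log Calabi--Yau boundary on $\pp^1$, hence equals $\{0\}+\{\infty\}$, and Lemma~\ref{lem:general-element} transfers log canonical places of $(\bar{X},\bar{B}+\bar{\Gamma})$ to generalized log canonical places of $(X,B,\mathbf{M})$. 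Your proposal never constructs $\Gamma$, never verifies that $C$ preserves it, and never restricts the torus, so the key conclusion is unsupported. The remaining steps (Galois descent of the one-dimensional torus, spreading out, equivariant extraction via Lemmas~\ref{lem:g-extraction} and~\ref{lem:G_m-action-lift-to-gdlt}) are fine and consistent with the paper.
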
 

\begin{proof}
By assumption, a general fiber $X_z$ is
and $\epsilon$-lc Fano variety
and $N(K_{X_z}+B_z+\mathbf{M}_{z,X_z})\sim 0$.
In particular, 
the pairs $(X_z,B_z)$
as in the statement are log bounded in terms of $n,N$, and $\epsilon$ (see, e.g.,~\cite{Bir21}).
By~\cite[Theorem 4.1]{Mor20c}, there is a constant $m_0:=m_0(n,N,\epsilon)$ satisfying the following.
Whenever $\zz_\ell \leqslant {\rm Aut}(X,B)$ acts fiberwise over $Z$ and $\ell\geq m_0$, we can find a subgroup $C\leqslant \zz_\ell$ of index at most $\frac{m_0}{2}$ 
such that $C\leqslant \mathbb{T}_z\leqslant {\rm Aut}^0(X_z,B_z)$ for a general closed point $z\in Z$. Here, $\mathbb{T}_z$ is a maximal torus of ${\rm Aut}^0(X_z,B_z)$.
Let $\eta$ be the generic point of $Z$ and $\bar{\eta}$ be the geometric generic point.
We write $(X_{\eta},B_{\eta})$ and $(X_{\bar{\eta}},B_{\bar{\eta}})$ for the log pair induced on the generic fiber and the geometric generic fiber, respectively.
By~\cite[Lemma 2.1]{Via13}, we conclude that $C\leqslant \mathbb{T}_{\bar{\eta}} \leqslant {\rm Aut}(X_{\bar{\eta}},B_{\bar{\eta}})$, 
where $\mathbb{T}_{\bar{\eta}}$ is a split algebraic torus over $\bar{\eta}$ of rank $r\leq n$.
We write $\mathbb{T}_{\eta}$ for the corresponding algebraic torus over $\eta$.
In particular, up to shrinking $Z$ and assuming that the base is affine, we have $C\leqslant \mathbb{T}_Z \leqslant {\rm Aut}_Z(X,B)$
where $\mathbb{T}_Z$ is an algebraic torus over $Z$.

Let $\eta'\supseteq \eta$ be the splitting field of $\mathbb{T}_\eta$.
We know that 
$G:={\rm Gal}(\eta'/\eta)$ is a finite group.
Let $Z'\rightarrow Z$ be the corresponding Galois \'etale cover.
Then, we have a commutative diagram as follows:
\[
\xymatrix{
(X,B,\mathbf{M})\ar[d]_-{f} & (X',B',\mathbf{M}') \ar[l]^-{/G} \ar[d]_-{f'} & (X_{\bar{\eta}},B_{\bar{\eta}},\mathbf{M}_{\bar{\eta}})\ar[l]\ar[d] \\ 
Z & Z'\ar[l]^-{/G} & \bar{\eta} \ar[l].
}
\]
Furthermore, we have 
$C\leqslant \mathbb{T}_{Z'}\leqslant {\rm Aut}_{Z'}(X',B')$ where
$\mathbb{T}_{Z'}$ is an $r$-dimensional split torus and $r\leq n$.
Hence, there is a commutative diagram:
\[
\xymatrix{
1\ar[r] &  C\ar@{^{(}->}[d] \ar[r] & \tilde{G}\ar[r]\ar@{^{(}->}[d] & G\ar[d]^-{{\rm id}_G} \ar[r] & 1 \\
1\ar[r] & \mathbb{T}_{Z'}\ar[r] & H\ar[r] & G\ar[r] & 1 \\
}
\]
where $\tilde{G}$ is the finite group generated by $G$ and $C$. Since the $C$-action descends to the quotient $X=X'/G$, the $C$-action normalizes $G$.
By~\cite[Appendix B.1.]{Gil22}, we may assume that $G\hookrightarrow {\rm GL}_r(\zz)$. 
In particular, the order of $|G|$ is bounded above by a constant only depending on $n$.
Replacing $C$ with a subgroup of index at most $m_1(r)$, we may assume that $G$ commutes with $C$.
So $C\leqslant \mathbb{T}_{Z'}$ commutes with the Galois action.\\

\noindent\textit{Claim 1:} There exists a subtorus 
$\mathbb{T}_{Z',1}\leqslant \mathbb{T}_{Z'}$ 
with $C\leqslant \mathbb{T}_{Z',1}$
satisfying the following conditions:
\begin{itemize}
\item the torus $\mathbb{T}_{Z',1}$ is invariant under the conjugation $G$-action, and
\item for any one-dimensional subtorus
$\mathbb{H}\leqslant \mathbb{T}_{Z',1}$ 
the divisorial sink and the divisorial source of the $\mathbb{H}$-action are generalized log canonical centers of
$(X',B',\mathbf{M}')$.
\end{itemize}

\begin{proof}[Proof of Claim 1]
Let $W$ be the normalized Chow quotient of $X'$ 
for the $\mathbb{T}_{Z'}$-action.
Since $H$ normalizes $\mathbb{T}_{Z'}$,
we have an $H$-equivariant rational map
$X'\dashrightarrow W$. 
Then, we can find an $H$-equivariant resolution 
$\bar{X}\rightarrow X'$ for which 
$\bar{X}\rightarrow W$ is an $H$-equivariant morphism.
Since $\mathbb{T}_{Z'}$-acts trivially on $W$, 
the morphism
$\bar{X}\rightarrow W$ is a quotient for the $\mathbb{T}_{Z'}$-action.
Let $(\bar{X},\bar{B},\mathbf{M}')$ be the log pull-back of $(X',B',\mathbf{M}')$ to $\bar{X}$.
Hence, the generalized sub-pair
$(\bar{X},\bar{B},\mathbf{M}')$ is a generalized
sub-log Calabi--Yau pair.
Let $U_W\subseteq W$ be an open subset over which all fibers of
$\bar{X}\rightarrow U_W$ are toric.
By possibly shrinking $U_W$, we may assume that it is $G$-invariant.
Thus, we have an $H$-equivariant commutative diagram as follows:
\[
\xymatrix{
\bar{X}\ar[rd]\ar[rr]^-{q} & & \bar{X}/C \ar[ld] \\
& U_W & 
}
\]
By performing toric blow-ups on $\bar{X}$, we may assume that 
$\bar{X}/C$ is smooth.
Recall that $N\mathbf{M}'$ is nef Cartier in a quotient where it descends.
Hence, there is a b-nef divisor $\mathbf{N}$ on $X'/C$ that pull-backs to $\mathbf{M}$ for which $N\mathbf{N}_{\bar{X}/C}$ is nef Cartier on $\bar{X}/C$.
By Lemma~\ref{lem:toric-gen}, 
after possible shrinking $U_W$, 
we can perform a sequence of toric blow-ups on $\bar{X}/C$ and assume that $N\mathbf{N}_{\bar{X}/C}$ is a base point free Cartier divisor over $U_W$.
By possibly shrinking $U_W$, we may assume that $N\mathbf{N}_{\bar{X}/C}$ is base point free on the preimage of $U_W$.
Let $\Gamma$ be such that
$N\Gamma \in |N\mathbf{N}_{\bar{X}/C}|$ is a general element.
We consider the divisor 
$\Gamma^G:=\sum_{g\in G} g^*N\Gamma/|G|$.
We replace $N$ with $N|G|$ and 
$\Gamma$ with $\Gamma^G$, so
that $\Gamma$ is $G$-invariant.
Hence, the following conditions are satisfied:
\begin{enumerate}
\item[(i)] the pair $(\bar{X},\bar{B}+\bar{\Gamma})$
is a sub-log Calabi--Yau pair, 
\item[(ii)] we have $N(K_{\bar{X}}+\bar{B}+\bar{\Gamma})\sim_{U_W} 0$, 
\item[(iii)] the pair $(\bar{X},\bar{B}+\bar{\Gamma})$ is $\tilde{G}$-invariant, and 
\item[(iv)] every log canonical place of $(\bar{X},\bar{B}+\bar{\Gamma})$ is a generalized log canonical place of $(\bar{X},\bar{B},\mathbf{M}')$.
\end{enumerate}
The first two statements are clear from the construction.
The third statement follows as $\Gamma$ is $G$-invariant. 
The last statement follows from Lemma~\ref{lem:general-element}. 

Let $\bar{X}_w$ be the fiber over $w$ of $\bar{X}\rightarrow U_W$. 
The variety $\bar{X}_w$ is a projective toric variety.
We argue that there 
is a finite set of projective toric varieties $\mathcal{T}_0:=\{T_i\mid i\in I\}$, only depending on $n,N$ and $\epsilon$, such that there is a projective contraction
$\bar{X}_w \rightarrow T_i$ for some $i\in I$.
For $u\in Z'$ a general closed point, 
the log pair
$(X'_u,B'_u)$ belongs to a bounded family that only depends on $n,N$ and $\epsilon$.
Hence, we may fix a $\mathbb{T}$-equivariant projective birational morphism
$\widetilde{X}'_u \rightarrow X'_u$ for which there is a good quotient $\widetilde{X}'_u \rightarrow V$.
Here, $\mathbb{T}$ is a maximal torus of ${\rm Aut}^0(X'_u,B'_u)$. 
By construction, the general fiber of $\widetilde{X}'_u\rightarrow V$ belongs to a
bounded family that only depends on $n,N$, and $\epsilon$.
A bounded family of projective toric varieties is indeed finite (see Lemma~\ref{lem:toric-bound-finite}). 
Hence, the general fiber of $\widetilde{X}'_u\rightarrow V$ belongs to a finite set of projective toric varieties, that only depends on $n,N$ and $\epsilon$.
We define $\mathcal{T}_0$ to be the finite set of projective toric varieties that appear as general fibers of
morphisms of the form $\widetilde{X}'_u\rightarrow V$.
By further blowing up $\bar{X}$, we may assume that the general fiber of $\bar{X}\rightarrow U_W$ dominates such of $\widetilde{X}'_u\rightarrow V$.
Hence, a general closed fiber of $\bar{X}\rightarrow U_W$ admits a projective birational morphism
to a projective toric variety in $\mathcal{T}_0$.

If the generalized pair $(\widetilde{X}'_u,\widetilde{B}'_u,\mathbf{M}'_u)$
is the log pull-back of $(X'_u,B'_u,\mathbf{M}'_u)$ to $\widetilde{X}'_u$, then the coefficients of $\widetilde{B'}_u$ belong to a finite set that only depends on $n,N$, and $\epsilon$.
In particular, 
there is a finite set of toric sub-pairs
$\mathcal{T}_1:=\{ (T_i,B_{T_i}) \mid i \in I'\}$,
only depending on $n,N$ and $\epsilon$,
such that 
the restriction of 
$(\widetilde{X}'_u,\widetilde{B}'_u)$ 
to $T_i$ is isomorphic to a sub-pair in $\mathcal{T}_1$.
Let $(\bar{X}_w,\bar{B}_w+\bar{\Gamma}_w)$ be the restriction of $(\bar{X},\bar{B}+\bar{\Gamma})$ to a general fiber of $\bar{X}\rightarrow W$. 
Let $(T_i,B_{T_i}+\Gamma_{T_i})$ be the push-forward of $(\bar{X}_w,\bar{B}_w+\bar{\Gamma}_w)$
to $T_i$.
By construction, 
the sub-pair $(T_i,B_{T_i}+\Gamma_{T_i})$ is a $C$-invariant sub-log Calabi--Yau pair for which $N(K_{T_i}+B_{T_i}+\Gamma_{T_i})\sim 0$.
Since the coefficients of $B_{T_i}$ are bounded below, the sub-pairs $(T_i,B_{T_i}+\Gamma_{T_i})$ are log bounded in terms of $n,N$, and $\epsilon$.
In particular, 
there is a finite set $\mathcal{A}:=\mathcal{A}(n,N,\epsilon)$ of
algebraic groups, 
only depending on $n,N$ and $\epsilon$,
such that ${\rm Aut}(T_i,B_{T_i},\Gamma_{T_i})\in \mathcal{A}$.
Hence, up to replacing $C$ with a subgroup of index at most $m_3:=m_3(n,N,\epsilon)$, we may assume that 
\[
C
\leqslant {\rm Aut}^0(T_i,B_{T_i},\Gamma_{T_i}).
\]
Since $\bar{X}_w \rightarrow T_i$ is a  projective birational toric morphism and $C\leqslant {\rm Aut}^0(T_i)$, we have 
\[
C
\leqslant {\rm Aut}^0(\bar{X}_w,\bar{B}_w+\bar{\Gamma}_w) \leqslant \mathbb{T}_{Z'}
\]
for a general closed point $w\in U_W$.
This implies that
\[
C\leqslant {\rm Aut}^0_{U_W}(\bar{X},\bar{B}+\bar{\Gamma})\leqslant \mathbb{T}_{Z'}.
\]
In particular, ${\rm Aut}^0_{U_W}(\bar{X},\bar{B}+\bar{\Gamma})$ is a positive-dimensional split torus over $\kk$ (see Lemma~\ref{lem:aut-sub-log-CY}).
We set $\mathbb{T}_{Z',1}:= {\rm Aut}^0_{U_W}(\bar{X},\bar{B}+\bar{\Gamma})$.
The torus $\mathbb{T}_{Z',1}$ is invariant under the conjugation $G$-action as it is the connected component of ${\rm Aut}_{U_W}(\bar{X},\bar{B}+\bar{\Gamma})$
and $G\leqslant {\rm Aut}(\bar{X},\bar{B}+\bar{\Gamma})$
preserves the fibration $\bar{X}\rightarrow U_W$.

Now, it suffices to show the second statement.
Let $\mathbb{H}\leqslant \mathbb{T}_{Z',1}$ be a one-dimensional subtorus.
Let $S_0$ and $S_\infty$ be the divisorial sink and the divisorial source
of the $\mathbb{H}$-action.
We can find an $\mathbb{H}$-equivariant projective birational morphism $M\rightarrow \bar{X}$
for which $M$ admits a good quotient $M\rightarrow L$ for the $\mathbb{H}$-action.
The general fiber of $M\rightarrow L$ is isomorphic to $\pp^1$.
Let $(M,B_M+\Gamma_M)$ be the log pull-back of
$(\bar{X},\bar{B}+\bar{\Gamma})$ to $M$.
Since $(\bar{X},\bar{B}+\bar{\Gamma})$ is
$\mathbb{H}$-invariant, then the restriction of  $B_M+\Gamma_M$ to the general fiber of $M\rightarrow L$ equals $\{0\}+\{\infty\}$.
Indeed, the restriction of $(M,B_M+\Gamma_M)$ to the general fiber of $M\rightarrow L$ must be $\mathbb{H}$-invariant and sub-log Calabi--Yau.
We conclude that both $S_0$ and $S_\infty$ appear with coefficient one in $B_M+\Gamma_M$.
Indeed, both  
$M\rightarrow \bar{X}$
and $M\rightarrow L$
morphisms
are over $U_W$.
Hence, $S_0$ and $S_\infty$ are log canonical centers of $(\bar{X},\bar{B}+\bar{\Gamma})$.
By (iv), we conclude that $S_0$ and $S_\infty$ are generalized log canonical places of $(X',B',\mathbf{M}')$. 
Thus, the restriction of the divisorial sink and the divisorial source of the $\mathbb{H}$-action to $X'_z$ are generalized log canonical places
of $(X'_z,B'_z,\mathbf{M}'_z)$ for $z\in Z$ a general closed point.
This finishes the proof of the claim.
\end{proof} 

Let $\mathbb{T}_{Z',1}\leqslant \mathbb{T}_{Z'}$ be the torus provided by the first claim.
This torus is induced by a split torus $\mathbb{T}_{\eta',1}\leqslant \mathbb{T}_{\eta'}$ which is invariant under the conjugation $G$-action.
By Lemma~\ref{lem:non-split-r-torus}, up to replacing $C$ with a subgroup of index at most $m_2(r)$, 
we may assume that
\[
C\leqslant \mathbb{T}_{\eta',0} 
\leqslant \mathbb{T}_{\eta',1} 
\leqslant \mathbb{T}_{\eta'}.
\]
Here, $\mathbb{T}_{\eta',0}$ is a one-dimensional split torus over $\eta'$ that is invariant under the conjugation $G$-action.
Hence, up to shrinking $Z$ and $Z'$, we have 
\[
C\leqslant \mathbb{T}_{Z',0} \leqslant \mathbb{T}_{Z',1} \leqslant \mathbb{T}_{Z'}.
\]
Here, $\mathbb{T}_{Z',0}$ is a one-dimensional torus over $Z'$ that is invariant under the conjugation $G$-action.
Let $H_0$ be the group generated by $\mathbb{T}_{Z',0}$ and $G$,
so we have a short exact sequence:
\[
1\rightarrow \mathbb{T}_{Z',0}\rightarrow H_0 \rightarrow G \rightarrow 1.
\]
\noindent\textit{Claim 2:}
Up to shrinking $Z$, we can construct a log pair $(X',B'+\Gamma')$ that satisfies the following properties:
\begin{itemize}
\item the log pair $(X',B'+\Gamma')$ is log Calabi--Yau over $Z$, 
\item we have $N(K_{X'}+B'+\Gamma')\sim_Z 0$, and 
\item the log pair $(X',B'+\Gamma')$ is $H_0$-invariant.
\end{itemize} 

\begin{proof}[Proof of Claim 2]
Let $\mathbb{T}_{\bar{\eta},0}$ be the one-dimensional torus induced by $\mathbb{T}_{Z',0}$ on the geometric generic fiber. So, we have 
$\mathbb{T}_{\bar{\eta},0}\leqslant {\rm Aut}(X_{\bar{\eta}},B_{\bar{\eta}})$.
For each $m\geq 1$, we let $H_{0,m}$ be the subgroup of $H_0$ generated by $\zz_m$ and $G$. 
Note that $(X',B')$ is $H_{0,m}$-invariant and $\qq$-complemented over $Z$.
By~\cite[Lemma 2.14]{Mor21}, for each $m\geq 1$, we can 
shrink $Z$ and construct an effective divisor $\Gamma_m$ for which:
\begin{itemize}
\item the log pair $(X',B'+\Gamma'_m)$ is log Calabi--Yau over $Z$,
\item we have $N(K_{X'}+B'+\Gamma'_m)\sim 0$, and 
\item the log pair $(X',B'+\Gamma'_m)$ is $H_{0,m}$-invariant.
\end{itemize}

Let $(X_{\bar{\eta}},B_{\bar{\eta}}+\Gamma_{\bar{\eta},m})$ be the log Calabi--Yau pair induced on the geometric generic fiber.
By Lemma~\ref{lem:aut-sub-log-CY}, the automorphism groups ${\rm Aut}(X_{\bar{\eta}},B_{\bar{\eta}}+\Gamma_{\bar{\eta},m})$ are finite extensions of algebraic tori.
As $N(K_{X_{\bar{\eta}}}+B_{\bar{\eta}}+\Gamma_{\bar{\eta},m})\sim 0$, the log pairs $(X_{\bar{\eta}},B_{\bar{\eta}}+\Gamma_{\bar{\eta},m})$ belong to a bounded family that only depends on $n,N$, and $\epsilon$.
Thus, the algebraic groups ${\rm Aut}(X_{\bar{\eta}},B_{\bar{\eta}}+\Gamma_{\bar{\eta},m})$ form a finite set of algebraic groups only depending on $n,N$ and $\epsilon$. In particular, this set is independent of $m$.
For each $m$, we have 
$\zz_m\leqslant {\rm Aut}(X_{\bar{\eta}},B_{\bar{\eta}}+\Gamma_{\bar{\eta},m})\cap \mathbb{T}_{\bar{\eta},0}$.
Thus, for $m$ large enough, we have 
$\mathbb{T}_{\bar{\eta},0} \leqslant  {\rm Aut}(X_{\bar{\eta}},B_{\bar{\eta}}+\Gamma_{\bar{\eta},m})$.
Fix $\Gamma':=\Gamma'_m$ for some $m$ large enough.
Up to shrinking $Z$, we may assume that the pair $(X',B'+\Gamma')$ is $\mathbb{T}_{Z',0}$-invariant.
By construction, the log pair $(X',B'+\Gamma')$ is $G$-invariant. 
Therefore, the log pair $(X',B'+\Gamma')$ is $H_0$-invariant. 
\end{proof}

Now, we return to the proof of the proposition.
Let $\pi'\colon Y'\rightarrow X'$ be a $G$-equivariant projective birational morphism
that extracts the divisorial sink and the divisorial source of the $\mathbb{T}_{Z',0}$-action.
By construction, both the divisorial sink and the divisorial source are log canonical places of $(X',B',\mathbf{M}')$.
Furthermore, the divisorial sink and the divisorial source are log canonical places of $(X',B'+\Gamma')$.
By Lemma~\ref{lem:g-extraction}, we may assume that $\pi'$ only extract generalized log canonical places of $(X',B',\mathbf{M}')$.
Thus, it only extracts log canonical places of $(X',B'+\Gamma')$.
In particular, by Lemma~\ref{lem:G_m-action-lift-to-gdlt} applied to $(X',B'+\Gamma')$, the projective birational morphism
$\pi'$ is $\mathbb{T}_{Z',0}$-equivariant.
Let $S' \subset Y'$ be the sum of the divisorial sink and the divisorial source of the $\mathbb{T}_{Z',0}$-action.
Let $t\in \mathbb{T}_{Z',0}$,
$g\in G$, and $p\in S'$.
We have 
\[
g(p)= g\circ t (p) = t'\circ g(p),
\]
for some $t'\in \mathbb{T}_{Z',0}$.
Indeed, the first equality holds as $p$ is fixed by $\mathbb{T}_{Z',0}$ and the second equality holds as the $\mathbb{T}_{Z',0}$-action is invariant under the conjugation $G$-action.
So for every $p\in S'$, we have $t'\cdot g(p)=g(p)$.
Thus, the one-dimensional torus
$\mathbb{T}_{Z',0}$ acts as the identity on every prime component of $g(S')$.
We conclude that $g(S')=S'$ for every $g\in G$.
Hence, $G(S')=S'$.
This means that $G$ either preserves the divisorial sink or the divisorial source
or permutes them.

Let $Y$ be the quotient of $Y'$ by the $G$-action.
Let $S$ be the image of $S'$ on $Y$.
Hence, we have a commutative diagram
\[
\xymatrix{ 
Y\ar[d] & Y'\ar[d]\ar[l]_-{/G} \\
X & X' \ar[l]_-{/G}.
}
\]
The $\mathbb{T}_{Z',0}$-action
induces a $\mathbb{G}_m$-action 
on $X'_{z'}$ for $z'\in Z'$ a general closed point.
By construction, the divisor $S'|_{Y'_{z'}}$ is precisely the reduced sum of the divisorial sink and the divisorial source of the $\mathbb{G}_m$-action on $X'_{z'}$.
The quotient by $G$ maps fibers isomorphically. 
Hence, there is an induced $\mathbb{G}_m$-action on $X_z$
for which $S|_{Y_z}$ is the reduced sum of the divisorial
sink and the divisorial source.
Further, by construction, the prime components of $S|_{Y_z}$ are generalized log canonical places of $(X_z,B_z,\mathbf{M}_z)$.
\end{proof} 

\section{Finite abelian actions and conic fibrations}

In this section, we the study geometric consequences of the existence of large abelian actions on log Calabi--Yau pairs. 

\begin{theorem}\label{thm:fin-act-conic}
Let $n$ be a positive integer
and $\Lambda$ be a set of rational numbers satisfying the descending chain condition.
There is a constant $m_0:=m_0(n,\Lambda)$,
only depending on $n$ and $\Lambda$, satisfying the following.
Let $X$ be a $n$-dimensional variety
and $X\rightarrow W$ be a Fano type morphism.
Let $(X,B,\mathbf{M})$ be a generalized log Calabi--Yau pair.
Assume there exists a finite abelian group $A$ for which $\bigoplus_{i=1}^r \zz_{\ell_i} \leqslant A \leqslant {\rm Aut}_W(X,B,\mathbf{M})$
with $\ell_1 \mid \ell_2 \mid \dots \mid \ell_r$
and $\ell_1\geq m_0$.
Furthermore, assume that the coefficients of $B$ belong to $\Lambda$ and the coefficients of $\mathbf{M}$ belong to $\Lambda$ in a quotient by $A$ where it descends.
Then, there exists a subgroup
$A_0 \leqslant A$ of index at most $m_0/2$
satisfying the following.
There is an $A_0$-equivariant crepant birational model 
$(X',B',\mathbf{M})$ of
$(X,B,\mathbf{M})$ over $W$ for which:
\begin{enumerate}
\item the birational map
$X'\dashrightarrow X$ only extracts generalized log canonical places of $(X,B,\mathbf{M})$, 
\item there is a tower of equivariant Mori fiber spaces over $W$
\[ 
\xymatrix{
X'\ar[r]^-{\psi_0} & 
 X_1 \ar[r]^-{\psi_1} & \dots  
\ar[r]^-{\psi_{r-1}}
& X_r \ar[r]^-{\psi_r} & W
}
\]
and a subset $I\subseteq \{0,1,\dots,r\}$ with $|I|=k$ such that each $\psi_i$ with $i\in I$ is a conic fibration, and 
\item if $i\in I$, $A_i$ is the homomorphic image of $A_0$ acting on $X_i$,
and $(X_i,B_i,\mathbf{N}_i)$ is the generalized pair on $X_i$ induced by the equivariant generalized canonical bundle formula, then 
$\psi_i$ is an $A_i$-equivariant strict conic fibration for $(X_i,B_i,\mathbf{N}_i)$.
\end{enumerate}
\end{theorem}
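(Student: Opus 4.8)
The plan is to argue by induction on the rank $r$, producing exactly one strict conic fibration for each cyclic factor, so that $k=r$ in the end. At each step I use one large cyclic factor to manufacture a $\mathbb{G}_m$-action on the general fibers via Theorem~\ref{thm:cyc-act-Fano-fib}, convert it into an $A_0$-equivariant strict conic fibration over $W$, and then descend the remaining rank-$(r-1)$ abelian action to the base so as to iterate. The subgroup $A_0\leqslant A$ will be the intersection of the bounded-index subgroups passed to at the $\leqslant n$ steps, and the constant $m_0=m_0(n,\Lambda)$ is chosen at the end, large enough to dominate the threshold of Theorem~\ref{thm:cyc-act-Fano-fib} at every step and to absorb the total (bounded) loss of index, using that the iterated bases carry generalized log Calabi--Yau structures whose coefficients lie in a DCC set depending only on $n$ and $\Lambda$ by Proposition~\ref{prop:equiv-gen-cbf}.

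For the inductive step I would first invoke boundedness. Since $(X,B,\mathbf{M})$ is generalized log Calabi--Yau with coefficients in the DCC set $\Lambda$, its index $N$ and a uniform log discrepancy $\epsilon$ are bounded in terms of $n$ and $\Lambda$; in particular the general fibers of $X\to W$ are $\epsilon$-lc Fano varieties in a bounded family (see~\cite{Bir21}), so Theorem~\ref{thm:cyc-act-Fano-fib} applies to the fibration $X\to W$ with the \emph{largest} cyclic factor $\zz_{\ell_r}$. This yields a projective birational morphism $Y\to X$ extracting only generalized log canonical places, a reduced divisor $S=S_0+S_\infty$ on $Y$, and a subgroup $C\leqslant \zz_{\ell_r}$ of bounded index such that for a general $w\in W$ the divisor $S|_{Y_w}$ is the divisorial sink plus the divisorial source of a one-dimensional torus $C\leqslant \mathbb{T}_w\leqslant \mathrm{Aut}(X_w,B_w)$.

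The $\mathbb{T}_w$-quotients of the general fibers assemble into a relative-dimension-one fibration over the locus of general fibers, which I would globalize over $W$ by Lemma~\ref{lem:extending-fibration}, obtaining an $A_0$-equivariant crepant birational model of $(X,B,\mathbf{M})$ over $W$ carrying a fibration $X'\to V$ over $W$ of relative dimension one that extracts only generalized log canonical places, with $S_0,S_\infty$ becoming $A_0$-invariant horizontal components of $\lfloor B'\rfloor$. Running a relative $K_{X'}$-MMP and applying the two-ray game of Lemma~\ref{lem:making-sections-disjoints}, I would then turn $X'\to V$ into an $A_0$-equivariant \emph{strict} conic fibration, i.e.\ a relative-dimension-one Mori fiber space whose two horizontal boundary components are disjoint sections; this is the conic fibration contributed by $\zz_{\ell_r}$. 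To iterate, observe that on a general conic fiber $(\pp^1,\{0\}+\{\infty\})$ the fiberwise automorphisms preserving the two sections form $\mathbb{G}_m\rtimes\zz_2$, so the subgroup of $A_0$ acting trivially on $V$ is cyclic up to bounded index. Hence the image of $A_0$ on $V$ contains, after passing to a further bounded-index subgroup, a group $\bigoplus_{i=1}^{r-1}\zz_{\ell'_i}$ with $\ell'_1\mid\dots\mid\ell'_{r-1}$ and $\ell'_1$ still above the required threshold, the divisibility $\ell_1\mid\dots\mid\ell_r$ together with the use of the largest factor being exactly what keeps the surviving invariant factors large. Equipping $V$ with the generalized log Calabi--Yau structure induced by the equivariant canonical bundle formula (Proposition~\ref{prop:equiv-gen-cbf}), which has coefficients in a DCC set depending only on $n,\Lambda$, and using that $V\to W$ is again Fano type (Lemma~\ref{lem:FT-under-morphisms}), I apply the induction hypothesis to $V\to W$, pull the resulting tower back through $X'\to V$, and prepend $\psi_0\colon X'\to V$.

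The main obstacle is the interface between geometry and group theory in the inductive step: one must globalize the fiberwise $\mathbb{G}_m$-action of a single cyclic factor into a genuine equivariant strict conic fibration over $W$ (this is where Lemmas~\ref{lem:extending-fibration} and~\ref{lem:making-sections-disjoints} are essential), while simultaneously controlling that the remaining abelian group descends faithfully to the base with invariant factors still exceeding the threshold needed to reapply Theorem~\ref{thm:cyc-act-Fano-fib}. Making this bookkeeping uniform, namely choosing a single $m_0=m_0(n,\Lambda)$ that dominates the product of the bounded indices lost over all $\leqslant n$ steps and that works for every DCC set produced along the way, is the delicate point, and the divisibility hypothesis on the $\ell_i$ is precisely what makes it possible.
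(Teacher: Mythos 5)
Your overall toolkit is the right one (Theorem~\ref{thm:cyc-act-Fano-fib} to manufacture fiberwise $\mathbb{G}_m$-actions from large cyclic factors, extraction of the sink and source, the two-ray game, and the equivariant canonical bundle formula to descend to the base), but the core of your inductive step has a genuine gap. You claim that the $\mathbb{T}_w$-quotients of the general fibers ``assemble into a relative-dimension-one fibration'' which you then globalize via Lemma~\ref{lem:extending-fibration}. Theorem~\ref{thm:cyc-act-Fano-fib} only hands you the divisor $S$ (sink plus source) as generalized log canonical places; it does not give you the Chow quotient as a morphism on a \emph{crepant} model. Realizing the relative Chow quotient as a morphism requires further equivariant blow-ups whose exceptional divisors have no reason to be glc places of $(X,B,\mathbf{M})$, and Lemma~\ref{lem:extending-fibration} explicitly requires that $Y_U\dashrightarrow X_U$ extract only log canonical places. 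The substitute the paper actually uses, namely the ample model of $B'+\mathbf{M}_{X'}-S'$ via Lemma~\ref{lem:iit-dim}, lives on a crepant model but only satisfies $0<\kappa(B'+\mathbf{M}_{X'}-S')<n$: the resulting fibration has relative dimension strictly smaller than that of the ambient Mori fiber space, but it is \emph{not} of relative dimension one in general. So your one-cyclic-factor-equals-one-conic-fibration step is unproved as stated.

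The paper closes this gap with a mechanism absent from your proposal: it inducts on the dimension rather than the rank, first fixes an equivariant fibration of \emph{minimal relative dimension} over $W$, runs the MMP to a Mori fiber space $X\to Z$, and then sets up a dichotomy on the fiberwise subgroup $A_f$. If $A_f$ is large and the general log fiber has regularity $\geq 1$, the torus action plus Lemma~\ref{lem:iit-dim} produces a fibration of strictly smaller relative dimension, contradicting minimality; hence the general log fiber has regularity $0$, its dual complex is two points, the gdlt modification has two disjoint horizontal boundary components, and the relative $K$-MMP forces the minimal fibration to be a conic fibration (two disjoint relatively ample horizontal divisors are impossible in relative dimension $\geq 2$). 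If $A_f$ is bounded, \emph{no} conic fibration is produced at that level: the entire rank-$k$ group descends to $Z$ with invariant factors shrunk by a bounded amount, and one recurses. This second branch is why the tower in the statement may contain Mori fiber spaces that are not conic fibrations ($I\subsetneq\{0,\dots,r\}$); your rank induction, which insists on producing a conic fibration at every step, cannot account for it. Finally, note that you apply Theorem~\ref{thm:cyc-act-Fano-fib} to $X\to W$ directly, asserting its general fibers are bounded $\epsilon$-lc Fanos; for a general Fano type morphism this fails, and one must first pass to a Mori fiber space (as in Step 2 of the paper) before boundedness of the fibers is available. The lifting of the tower obtained on the base back through the conic fibration also requires Proposition~\ref{prop:2-ray}, which your sketch omits.
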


\begin{proof}
The proof proceeds by induction on the dimension $n$ of the variety $X$. 
The case $n=1$ is simple and the details are explained in Example~\ref{ex:p1}.
We will proceed in six steps.
In Step 1, we will control the index of the generalized log Calabi--Yau pair $(X,B,\mathbf{M})$.
In Step 2, we take an equivariant gdlt modification and run an equivariant minimal model program that terminates with an equivariant Mori fiber space. 
In Step 3, we analyze the group acting on the fibers of the Mori fiber space 
and the regularity of the generalized log Calabi--Yau pair induced on such fibers. 
In Steps 4-5, we prove the statement of the theorem provided that the group acting fiberwise is large enough compared to $n$ and $\Lambda$.
In Step 6, we prove the statement of the theorem when the order of the group acting on the fibers is bounded in terms of $n$ and $\Lambda$.\\

\textit{Step 1:} In this step, we show that $I(K_X+B+\mathbf{M}_X)\sim 0$ for some $I:=I(n,\Lambda)$.\\

By~\cite[Theorem 1.6]{BZ16}, there exists a finite subset $\mathcal{F}\subseteq \Lambda$ such that the coefficients of $B$ belong to $\mathcal{F}$ 
and the coefficients of $\mathbf{M}$ belong to $\mathcal{F}$ in a quotient where it descends.
Hence, by~\cite[Theorem 1.2]{FM20}, we conclude that $I(K_X+B+\mathbf{M}_X)\sim 0$ for some $I$ that only depends on $n$ and $\mathcal{F}$.
In particular, the positive integers $I$ only depends on $n$ and $\Lambda$.\\ 

\textit{Step 2:} In this step, we take an equivariant dlt modification of $(X,B,\mathbf{M})$ and run an equivariant minimal model program.\\

Let $(X_0,B_0,\mathbf{M})$ be an $A$-equivariant crepant model of $(X,B,\mathbf{M})$ that admits an $A$-equivariant fibration $X_0\rightarrow Z_1$ over $W$ of minimal relative dimension. 
By the first step, we have $I(K_{X_0}+B_0+\mathbf{M}_{X_0})\sim 0$.
Let $(X_1,B_1,\mathbf{M})$ be an $A$-equivariant generalized dlt modification
of $(X_0,B_0,\mathbf{M})$.
The variety $X_1$ is $\frac{1}{I}$-lc. 
We run an $A$-equivariant MMP for $K_{X_1}$ over $Z_1$. 
This MMP terminates with an $A$-equivariant Mori fiber space $X_2\rightarrow Z$ over $Z_1$.
As $X_0\rightarrow Z_1$ is chosen to be an equivariant fibration of minimal relative dimension, we conclude that $\dim Z_1=\dim Z$. Let $B_2$ be the push-forward of $B_1$ on $X_2$. Then, the generalized pair
$(X_2,B_2,\mathbf{M})$ is a generalized log Calabi--Yau pair and $X_2\rightarrow Z$ is an equivariant Mori fiber space.
Let $A_f \leqslant A$ be the subgroup acting fiberwise on $X_2\rightarrow Z$ and $A_Z$ the homomorphic image of $A$ acting on $Z$.
We may replace $(X,B,\mathbf{M})$ with $(X_2,B_2,\mathbf{M})$ and assume that $X$ itself admits an $A$-equivariant Mori fiber space
$X\rightarrow Z$. Further, the general fiber of $X\rightarrow Z$ is an $A_f$-equivariant $\frac{1}{I}$-lc Fano variety of dimension at most $n$. Let $z\in Z$ be a general closed point and let
$(X_z,B_z,\mathbf{M}_z)$ be the generalized pair induced by adjunction to a general fiber of $X\rightarrow Z$. In what follows, we will analyze the equivariant geometry of the generalized log Calabi--Yau pair $(X_z,B_z,\mathbf{M}_z)$.\\

\textit{Step 3:} In this step, we show that either 
$|A_f|\leq m_1(n,I,\epsilon)$ or
${\rm reg}(X_z,B_z,\mathbf{M}_z)=0$.\\

Assume that ${\rm reg}(X_z,B_z,\mathbf{M}_z)\geq 1$.
This condition implies that the dimension of $X_z$ is at least $2$.
Let $U\subseteq Z$ be an open set for which
there is an $A$-invariant $N$-complement
$\Gamma_U$ of $(X_U,B_U)$ over $Z$.
Here, we may choose $N$ so it only depends on $n$ and $\Lambda$ (see~\cite[Theorem 1.2]{FM20}).
Further, the pair $(X_U,B_U)$ is the restriction of $(X,B)$ to the preimage of $U$.
We may assume that ${\rm reg}(X_z,\Gamma_z)\geq {\rm reg}(X_z,B_z,\mathbf{M}_z)\geq 1$.
Let $m_2:=m_2(n,N,I^{-1})$ be as in Proposition~\ref{lem:finite-action-conic-bundle}.
Assume that $|A_f|>m_1(n,I,\epsilon):=m_2^n$.
Since $A_f$ is a finite abelian group of rank at most $n$, we conclude that there is a cyclic subgroup $C\leqslant A_f$ of order at least $m_2$.
Note that $C\leqslant {\rm Aut}(X_U,\Gamma_U)$ is acting fiberwise over $U$, i.e., we have $C\leqslant {\rm Aut}_U(X_U,\Gamma_U)$.
By Theorem~\ref{thm:cyc-act-Fano-fib}, 
there exists a subgroup $C_0\leqslant C$ of index at most $\frac{m_2}{2}$ satisfying the following.
There exist:
\begin{itemize} 
\item a projective birational morphism $Y\rightarrow X$ only extracting generalized log canonical places of $(X,B,\mathbf{M})$, and
\item a reduced effective divisor $S$ on $Y$
\end{itemize}
with the following property. For a general closed point $z\in Z$,
we can find a one-dimensional torus $C_0\leqslant \mathbb{T}_z \leqslant {\rm Aut}(X_z,\Gamma_z)$ for which $S|_{Y_z}$ is the reduced sum of the divisorial sink and the divisorial source for the $\mathbb{T}_z$-action.
Each component of $S$ is a log canonical place of both $(X_U,\Gamma_U)$ and $(X_U,B_U,\mathbf{M}_U)$.
Indeed, each component of $S_z$ is a log canonical center of both $(X_z,\Gamma_z)$ and 
$(X_z,B_z,\mathbf{M}_z)$ by Theorem~\ref{thm:cyc-act-Fano-fib}.
By Theorem~\ref{thm:action-dual-comp},
there exists a subgroup $A'\leqslant A$ of index at most $m_3(n,\Lambda)$ such that $A'$ acts trivially on $\mathcal{D}(X,B,\mathbf{M})$.
Hence, for each $i\in \{1,\dots,k\}$
there exists $\ell_{i,0} \mid \ell_i$ 
with $\ell_i \ell_{i,0}^{-1} \leq m_3(n,\Lambda)$ 
such that $A' \geqslant \bigoplus_{i=1}^k \zz_{\ell_i\ell_{i,0}^{-1}}$ acts
trivially on $\mathcal{D}(X,B,\mathbf{M})$.
We may replace $A$ with $A'$ and
$\bigoplus_{i=1}^k \zz_{\ell_i}$
with $\bigoplus_{i=1}^k \zz_{\ell_i\ell_{i,0}^{-1}}$.
After this replacement, 
we may 
assume
that $A$ acts trivially on $\mathcal{D}(X,B,\mathbf{M})$.
By Lemma~\ref{lem:g-extraction}, 
there exists an $A$-equivariant  
crepant birational modification $(X'_U,B'_U,\mathbf{M}_U)$
of $(X_U,B_U,\mathbf{M}_U)$
that extracts precisely
the divisorial valuations corresponding to the prime components of $S$.
We denote by $S'$ the image of $S$ on $X'_U$.
Further, each component of $S'$ is invariant under the $A$-action. 
Let $z\in U$ be a general closed point.
By Lemma~\ref{lem:G_m-action-lift-to-gdlt}, the variety $X'_z$ admits a $\mathbb{G}_m$-action. Further, by construction, the divisor $S'|_{X'_z}$ is the reduced sum of the divisorial sink and the divisorial source of this action.
Indeed, the crepant birational morphism
$X'_z\rightarrow X_z$ only extracts log canonical places of the $\mathbb{G}_m$-invariant log pair $(X_z,\Gamma_z)$. So, the morphism $X'_z\rightarrow X_z$ is $\mathbb{G}_m$-equivariant (see Lemma~\ref{lem:G_m-action-lift-to-gdlt}).
By Lemma~\ref{lem:iit-dim}, we have 
\[
0 < k(B'_U + \mathbf{M}_{X'_U}-S'/U)< \dim X'_z.
\]
In particular, there exists an $A$-equivariant crepant birational model of $(X_U,B_U,\mathbf{M}_U)$ that admits an equivariant fibration of relative dimension strictly less than $\dim X_z$.
The previous statement is obtained by running an $A$-equivariant $(B'_U+\mathbf{M}_{X'_U}-S')$-MMP over $U$ and taking its ample model over $U$.
Hence, by Lemma~\ref{lem:extending-fibration}, 
we conclude that there is an $A$-equivariant crepant birational model 
of $(X,B,\mathbf{M})$ that admits an equivariant fibration
of relative dimension less than such of $X\rightarrow Z$. This contradicts the minimality of the fibration $X\rightarrow Z$. We conclude that either $|A_f|\leq m_1(n,I,\epsilon)$ or ${\rm reg}(X_z,B_z,\mathbf{M}_z)$.\\

\textit{Step 4:} In this step, we show that 
$X\rightarrow Z$ is a conic fibration provided that $|A_f|>m_1(n,I,\epsilon)$ and ${\rm reg}(X_z,B_z,\mathbf{M}_z)=0$.\\

In this step, we assume that
${\rm reg}(X_z,B_z,\mathbf{M}_z)=0$
for a general closed point $z\in U$
and that 
$|A_f|>m_1(n,I,\epsilon)$.
By Theorem~\ref{thm:cyc-act-Fano-fib}, up to replacing $m_1(n,I,\epsilon)$, we conclude that $\mathcal{D}(X_z,B_z,\mathbf{M}_z)$ is two points.
Let $(Y,B_Y,\mathbf{M})$ be an $A$-equivariant dlt modification of $(X,B,\mathbf{M})$.
Then, for $z\in U$ general, the divisor $\lfloor B_{Y,z}\rfloor$ is the sum
of two disjoint reduced divisors.
We run an $A$-equivariant 
$K_Y$-MMP over $Z$.
This MMP must terminate with an equivariant conic fibration over $Z$.
By the minimality assumption
on the relative dimension of $X\rightarrow Z$, we conclude that $X\rightarrow Z$ is itself a conic fibration over $W$.\\

\textit{Step 5:} In this step, we prove the statement of the theorem
when $X\rightarrow Z$ is a conic fibration
provided $|A_f| > m_1(n,I,\epsilon)$.\\

Since $|A_f|>m_1(n,I,\epsilon)$ and 
$X\rightarrow Z$ is an $A$-equivariant conic fibration, we conclude $(X_z,B_z,\mathbf{M}_z)\simeq (\pp^1,\{0\}+\{\infty\})$.
In particular, $A_f$ is a cyclic group.
By Proposition~\ref{lem:finite-action-conic-bundle}, we know that $\lfloor B\rfloor$ has two sections over $Z$ provided
that $|A_f|>m_1(n,I,\epsilon)$.
By Lemma~\ref{lem:making-sections-disjoints}, up to an equivariant crepant birational transformation that only extracts log canonical places, we may assume that the sections are disjoint.
Let $(Z,B_Z,\mathbf{N})$ be the generalized log Calabi--Yau pair
obtained by the equivariant generalized canonical bundle formula (see Proposition~\ref{prop:equiv-gen-cbf}).
There exists a set of rational numbers $\Omega$ with the DCC property satisfying the following.
The coefficients of $B_Z$ belong to $\Omega$
and the coefficients of $\mathbf{N}$ belong to $\Omega$ in a quotient by $A_Z$ where it descends.
As $A_f$ is cyclic, we have that 
$\bigoplus_{i=1}^{k-1} \zz_{b_i} \leqslant A_Z \leqslant {\rm Aut}(Z,B_Z,\mathbf{N})$,
where $b_1\mid \dots \mid b_{k-1}$
and $b_1\geq \ell_1$.
By Lemma~\ref{lem:FT-under-morphisms}.(1), we have that $Z$ is of Fano type over $W$.
Thus, provided
that 
$\ell_1 \geq m_0(\dim Z,\Omega)$, 
we may assume that the conditions of the theorem
hold for the generalized 
log Calabi--Yau pair $(Z,B_Z,\mathbf{N})$.
By the induction hypothesis, 
there exists a subgroup $A_{Z,0}\leqslant A_Z$ of index at most $m_0(\dim Z,\Omega)/2$
satisfying the following.
There is an $A_{Z,0}$-equivariant crepant birational model 
$(Z',B_{Z'},\mathbf{N})$
of $(Z,B_Z,\mathbf{N})$ over $W$
that admits a tower of Mori fiber spaces
\[
\xymatrix{
Z' \ar[r]^-{\phi_1} & Z_1 
\ar[r]^-{\phi_2} & \dots \ar[r]^-{\phi_{r}} & Z_r \ar[r]^-{\phi_{r+1}} & W.
}
\]
By induction, we know that there exists 
a subset $I\subseteq \{1,\dots,r+1\}$ such that
$|I|=k-1$ and each $\psi_i$ with $i\in I$ is a conic fibration.
Furthermore, if $i\in I$, $A_{Z,i}$ is the homomorphic image of $A_{Z,0}$ acting on $Z_i$,
and $(Z_i,B_{Z_i},\mathbf{N}_i)$ is induced by the generalized canonical bundle formula on $Z_i$, then $\lfloor B_{Z_i}\rfloor$ admits two disjoint $A_{Z,i}$-invariant prime components that are horizontal over $Z_{i+1}$.
Let $A_0\leqslant A$ be the preimage of $A_{Z,0}$ in $A$.
Hence $A_0$ has index at most
$m_0(\dim Z,\Omega)/2$ in $A$.
By Proposition~\ref{prop:2-ray}, 
there exists an $A_0$-equivariant 
crepant birational model $(X',B',\mathbf{M})$ 
that admits an equivariant conic fibration $\phi_0$ onto $Z'$.
By Proposition~\ref{prop:2-ray}.(5), we have the following commutative diagram:
\[
\xymatrix{
(X,B,\mathbf{M}) \ar[d] &  (X',B',\mathbf{M}) \ar@{-->}[l] \ar[d]^-{\phi_0} \\
(Z,B_Z,\mathbf{N}) & (Z',B_{Z'},\mathbf{N})\ar@{-->}[l] \ar[d]^-{\phi_1} \\
& \vdots \ar[d]^-{\phi_{r+1}}  \\ 
& W \\
}
\]
By construction, the divisor $\lfloor B'\rfloor$ admits two disjoint $A_0$-invariant prime divisors over $Z'$.
Hence, it suffices to take $I':=I\cup \{0\}$. 
This finishes the proof of the theorem provided that $|A_f|>m_1(n,I,\epsilon)$.\\

\textit{Step 6:} In this step, we prove the statement of the theorem when $|A_f|\leq m_1(n,I,\epsilon)$.\\

The proof of this step is very similar to the proof of the previous step.
We spell out the details for the sake of completeness.

Since $|A_f|\leq m_1(n,I,\epsilon)$, 
we have that 
$\bigoplus_{i=1}^k \zz_{b_i} \leqslant A_Z$
where $b_1\mid \dots \mid b_k$ and 
$b_1 \geq \ell_1/m_1$.
Let $(Z,B_Z,\mathbf{N})$ be the generalized
log Calabi--Yau pair obtained by the equivariant generalized 
canonical bundle formula (see Proposition~\ref{prop:equiv-gen-cbf}).
Then, we have that $A_Z \leqslant {\rm Aut}_W(Z,B_Z,\mathbf{N})$.
There exists a set of rational numbers $\Omega$  
with the DCC property satisfying the following.
The coefficients of $B_Z$ 
belong to $\Omega$
and the coefficients of $\mathbf{N}$
belong to $\Omega$ in a quotient by $A_Z$ where it descends.
Furthermore, observe that $Z$ is of Fano type over $W$ by Lemma~\ref{lem:FT-under-morphisms}.(1).
Provided that $\ell_1 \geq m_1 m_0(\dim Z, \Omega)$, 
we may assume that the statement
of the theorem holds
for the generalized log Calabi--Yau pair
$(Z,B_Z,\mathbf{N})$.
By the induction hypothesis,
there exists a subgroup $A_{Z,0}\leqslant A_Z$ 
of index at most $m_0(\dim Z,\Omega)/2$ satisfying
the following.
There is an $A_{Z,0}$-equivariant crepant birational model $(Z',B_{Z'},\mathbf{N})$ of $(Z,B_Z,\mathbf{N})$ over $W$ that admits a tower of Mori fiber spaces
\[
\xymatrix{
Z' \ar[r]^-{\phi_1} & Z_1 
\ar[r]^-{\phi_2} & \dots \ar[r]^-{\phi_{r}} & Z_r \ar[r]^-{\phi_{r+1}} & W.
}
\]
By the induction hypothesis,
there exists a subset $I\subseteq \{1,\dots,r+1\}$
such that $|I|=k$ 
and each $\psi$ with $i\in I$ is a conic fibration.
Furthermore, if $i\in I$, $A_{Z,i}$ is the homomorphic image of $A_{Z,0}$ acting on $Z_i$,
and $(Z_i,B_{Z_i},\mathbf{N}_i)$ is
induced by the generalized canonical bundle formula,
then $\lfloor B_{Z_i}\rfloor$ admits two disjoint
$A_{Z,i}$-invariant prime components
that are horizontal over $Z_{i+1}$.
Let $A_0\leqslant A$ be the preimage
of $A_{Z,0}$ in $A$.
By construction,
$A_0$ has index at most $m_0(\dim Z,\Omega)/2$ in $A$.
By Proposition~\ref{prop:2-ray}, there exists an $A_0$-equivariant crepant birational model
$(X',B',\mathbf{M})$ that admits an equivariant Mori fiber space
$\phi_0$ onto $Z'$.
Thus, we have the following commutative diagram:
\[
\xymatrix{
(X,B,\mathbf{M}) \ar[d] &  (X',B',\mathbf{M}) \ar@{-->}[l] \ar[d]^-{\phi_0} \\
(Z,B_Z,\mathbf{N}) & (Z',B_{Z'},\mathbf{N})\ar@{-->}[l] \ar[d]^-{\phi_1} \\
& \vdots \ar[d]^-{\phi_{r+1}}  \\ 
& W \\
}
\]
Hence, it suffices to take $I\subseteq \{1,\dots,r+1\}$
as defined above.
This finishes the proof of the theorem.
\end{proof}

\section{Proof of the theorems}
\label{sec:proofs}
In this section, we prove the theorems of this article.

\subsection{Finite actions on rationally connected varieties}
In this subsection, we prove a geometric Jordan property for finite actions on rationally connected varieties. 

\begin{proof}[Proof of Theorem~\ref{thm:finite-actions}]
We proceed by induction on the dimension.
Let $n$ be the dimension of $X$.
The statement for $n=1$ by using the Jordan property for $\mathbb{P}{\rm GL}(2,\kk)$. 
By using the Jordan property~\cite{PS14}, we may assume that $G$ is indeed abelian. 
Let $X\dashrightarrow X'$ be a $G$-equivariant birational map 
with a $G$-equivariant Fano type morphism 
$X'\rightarrow Z$ for which $\dim(X'/Z)$ is minimized.
Let $G_f<G$ be the normal subgroup acting fiberwise over $Z$.
Note that $X'$ admits an $G$-invariant $N$ complement $(X',B')$ over $Z$, so we are in the situation of Theorem~\ref{thm:fin-act-conic}.
Let $m_0(n,\frac{1}{N}\zz\cap [0,1])$ be as in the statement of Theorem~\ref{thm:fin-act-conic}.
If $|G_f|\leq m_0(n,\frac{1}{N}\zz\cap [0,1])$, 
then we may replace $G$ with $G^{|G_f|}$ and the statement follows by induction on the dimension.
If $|G_f|>m_0(n,\frac{1}{N}\zz\cap[0,1])$, then we can apply Theorem~\ref{thm:fin-act-conic} with $W=Z$~\footnote{The proof applies as long as we have a relative complement over $W$.}.
Let $G_0\leqslant G$ be the normal subgroup of bounded index given by Theorem~\ref{thm:fin-act-conic}. Here, the index is bounded in terms of $n$ and $N$.
There are two possibilities:
\begin{enumerate}
    \item[(i)] If $\psi_0$ is a strict conic fibration, then $X'\simeq_{\rm bir} \pp^1 \times X_0$ is an $G_0$-equivariant isomorphism.
In this case the statement follows by induction on the dimension.
    \item[(ii)] If $\psi_0$ is not a strict conic fibration.
\end{enumerate} 
If (ii) holds, then we replace $G$ with $G_0$ so $\dim(X'/Z)$ drops. 
We can iterate the argument and (ii) can only hold finitely many times as in each iteration we replace $A$ with a subgroup of bounded index and the dimension of $X'/Z$ drops. 
Thus, eventually, we are in the situation of (i) so the statement follows from induction on the dimension.
\end{proof} 

\subsection{Complexity measures}
In this subsection, we prove the results regarding the non-negativity of the complexity measures and characterizations of the zero case. 
First, we show that the alteration complexity is non-negative.

\begin{theorem}\label{theorem:alt-comp-noneg}
Let $(X,B)$ be a log Calabi--Yau pair.
Then, we have 
\[
c(X,B) \geq 
c_{\rm bir}(X,B) \geq 
c_{\rm alt}(X,B)\geq 0.
\]
\end{theorem}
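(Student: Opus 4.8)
The first two inequalities are immediate from the definitions. For $c(X,B)\geq c_{\rm bir}(X,B)$, the pair $(X,B)$ is a crepant birational model of itself with $B\geq 0$, so it competes in the infimum defining $c_{\rm bir}(X,B)$ and that infimum is therefore at most $c(X,B)$. For $c_{\rm bir}(X,B)\geq c_{\rm alt}(X,B)$, the identity $(X,B)\to(X,B)$ is a crepant finite Galois morphism with trivial Galois group, and $X\to\Spec(\kk)$ is a contraction; thus $(X,B)$ competes in the infimum defining $c_{\rm alt}(X,B)$, which gives $c_{\rm alt}(X,B)\leq c_{\rm bir}(X,B)$.

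The content of the statement is the lower bound $c_{\rm alt}(X,B)\geq 0$, and the plan is to reduce it to the classical non-negativity of the complexity of an effective log Calabi--Yau pair. Unwinding Definitions~\ref{def:alt-comp} and~\ref{def:bir-comp}, the number $c_{\rm alt}(X,B)$ is an infimum of quantities $c(Y',B_{Y'})$, taken over crepant finite Galois morphisms $(Y,B_Y)\to(X,B)$ and, for each such $Y$, over crepant birational models $(Y',B_{Y'})$ of $(Y,B_Y)$ with $B_{Y'}\geq 0$. Hence it suffices to prove that $c(Y',B_{Y'})\geq 0$ for every such admissible model.

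So I would fix one admissible $(Y',B_{Y'})$. Being crepant to $(X,B)$, the sub-pair $(Y,B_Y)$ is sub-log canonical with $K_Y+B_Y\sim_\qq 0$, and the same holds for its crepant birational model $(Y',B_{Y'})$; together with $B_{Y'}\geq 0$ this makes $(Y',B_{Y'})$ an honest projective log Calabi--Yau pair. I would then pass to the fine complexity: the decomposition $\Sigma$ of $B_{Y'}$ into its prime components satisfies $\rho(\Sigma)\leq\dim_\qq{\rm Cl}(Y')_\qq$ and $|\Sigma|=|B_{Y'}|$, so that $\bar{c}(Y',B_{Y'})\leq c(Y',B_{Y'})$; and the non-negativity of the complexity for log Calabi--Yau pairs~\cite{BMSZ18} gives $\bar{c}(Y',B_{Y'})\geq 0$. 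Combining the two yields $c(Y',B_{Y'})\geq 0$, and taking the infimum over all admissible models gives $c_{\rm alt}(X,B)\geq 0$.

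The main obstacle is entirely concentrated in this last step: the non-negativity of the complexity is the substantial external input, while everything else is bookkeeping with the two infima. The only technical points I would check carefully are that effectiveness of $B_{Y'}$ genuinely upgrades the sub-log canonical condition of the crepant model to the log canonical condition required by~\cite{BMSZ18}, and that the competing models may be taken projective so that~\cite{BMSZ18} applies to them.
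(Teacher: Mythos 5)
Your proposal is correct and follows essentially the same route as the paper: both arguments reduce the only substantive inequality, $c_{\rm alt}(X,B)\geq 0$, to the non-negativity of the complexity of an honest log Calabi--Yau pair from~\cite{BMSZ18}, the paper phrasing this as a contradiction and you phrasing it directly as a bound on every competitor in the infimum. The extra detour through the fine complexity is harmless but not needed.
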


\begin{proof}[Proof of Theorem~\ref{theorem:alt-comp-noneg}]
Let $(X,B)$ be a log Calabi--Yau pair.
The inequalities 
\[
c(X,B) \geq c_{\rm bir}(X,B) \geq 
c_{\rm alt}(X,B)
\]
follow from the definitions.
It suffices to check that the alteration complexity is non-negative.
However, if $c_{\rm alt}(X,B)<0$, then we can find a log Calabi--Yau pair $(Y,B_Y)$, obtained from $(X,B)$ by an alteration, such that $c(Y,B_Y)<0$. This contradicts the non-negativity of the complexity (see~\cite[Corollary 1.3]{BMSZ18}).
\end{proof}

We recall the following theorem due to Mauri and the author
characterizing log Calabi--Yau pairs
of index one and birational complexity zero.

\begin{theorem}\label{theorem:bir-comp-zero}
Let $(X,B)$ be a log Calabi--Yau pair of dimension $n$ and index one. Then, we have that $c_{\rm bir}(X,B)=0$ if and only if $(X,B)$ is crepant birational equivalent to $(\mathbb{P}^n,H_0+\dots+H_n)$.
\end{theorem}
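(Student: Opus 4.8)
The plan is to prove the two implications separately, disposing of the elementary ``if'' direction first and then establishing the substantive ``only if'' direction by combining the Brown--McKernan--Svaldi--Zhong toric characterization of low-complexity pairs with the toric geometry of reduced boundaries.

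For the ``if'' direction I would simply compute that $(\pp^n, H_0+\dots+H_n)$ has $\dim = n$, $\rank \Cl(\pp^n)_\qq = 1$ and $|H_0+\dots+H_n| = n+1$, so its complexity equals $n+1-(n+1)=0$. If $(X,B)$ is crepant birational equivalent to it, then $(\pp^n, H_0+\dots+H_n)$ is a crepant birational model of $(X,B)$ with effective boundary realizing complexity $0$, whence $c_{\rm bir}(X,B)\leq 0$; together with the inequality $c_{\rm bir}(X,B)\geq 0$ of Theorem~\ref{theorem:alt-comp-noneg} this forces equality.

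For the ``only if'' direction, since $c_{\rm bir}(X,B)=0<1$ and every crepant birational model has nonnegative complexity, I would first choose a crepant birational model $(X',B')$ with $B'\geq 0$ and $c(X',B')<1$. Because index one is a crepant birational invariant, $K_{X'}+B'\sim 0$. By the toric characterization of pairs of complexity less than one \cite{BMSZ18}, the variety $X'$ is toric and, crucially, every prime component of $B'$ is torus invariant. Let $\Theta'$ denote the reduced toric boundary, so that $K_{X'}+\Theta'=0$ literally and $c(X',\Theta')=0$. From $K_{X'}+B'\sim 0\sim K_{X'}+\Theta'$ I get $B'\sim \Theta'$, and since $B'$ is effective, torus invariant, and $B'\leq \Theta'$, the difference $\Theta'-B'$ is an effective torus-invariant principal divisor on a complete toric variety. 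Such a divisor is the divisor of a global monomial with no poles, hence of a constant, so $\Theta'-B'=0$ and $B'=\Theta'$. Thus $(X',B')=(X',\Theta')$ is a toric log Calabi--Yau pair.

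It remains to note that any two toric log Calabi--Yau pairs of dimension $n$ with reduced toric boundary are crepant birational equivalent: refining the defining fans of $X'$ and $\pp^n$ to a common complete fan $W$ with reduced toric boundary $\Theta_W$, the toric birational morphisms $W\to X'$ and $W\to \pp^n$ each pull back $K+(\text{toric boundary})$ --- which is identically zero --- to $K_W+\Theta_W$. Hence $(X',\Theta')$ and $(\pp^n,H_0+\dots+H_n)$ have the same log pullback on $W$ and are crepant birational equivalent, so $(X,B)$ is crepant birational equivalent to $(\pp^n, H_0+\dots+H_n)$, as desired. The main obstacle is the step asserting that $c(X',B')<1$ forces \emph{every} component of $B'$ to be torus invariant (rather than merely that $X'$ is toric); this relies on the precise quantitative form of the result in \cite{BMSZ18}, namely that each non-torus-invariant component, as well as any fractional defect of a boundary coefficient, contributes a full unit to the complexity, so that values below $1$ are incompatible with a non-toric-invariant or fractional boundary in the log canonical Calabi--Yau setting.
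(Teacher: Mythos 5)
The paper does not actually prove this statement: it is recalled verbatim from \cite{MM24}, so there is no in-text proof to compare against. Your argument is essentially the standard one (and, in outline, the one used in \cite{MM24}): compute that $(\pp^n,H_0+\dots+H_n)$ has complexity zero for the ``if'' direction, and for the ``only if'' direction pass to a low-complexity crepant model, invoke the toric characterization of \cite{BMSZ18}, use index one to force the boundary to equal the full reduced toric boundary, and finish with the observation that all $n$-dimensional toric log Calabi--Yau pairs with reduced toric boundary are crepant birational to one another via a common fan refinement (after identifying the two cocharacter lattices). All of these steps are sound; in particular the identity $D-B'=\operatorname{div}(\chi^m)$ with $m=0$ on a complete toric variety, and the reducedness of $B'$ forced by $K_{X'}+B'\sim 0$, are used correctly.

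The one point you should tighten is the quantitative appeal to \cite{BMSZ18}. The theorem there states that if a decomposition $\Sigma$ has complexity $c<1$ then $X'$ is toric with toric boundary $D\geq\lfloor\Sigma\rfloor$ and all but $\lfloor 2c\rfloor$ components of $\Sigma$ are components of $D$; it is \emph{not} true that a non-torus-invariant component contributes a full unit, only (in effect) at least one half. So a model with, say, $c(X',B')=0.7$ could a priori have one component of $B'$ off the toric boundary, and your computation $\Theta'-B'\geq 0$ would break there. The fix is immediate: since $c_{\rm bir}(X,B)=0$ is an infimum, choose the crepant model with $c(X',B')<1/2$ (equivalently, note that for index-one models the complexity is an integer, so some model has complexity exactly $0$); then $\lfloor 2c\rfloor=0$ and every component of $B'$ is torus invariant, after which the rest of your argument goes through unchanged.
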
 

If we do not impose the condition $K_X+B\sim 0$ in the previous theorem, then it is not true that $(X,B)$ is crepant birational equivalent to a log Calabi--Yau pair in $\pp^n$ (see Example~\ref{ex:not-index-1}). However, $(X,B)$ is still crepant birational equivalent to a log Calabi--Yau pair $(T,B_T)$ where $T$ is a projective toric variety.

Now, we turn to show that the alteration complexity is indeed computed by a log Calabi--Yau pair. We will need the following lemma.

\begin{lemma}\label{lem:finite-act-regularization}
Let $(X,B)$ be a log Calabi--Yau pair.
Let $(Y,\Delta)\rightarrow (X,B)$
be a crepant finite Galois morphism. 
Assume that $(Y,\Delta)$ is crepant birational equivalent to a log Calabi--Yau pair $(Y_0,\Delta_0)$. 
Then, there exists:
\begin{enumerate}
\item[(i)] a log Calabi--Yau pair $(X',B')$ that is crepant birational equvialent to $(X,B)$, and 
\item[(ii)] a log Calabi--Yau pair $(Y',\Delta')$ that admits a crepant finite Galois morphism to $(X',B')$
\end{enumerate} 
such that $(Y',\Delta')$ is crepant birational equivalent to $(Y_0,\Delta_0)$.
\end{lemma}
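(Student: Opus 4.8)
The plan is to transport the Galois cover structure along the crepant birational equivalence $(Y,\Delta)\sim (Y_0,\Delta_0)$ by regularizing the induced birational action on a common model. Write $G:=\operatorname{Gal}(Y/X)$; since $Y\to X$ is a crepant finite Galois morphism, $K_Y+\Delta$ is the pullback of $K_X+B$ and is therefore $G$-invariant, so $G\leqslant \operatorname{Aut}(Y,\Delta)$. Fix a crepant birational map $\psi\colon (Y,\Delta)\dashrightarrow (Y_0,\Delta_0)$. Because $G$ acts biregularly on $Y$, I would take a $G$-equivariant log resolution $p\colon W\to Y$ on which the composite $q\colon W\dashrightarrow Y_0$ becomes a morphism; the indeterminacy of $\psi\circ p$ can be resolved by $G$-equivariant blow-ups. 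Letting $(W,\Delta_W)$ be the log pullback, $K_W+\Delta_W=p^*(K_Y+\Delta)$, and since $\psi$ is crepant we also have $K_W+\Delta_W=q^*(K_{Y_0}+\Delta_0)$. Thus $(W,\Delta_W)$ is a sub-log Calabi--Yau pair that is a common crepant model of $(Y,\Delta)$ and $(Y_0,\Delta_0)$, and as $p$ is $G$-equivariant and $\Delta$ is $G$-invariant, the divisor $\Delta_W$ is $G$-invariant. Write $\Delta_W=P-N$ with $P,N\geq 0$ having no common components; then $P$ and $N$ are both $G$-invariant.

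The key observation concerns the position of $N$. Since $\Delta\geq 0$, the coefficient in $\Delta_W$ of any $p$-non-exceptional prime divisor equals its coefficient in $\Delta$ and is non-negative, so $\operatorname{Supp}(N)$ consists of $p$-exceptional divisors; in particular $N$ is exceptional over $Y$ (comparing through $q$ shows it is also $q$-exceptional, consistent with $\Delta_0\geq 0$). Because $K_W+\Delta_W\sim_\qq 0$ we have $K_W+P\sim_{\qq,Y}N$, so I would run a $G$-equivariant $(K_W+P)$-MMP over $Y$: this is an MMP for the effective $Y$-exceptional divisor $N$, and by Lemma~\ref{lem:negativity} one has $\operatorname{Supp}(N)\subseteq \operatorname{Bs}_-(N)$, so the MMP contracts $N$. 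The essential point is that $K_W+\Delta_W\sim_\qq 0$, so every extremal contraction in this MMP is $(K_W+\Delta_W)$-trivial and the MMP is automatically crepant for the pair $(W,\Delta_W)$. Let $(Y',\Delta')$ be the resulting model with $\Delta'$ the pushforward of $\Delta_W$; once $N$ is contracted, $\Delta'$ is the pushforward of $P$, hence effective, $K_{Y'}+\Delta'\sim_\qq 0$, and $(Y',\Delta')$ is log canonical. So $(Y',\Delta')$ is a log Calabi--Yau pair carrying a biregular $G$-action and crepant birational equivalent to $(Y_0,\Delta_0)$.

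Finally I would pass to the quotient. Set $X':=Y'/G$ with quotient morphism $\pi'\colon Y'\to X'$, and define $B'$ by $K_{Y'}+\Delta'=\pi'^*(K_{X'}+B')$; since $\Delta'$ is $G$-invariant and $K_{Y'}+\Delta'\sim_\qq 0$, the pair $(X',B')$ is log Calabi--Yau and $\pi'\colon (Y',\Delta')\to (X',B')$ is a crepant finite Galois morphism, which gives (ii) together with $(Y',\Delta')\sim_{\mathrm{cb}}(Y_0,\Delta_0)$. The $G$-equivariant crepant birational map $Y\xleftarrow{\,p\,}W\dashrightarrow Y'$ descends to a birational map $X=Y/G\dashrightarrow Y'/G=X'$; since both vertical morphisms are crepant finite Galois and the upstairs map is $G$-equivariant and crepant, the descended map is crepant for $(X,B)$ and $(X',B')$, giving (i).

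The main obstacle I expect is the MMP in the second paragraph: one must guarantee that it can be run $G$-equivariantly and that it terminates. To handle this I would reduce to the case where $Y$ is klt (replacing $(Y,\Delta)$ by a $G$-equivariant $\qq$-factorial model if necessary) and argue that $W$ is of Fano type over $Y$, so that the relative $G$-equivariant MMP exists and terminates as in the proof of Lemma~\ref{lem:G_m-action-lift-to-gdlt}, with Lemma~\ref{lem:FT-under-morphisms} controlling the Fano type condition and Lemma~\ref{lem:negativity} forcing the contraction of $N$. Everything else—the $G$-invariance of $\Delta_W$ and of $N$, the crepancy of the MMP via $K_W+\Delta_W\sim_\qq 0$, the effectivity of $\Delta'$, and the descent of both the cover and the crepant equivalence to the quotients—is then formal.
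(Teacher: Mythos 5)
There is a genuine gap, and it sits exactly where you flag the ``key observation.'' You assume $\Delta\geq 0$ in order to conclude that the negative part $N$ of $\Delta_W$ is $p$-exceptional, but $\Delta$ is only the log pull-back of $B$ under the finite Galois morphism, i.e.\ $K_Y+\Delta=f^*(K_X+B)$ with $\Delta=f^*B-R$ by Riemann--Hurwitz, so $\Delta$ is in general a \emph{sub}-boundary with negative coefficients along ramification divisors of $Y$ itself (already for $\mathbb{P}^1\to\mathbb{P}^1$, $z\mapsto z^2$, with $B=\{1\}+\{-1\}$ one gets $\Delta\not\geq 0$); the paper's proof explicitly warns that ``a priori, the divisor $\Delta$ may have negative coefficients.'' For such a component $D\subset Y$ the strict transform of $D$ on $W$ lies in $\operatorname{Supp}(N)$ and is \emph{not} $p$-exceptional, so your $G$-equivariant $(K_W+P)$-MMP over $Y$ cannot contract it: the output $\Delta'$ would still have negative coefficients and $(Y',\Delta')$ would fail to be a log Calabi--Yau pair. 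The fact you relegate to a parenthesis --- that $N$ is exceptional over $Y_0$, because $\Delta_0\geq 0$ and the map is crepant --- is the one that actually matters.

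The repair is essentially the paper's argument: arrange (after a $G$-equivariant resolution) that $W\dashrightarrow Y_0$ is a birational contraction, observe that $N$ (equivalently, the difference $\Gamma-\Delta_W$ after raising negative coefficients) is an effective divisor exceptional over $Y_0$, apply Lemma~\ref{lem:negativity} to the contraction $W\dashrightarrow Y_0$ to get $\operatorname{Supp}(N)\subseteq\operatorname{Bs}_-(N)$, and run the $G$-equivariant MMP for $K_W+P\sim_{\qq}N$ \emph{absolutely} (or over $Y_0$), not over $Y$; this contracts all of $N$ and then terminates since the divisor becomes torsion. The rest of your argument --- $G$-invariance of $\Delta_W$, crepancy of every MMP step because $K_W+\Delta_W\sim_{\qq}0$, effectivity of the pushforward of $P$, and the descent of both the Galois cover and the crepant equivalence to the quotients --- is correct and matches the paper.
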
 

\begin{proof}
Let $f\colon (Y,\Delta)\rightarrow (X,B)$ be the crepant finite Galois morphism.
Note that $(Y,\Delta)$ is a sub-log Calabi--Yau pair that is crepant birational equivalent to $(Y_0,\Delta_0)$.
A priori, the divisor $\Delta$ may have negative coefficients.
Let $G$ be the finite group acting on $(Y,\Delta)$.
We may replace $(Y,\Delta)$ with a $G$-equivariant resolution of singularities
and assume that $Y\dashrightarrow Y_0$
is a birational contraction.
Consider the boundary $\Gamma$ on $Y$
that is obtained from $\Delta$ by increasing all its negative coefficients to $1$.
Then, the pair $(Y,\Gamma)$ is a $G$-invariant log pair.
Furhtermore, we can write 
\begin{equation}\label{eq:eff-div} 
K_Y+\Gamma \sim_\qq F \geq 0.
\end{equation} 
The divisor $F$ is effective and exceptional over $Y_0$.
By Lemma~\ref{lem:negativity}, we have 
$\supp F \subseteq {\rm Bs}_{-}(F)$.
We run a $G$-equivariant $(K_Y+\Gamma)$-MMP.
Due to~\eqref{eq:eff-div} and the
fact that ${\rm supp}(F)\subseteq {\rm Bs}_{-}(F)$ this MMP terminates after contracting all the components of $F$.
Let $Y\dashrightarrow Y'$ be the corresponding $G$-equivariant birational map.
Let $\Delta'$ be the push-forward of $\Delta$ to $Y'$.
Then, the log pair $(Y',\Delta')$ is log Calabi--Yau and crepant birational equivalent to $(Y,\Delta)$.
In particular, the log Calabi--Yau pair $(Y',\Delta')$ is crepant birational equivalent to $(Y_0,\Delta_0)$.
Let $Y'\rightarrow X'$ be the quotient by $G$.
Let $(X',B')$ be the quotient by $G$ of the log pair $(Y',\Delta')$.
Then, the log Calabi--Yau pair $(X',B')$ is crepant birational equivalent to $(X,B)$.
\end{proof}

The following theorem shows that the alteration complexity is indeed computed by a log Calabi--Yau pair.

\begin{theorem}\label{theorem:alt-comp-computation}
Let $(X,B)$ be a log Calabi--Yau pair.
Then, there exist:
\begin{enumerate}
\item[(i)] a log Calabi--Yau pair $(X',B')$ which is crepant birational equivalent to $(X,B)$, and 
\item[(ii)] a log Calabi--Yau pair $(Y,B_Y)$ 
with a crepant finite Galois morphism $(Y,B_Y)\rightarrow (X',B')$,
\end{enumerate}
such that $c_{\rm alt}(X,B)=c_{\rm bir}(Y,B_Y)$. 
\end{theorem}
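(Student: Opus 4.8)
The plan is to show that the infimum defining $c_{\rm alt}(X,B)$ is in fact a minimum, and then to repackage the minimizing cover using Lemma~\ref{lem:finite-act-regularization}. I would work throughout with a fixed positive integer $N$ satisfying $N(K_X+B)\sim 0$, so that the coefficients of $B$ lie in $\tfrac1N\zz$; this is the setting in which the complexity is rational, and I would reduce to it at the outset. The crucial first step is to observe that \emph{every} complexity value occurring in the two nested infima is confined to a fixed discrete subset of $\rr$. If $f\colon (Y,B_Y)\to(X,B)$ is any crepant finite Galois morphism, then $N(K_Y+B_Y)=f^*N(K_X+B)\sim 0$, so $(Y,B_Y)$ has index dividing $N$; by the ramification formula $\mathrm{coeff}_E(B_Y)=1-r_E\bigl(1-\mathrm{coeff}_{f(E)}(B)\bigr)$ its coefficients again lie in $\tfrac1N\zz$. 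Likewise, every crepant birational model $(Y',B')$ of such a cover has index dividing $N$, and its boundary coefficients $1-a_E$ lie in $\tfrac1N\zz$, because the log discrepancies of an index-$N$ log Calabi--Yau pair lie in $\tfrac1N\zz$.

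Granting this, for any effective crepant birational model $(Y',B')$ appearing in the definition one has
\[
c(Y',B')=\dim Y'+\dim_\qq \Cl(Y')_\qq-|B'|\in \tfrac1N\zz,
\]
since $\dim_\qq\Cl(Y')_\qq\in\zz_{\geq0}$ and $|B'|\in\tfrac1N\zz_{\geq0}$. Moreover $c(Y',B')\geq 0$ by the non-negativity of the complexity~\cite[Corollary 1.3]{BMSZ18}. Thus every value entering either infimum lies in the discrete set $\tfrac1N\zz\cap[0,\infty)$. I conclude formally that each birational complexity $c_{\rm bir}(Y,B_Y)$ is a minimum, attained by some effective model, and that $c_{\rm alt}(X,B)=\inf_f c_{\rm bir}(Y,B_Y)$ is itself a minimum, attained by some crepant finite Galois cover $f\colon(Y_f,B_{Y_f})\to(X,B)$.

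It then remains to put this minimizing cover into the asserted form. Since $c_0:=c_{\rm bir}(Y_f,B_{Y_f})=c_{\rm alt}(X,B)$ is finite, the sub-log Calabi--Yau pair $(Y_f,B_{Y_f})$ is crepant birational equivalent to an honest log Calabi--Yau pair $(Y_0,\Delta_0)$. Applying Lemma~\ref{lem:finite-act-regularization} to the cover $(Y_f,B_{Y_f})\to(X,B)$ together with the model $(Y_0,\Delta_0)$ produces a log Calabi--Yau pair $(X',B')$ crepant birational to $(X,B)$ and a log Calabi--Yau pair $(Y,B_Y)$ admitting a crepant finite Galois morphism $(Y,B_Y)\to(X',B')$, where $(Y,B_Y)$ is crepant birational to $(Y_0,\Delta_0)$. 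Because the birational complexity is invariant under crepant birational equivalence, $c_{\rm bir}(Y,B_Y)=c_{\rm bir}(Y_0,\Delta_0)=c_{\rm bir}(Y_f,B_{Y_f})=c_0=c_{\rm alt}(X,B)$, which is exactly the desired conclusion.

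The step I expect to require the most care is the discreteness bookkeeping of the first paragraph: one must verify uniformly, over all covers and all their crepant birational models, that no boundary coefficient or log discrepancy escapes $\tfrac1N\zz$, and in particular that passing to a crepant finite Galois cover can only change the index to a divisor of $N$. Once discreteness of the value set is established, the reduction of the infimum to a minimum is automatic, and Lemma~\ref{lem:finite-act-regularization} handles the final repackaging. A secondary point to address is the possibility that $(X,B)$ is only $\rr$-trivial, in which case the complexities need not be rational; I would dispose of this by assuming finite index from the start, consistent with the complexity framework used throughout the paper.
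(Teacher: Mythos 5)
Your strategy is sound and, in the case it covers, genuinely simpler than the paper's. For a pair with $N(K_X+B)\sim 0$ your discreteness bookkeeping does go through: pulling back the principal divisor $N(K_X+B)$ under a crepant finite Galois morphism, and then under any birational model, shows that $NB_Y$ and the boundary of every crepant model are integral, so all complexities occurring in the two nested infima lie in the well-ordered set $\tfrac1N\zz\cap[0,\infty)\cup\{+\infty\}$ and both infima are attained; the final repackaging via Lemma~\ref{lem:finite-act-regularization} is exactly what the paper does. The paper's proof takes a different route to the same attainment statement: it extracts a minimizing sequence of covers, uses \cite[Corollary 2.13]{KM98} together with Riemann--Hurwitz to confine the boundary coefficients of all crepant models of all covers to a finite set determined by $(X,B)$ alone, and then invokes \cite[Lemma 2.30, Theorem 2.25]{MM24} to pass to models whose number of boundary components is bounded, so that only finitely many values of $|B''_i|$ and of $\rho(Y''_i)$ can occur and the sequence of complexities stabilizes.

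The reason the paper goes through this heavier machinery is precisely the point you defer to the end: the theorem is stated for an arbitrary log Calabi--Yau pair, which in this paper means $K_X+B\sim_\rr 0$, with no index hypothesis. Declaring that you will ``assume finite index from the start'' restricts the statement rather than reducing to it, so as written your argument does not prove the theorem. The gap is not cosmetic: with irrational coefficients the complexities need not lie in any $\tfrac1N\zz$, and even though the coefficient \emph{values} still form a finite set (by the finiteness of non-terminal log discrepancies of the fixed pair $(X,B)$), a finite set of incommensurable coefficients does not make the possible sums $|B'|$ discrete unless one also bounds the number of components --- e.g.\ $\{a+b(\sqrt2-1):a,b\in\zz_{\ge0}\}$ admits strictly decreasing convergent sequences. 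This is exactly what the appeal to \cite[Lemma 2.30, Theorem 2.25]{MM24} supplies. To complete your proof in full generality you would need to either import that component bound or give another argument that the value set of achievable complexities is finite in the $\rr$-trivial case.
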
 

\begin{proof}[Proof of Theorem~\ref{theorem:alt-comp-computation}]
Let $(X,B)$ be a log Calabi--Yau pair of dimension $n$.
First, we argue that there exists a crepant finite Galois morphism
$f\colon (Y_0,B_{Y_0})\rightarrow (X,B)$
such that $(Y_0,B_{Y_0})$ is sub-log Calabi--Yau
and $c_{\rm bir}(Y_0,B_{Y_0})=c_{\rm alt}(X,B)$.

A priori, the alteration complexity of a log Calabi--Yau pair $(X,B)$ is an infimum.
Let $f_i\colon (Y_i,B_i)\rightarrow (X,B)$ be a sequence of crepant finite Galois morphisms 
for which $c_{\rm bir}(Y_i,B_i)$ converges to the alteration complexity of $(X,B)$.
Note that each $(Y_i,B_i)$ is sub-log Calabi--Yau.
By definition of the birational complexity, we know that $(Y_i,B_i)$ is crepant birational
to a log Calabi--Yau pair $(Y'_i,B'_i)$.

We argue that the coefficients of $B'_i$ 
belong to a finite set $\mathcal{B}$
that is independent of $i$. We show that the set $\mathcal{B}$ only depends on the initial log Calabi--Yau pair $(X,B)$.
Indeed, a prime divisor $P$ in the support of $B'_i$ induces a non-terminal valuation of
$(Y_i,B_i)$.
By Riemann-Hurwitz, we have 
\begin{equation}\label{eq:non-term-val} 
a_P(Y_i,B_i)=
ra_{f_i(P)}(X,B)
\end{equation} 
where $r$ is a positive integer. 
In particular, $f_i(P)$ is a non-terminal valuation of $(X,B)$.
By~\cite[Corollary 2.13]{KM98}, there exists a finite set $\mathcal{B}_0$ such that every non-terminal valuation $Q$ over $(X,B)$ satisfies $a_Q(X,B)\in \mathcal{B}_0$.
Hence, $a_{f_i(P)}(X,B) \in \mathcal{B}_0$.
Since $a_P(Y_i,B_i)<1$, by equality~\eqref{eq:non-term-val}, we conclude that $a_P(Y_i,B_i)$ belongs to a finite set that only depends on $\mathcal{B}_0$. 
In particular, there is a lower bound $\epsilon_0>0$ for the coefficients of $B'_i$
which is independent of $i$.

By~\cite[Lemma 2.30]{MM24}, for each $i$, there exists a crepant birational model
$(Y''_i,B''_i)$ of $(Y'_i,B'_i)$ and a fibration $Y''_i\rightarrow W_i$, satisfying the following conditions:
\begin{enumerate}
\item the variety $Y''_i$ is $\qq$-factorial,
\item every divisor extracted by
$Y''_i\dashrightarrow Y'_i$ is a log canonical place of $(Y'_i,B'_i)$,
\item every component of $B''_i$ dominates $W_i$, and  
\item the relative Picard rank $\rho(Y''_i/W_i)$ is bounded above by $n$.
\end{enumerate} 
By~\cite[Theorem 2.25]{MM24}, condition (3), and condition (4), we conclude that $B_i''$ has at most $\lfloor 2\epsilon_0^{-1}n\rfloor$ components. 
By condition (1) and condition (2), we have 
\begin{equation}\label{eq:comp}
c(Y'_i,B'_i) \geq c(Y''_i,B''_i).
\end{equation} 
In particular, the sequence 
$c(Y''_i,B''_i)$ must converge to $c_{\rm alt}(X,B)$.
However, observe that 
\[
c(Y''_i,B''_i)= n + \rho(Y''_i) - |B''_i|. 
\]
Since the number of components of $B''_i$ is bounded above 
and the coefficients of $B''_i$ belong to $\mathcal{B}_0$, 
we conclude that 
there are only finitely many possible values for $|B''_i|$.
The sequence $\rho(Y''_i)$ can either stabilize or diverge.
The upper bound given in~\eqref{eq:comp} implies that $\rho(Y''_i)$ stabilizes.
Hence, up to passing to a subsequence, we may assume that $c(Y''_i,B''_i)$ stabilizes
and equals $c_{\rm alt}(X,B)$.
We set $(Y_0,B_{Y_0}):=(Y_i,B_i)$ for $i$ large enough.

Now, assume that there is a crepant finite Galois morphism $f\colon (Y_0,B_{Y_0})\rightarrow (X,B)$ such that 
$(Y_0,B_{Y_0})$ is sub-log Calabi--Yau and 
$c_{\rm bir}(Y_0,B_{Y_0})=c_{\rm alt}(X,B)$.
Since $c_{\rm alt}(X,B)$ is finite,
the sub-log Calabi--Yau pair
$(Y_0,B_{Y_0})$ must be crepant birational equivalent to a log Calabi--Yau pair.
By Lemma~\ref{lem:finite-act-regularization}, 
there exists:
\begin{itemize}
\item a log Calabi--Yau pair $(X',B')$
which is crepant birational equivalent to $(X,B)$, and 
\item a log Calabi--Yau pair $(Y,B_Y)$ with a crepant finite Galois morphism
$(Y,B_Y)\rightarrow (X',B')$,
\end{itemize} 
such that $(Y,B_Y)$ is crepant birational equivalent to $(Y_0,B_{Y_0})$.
In particular, we have 
$c_{\rm bir}(Y,B_Y)=c_{\rm bir}(Y_0,B_{Y_0})=c_{\rm alt}(X,B)$. 
\end{proof}

Applying Theorem~\ref{theorem:bir-comp-zero} and Theorem~\ref{theorem:alt-comp-computation}, we obtain a characterization of log Calabi--Yau pairs
with index one and 
alteration complexity zero.

\begin{proof}[Proof of Theorem~\ref{theorem:alt-comp-zero}]
Let $(X,B)$ be a log Calabi--Yau pair of dimension $n$ and index one. 

First, assume that $c_{\rm alt}(X,B)=0$.
By Theorem~\ref{theorem:alt-comp-computation}, there exists:
\begin{itemize}
\item a log Calabi--Yau pair $(X',B')$ 
which is crepant birational to $(X,B)$, and 
\item a log Calabi--Yau pair $(Y,B_Y)$ 
with a crepant finite Galois morphism $(Y,B_Y)\rightarrow (X',B')$, 
\end{itemize}
such that $c_{\rm bir}(Y,B_Y)=c_{\rm alt}(X,B)=0$.
Note that $K_{X'}+B'\sim 0$ as 
the index of a log Calabi--Yau pair is a birational invariant.
On the other hand, since 
$(Y,B_Y)\rightarrow (X',B')$ is a finite crepant morphism, we have 
$K_Y+B_Y\sim 0$.
Thus, we have that $K_Y+B_Y\sim 0$ and $c_{\rm bir}(Y,B_Y)=0$.
By Theorem~\ref{theorem:bir-comp-zero}, we conclude that $(Y,B_Y)$ is crepant birational equivalent to $(\pp^n,H_0+\dots+H_n)$.

Now, assume that there exists:
\begin{itemize}
\item a log Calabi--Yau pair $(X',B')$
which is crepant birational equivalent 
to $(X,B)$, and 
\item a log Calabi--Yau pair $(Y,B_Y)$ with a crepant finite Galois morphism
$(Y,B_Y)\rightarrow (X',B')$, 
\end{itemize}
such that $(Y,B_Y)$ is crepant birational equivalent to
$(\pp^n,H_0+\dots+H_n)$.
Then, the crepant finite Galois morphism 
$(Y,B_Y)\rightarrow (X',B')$ induces a crepant finite Galois morphism
$(Y',B_{Y'})\rightarrow (X,B)$.
Hence,
the sub-pair $(Y',B_{Y'})$ is crepant birational equivalent to $(Y,B_Y)$.
We have that $c_{\rm alt}(X,B)\leq c_{\rm bir}(Y',B_{Y'})=c_{\rm bir}(Y,B_Y) \leq c(\pp^n,H_0+\dots+H_n)=0$. 
Thus, we have $c_{\rm alt}(X,B)\leq 0$.
On the other hand, by Theorem~\ref{theorem:alt-comp-noneg}, we know that the alteration complexity is non-negative.
This finishes the proof.
\end{proof} 

\begin{proof}[Proof of Theorem~\ref{theorem:conic-complexity-zero}]
Let $(X,B)$ be a log Calabi--Yau pair with conic complexity zero.
By definition, we have a conic fibration $f\colon (X,B)\rightarrow Z$.
Further, we have two prime divisors $S_1$ and $S_2$ in $\lfloor B\rfloor$, horizontal over $Z$, such that 
$S_1\cap S_2=\emptyset$.
Since $S_1\rightarrow Z$ is generically one-to-one, we conclude that the moduli divisor of the canonical bundle formula is $\qq$-trivial.
Let $(Z,B_Z)$ be the log pair induced by the canonical bundle formula.
By induction on the dimension, we may assume that $Z$ is a Bott $\qq$-tower
and $B_Z$ is the torus invariant divisor.

We argue that $X$ is a toric variety and $B$ is the torus invariant divisor.
Let $P$ be a prime component of $B_Z$.
We show that the divisor $f^*P$ appears with coefficient one in $B$.
Indeed, since $P$ is a log canonical center of $(Z,B_Z)$, there is a codimension two subvariety $P_X$ on $X$ satisfying the following:
\begin{itemize}
\item the subvariety $P_X$ maps to $P$ under $f$, and
\item the subvariety $P_X$ is either contained in $S_1$ or $S_2$.
\end{itemize} 
If $f^*P$ is not a log canonical center of $(X,B)$, then we get a contradiction of the connectedness theorem~\cite[Theorem 1.2]{FS20}.
Hence, the divisor $f^*P$ appears with coefficient one in $B$.
Since $Z$ is a Bott $\qq$-tower, we have that $\rho(Z)=\dim Z = n-1$
and $B_Z$ has $2n-2$ prime components.
We conclude that $B$ has $2n$ prime components and $\rho(X)=\dim X = n$.
Henceforth, the pair $(X,B)$ is toric
by~\cite[Theorem 1.2]{BMSZ18}.

We conclude that $X\rightarrow Z$ is a toric Mori fiber space of relative dimension one.
Hence, $X$ is the projectivization of a rank $2$ torus invariant split $\qq$-vector bundle on $Z$.
This implies that $X$ is a Bott $\qq$-tower provided that $Z$ is a Bott $\qq$-tower. 
\end{proof} 

\begin{proof}[Proof of Theorem~\ref{introcor:bir-alt-conic-comp-zero}]
Let $(X,B)$ be a log Calabi--Yau pair of dimension $n$. 

We prove the first statement.
Assume that $(X,B)$ is crepant birational equivalent to $(\pp^n,H_0+\dots+H_n)$. 
Then, $(X,B)$ is crepant birational equivalent to 
$((\pp^1)^n,B_T)$ where $B_T$ is the torus invariant divisor. 
It is clear that the pair $((\pp^1)^n,B_T)$ admits a sequence of $n$ conic fibrations with disjoint sections. Thus $k_{\rm bir}(X,B)=0$.
On the other hand, if $k_{\rm bir}(X,B)=0$, then by Theorem~\ref{theorem:conic-complexity-zero}, we know that $(X,B)$ is crepant birational equivalent to a toric log Calabi--Yau pair.
Hence, it is crepant birational equivalent to $(\pp^n,H_0+\dots+H_n)$.

The second statement follows from Lemma~\ref{lem:finite-act-regularization}
and
Theorem~\ref{theorem:conic-complexity-zero}.
\end{proof}

\subsection{Birational complexity and conic fibrations}

In this subsection, we prove that the birational complexity of a log Calabi--Yau pair is at least its birational conic complexity.
First, we introduce the following definition
motivated by the concept 
of orbifold divisors.

\begin{definition}\label{def:orbifold-divisors}
{\em 
Let $X$ be a normal quasi-projective variety, 
$\Gamma$ be a reduced divisor on $X$, and $n$ be a positive integer. Set $B:=(1-n^{-1})\Gamma$.
We define $\mathbb{K}(X,B)$ to be the $\mathbb{K}(X)$-algebra generated by the elements $f\in \overline{\mathbb{K}(X)}$ for which $f^n\in \mathbb{K}(X)$ and $\supp ({\rm div}(f))\subseteq \supp(B)$.
Let $g\in \kk(X,B)$ be an element for which $g^n\in \kk(X)$. We write ${\rm div}(g):={\rm div}(g^n)/n$. For a $\qq$-divisor $D$ on $X$, we
define the sheaf $\mathcal{O}_{(X,B)}(D)$ by:
\[
\mathcal{O}_{(X,D)}(D)|_U:=
\{ f\in \kk(X,B) \mid 
f^n \in \kk(X) \text{ and }
({\rm div}(f)+D)|_U \geq 0
\}
\]
for every open $U\subseteq X$.
}
\end{definition}

The following theorem implies Theorem~\ref{theorem:cbir-conic}.

\begin{theorem}\label{thm:cbir-conic}
Let $(X,B)$ be a log Calabi--Yau pair
of index one.
Let $n$ be the dimension of $X$ and $c=\bar{c}_{\rm bir}(X,B)$.
Then, there exists a crepant birational model
$(X',B')$ of $(X,B)$ satisfying the following:
\begin{itemize}
\item the birational map $X'\dashrightarrow X$ only extract log canonical places of $(X,B)$, 
\item the variety $X'$ admits a sequence of Mori fiber spaces 
\[ 
\xymatrix{
X'\ar[r]^-{\psi_0} & 
 X_1 \ar[r]^-{\psi_1} & \dots  
\ar[r]^-{\psi_{r-1}}
& X_r
}
\]
\item 
there is a subset $I\subseteq \{0,\dots,r-1\}$
with $|I|\geq n-c$
such that each $\psi_i$ with $i\in I$ is a conic fibration, 
\item if $(X_i,B_i,\mathbf{M}_i)$ is obtained by the canonical bundle formula and $i\in I$, then $\lfloor B_i\rfloor$ has two disjoint prime components that are horizontal over $X_{i+1}$.
\end{itemize} 
\end{theorem}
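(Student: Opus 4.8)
The plan is to manufacture a large finite abelian automorphism group out of the complexity data, feed it into Theorem~\ref{thm:fin-act-conic}, and then descend the resulting tower by the group action. We may assume $c<n$, since otherwise the bound $|I|\geq n-c$ is vacuous. Because $(X,B)$ has index one, the relation $K_X+B\sim 0$ forces $B$ to be integral, hence reduced (effective, integral, and log canonical), and this persists under crepant birational equivalence. First I would pass, via a $\qq$-factorial dlt modification followed by a complexity-minimizing run of the MMP as in the proof of Theorem~\ref{theorem:alt-comp-computation}, to a crepant birational model $(X_0,B_0)$ with $X_0\dashrightarrow X$ extracting only log canonical places and computing the fine birational complexity, so that $c=n+\dim_\qq\langle B_{0,1},\dots,B_{0,k}\rangle-k$ where $B_0=\sum_{i=1}^k B_{0,i}$ is reduced. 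Then the classes $[B_{0,i}]\in\Cl_\qq(X_0)$ satisfy exactly $k-\dim_\qq\langle B_{0,i}\rangle=n-c$ independent $\qq$-linear relations.

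Next I would turn these $n-c$ relations into a cover. Clearing denominators, each relation gives a rational function $f_j\in\kk(X_0)^\ast$ with $\dv(f_j)$ supported on $\Supp(B_0)$, and I form the $(\zz_m)^{n-c}$-cover $\rho\colon(Y,B_Y)\to(X_0,B_0)$ obtained by adjoining $m$-th roots of the $f_j$ (choosing $m$ prime so that $Y$ is connected). Since the branch locus lies in the coefficient-one locus $B_0$, the ramification formula gives $K_Y+B_Y=\rho^\ast(K_{X_0}+B_0)$, so $\rho$ is crepant and $(Y,B_Y)$ is again log Calabi--Yau of index one with $B_Y$ reduced; the Galois group $G:=(\zz_m)^{n-c}$ embeds in $\Aut(Y,B_Y)$, and the coefficients of $B_Y$ lie in a fixed finite set, so the constant $m_0=m_0(n,\Lambda)$ of Theorem~\ref{thm:fin-act-conic} is independent of $m$.

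I would then apply Theorem~\ref{thm:fin-act-conic} to $G$. For this I must produce a $G$-equivariant Fano type morphism $Y\to W$ on which $G$ acts fiberwise of full rank $n-c$: after a $G$-equivariant MMP one reduces to the maximal rationally connected fibration of $Y$, whose general fibers are rationally connected klt log Calabi--Yau and hence of Fano type, and over whose base the $n-c$ root-directions are vertical, so that $G\leqslant\Aut_W(Y,B_Y)$. Taking $m$ prime with $m\geq m_0$, the index-$\le m_0/2$ subgroup $A_0\leqslant G$ furnished by the theorem must equal $G$, since a proper subgroup of $(\zz_m)^{n-c}\cong\mathbb F_m^{\,n-c}$ has index at least $m>m_0/2$. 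Thus Theorem~\ref{thm:fin-act-conic} produces a \emph{fully} $G$-equivariant crepant birational model $(Y',B_{Y'})$ of $(Y,B_Y)$, extracting only generalized log canonical places, with a tower of $G$-equivariant Mori fiber spaces over $W$ of which at least $n-c$ are $G$-equivariant strict conic fibrations, each carrying two disjoint $G$-invariant horizontal components of $\lfloor B_{Y'}\rfloor$.

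Finally I would quotient the entire $G$-equivariant tower by $G$. As $Y/G=X_0$, the quotient $X':=Y'/G$ is crepant birational to $(X,B)$ and $X'\dashrightarrow X$ extracts only log canonical places, since a crepant finite morphism matches log canonical places. The descended maps form a tower of Mori fiber spaces, and each $G$-equivariant strict conic fibration descends to a conic fibration: using $\pp^1/\zz_m\cong\pp^1$ together with the equivariant canonical bundle formula (Proposition~\ref{prop:equiv-gen-cbf}), the two disjoint $G$-invariant horizontal sections map to two disjoint horizontal prime divisors appearing with coefficient one in the induced boundary, so the corresponding $\lfloor B_i\rfloor$ has two disjoint horizontal prime components. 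This yields the required $\geq n-c$ conic fibrations. The step I expect to be the main obstacle is the middle one: arranging a $G$-equivariant Fano type base $Y\to W$ on which all $n-c$ cover-directions act fiberwise and trivially on $W$ — equivalently, that the boundary components carrying the relations are horizontal over the rationally connected fibration and that its fibers are of Fano type — since only then does the fiberwise rank remain $n-c$ and the full strength of Theorem~\ref{thm:fin-act-conic} become available.
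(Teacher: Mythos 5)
Your overall strategy coincides with the paper's: extract the $n-c$ linear relations among the components of a reduced boundary on a model computing $\bar{c}_{\rm bir}$, turn them into a $\zz_p^{n-c}$-cover ramified only along the boundary, feed the Galois action into Theorem~\ref{thm:fin-act-conic}, and quotient the resulting equivariant tower. The cover construction (the paper realizes it as a quasi-torsor $\Spec_{X_0}$ of orbifold divisorial sheaves, which is your "adjoin $p$-th roots of the $f_j$" done carefully), the observation that a proper subgroup of $\zz_p^{n-c}$ has index at least $p$ so that $A_0=G$ for $p$ prime and large, and the descent of strict conic fibrations and of the disjoint invariant horizontal components through the quotient all match what the paper does.

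The gap is exactly where you flag it: producing a Fano type fibration $Y\rightarrow W$ with $G\leqslant {\rm Aut}_W(Y,B_Y)$ acting fiberwise with full rank $n-c$. Your substitute via the MRC fibration of $Y$ fails on three counts. First, the general fiber of that fibration carries only an lc (not klt) log Calabi--Yau structure, and rationally connected plus lc log Calabi--Yau does not imply Fano type: the blow-up of $\pp^2$ at nine general points of a smooth cubic, with the strict transform of the cubic as boundary, is rationally connected, klt, of index one, and not of Fano type, so the fibers (or $Y$ itself, when the MRC base is a point) need not satisfy the hypotheses of Theorem~\ref{thm:fin-act-conic}. Second, rational connectedness is not preserved under finite covers, so the MRC base of $Y$ is not controlled by $X_0$ and the Galois group can act nontrivially on it; the inclusion $G\leqslant{\rm Aut}_W$ is therefore not automatic and is not justified by saying the root-directions are vertical. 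Third, no argument is given that those directions are in fact vertical over the MRC base. The paper resolves all of this by choosing $W$ \emph{before} taking the cover: by~\cite[Theorem 2.25]{FFMP22} the complexity-computing model may be assumed to carry a Fano type fibration $X_0\rightarrow W$ with every component of $B_0$ horizontal over $W$. Then $G$ acts fiberwise over $W$ for free (it already acts over $X_0$), $Y\rightarrow W$ is a Fano type morphism by Lemma~\ref{lem:FT-under-morphisms}(5) because the branch divisor lies in $\lfloor B_0\rfloor$, horizontality keeps the $n-c$ relations nontrivial on the general fiber so the fiberwise rank stays $n-c$, and the fractionality condition on the divisors $D_i$ gives connectedness of the fibers of $Y\rightarrow W$ for $p\gg 0$. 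Without an input of this kind your reduction to Theorem~\ref{thm:fin-act-conic} does not go through.
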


\begin{proof}
Let $(X_0,B_0)$ be a birational model of $(X,B)$
for which $\bar{c}(X_0,B_0)=\bar{c}_{\rm bir}(X,B)=c$.
Without loss of generality, we may assume that $(X_0,B_0)$ is $\qq$-factorial and dlt.
If $c\geq n$, then the statement is vacuous.
We assume that $c < n$.
By~\cite[Theorem 2.25]{FFMP22}, we may assume that there is a Fano type fibration
$X_0\rightarrow W$ such that all 
components of $B_0$ are horizontal over $W$.
We can find Weil divisors $D_1,\dots,D_{n-c}$
satisfying the following conditions:
\begin{enumerate}
\item each $D_i$ is supported on $\supp(B_0)$, 
\item for each $i\in \{1,\dots, n-c \}$ we have 
$D_i\sim 0$, 
\item for each $i\in \{1,\dots, n-c\}$ there is a prime component of $\supp(D_i)$ that is not contained in the set $\bigcup_{j\neq i}\supp(D_j)$.
\end{enumerate} 
For each $p\in \zz_{>0}$, we consider the following variety
\[
Y_{0,p}:={\rm Spec}_{X_0}\left(
\bigoplus_{\vec{\alpha}\in \zz_p^{n-c}}
\mathcal{O}_{\left(X_0,\left(1-\frac{1}{p}\right)B_0\right)} 
\left( 
\alpha_1 \frac{D_1}{p} +\dots + \alpha_{n-c}\frac{D_{n-c}}{p}
\right) 
\right). 
\]
By condition (2), the variety $Y_{0,p}$ is a $\zz_p^{n-c}$-quasi-torsor over $X_0$ (see~\cite[Definition 2.7]{BM22} and ~\cite[Proposition 4.10]{BM24}).
By condition (1), we know that $Y_{0,p}\rightarrow X_0$ only ramifies along $B_0$.
By condition (3), if $p$ is a large prime number, then coefficients of each $\alpha_iD_i/p$ are fractional
provided that $\vec{\alpha}\neq 0$ in $\zz_p^{n-c}$.
In particular, the induced morphism
$Y_{0,p}\rightarrow W$ has connected fibers for $p\gg 0$.
Let $(Y_{0,p},B_{Y_{0,p}})$ be the log pull-back of $(X_0,B_0)$.
By the above considerations, the pair $(Y_{0,p},B_{Y_{0,p}})$ is log Calabi--Yau.
Since $Y_{0,p}\rightarrow X_0$ only ramifies along $\lfloor B_0\rfloor=B_0$, we conclude that 
$Y_{0,p} \rightarrow W$ is a Fano type morphism (see Lemma~\ref{lem:FT-under-morphisms}).
By construction, the group $\zz_p^{n-c}$ acts on $(Y_{0,p},B_{Y_{0,p}})$ fiberwise over $W$.
Thus, if $p\geq p_0(n)$, then we can apply Theorem~\ref{thm:fin-act-conic}.
From now on, we just fix $p\gg 0$ and set 
$(Y_0,B_{Y_0}):=(Y_{0,p},B_{Y_{0,p}})$.
By Theorem~\ref{thm:fin-act-conic}, 
there exists a 
$\zz_p^{n-c}$-equivariant crepant birational model
$(Y',B_{Y'})$ of $(Y_0,B_{Y_0})$ over $W$ that admits a tower of Mori fiber spaces
\[ 
\xymatrix{
Y' \ar[rrrd] \ar[r]^-{\phi_0} & 
 Y_1 \ar[rrd] \ar[r]^-{\phi_1} & \dots  
\ar[r]^-{\phi_{r-1}}
& Y_r \ar[d]^-{\phi_r} \\ 
& & & W 
}
\]
satisfying the following conditions:
\begin{enumerate}
\item there is a subset $I\subseteq \{0,\dots,r\}$
with $|I|\geq n-c$ such that each $\psi_i$ with $i\in I$ is a conic fibration, and
\item if $i\in I$, $\zz_p^{k_i}$ is the homomorphic image of $\zz_p^{n-c}$ acting on $Y_i$, and 
$(Y_i,B_{Y_i},\mathbf{N}_i)$ is induced by the equivariant generalized canonical bundle formula on $Y_i$, then $\lfloor B_{Y_i}\rfloor$ contains two prime disjoint $\zz_p^{k_i}$-invariant components that are horizontal over $Y_{i+1}$.
\end{enumerate} 
Let $(X',B')$ be the quotient of $(Y',B_{Y'})$ by $\zz_p^{n-c}$ and $X_i$ be the quotient of $Y_i$ by $\zz_p^{k_i}$ for each $i$.
Then, we obtain a commutative diagram as follows:
\[
\xymatrix{
& (Y_0,B_{Y_0}) \ar@{-->}[r]\ar[d]_-{/\zz_p^{n-c}} & 
(Y',B_{Y'}) \ar[d]_-{/\zz_p^{n-c}}\ar[r]^-{\phi_0} & 
Y_1 \ar[r]^-{\phi_1} \ar[d]_-{/\zz_p^{k_1}} & 
\dots \ar[r]^-{\phi_{r-1}} & 
Y_r \ar[d]_-{/\zz_p^{k_r}} \ar[rddd]^-{\phi_r} \\
(X,B) \ar@{-->}[r] &
(X_0,B_0) \ar[rrrrrdd] \ar@{-->}[r] & 
(X',B') \ar[rrrrdd] \ar[r]^-{\psi_0} & 
X_1 \ar[rrrdd] \ar[r]^-{\psi_1} & 
\dots \ar[r]^-{\psi_{r-1}} & X_r \ar[rdd]_-{\psi_r} &  \\ 
&  & & & & &  \\ 
& & & & & &  W
}
\]
If $\phi_i$ is a conic fibration, then $\psi_i$ is a conic fibration as well.
This implies that there is a subset $I\subseteq \{0,\dots,r\}$ with $|I|\geq n-c$ such that each $\psi_i$ with $i\in I$ is a conic fibration.
The generalized pair $(X_i,B_i,\mathbf{M}_i)$ obtained by the canonical bundle formula
is the quotient of $(Y_i,B_{Y_i},\mathbf{N}_i)$ by $\zz_p^{k_i}$.
For each $i\in I$, the divisor $\lfloor B_{Y_i}\rfloor$ has two disjoint $\zz_p^{k_i}$-invariant prime components horizontal over $Y_{i+1}$.
Hence, for each $i\in I$, the divisor
$\lfloor B_i \rfloor$, which is the $\zz_p^{k_i}$-quotient of $\lfloor B_{Y_i}\rfloor$, has two disjoint prime components that are horizontal over $X_{i+1}$.
\end{proof} 

Now, we turn to prove the two corollaries regarding the birational complexity and the conic fibrations.

\begin{proof}[Proof of Theorem~\ref{cor:hyperf}]
Let $X_d\subset \pp^n$ be a smooth hypersurface of degree $d\leq n$.
Since $X_d$ is smooth, we have $\rho(X_d)=1$.
Let $H_1,\dots,H_{n-d}$ be general hyperplanes in $\mathbb{P}^n$.
Then, the pair
$(X_d, H_1|_{X_d}+\dots + H_{n+1-d}|_{X_d})$ is log Calabi--Yau.
Hence, we have 
\[
c_{\rm bir}(X_d) 
\leq 
c(X_d, H_1|_{X_d}+\dots + H_{n+1-d}|_{X_d})
= n - (n+1-d) = d-1. 
\]
We conclude that the birational complexity of $X_d$ is at most $d-1$.
Then, the statement follows from Theorem~\ref{theorem:cbir-conic}.
\end{proof} 

\begin{proof}[Proof of Corollary~\ref{cor:birsuper}]
By~\cite[Corollary 1.4]{MM24}, we have that $c_{\rm bir}(X,B)\in \{0,\dots,n\}$.
If $c_{\rm bir}(X,B)<n$, then Theorem~\ref{thm:cbir-conic} implies that $X$ admits a birational non-trivial Mori fiber space. 
This contradicts the fact that $X$ is birationally superrigid. 
We conclude that $c_{\rm bir}(X,B)=n$.
\end{proof} 

\begin{corollary}\label{cor:bsuper}
Let $X$ be a Fano variety of Picard rank one.
Assume that $X$ admits a $1$-complement $B$. 
If the divisor $B$ has at least $2$ prime components, then $X$ is not birationally superrigid.
\end{corollary}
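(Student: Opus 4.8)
The plan is to deduce the statement from Theorem~\ref{theorem:cbir-conic} by extracting, from the hypotheses, the complexity bound $c_{\rm bir}(X,B)<n$, after which the argument is the same as in the proof of Corollary~\ref{cor:birsuper}. The first point I would record is that a $1$-complement is automatically a \emph{reduced} divisor. Indeed, by definition $(X,B)$ is log Calabi--Yau with $K_X+B\sim 0$; since $K_X$ is an integral Weil divisor, so is $B\sim -K_X$, and the log canonical condition forces every coefficient of the effective integral divisor $B$ to lie in $\{0,1\}$. Thus the assumption that $B$ has at least two prime components means exactly that $|B|\geq 2$.

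Next I would carry out the complexity computation. As $X$ is a birationally superrigid Fano variety it is, as usual, $\qq$-factorial, so the hypothesis $\rho(X)=1$ gives $\rank \Cl(X)_\qq=1$. Using that $X$ is a crepant birational model of itself with boundary $B\geq 0$, we obtain
\[
c_{\rm bir}(X,B)\leq c(X,B)=\dim X+\rank \Cl(X)_\qq-|B|\leq n+1-2=n-1<n.
\]

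With this inequality in hand I would invoke Theorem~\ref{theorem:cbir-conic}: since $(X,B)$ is log Calabi--Yau of index one, it admits a crepant birational model $(X',B')$ carrying a tower of Mori fiber spaces $X'\xrightarrow{\psi_0} X_1\to\dots\to X_r$ of which at least $n-c_{\rm bir}(X,B)\geq 1$ are conic fibrations. In particular the tower is non-empty, so $\psi_0\colon X'\to X_1$ is a non-trivial Mori fiber space, and $X'$, being crepant birational to $X$, is birational to it. For $n\geq 2$ the base $X_1$ is positive-dimensional: otherwise the tower would consist of the single map $\psi_0\colon X'\to X_1$ with $X_1$ a point, and the required conic fibration, having relative dimension one, could occur only if $n=1$. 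Hence $X$ is birational to a Mori fiber space over a positive-dimensional base, which is impossible for a birationally superrigid Fano variety of Picard rank one. Therefore $X$ is not birationally superrigid.

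I expect the only delicate point to be the last one, namely certifying that the fibration extracted from Theorem~\ref{theorem:cbir-conic} is genuinely non-trivial, i.e.\ that $\dim X_1\geq 1$ rather than a Mori fiber space onto a point. This is settled by the dimension bookkeeping above, using that a conic fibration has relative dimension one and so sits over a base of positive dimension once $n\geq 2$; the remaining case $n=1$ reduces to $X\cong\pp^1$, which is not at issue.
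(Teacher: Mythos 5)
Your proposal is correct and follows essentially the same route as the paper: since $B$ is a reduced divisor with at least two components and $\rho(X)=1$, one gets $c_{\rm bir}(X,B)\leq c(X,B)=n+1-|B|\leq n-1<n$, and Theorem~\ref{theorem:cbir-conic} then produces a crepant birational model with a tower of Mori fiber spaces containing at least one conic fibration, hence a birational Mori fiber space over a positive-dimensional base, which is incompatible with birational superrigidity. Your added bookkeeping (reducedness of a $1$-complement, and the check that the base $X_1$ is positive-dimensional) only makes explicit what the paper leaves implicit.
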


\begin{proof}[Proof of Corollary~\ref{cor:bsuper}]
Let $X$ be a klt Fano variety of Picard rank $1$.
Let $(X,B)$ be a $1$-complement.
If $B$ has at least two components, then
$c(X,B) < n$. 
Theorem~\ref{theorem:cbir-conic} implies that $X$ admits a birational non-trivial Mori fiber space, i.e., 
a Mori fiber space whose image is a positive dimensional variety.
\end{proof} 

\subsection{Dimension of the dual complex}
In this subsection, we prove a theorem relating the dimension of the dual complex
and the alteration conic complexity.
In~\cite[Theorem 1.12]{MM24}, the authors proved
that for a log Calabi--Yau pair $(X,B)$
the invariant 
${\rm reg}(X,B)$ is bounded below by
$n-c_{\rm bir}(X,B)-1$, or analogously, 
that ${\rm coreg}(X,B)\leq c_{\rm bir}(X,B)$.
The following theorem gives an improvement 
using the concept of alteration conic complexity.

\begin{theorem}\label{theorem:k-alt-vs-reg}
Let $(X,B)$ be a log Calabi--Yau pair of index one.
Then, we have 
\[
n-k_{\rm alt}(X,B)-1\leq 
{\rm reg}(X,B)
\]
or analogously
\[
{\rm coreg}(X,B)\leq k_{\rm alt}(X,B).
\]
\end{theorem}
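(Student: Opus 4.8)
The plan is to isolate the pointwise inequality ${\rm coreg}(W,B_W)\le k(W,B_W)$, valid for an arbitrary log Calabi--Yau pair, and to transport it through the two operations defining $k_{\rm alt}$ by exploiting that the coregularity is invariant under both of them. Indeed, by Proposition~\ref{prop:sim-hom-equiv} the dual complex is well defined up to simple homotopy equivalence, so its dimension---and hence ${\rm reg}$ and ${\rm coreg}$---is a crepant birational invariant. Moreover, if $(Y,B_Y)\to(X,B)$ is a crepant finite Galois morphism, then log canonical centers of $(Y,B_Y)$ and of $(X,B)$ correspond under the finite map with preservation of dimension, and minimal centers map to minimal centers; since the coregularity equals the dimension of a minimal log canonical center in a gdlt modification, this gives ${\rm coreg}(Y,B_Y)={\rm coreg}(X,B)$. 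Therefore, for every crepant finite Galois cover $(Y,B_Y)\to(X,B)$ and every crepant birational model $(Y',B_{Y'})$ of $(Y,B_Y)$ one has ${\rm coreg}(X,B)={\rm coreg}(Y',B_{Y'})$; granting the pointwise inequality, ${\rm coreg}(X,B)={\rm coreg}(Y',B_{Y'})\le k(Y',B_{Y'})$, and taking the infimum first over models $Y'$ and then over covers $Y$ yields ${\rm coreg}(X,B)\le k_{\rm alt}(X,B)$.

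It remains to prove ${\rm coreg}(W,B_W)\le k(W,B_W)$, that is ${\rm reg}(W,B_W)\ge \dim W-k(W,B_W)-1$. Fix a tower $W=X_0\to X_1\to\cdots\to X_s$ attaining the maximum in the definition of $k$, write $\phi_i\colon X_i\to X_{i+1}$, let $I$ be the set of indices at which $\phi_i$ is a strict conic fibration, so $|I|=\dim W-k(W,B_W)$, and let $(X_i,B_i,\mathbf{N}_i)$ be the generalized log Calabi--Yau pair induced by the generalized canonical bundle formula, with $(X_0,B_0,\mathbf{N}_0)=(W,B_W)$. I would prove two monotonicity statements: (a) ${\rm reg}(X_i,B_i,\mathbf{N}_i)\ge {\rm reg}(X_{i+1},B_{i+1},\mathbf{N}_{i+1})$ for all $i$, and (b) ${\rm reg}(X_i,B_i,\mathbf{N}_i)\ge {\rm reg}(X_{i+1},B_{i+1},\mathbf{N}_{i+1})+1$ for $i\in I$. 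Since the dual complex of the zero divisor has dimension $-1$, one has ${\rm reg}(X_s,B_s,\mathbf{N}_s)\ge -1$, and telescoping (a) and (b) down the tower gives ${\rm reg}(W,B_W)\ge {\rm reg}(X_s,B_s,\mathbf{N}_s)+|I|\ge |I|-1=\dim W-k(W,B_W)-1$, as required.

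Statement (a) expresses that generalized log canonical centers of the base lift to those of the total space: working on compatible gdlt models and using $\phi_i^{-1}\lfloor B_{i+1}\rfloor\subseteq \lfloor B_i\rfloor$ (as in Proposition~\ref{prop:2-ray}(3)), every stratum of $\mathcal{D}(X_{i+1},B_{i+1},\mathbf{N}_{i+1})$ pulls back to a vertical stratum of $(X_i,B_i,\mathbf{N}_i)$ of the same dual-complex dimension, so $\mathcal{D}(X_{i+1},B_{i+1},\mathbf{N}_{i+1})$ embeds combinatorially into $\mathcal{D}(X_i,B_i,\mathbf{N}_i)$. The heart of the argument is (b), where I would use the defining features of a strict conic fibration: $\lfloor B_i\rfloor$ has two disjoint horizontal sections $S_0,S_\infty$ and the general log fiber is $(\pp^1,\{0\}+\{\infty\})$, whose moduli is trivial, so $\mathbf{N}_{i+1}$ is torsion and $(X_{i+1},B_{i+1})$ is an honest log Calabi--Yau pair. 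Taking a top-dimensional cell of $\mathcal{D}(X_{i+1},B_{i+1})$, corresponding to a stratum $V$ cut out by ${\rm reg}(X_{i+1},B_{i+1})+1$ components of $\lfloor B_{i+1}\rfloor$, I would pull these back to $\lfloor B_i\rfloor$ and intersect with the section $S_0$; because $S_0\to X_{i+1}$ is birational, $S_0\cap\phi_i^{-1}(V)$ is a nonempty log canonical center cut out by one further boundary component, hence a cell of $\mathcal{D}(X_i,B_i)$ of dimension one larger, giving ${\rm reg}(X_i)\ge {\rm reg}(X_{i+1})+1$.

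The main obstacle will be the bookkeeping in (b): on a simultaneous gdlt model of $(X_i,B_i)$ and $(X_{i+1},B_{i+1})$ one must check that $S_0$ meets the pullback of a minimal base stratum so that their intersection is genuinely a log canonical center of one higher dimension, rather than being contracted or failing to be a stratum. Ensuring that the moduli b-divisor $\mathbf{N}_{i+1}$ is torsion---so that it creates no spurious generalized log canonical centers on the base and the section argument stays clean---is the technical point that makes this work, and it is exactly here that the existence of two disjoint sections, rather than a general conic bundle, is indispensable.
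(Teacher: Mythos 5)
Your proposal is correct and follows essentially the same route as the paper's: reduce to a single cover and crepant model using the invariance of the regularity under crepant birational maps and crepant finite Galois morphisms, then gain $+1$ in the regularity at each strict conic fibration because the two disjoint horizontal sections make the dual complex of the total space contain the join of that of the base with two points, and lose nothing at the remaining fibrations since glc centers of the base are images of glc centers of the total space; the paper packages your telescoping as an induction on the dimension of the base, which is the same argument. One small correction to the point you single out as the technical crux: the moduli part $\mathbf{N}_{i+1}$ on the base of a strict conic fibration need \emph{not} be torsion once $\mathbf{N}_i$ is nontrivial --- generalized adjunction along the section $S_0$, which maps birationally onto $X_{i+1}$, identifies $(X_{i+1},B_{i+1},\mathbf{N}_{i+1})$ crepantly with $\bigl(S_0,\mathrm{Diff}_{S_0}(B_i),\mathbf{N}_i|_{S_0}\bigr)$, so the moduli part persists --- but this is harmless, since it is precisely this adjunction (not triviality of the moduli part) that turns a minimal glc center of the base into a glc center of $(X_i,B_i,\mathbf{N}_i)$ of the same dimension and yields the $+1$ in your step (b).
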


\begin{proof}[Proof of Theorem~\ref{theorem:k-alt-vs-reg}]
Let $(X,B)$ be a log Calabi--Yau pair of dimension $n$.
If $k_{\rm alt}(X,B)=n$, then there is nothing to prove.
Indeed, we have ${\rm reg}(X,B)\geq -1$ for every log Calabi--Yau pair.

Assume that $k_{\rm alt}(X,B)=k<n$. 
Then, we may apply Lemma~\ref{lem:finite-act-regularization}.
Thus, there exists:
\begin{itemize}
\item a crepant birational map
$(X',B')\dashrightarrow (X,B)$, and 
\item a crepant finite Galois morphism
$(Y,\Delta)\rightarrow (X',B')$,
\end{itemize}
such that $k_{\rm alt}(X,B)=k_{\rm bir}(Y,\Delta)=k$.
Note that ${\rm reg}(X,B)={\rm reg}(X',B')={\rm reg}(Y,\Delta)$. 
Hence, it suffices to show that the inequality holds for $(Y,\Delta)$.
Since $k_{\rm bir}(Y,\Delta)=k$, there
exists a crepant birational map
$(Y',\Delta')\dashrightarrow (Y,\Delta)$
for which $k(Y',\Delta')=k<n$.
Hence, we have two Fano type morphisms
\begin{equation}\label{eqq:1} 
\xymatrix{ 
Y' \ar[r]^-{\phi_1} & Y_1 \ar[r]^-{\phi_2} & Y_2
}
\end{equation}
where $\phi_2$ is a conic fibration. 
In the previous composition, the morphism $\phi_1$ is possibly the identity.
Let $(Y_1,\Delta_1)$ and $(Y_2,\Delta_2)$ be the log Calabi--Yau pairs induced by the canonical bundle formula on $Y_1$ and $Y_2$, respectively.
Then, we know that $\lfloor \Delta_1\rfloor$ has two disjoint sections over $Y_2$.
By construction, we have 
${\rm reg}(Y',\Delta')\geq {\rm reg}(Y_1,\Delta_1)={\rm reg}(Y_2,\Delta_2)+1$.
Indeed, the dual complex of $(Y_1,\Delta_1)$ is the joint of the dual complex
of $(Y_2,\Delta_2)$ with two points.
On the other hand, we have 
\[
k(Y',\Delta')=
k(Y_1,\Delta_1)+\dim Y' - \dim Y_1=
k(Y_2,\Delta_2) -1 +\dim Y' - \dim Y_2.
\]
By induction on the dimension, we have 
\begin{equation}\label{eqq:2}
{\rm reg}(Y_2,\Delta_2) \geq \dim Y_2 - k_{\rm alt}(Y_2,\Delta_2)-1 
\geq 
\dim Y_2 - k(Y_2,\Delta_2) - 1.
\end{equation}
Putting equalities~\eqref{eqq:1}
and inequalities~\eqref{eqq:2} together, we obtain
\[
{\rm reg}(Y_2,\Delta_2) \geq 
\dim Y' - k(Y',\Delta') - 2. 
\]
Thus, we have 
\[
{\rm reg}(X,B) \geq {\rm reg}(Y_2,\Delta_2)+1 \geq \dim Y' - k(Y',\Delta')-2 = n-k-1.
\]
The second inequality follows from the definition of coregularity.
\end{proof} 

The following corollary makes plain
that Theorem~\ref{theorem:k-alt-vs-reg} implies~\cite[Theorem 1.12]{MM24}.
Indeed, among all the defined invariants
the alteration conic complexity $k_{\rm alt}(X,B)$ is the one that takes the smallest value.

\begin{corollary}\label{corollary:total-comparison}
Let $(X,B)$ be a log Calabi--Yau pair
of dimension $n$ 
of index one. 
Then, we have 
\[
c(X,B) \geq c_{\rm bir}(X,B) \geq 
\bar{c}_{\rm bir}(X,B)
\geq 
\bar{c}_{\rm alt}(X,B) \geq
k_{\rm alt}(X,B) \geq 0.
\]
Furthermore, the dimension of $\mathcal{D}(X,B)$ is at least $n-k_{\rm alt}(X,B)-1$.
\end{corollary}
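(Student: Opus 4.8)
The plan is to obtain the whole chain by unwinding the definitions, feeding in Theorem~\ref{thm:cbir-conic} for the one genuinely geometric step and Theorem~\ref{theorem:k-alt-vs-reg} for the final clause.

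First I would dispatch the three left-most inequalities formally. The bound $c(X,B) \geq c_{\rm bir}(X,B)$ holds because the identity map exhibits $(X,B)$ as one of the crepant birational models appearing in the infimum defining $c_{\rm bir}$. For $c_{\rm bir}(X,B) \geq \bar{c}_{\rm bir}(X,B)$, I would note that on any crepant model $(X',B')$ the decomposition $\Sigma$ given by the prime components of $B'$ with their coefficients satisfies $|\Sigma| = |B'|$ and $\rho(\Sigma) = \dim_\qq \langle \Sigma \rangle \leq \dim_\qq {\rm Cl}(X')$, so $\bar{c}(X',B') \leq c(\Sigma) \leq c(X',B')$; passing to infima over models gives the claim. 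Finally $\bar{c}_{\rm bir}(X,B) \geq \bar{c}_{\rm alt}(X,B)$ holds because the identity is a (trivial) crepant finite Galois morphism, so $\bar{c}_{\rm bir}(X,B)$ is among the values over which the infimum defining $\bar{c}_{\rm alt}$ ranges.

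The substantive step is $\bar{c}_{\rm alt}(X,B) \geq k_{\rm alt}(X,B)$, and this is where the hard part lies. It suffices to prove $k_{\rm bir}(Y,B_Y) \leq \bar{c}_{\rm bir}(Y,B_Y)$ for every crepant finite Galois cover $f\colon (Y,B_Y) \to (X,B)$; taking the infimum over covers then yields the inequality, since $k_{\rm alt}$ is bounded above by each $k_{\rm bir}(Y,B_Y)$ and hence by each $\bar{c}_{\rm bir}(Y,B_Y)$. I may assume $\bar{c}_{\rm bir}(Y,B_Y)$ is finite, so that $(Y,B_Y)$ is crepant birational to a genuine log Calabi--Yau pair. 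Since $f$ is crepant and $(X,B)$ has index one, $K_Y+B_Y = f^*(K_X+B) \sim 0$, so this genuine model again has index one and Theorem~\ref{thm:cbir-conic} applies to it: it produces a crepant model carrying a tower $\mathcal{T}$ of Mori fiber spaces of which at least $\dim Y - \bar{c}_{\rm bir}(Y,B_Y)$ are conic fibrations whose induced boundaries have two disjoint horizontal prime components, that is, strict conic fibrations in the sense used to define $k$. Hence $r(\mathcal{T}) \geq \dim Y - \bar{c}_{\rm bir}(Y,B_Y)$, so $k$ of this model is at most $\bar{c}_{\rm bir}(Y,B_Y)$, whence $k_{\rm bir}(Y,B_Y) \leq \bar{c}_{\rm bir}(Y,B_Y)$. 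The main obstacle here is precisely that Theorem~\ref{thm:cbir-conic} is stated for honest index-one log Calabi--Yau pairs, so one must first pass to a genuine crepant model of the cover (where $k_{\rm bir}$ and $\bar{c}_{\rm bir}$ are computed in any case) and verify that it still has index one before invoking the theorem.

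The non-negativity $k_{\rm alt}(X,B) \geq 0$ is again formal: in any tower of fibrations each strict conic fibration has relative dimension one, so their number is bounded by the total drop in dimension, giving $r(\mathcal{T}) \leq \dim X$ and hence $k \geq 0$ on every model, so $k_{\rm bir}, k_{\rm alt} \geq 0$. For the concluding clause I would simply quote Theorem~\ref{theorem:k-alt-vs-reg}, which gives $\dim \mathcal{D}(X,B) = {\rm reg}(X,B) \geq n - k_{\rm alt}(X,B) - 1$.
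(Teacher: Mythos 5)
Your proof is correct and follows essentially the same route as the paper, whose own proof is just the citation ``follows from Theorem~\ref{theorem:cbir-conic} and Theorem~\ref{theorem:alt-comp-computation}''; you have correctly identified that the formal inequalities are definitional, that the substantive step $\bar{c}_{\rm alt}\geq k_{\rm alt}$ comes from Theorem~\ref{thm:cbir-conic} applied cover-by-cover after passing to an honest index-one crepant model, and that the final clause is Theorem~\ref{theorem:k-alt-vs-reg}. Your direct reduction to a genuine log Calabi--Yau model of each cover plays exactly the role that the paper's appeal to Theorem~\ref{theorem:alt-comp-computation} (via Lemma~\ref{lem:finite-act-regularization}) is meant to play, so there is no meaningful divergence.
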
 

\begin{proof}[Proof of Corollary~\ref{corollary:total-comparison}]
Follows from Theorem~\ref{theorem:cbir-conic} and Theorem~\ref{theorem:alt-comp-computation}.
\end{proof}

In Section~\ref{sec:ex-and-quest}, we give plenty of examples that show that the inequalities in Corollary~\ref{corollary:total-comparison} can be strict.

\subsection{Fundamental groups and alteration conic complexity}
In this subsection, we prove theorems regarding fundamental groups
and alteration conic complexity.

The fundamental group of the smooth locus of a Fano variety is finite~\cite{Bra20}.
Indeed, the fundamental groups of the smooth locus
of $n$-dimensional Fano varieties 
satisfy the Jordan property~\cite{BFMS20}.
Similar theorems are expected
to hold
for open Calabi--Yau subvarities of Fano varieties, i.e.,
varieties of the form $U=X\setminus B$
where $X$ is Fano and $B$ a $1$-complement of $X$.
In this case, the fundamental group may be infinite
as the log pair $(\pp^1,\{0\}+\{\infty\})$
makes clear.
Nevertheless, in the previous context, the group
$\pi_1^{\rm reg}(U)$ is expected to contain a normal abelian subgroup of finite index and rank at most $n$.
The following theorem states that this expectation holds
for the algebraic fundamental group.
Further, we can control the rank of the abelian group
using the alteration conic complexity.

\begin{theorem}\label{theorem:from-pi-to-fun}
Let $n$ be a positive integer. 
There exists a constant $c(n)$, 
only depending on $n$, 
satisfying the following.
Let $X$ be a $n$-dimensional Fano type variety.
Assume that $X$ admits a $1$-complement $B$ and let $U=X^{\rm reg}\setminus B$.
Then, there is a short exact sequence:
\[
1\rightarrow A \rightarrow \pi_1^{\rm alg}(U) \rightarrow N \rightarrow 1,
\]
where $A$ is an inverse limit of finite abelian groups of rank at most $n-k_{\rm alt}(X,B)$ and $|N|\leq c(n)$.
\end{theorem}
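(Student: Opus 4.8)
The plan is to describe the profinite group $\Pi:=\pi_1^{\rm alg}(U)$ through the finite étale covers it classifies and to convert each such cover into a crepant finite Galois morphism of log Calabi--Yau pairs, so that the geometric results proved above apply. First I would recall that $\Pi$ is topologically finitely generated, since the algebraic fundamental group of a quasi-projective variety in characteristic zero is the profinite completion of a finitely generated group. Every finite quotient $G$ of $\Pi$ corresponds to a connected finite cover of $U$; normalizing $X$ in the function field of the cover and invoking purity of the branch locus produces a finite morphism $Y\to X$ that is étale over $U$ and whose divisorial branch locus is contained in $B=\lfloor B\rfloor$. Because $K_X+B\sim 0$ and $B$ is reduced, the log pull-back $(Y,B_Y)$ is again a log Calabi--Yau pair of index one with $K_Y+B_Y\sim 0$, and after passing to the Galois closure I obtain a crepant finite Galois morphism $(Y,B_Y)\to(X,B)$ with group $G$. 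By Lemma~\ref{lem:FT-under-morphisms}.(5) the variety $Y$ is of Fano type, hence rationally connected, and the deck group $G$ acts faithfully on $Y$ preserving $B_Y$.

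Next I would extract the abelian part. Applying the Jordan property~\cite{PS14} to the faithful $G$-action on the rationally connected variety $Y$, every finite quotient of $\Pi$ carries a normal abelian subgroup of index at most $c(n)$. Since $\Pi$ is topologically finitely generated it has only finitely many open subgroups of each bounded index, so a pigeonhole argument over the open normal subgroups $V\trianglelefteq\Pi$ with $\bigcap V=1$ produces a single closed normal abelian subgroup $A\leqslant\Pi$ of index at most $c(n)$. This gives the exact sequence $1\to A\to\Pi\to N\to 1$ with $|N|\leq c(n)$, and it remains only to bound the rank of $A$.

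The rank bound is where the conic geometry enters. A finite abelian quotient $G$ of $A$ is the Galois group of a crepant finite abelian morphism onto the bounded cover attached to $N$; taking Galois closures over $(X,B)$ does not affect the invariant in play, so it suffices to work over $(X,B)$. Writing $G\cong\bigoplus_{i=1}^{r}\zz/\ell_i$ with $\ell_1\mid\cdots\mid\ell_r$ and $\ell_1\geq m_0$, where $m_0=m_0(n,\{0,1\})$ is the constant of Theorem~\ref{thm:fin-act-conic} (depending on $n$ alone, via boundedness of one-complements), I would apply Theorem~\ref{thm:fin-act-conic} with $W=\Spec\kk$. After replacing $G$ by a subgroup of bounded index this yields a crepant birational model of a crepant finite Galois cover of $(X,B)$ carrying a tower of Mori fiber spaces with $r$ strict conic fibrations; hence the conic complexity of that model is at most $n-r$, so $k_{\rm alt}(X,B)\leq n-r$, that is $r\leq n-k_{\rm alt}(X,B)$. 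Thus every finite abelian quotient of $A$ whose smallest invariant factor is at least $m_0$ has rank at most $n-k_{\rm alt}(X,B)$.

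The main obstacle is that this conclusion controls only \emph{large} cyclic covers, whereas the rank of the profinite group $A$ could a priori be inflated by torsion of small exponent at the finitely many primes $p<m_0$. I would resolve this through the primary decomposition $A=\prod_p A_p$: for any exponent $e$ with $p^{e}\geq m_0$, the quotient $(\zz/p^{e})^{s}$ of $A_p$ has smallest invariant factor $p^{e}\geq m_0$, so the number $s$ of cyclic factors of $A_p$ of exponent at least $p^{e}$ is at most $n-k_{\rm alt}(X,B)$; in particular the free rank of each $A_p$ is bounded by $n-k_{\rm alt}(X,B)$, and for $p\geq m_0$ (taking $e=1$) the whole group $A_p$ has rank at most $n-k_{\rm alt}(X,B)$. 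Setting $A^{\circ}:=\prod_{p\geq m_0}A_p\times\prod_{p<m_0}p^{e(p)}A_p$, with $e(p)$ minimal subject to $p^{e(p)}\geq m_0$, gives a characteristic, hence $\Pi$-normal, closed abelian subgroup of rank at most $n-k_{\rm alt}(X,B)$, while $A/A^{\circ}$ is finite of order bounded only in terms of $n$ (finitely many primes $p<m_0$, each contributing at most $n$ cyclic factors of exponent below $m_0^{2}$, the rank bound $\leq n$ coming from the quotient $A_p/pA_p$ and~\cite{PS14}). Absorbing $A/A^{\circ}$ into $N$ and enlarging $c(n)$ accordingly produces the asserted sequence with $A^{\circ}$ an inverse limit of finite abelian groups of rank at most $n-k_{\rm alt}(X,B)$ and $|N|\leq c(n)$. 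The delicate points to verify carefully will be the dependence of $m_0$ on $n$ alone and the fact that passing to Galois closures and to the bounded cover does not decrease $k_{\rm alt}$.
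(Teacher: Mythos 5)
Your proposal reaches the statement through the same geometric engine as the paper---Theorem~\ref{thm:fin-act-conic} applied to a crepant finite Galois cover carrying a large abelian action, whose conic fibrations force the rank bound $r\leq n-k_{\rm alt}(X,B)$---but the group-theoretic packaging is genuinely different. The paper never manipulates the profinite group $\pi_1^{\rm alg}(U)$ directly: it invokes the isomorphism $\pi_1^{\rm alg}(X\setminus B)\simeq\varprojlim_{\vec{m}}\pi_1^{\rm reg}(X,\Delta_{\vec{m}})$ from \cite{MYY24}, so that every object in sight is a \emph{finite} orbifold fundamental group of a klt Fano type pair; the Jordan property of \cite{BFMS20} then gives a finite normal abelian $A_{\vec{m}}$ of bounded index in each $G_{\vec{m}}$, the small invariant factors of $A_{\vec{m}}$ are discarded into $N_{\vec{m}}$ before Theorem~\ref{thm:fin-act-conic} is applied, and the exact sequences are assembled by an inverse limit only at the very end. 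You instead work with the profinite group itself, using topological finite generation together with a compactness/pigeonhole argument over the finite sets of open normal subgroups of index at most $c(n)$ to extract a single closed abelian $A$, and you control its rank through the primary decomposition $A=\prod_p A_p$. Your route avoids the \cite{MYY24} description (you only need the correspondence between finite quotients and covers, plus normality of $X$ to locate the divisorial branch locus inside $B$), at the cost of justifying finite generation and carrying out nontrivial profinite bookkeeping; the paper's route keeps everything finite at every stage.

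The one step you must tighten is the passage ``a finite abelian quotient $G$ of $A$ is the Galois group of a crepant finite abelian morphism onto the bounded cover attached to $N$; taking Galois closures over $(X,B)$ does not affect the invariant in play.'' Since $k_{\rm alt}$ is defined via covers that are Galois over $(X,B)$, a quotient $A\to G$ whose kernel is merely normal in $A$ produces a cover that is Galois only over the intermediate $X_N$, and it is not clear that the conic fibrations survive passage to the Galois closure over $X$ (a further finite cover need not preserve $k_{\rm bir}$), so the inequality $s\leq n-k_{\rm alt}(X,B)$ is not yet justified as written. The repair is already implicit in your final paragraph: only use quotients of $A$ by subgroups that are \emph{characteristic} in $A$, such as $p^{e}A_p\times\prod_{q\neq p}A_q$, which are automatically normal in $\pi_1^{\rm alg}(U)$; the corresponding cover is then Galois over $(X,B)$, the group acting on it contains the subgroup $(\zz/p^{e})^{s}$ required by Theorem~\ref{thm:fin-act-conic}, and the bound on $s$ becomes legitimate. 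With that adjustment the primary-decomposition argument, the rank bound for $A^{\circ}$, and the absorption of the bounded group $A/A^{\circ}$ into $N$ all go through, and your constant $m_0(n,\{0,1\})$ does depend on $n$ alone since $B_Y$ is reduced for every such cover.
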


\begin{proof}[Proof of Theorem~\ref{theorem:from-pi-to-fun}]
First, we introduce some notation about boundaries with standard coefficients.
Let $P_1,\dots,P_b$ be the prime components of the divisor $B$.
Let $\vec{m}=(m_1,\dots,m_b)\in \zz_{>0}^b$.
We write $\Delta_{\vec{m}}$ for the boundary divisor with coefficient $1-\frac{1}{m_i}$ at $P_i$.
Hence, for every such $\vec{m}$, we
have that $\Delta_{\vec{m}}$ has standard coefficients, $\lfloor \Delta_{\vec{m}}\rfloor=0$,
and $\Delta_{\vec{m}}\leq B$.

By the proof of~\cite[Theorem 3.3]{MYY24},
we know that there is an isomorphism 
\begin{equation}\label{eq:isom-fun} 
\pi_1^{\rm alg}(X\setminus B) \simeq 
\varprojlim_{\vec{m}} \pi_1^{\rm reg}(X,\Delta_{\vec{m}}).
\end{equation}
Let $\Delta_{\vec{m}}\leq B$ be a boundary divisor with standard coefficients and 
$\lfloor \Delta_{\vec{m}}\rfloor =0$.
We argue that there is a short exact sequence 
\begin{equation}\label{eq:ses-finite} 
1 \rightarrow A_{\vec{m}} 
\rightarrow \pi_1^{\rm reg}(X,\Delta_{\vec{m}}) 
\rightarrow N_{\vec{m}} \rightarrow 1,
\end{equation} 
where $A_{\vec{m}}$ is a finite abelian group of rank at most $n-k_{\rm alt}(X,B)$
and $|N_{\vec{m}}| \leq c(n)$.
Here, $c(n)$ is a constant that only depends on the dimension $n$.
Write $G_{\vec{m}}:=\pi_1^{\rm reg}(X,\Delta_{\vec{m}})$.
Let $Y_{\vec{m}}\rightarrow X$ be the cover associated to $G_{\vec{m}}$.
Let $(Y_{\vec{m}},D_{\vec{m}})$
be the log pull-back of $(X,\Delta_{\vec{m}})$.
By~\cite[Theorem 3]{BFMS20},
there is a normal finite abelian subgroup 
$A_{\vec{m}}\leqslant G_{\vec{m}}$ of index at most $c_0(n)$.
By Lemma~\ref{lem:FT-under-morphisms}, the variety $Y_{\vec{m}}$ is of Fano type.
We know that $A_{\vec{m}}$ has rank at most $n$.
Write $A_{\vec{m}} := \bigoplus_{i=1}^n \zz_{b_{i,\vec{m}}}$ with $b_{1,\vec{m}}\mid \dots \mid b_{n,\vec{m}}$.
Let $s\in \{1,\dots,n\}$ be the smallest positive for which $b_{s,\vec{m}}\geq m_0(n,\{0,1\})$, 
where $m_0(n,\{0,1\})$ is as in Theorem~\ref{thm:fin-act-conic}.
We replace 
$A_{\vec{m}}:= \bigoplus_{i=1}^n \zz_{b_{i,\vec{m}}}$ with $\bigoplus_{i=s}^n \zz_{b_{i,\vec{m}}}$. 
Note that the former group has index bounded above by $m_0(n,\{0,1\})^{s-1}$ in $A_{\vec{m}}$.
After such replacement, by Theorem~\ref{thm:fin-act-conic}, we have an
$A_{\vec{m}}$-equivariant crepant birational model
$(Y'_{\vec{m}},\Delta'_{\vec{m}})$ of $(Y_{\vec{m}},\Delta_{\vec{m}})$
and a sequence of equivariant Mori fiber spaces:
\[
\xymatrix{ 
(Y'_{\vec{m}}, \Delta'_{\vec{m}})
\ar[r]^-{\psi_1}
& 
(Z_1,\Delta_1,\mathbf{M}_1)
\ar[r]^-{\psi_2}
& 
\dots 
\ar[r]^-{\psi_r}
& 
(Z_r,\Delta_r,\mathbf{M}_r)
\ar[r]^-{\psi_{r+1}}
&
{\rm Spec}(\kk) 
}
\]
We write $\psi_{1,j}:=\psi_j \circ \dots \circ \psi_1$.
By construction, we know that there is a subset $I\subseteq \{1,\dots,r+1\}$
with $|I|=n-s$ 
such that each $\psi_i$ with $i\in I$ is a conic fibration.
Let $A_{\vec{m},i}$ be the homomorphic image of $A_{\vec{m}}$ acting on $Z_i$.
Furthermore, for each $i \in I$, the boundary divisor $\lfloor \Delta_i\rfloor$ has two disjoint $A_{\vec{m},i}$-invariant prime components that are horizontal over $Z_{i+1}$.
This implies that $k(Y'_{\vec{m}},\Delta'_{\vec{m}}) \leq s.$
Hence, the rank of $A_{\vec{m}}$ is at most $n-k(Y'_{\vec{m}},\Delta'_{\vec{m}})$.
So the rank of $A_{\vec{m}}$ is at most
$n-k_{\rm alt}(X,B)$. 
Therefore, we have a short exact sequence as in~\eqref{eq:ses-finite}.

If $\vec{m}\mid \vec{n}$, meaning that $m_i\mid n_i$ for every $i\in \{1,\dots,b\}$,
then we have a surjective homomorphism 
\[
\pi_1^{\rm reg}(X,\Delta_{\vec{n}})
\rightarrow
\pi_1^{\rm reg}(X,\Delta_{\vec{m}}).
\]
Note that $|N_{\vec{m}}|\leq c(n)$.
Thus, the finite group $N_{\vec{m}}$ stabilizes for $\vec{m}$ divisible enough.
So, taking the inverse limit preserves surjectivity in this case.
Taking the inverse limit, we get an exact sequence 
\[
1\rightarrow 
A \rightarrow
\varprojlim_{\vec{m}} \pi_1^{\rm reg}(X,\Delta_{\vec{m}}) \rightarrow N \rightarrow 1,
\]
where $|N|\leq c(n)$.
By construction, $A$ is the inverse limit of finite abelian groups of rank at most $n-k_{\rm alt}(X,B)$.
Then, the isomorphism~\eqref{eq:isom-fun}
implies the statement.
\end{proof}

As a consequence, 
if $(X,B)$ has maximal
alteration conic complexity,
then the algebraic fundamental group
of $U$ is finite
and its order is
bounded above in terms of $n$.

\begin{corollary}\label{corollary:fun-group-max-alt-comp}
Let $n$ be a positive integer. 
There exists a constant $a(n)$, only depending on $n$, satisfying the following.
Let $X$ be a $n$-dimensional Fano type variety.
Assume that $X$ admits a $1$-complement $B$ for which $k_{\rm alt}(X,B)=n$.
If 
$U=X^{\rm reg} \setminus B$,
then $\pi_1^{\rm alg}(U)$ is finite and its order is bounded above by $a(n)$.
\end{corollary}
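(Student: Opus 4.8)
The plan is to deduce this immediately from Theorem~\ref{theorem:from-pi-to-fun} by specializing to the case $k_{\rm alt}(X,B)=n$. First I would apply Theorem~\ref{theorem:from-pi-to-fun} to the Fano type variety $X$ together with its $1$-complement $B$ and the open set $U=X^{\rm reg}\setminus B$. This produces the constant $c(n)$ from that theorem and a short exact sequence
\[
1\rightarrow A \rightarrow \pi_1^{\rm alg}(U) \rightarrow N \rightarrow 1,
\]
where $|N|\leq c(n)$ and $A$ is an inverse limit of finite abelian groups each of rank at most $n-k_{\rm alt}(X,B)$.

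The key observation is that the hypothesis $k_{\rm alt}(X,B)=n$ forces $n-k_{\rm alt}(X,B)=0$. By the convention fixed in the Preliminaries, the rank of a finite abelian group is its least number of generators, so a finite abelian group of rank at most $0$ is the trivial group. Hence every group appearing in the inverse system defining $A$ is trivial, and therefore $A$ itself is trivial, being an inverse limit of trivial groups.

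Consequently the short exact sequence collapses to an isomorphism $\pi_1^{\rm alg}(U)\simeq N$, so $\pi_1^{\rm alg}(U)$ is finite and its order is bounded above by $c(n)$. Setting $a(n):=c(n)$ then completes the argument. Since essentially all of the content is already packaged inside Theorem~\ref{theorem:from-pi-to-fun}, there is no genuine obstacle in this corollary; the only points requiring any care are the two elementary facts that a finite abelian group of rank $0$ is trivial and that an inverse limit of trivial groups is trivial, both of which are immediate from the definitions.
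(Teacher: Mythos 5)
Your proposal is correct and matches the paper's argument, which simply cites Theorem~\ref{theorem:from-pi-to-fun}; you have just filled in the (trivial) details that rank at most $n-k_{\rm alt}(X,B)=0$ forces each group in the inverse system, and hence $A$ itself, to be trivial, so $\pi_1^{\rm alg}(U)\simeq N$ has order at most $c(n)$.
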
 

\begin{proof}[Proof of Corollary~\ref{corollary:fun-group-max-alt-comp}]
Follows from Theorem~\ref{theorem:from-pi-to-fun}.
\end{proof}

The previous corollary gives simple criteria 
to check whether 
a log Calabi--Yau pair $(X,B)$ 
of index one has maximal alteration complexity:
if $\pi_1(U)$ is infinite for $U=X\setminus B$,
then $k_{\rm alt}(X,B)$ is strictly less than the dimension of $X$.

\subsection{Log Calabi--Yau pairs of coregularity 0}

In this subsection, we prove theorems regarding log Calabi--Yau pairs of coregularity zero. 

Log Calabi--Yau pairs of coregularity zero
are of special interest~\cite{FMM22}.
Indeed, every $n$-dimensional Fano variety of coregularity zero $X$
admits a $2$-complement $(X,B)$ of coregularity zero (see, e.g.,~\cite[Theorem 4]{FFMP22}).
Thus, a $n$-dimensional Fano variety $X$ of coregularity zero admits a $1$-complement of coregularity zero $(X,B)$
after possibly replacing $X$ with a $2$-to-$1$ cover.
By~\cite[Theorem 1.4]{MM24}, we know that $c_{\rm bir}(X,B)\leq n$. 
If $c_{\rm bir}(X,B)<n$, then Theorem~\ref{theorem:cbir-conic} implies that $(X,B)$ admits a non-trivial birational Mori fiber space.
Thus, the building blocks of log Calabi--Yau pairs of coregularity zero and index one are:
\begin{enumerate}
\item[(i)] the pair $(\pp^1,\{0\}+\{\infty\})$, and 
\item[(ii)] log Calabi--Yau pairs of coregularity zero, index one, and maximal birational complexity.
\end{enumerate}
If we allow to replace the pair $(X,B)$
by both: birational transformations and finite covers, then the building blocks are 
pairs of maximal alteration complexity.
The following theorem makes this assertion  precise.

\begin{theorem}\label{theorem:coreg-0-decomp}
Let $(X,B)$ be a log Calabi--Yau pair
with coregularity zero
and standard coefficients.
Then, there exists a log Calabi--Yau pair 
$(X',B')$ obtained from $(X,B)$ via crepant finite Galois covers and crepant birational transformations, satisfying the following.
There exists a Fano type fibration $\phi\colon X'\rightarrow W$ and either 
\begin{enumerate}
    \item the fibration $\phi$ is a conic fibration and $\lfloor B'\rfloor$ has two disjoint sections over $W$, or 
    \item we have that $\bar{c}_{\rm alt}(X'/W,B')=\dim X' - \dim W$.
\end{enumerate}
\end{theorem}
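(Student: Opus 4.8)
The plan is to minimize the relative dimension of a Fano type fibration over all crepant finite Galois covers and crepant birational models of $(X,B)$, and then to read off the two alternatives by comparing the relative alteration complexity with the fiber dimension, using Theorem~\ref{thm:cbir-conic} (equivalently the relative form of Theorem~\ref{thm:fin-act-conic}) as the engine that manufactures conic fibrations whenever the relative complexity is not maximal. First I would reduce to index one: since $(X,B)$ has standard coefficients, a crepant finite Galois cover branched along the support of $B$ (a Kawamata-type cover) produces a crepant finite Galois morphism $(Y,B_Y)\to (X,B)$ with $B_Y$ reduced, that is, of index one. As this cover is crepant, the coregularity is preserved, so $(Y,B_Y)$ still has coregularity zero. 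Both operations being permitted in the statement, I may replace $(X,B)$ by $(Y,B_Y)$ and assume from now on that $K_X+B\sim 0$ and $\lfloor B\rfloor=B$; here coregularity zero is exactly what will force the conic fibrations produced below to be strict, i.e. to carry two disjoint horizontal log canonical centers.

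Next I would set up the minimization. Let $\mathcal{S}$ be the set of pairs $\big((X',B'),\phi\big)$ in which $(X',B')$ is obtained from $(X,B)$ by crepant finite Galois covers and crepant birational transformations and $\phi\colon X'\to W$ is a Fano type fibration with $\dim W<\dim X'$. The set $\mathcal{S}$ is nonempty: by \cite[Theorem 2.25]{FFMP22} some crepant birational model admits a Fano type fibration with all boundary components horizontal. Among the elements of $\mathcal{S}$ I would choose one for which the relative dimension $d:=\dim X'-\dim W$ is minimal. Writing $(F,B_F)$ for the general fiber of $\phi$, an index-one log Calabi--Yau pair of dimension $d$, the relative alteration complexity is read off from the general fiber, so $\bar{c}_{\rm alt}(X'/W,B')\le \bar{c}_{\rm alt}(F,B_F)\le c_{\rm bir}(F,B_F)\le d$ by \cite[Corollary 1.4]{MM24}.

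The dichotomy then comes from comparing $\bar{c}_{\rm alt}(X'/W,B')$ with $d$. Suppose first $d\ge 2$ and, for contradiction, that $\bar{c}_{\rm alt}(X'/W,B')<d$. Arguing as in the proof of Theorem~\ref{thm:cbir-conic}, but relatively over $W$ (this is precisely the setting of Theorem~\ref{thm:fin-act-conic}, whose base may be taken to be $W$), I obtain a crepant finite Galois cover together with a crepant birational model $(X'',B'')$ over $W$ carrying a tower of Mori fiber spaces over $W$ containing at least one conic fibration; in particular this tower is nontrivial. Its first step $X''\to W''$ is then a Fano type fibration by Lemma~\ref{lem:FT-under-morphisms}, with $\dim W<\dim W''<\dim X''$, so $\big((X'',B''),X''\to W''\big)\in\mathcal{S}$ has relative dimension strictly smaller than $d$, contradicting minimality. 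Hence for $d\ge 2$ one must have $\bar{c}_{\rm alt}(X'/W,B')=d=\dim X'-\dim W$, which is alternative $(2)$. If instead $d=1$, then $(F,B_F)\simeq(\pp^1,\{0\}+\{\infty\})$ by index one, so $\phi$ is itself a conic fibration and the two points of $B_F$ form the horizontal part of $\lfloor B'\rfloor$; a priori this may be a single irreducible $2$-section, in which case a crepant double cover of $W$ (the normalized base change along that $2$-section) splits it into two sections. Applying Lemma~\ref{lem:making-sections-disjoints} together with Proposition~\ref{prop:2-ray} I would then make the two sections disjoint, yielding alternative $(1)$.

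The main obstacle I anticipate is the bookkeeping needed to extend the fiberwise conic-fibration structure produced on $(F,B_F)$ to a genuine tower over the base $W$ while remaining inside the class of crepant finite Galois covers and crepant birational transformations of $(X,B)$. Concretely, the finite abelian $\zz_p^{k}$ covers manufactured in the proof of Theorem~\ref{thm:cbir-conic} are built on the fiber and must be spread out over an open subset of $W$ and then globalized; controlling this, and verifying that the relative alteration complexity is indeed computed on the general fiber (so that the bound $\bar{c}_{\rm alt}(X'/W,B')\le d$ and the equality in alternative $(2)$ are legitimate), is where the argument requires care. Here I would use Lemma~\ref{lem:extending-fibration} to propagate the fibration across $W$ and the stability of the Fano type condition from Lemma~\ref{lem:FT-under-morphisms} at each stage of the tower.
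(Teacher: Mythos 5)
Your overall strategy (minimize an invariant over the class of pairs reachable by crepant finite Galois covers and crepant birational transformations, then use Theorem~\ref{thm:cbir-conic} to force a conic fibration whenever the relative complexity is submaximal) is structurally parallel to the paper's proof, which minimizes the conic complexity $k(X_0,B_0)$ rather than the relative dimension of a Fano type fibration; your case $d\geq 2$ is essentially the paper's contradiction argument in different clothing and is sound in outline, modulo the fact that you should invoke Lemma~\ref{lem:finite-act-regularization} to rearrange the sub-log Calabi--Yau cover produced by Theorem~\ref{thm:cbir-conic} into an honest log Calabi--Yau pair inside the allowed class before contradicting minimality. However, there are two genuine gaps.

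First, your reduction to $\lfloor B\rfloor=B$ is not just incorrectly justified but impossible. If $P$ is a component of $B$ with standard coefficient $1-\tfrac{1}{m}<1$ and a finite cover ramifies to order $r$ along $P$, the crepant (log pull-back) coefficient of the preimage is $1-\tfrac{r}{m}\leq 1-\tfrac{1}{m}<1$; no crepant finite cover can raise a fractional standard coefficient to $1$. (The Kawamata-type cover with $r=m$ sends the coefficient to $0$, not $1$.) The correct reduction, as in the paper, is only to index one: by \cite[Corollary 3]{FMM22} one has $2(K_X+B)\sim 0$ for coregularity zero with standard coefficients, and the index one cover gives $K_Y+B_Y\sim 0$ while the fractional components persist. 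Second, this false reduction is load-bearing in your case $d=1$: without $B'$ reduced, the general log fiber of a relative-dimension-one Fano type fibration need not be $(\pp^1,\{0\}+\{\infty\})$ --- it can be, say, $(\pp^1,\{0\}+\tfrac12\{1\}+\tfrac12\{-1\})$, in which case $\lfloor B'\rfloor$ has only one horizontal component and no amount of base change and Lemma~\ref{lem:making-sections-disjoints} produces two disjoint sections; the conclusion in that situation must be alternative $(2)$, not $(1)$. The fix is to run your $d\geq 2$ dichotomy uniformly for $d=1$ as well: either $\bar{c}_{\rm alt}(X'/W,B')=1=d$ (alternative $(2)$), or it is $0$ and the relative Theorem~\ref{thm:cbir-conic} over $W$ produces a strict conic fibration (alternative $(1)$). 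This is exactly how the paper organizes the case split: the first Mori fiber space is either a strict conic fibration, or one proves maximality of the relative alteration complexity regardless of its relative dimension, using the upper bound $c_{\rm bir}(X_0/X_1,B_0)\leq \dim X_0-\dim X_1+{\rm coreg}(X_0,B_0)$ from \cite[Theorem 1.4]{MM24} (your justification of the bound $\bar c_{\rm alt}(X'/W,B')\leq d$ by ``reading it off the general fiber'' is also imprecise, since the relative complexity involves ${\rm Cl}(X'/W)$ rather than the class group of the fiber).
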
 

\begin{proof}[Proof of Theorem~\ref{theorem:coreg-0-decomp}]
Note that $2(K_X+B)\sim 0$ by~\cite[Corollary 3]{FMM22}.
We replace $(X,B)$ with its index one cover
and assume that $K_X+B\sim 0$.
By~\cite[Proposition 7.4]{ELM23}, we know that the coregularity stays zero when we pass to the index one cover.

Let $(X_0,B_0)$ be a log Calabi--Yau pair, 
obtained from $(X,B)$ via crepant finite Galois morphisms and crepant birational transformations, such that $k(X_0,B_0)$ is minimized. 
Consider the sequence of fibrations for $X_0$ that maximizes the number of strict conic fibrations:
\[
\xymatrix{
X_0 \ar[r]^-{\phi_1}  & X_1 \ar[r]^-{\phi_2} & \dots \ar[r]^-{\phi_r} & X_r \ar[r]^-{\phi_{r+1}} & {\rm Spec}(\kk). 
}
\]
We may assume that $\phi_1$ is a Mori fiber space. If $\phi_1\colon X_0\rightarrow X_1$ is a conic fibration with two disjoint sections contained in $\lfloor B_0\rfloor$, then condition (1) holds.
From now on, we assume that $\phi_1$ is not a conic fibration.

Note that $K_{X_0}+B_0\sim 0$ and ${\rm coreg}(X_0,B_0)=0$.
Indeed, both properties are preserved
under crepant finite Galois morphism
and crepant birational transformations 
(see~\cite[Proposition 7.4]{ELM23}
and~\cite[Proposition 3.11]{FMM22}).
By Theorem~\ref{theorem:alt-comp-noneg} and~\cite[Theorem 1.4]{MM24}, 
we have 
\[
c_{\rm alt}(X_0/X_1,B_0) 
\leq 
c_{\rm bir}(X_0/X_1,B_0) 
\leq 
\dim X_0 -\dim X_1 +{\rm coreg}(X_0,B_0)=
\dim X_0 - \dim X_1.
\]
We argue that $c_{\rm alt}(X_0/X_1,B_0)=\dim X_0-\dim X_1$.
Assume otherwise that 
$c_{\rm alt}(X_0/X_1,B_0)< \dim X_0-\dim X_1$.
Then, by Theorem~\ref{theorem:cbir-conic}, there is a commutative diagram:
\[
\xymatrix{
(Y_0,\Delta_0)\ar[d]^-{f_1} \ar[dr] \ar@{-->}[r]^-{\pi_1} &
(Y_1,\Delta_1)\ar[d] \ar[r]^-{\psi_2} &
Y_2 \ar[r]^-{\psi_3} \ar[ld] & 
\dots \ar[r]^-{\psi_s} & 
Y_s \ar[llld] \\ 
(X_0,B_0) \ar[r]^-{\phi_1} & X_1
}
\]
In the previous diagram, all the morphisms onto $X_1$ are fibrations. Further, the following conditions are satisfied:
\begin{itemize}
\item the morphism $f_1$ is a crepant finite Galois from a sub-log Calabi--Yau pair $(Y_0,\Delta_0)$, 
\item the birational map $\pi_1$ is a crepant birational transformation to a log Calabi--Yau pair $(Y_1,\Delta_1)$, and 
\item at least one of the $\psi_i$'s is a strict conic fibration.
\end{itemize}
The last two conditions imply that $k(Y_1,\Delta_1)<k(X_0,B_0)$. 
By Lemma~\ref{lem:finite-act-regularization}, the log Calabi--Yau pair $(Y_1,\Delta_1)$ can be obtained from $(X_0,B_0)$ via crepant birational transformations
and a single crepant finite Galois morphism.
In particular, the pair $(Y_1,\Delta_1)$ can be obtained from $(X,B)$ via finite Galois crepant morphisms
and crepant birational transformations.
This contradicts the minimality of $k(X_0,B_0)$. 
Thus, we conclude that
$c_{\rm alt}(X_0/X_1,B_0)=\dim X_0-\dim X_1$. Then, condition (2) holds.
\end{proof}

In low dimensions, we can prove the following theorem:

\begin{theorem}\label{theorem:bound-coreg-0-3-fold}
Let $(X,B)$ be a log Calabi--Yau pair
with index one and ${\rm coreg}(X,B)=0$.
\begin{enumerate}
\item If $\dim X=2$, then $c_{\rm bir}(X,B)=c_{\rm alt}(X,B)=0$.
\item If $\dim X=3$ and $c_{\rm bir}(X,B)<3$, then either 
\begin{enumerate} 
\item[(i)] we have $c_{\rm alt}(X,B)=0$, or
\item[(ii)] the variety $X$ admits a birational
del Pezzo fibration.
\end{enumerate} 
\end{enumerate} 
\end{theorem}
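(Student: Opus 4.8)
The plan is to treat the two dimensions separately, but in both cases to feed the coregularity-zero hypothesis into the tower of Mori fiber spaces produced by Theorem~\ref{thm:cbir-conic}. The two structural facts I will use repeatedly are that the coregularity is a crepant birational invariant, and that it is preserved when passing through a strict conic fibration: if $\rho\colon V\to W$ is a strict conic fibration, then $\mathcal{D}(V,B_V)$ is the join of $\mathcal{D}(W,B_W)$ with the two points coming from the disjoint sections, so $\operatorname{coreg}(V,B_V)=\operatorname{coreg}(W,B_W)$. In particular a log Calabi--Yau pair of conic complexity zero has coregularity zero, and conversely building a full tower of strict conic fibrations forces the pair to be toric by Theorem~\ref{theorem:conic-complexity-zero}.

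For part (1) I would first reduce to proving $c_{\rm bir}(X,B)=0$: by Theorem~\ref{theorem:alt-comp-noneg} we have $0\le c_{\rm alt}(X,B)\le c_{\rm bir}(X,B)$, so the vanishing of the birational complexity forces the vanishing of the alteration complexity. By Theorem~\ref{theorem:bir-comp-zero} it then suffices to produce a crepant birational model equal to $(\mathbb{P}^2,H_0+H_1+H_2)$, and by Theorem~\ref{introcor:bir-alt-conic-comp-zero}(i) this is equivalent to $k_{\rm bir}(X,B)=0$. To reach conic complexity zero I would run the surface minimal model program on a dlt modification to obtain a strict conic fibration $X'\to \mathbb{P}^1$: the base carries the log Calabi--Yau structure induced by the canonical bundle formula, which is a coregularity-zero curve and hence equals $(\mathbb{P}^1,\{0\}+\{\infty\})$; the two disjoint boundary sections then exhibit the second contraction $\mathbb{P}^1\to\operatorname{pt}$ as a strict conic fibration as well. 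Thus we obtain a tower of two strict conic fibrations, $k_{\rm bir}(X,B)=0$, and the toric model follows.

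For part (2), set $c=c_{\rm bir}(X,B)<3$ and apply Theorem~\ref{thm:cbir-conic} to obtain a crepant birational model $(X',B')$ extracting only log canonical places (so that $\operatorname{coreg}(X',B')=0$) together with a tower of Mori fiber spaces containing at least $3-c\ge 1$ strict conic fibrations. I would then split according to the relative dimension of the first contraction $\psi_0\colon X'\to Y_1$. If $\psi_0$ has relative dimension two, then $Y_1$ is a curve and $\psi_0$ is, after a relative minimal model program, a del Pezzo fibration; as $X'\dashrightarrow X$ is birational this places us in alternative (ii). If $\psi_0$ has relative dimension one, then $Y_1$ is a surface and the induced pair $(Y_1,B_{Y_1})$ is log Calabi--Yau of coregularity zero, so by part (1) it is crepant birational to a toric pair and carries a tower of two strict conic fibrations; pulling this tower back through $\psi_0$ by Proposition~\ref{prop:2-ray}(5) produces a model of $X'$ with a tower of three strict conic fibrations, whence the conic complexity is zero and $c_{\rm alt}(X,B)=0$ by Theorem~\ref{introcor:bir-alt-conic-comp-zero}(ii) and Theorem~\ref{theorem:alt-comp-zero}, giving alternative (i). The relative dimension three case is excluded because it would force $c=3$, and the bound $\bar{c}_{\rm alt}(X,B)\le c_{\rm bir}(X,B)<3$ of Corollary~\ref{corollary:total-comparison} rules out the maximal-complexity output of Theorem~\ref{theorem:coreg-0-decomp}.

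The hardest point will be organizing the strictness bookkeeping in the relative-dimension-one branch of part (2): Proposition~\ref{prop:2-ray}(5) only transports the \emph{strict} conic structure, so I must guarantee that $\psi_0$ can be taken to have two disjoint horizontal boundary sections before gluing in the toric tower on $Y_1$. When $c$ is close to $3$ the conic fibrations provided by Theorem~\ref{thm:cbir-conic} need not include $\psi_0$, and arranging the two sections to be disjoint typically requires passing to a finite Galois cover, exactly as in Lemma~\ref{lem:finite-action-conic-bundle} and Lemma~\ref{lem:making-sections-disjoints}; this is precisely why the conclusion is $c_{\rm alt}(X,B)=0$ rather than $c_{\rm bir}(X,B)=0$. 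A secondary subtlety is the genuine classification input hidden in the surface base case of part (1), namely that a coregularity-zero log Calabi--Yau surface admits a birational strict conic fibration over $(\mathbb{P}^1,\{0\}+\{\infty\})$.
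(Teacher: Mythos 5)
Your overall architecture tracks the paper's proof closely: the surface case is a classification statement, and the threefold case splits on the relative dimension of the first Mori fiber space in the tower coming from Theorem~\ref{thm:cbir-conic}, with the del Pezzo fibration appearing in the relative-dimension-two branch and $c_{\rm alt}(X,B)=0$ in the relative-dimension-one branch after possibly a double cover and Lemma~\ref{lem:making-sections-disjoints}. However, the surface case is a genuine gap, not a ``secondary subtlety.'' Running the $K$-MMP on a dlt modification of a coregularity-zero log Calabi--Yau surface does \emph{not} automatically produce a strict conic fibration: the MMP can terminate with a Picard-rank-one log del Pezzo (already for $(\pp^2,H_0+H_1+H_2)$ itself), and even when it terminates with a Mori fiber space over $\pp^1$ the horizontal part of the boundary cycle can be an irreducible $2$-section (e.g.\ the image of a conic in $(\pp^2,C+L)$), so strictness fails. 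The statement you need --- that every index-one, coregularity-zero log Calabi--Yau surface has a toric model reached by extracting only log canonical places --- is exactly the content of part (1), and the paper obtains it by citing the toric-model theorem for such surfaces (\cite[Proposition 1.3]{GHS16}) rather than by an MMP argument. Since part (2) feeds the surface case into the base of the conic fibration, this gap propagates.

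Two smaller points. In the relative-dimension-two branch you assert that a relative MMP over the curve yields a del Pezzo fibration; it can instead terminate with a Mori fiber space onto a surface over $\pp^1$, and that outcome must be routed back into alternative (i) (as the paper does). In the relative-dimension-one branch, before applying part (1) to the base you must know that the induced structure on the surface is an honest index-one pair with trivial moduli part; this is obtained by adjunction to one of the two disjoint horizontal sections (which identifies the base pair with a log Calabi--Yau pair on the section), so the reduction to a strict conic fibration is needed \emph{before} the canonical bundle formula is invoked, not only for Proposition~\ref{prop:2-ray}(5). Finally, note that pulling the toric tower on the base back through $\psi_0$ via Proposition~\ref{prop:2-ray} requires the crepant map $\pp^2\dashrightarrow Y_1$ to extract only log canonical places of the base pair, which is not automatic; the paper instead passes through a log resolution dominating $\pp^2$, runs an MMP over $\pp^2$, and then counts components to get complexity zero directly. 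These last items are repairable; the surface classification is the step that your argument does not actually supply.
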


\begin{proof}[Proof of Theorem~\ref{theorem:bound-coreg-0-3-fold}]
First assume that $(X,B)$ has dimension $2$.
If $K_X+B\sim 0$ and ${\rm coreg}(X,B)=0$, then $\mathcal{D}(X,B)\simeq S^1$ (see, e.g.,~\cite[Lemma 2.12]{GLM23}).
Hence, by~\cite[Proposition 1.3]{GHS16}, we conclude that $(X,B)$ is crepant birational equivalent to 
$(\pp^2,H_0+H_1+H_2)$. 
Furthermore, the birational map $\pp^2\dashrightarrow X$ only extracts log canonical places of $(X,B)$.
This implies that $c_{\rm bir}(X,B)=0$ and hence $c_{\rm alt}(X,B)=0$.

Let $(X,B)$ be a log Calabi--Yau $3$-fold
of index one and coregularity zero.
Assume that $c_{\rm bir}(X,B)<3$.
By Theorem~\ref{theorem:cbir-conic}, we conclude that $k_{\rm bir}(X,B)<3$.
Hence, we can find a crepant birational model $(X',B')$ of $(X,B)$ with $k(X',B')<3$. Therefore, there is a sequence of morphisms:
\[
\xymatrix{
X' \ar[r]^-{\phi_1} & X_1 \ar[r]^-{\phi_2} & X_2 \ar[r]^-{\phi_3} & {\rm Spec}(\kk), 
}
\]
such that each $\phi_i$ is either the identity or a Mori fiber space.
Furthermore, as $k(X',B')<3$, at least one of the $\phi_i$'s is a conic fibration with two disjoint sections for the corresponding log Calabi--Yau pair.

First, assume that $\phi_1$ is a conic fibration and $\lfloor B'\rfloor$ has two disjoint sections $S_0$ and $S_\infty$ over $X_1$.
Hence, the log Calabi--Yau pair $(X_1,B_1)$, obtained by the canonical bundle formula,
satisfies that $K_{X_1}+B_1\sim 0$ and ${\rm coreg}(X_1,B_1)=0$.
Let $(S_0,B_{S_0})$ be the log Calabi--Yau pair 
induced by adjunction of $(X_1,B_1)$ to $S_0$.
Then, the log Calabi--Yau pair $(S_0,B_{S_0})$ is isomorphic to $(X_1,B_1)$.
By the first paragraph,
there exists a crepant birational map
$\phi\colon (\pp^2,H_0+H_1+H_2)\dashrightarrow (X_1,B_1)$.
Let $E\subset \pp^2$ be a prime divisor that is exceptional over $X_1$. Then, we have 
$a_E(X_1,B_1) \in \{0,1\}$.
By inversion of adjunction there exists a divisor $F$ over $X'$ that maps onto $E$ and for which $a_F(X',B')\in \{0,1\}$.
Let $(X'',B'')$ be a log resolution of $(X',B')$ that dominates $\pp^2$
with a fibration $\varphi\colon X''\rightarrow \pp^2$.
For every prime divisor $P$ of $\pp^2$ 
there exists a prime divisor of $X''$ with non-negative coefficient in $B''$ contained in $\varphi^{-1}(P)$.
Let $\Gamma$ be the divisor obtained from $B''$ by increasing all the negative coefficients to zero. 
We run a $(K_{X''}+\Gamma)$-MMP over $\pp^2$ that terminates with a Mori fiber space 
$X'''\rightarrow \pp^2$. 
The push-forward $B'''$ of $B''$ to $X'''$ gives a log Calabi--Yau pair $(X''',B''')$.
By Proposition~\ref{prop:2-ray}, there 
exists a log Calabi--Yau pair 
$(Y,\Delta)$, that is crepant birational equivalent to $(X''',B''')$, with a Mori fiber space $f\colon Y\rightarrow \pp^2$ satisfying
$f^*H_j\subset \supp(\Delta)$ for each $j$.
By construction, the divisor $\Delta$ has $5$ prime components. On the other hand $\rho(Y)=2$.
Then, we have $c(Y,\Delta)=0$ and $c_{\rm bir}(X,B)=0$. In particular, $c_{\rm alt}(X,B)=0$. 
Then, condition (a) holds.

Assume that $\phi_1$ is a conic fibration over $X_1$.
If $B'$ has two sections over $X_1$, then we may apply Lemma~\ref{lem:making-sections-disjoints}.
After applying the lemma, we have two disjoint sections in $B'$ over $X_1$.
Thus, we are in the situation of the previous paragraph and we conclude that $c_{\rm bir}(X,B)=0$.
In particular, $c_{\rm alt}(X,B)=0$.
So, condition (a) holds.

Assume that $\phi_1$ is a conic fibration over $X_1$ and $B'$ has a prime component that maps $2$-to-$1$ onto $X_1$. Up to a $2$-to-$1$ cover of $X'$, possibly ramified along $B'$, we may assume that $B'$ has two prime components that are sections over $X_1$. Hence, we are in the situation of the previous paragraph.
In this case, we conclude that $c_{\rm alt}(X,B)=0$, so condition (a) holds.

From now on, we may assume that $\phi$ does not have relative dimension one. 
Hence, we may assume that $\phi_1$ has relative dimension $2$, 
$\phi_2$ is the identity, and $X_2\simeq \pp^1$.
We take a terminalization of $X'$ and run a minimal model program over $\pp^1$.
If this minimal model program
terminates with a Mori fiber space onto a surface, then the statement follows from the previous paragraph.
We may assume that the MMP terminates with a Mori fiber space onto $\pp^1$.
Thus, $X$ is birational to a del Pezzo fibration and (b) holds.
\end{proof}

As mentioned above, 
we have $c_{\rm alt}(X,B)\in \{0,1,\dots,n\}$
for a $n$-dimensional log Calabi--Yau pair
of index one and coregularity zero.
Theorem~\ref{theorem:bound-coreg-0-3-fold} 
implies that the values $1$ and $2$ do not occur for $c_{\rm alt}(X,B)$ in dimension $2$.
For $3$-folds, only the values $0$ and $1$ are known to occur (see Question~\ref{quest:what-value}).

\subsection{Weil index of Fano varieties} 

In this subsection, we prove the theorems in the introduction regarding Weil indices of Fano varieties.

The {\em Weil index} $i(X)$ of a Fano variety $X$ is the largest positive integer
for which we can write $-K_X \sim iW$ with $W$ a Weil divisor.
If $X$ is smooth, then $i(X)\leq n+1$ and 
$X\simeq \pp^n$ whenever $i(X)>n$ (see, e.g.,~\cite{KO73}).
However, if $X$ is singular, the Weil index can be arbitrarily large.
For instance $\pp(1,1,n)$ has Weil index $n$.
Thus, a priori, the Weil index can be large and not reflect in the geometry of the Fano variety.
The following theorem shows that $n$-dimensional Fano varieties with large Weil index (compared to $n$) have non-maximal alteration conic complexity.

\begin{theorem}\label{theorem:weil-index}
Let $n$ be a positive integer.
There exists a constant $i(n)$, only depending on $n$,
satisfying the following.
Let $X$ be a $n$-dimensional Fano variety for which: 
\begin{enumerate}
\item the variety $X$ admits a $1$-complement $B$, and 
\item we have $i(X)\geq i(n)$. 
\end{enumerate} 
Then, we have $k_{\rm alt}(X,B)<n$.
\end{theorem}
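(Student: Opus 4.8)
The plan is to convert the divisibility of $-K_X$ into a large cyclic automorphism group on a crepant finite Galois cover of $(X,B)$, and then invoke Theorem~\ref{thm:fin-act-conic} to produce a conic fibration on that cover. First I would record that, since $(X,B)$ is a $1$-complement, the relation $K_X+B\sim 0$ forces $B\sim -K_X$ to be an \emph{integral} Weil divisor; as $B$ is an lc boundary, its coefficients lie in $\{0,1\}$, so $B=\lfloor B\rfloor$ is reduced. Writing $i:=i(X)$ and $-K_X\sim iW$ for a Weil divisor $W$, I then have $B\sim iW$, and the section of the reflexive sheaf $\mathcal{O}_X(iW)\cong\mathcal{O}_X(B)$ cutting out $B$ defines the degree-$i$ cyclic cover
\[
\pi\colon Y:={\rm Spec}_X\Big(\bigoplus_{j=0}^{i-1}\mathcal{O}_X(-jW)\Big)\longrightarrow X,
\]
totally ramified of index $i$ along each component of $B$ and \'etale elsewhere. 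Because $-K_X$ is ample, its class has infinite order in $\Cl(X)$, hence so does $[W]$; this rules out idempotents in the sheaf of algebras and guarantees that $Y$ is irreducible, so $Y$ is connected and $\pi$ is a genuine $\zz_i$-Galois cover.

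Next I would check that $\pi$ is crepant with reduced boundary. The totally ramified Riemann--Hurwitz identity gives $\pi^*B=i\,R$ and $K_Y=\pi^*K_X+(i-1)R$ for $R:=\pi^{-1}(B)_{\rm red}$, whence $K_Y+R=\pi^*(K_X+B)\sim 0$. Thus the log pullback boundary is $B_Y=R$, which is reduced, and $(Y,B_Y)\to(X,B)$ is a crepant finite Galois morphism with $\zz_i\leqslant\Aut(Y,B_Y)$ realized by the deck transformations, which preserve $R$. Since the divisorial branch locus equals $B=\lfloor B\rfloor$, Lemma~\ref{lem:FT-under-morphisms} shows $Y$ is of Fano type; in particular $Y\to\Spec(\kk)$ is a Fano type morphism with connected fibers, so $(Y,B_Y)$ is an admissible cover in the definition of $k_{\rm alt}$.

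Finally I would set $i(n):=m_0(n,\{0,1\})$, the constant of Theorem~\ref{thm:fin-act-conic}. Assuming $i(X)\geq i(n)$, I apply that theorem to $(Y,B_Y)$ over $W=\Spec(\kk)$ with $\Lambda=\{0,1\}$, trivial moduli part, and the rank-one abelian group $A=\zz_i$ (so $r=1$ and $\ell_1=i\geq m_0$). The theorem then produces an $A_0$-equivariant crepant birational model $(Y',B_{Y'})$ of $(Y,B_Y)$ carrying a tower of Mori fiber spaces in which at least one map (indeed $|I|=r=1\geq 1$) is a strict conic fibration. Hence $k(Y',B_{Y'})\leq n-1$, so $k_{\rm bir}(Y,B_Y)\leq n-1$, and since $(Y,B_Y)\to(X,B)$ is a crepant finite Galois cover, $k_{\rm alt}(X,B)\leq k_{\rm bir}(Y,B_Y)<n$, as claimed.

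I expect the technical heart to be the cyclic-cover step on the possibly singular $X$: building $Y$ from reflexive powers of $\mathcal{O}_X(W)$, verifying normality and the totally ramified Riemann--Hurwitz identity so that $B_Y$ is \emph{exactly} reduced (which is what pins down both crepancy and the coefficient set $\Lambda=\{0,1\}$), and deducing connectedness from the ampleness of $-K_X$. Once the cover carries its faithful $\zz_{i(X)}$-action as pair automorphisms, the remainder is a direct invocation of Theorem~\ref{thm:fin-act-conic} in rank one.
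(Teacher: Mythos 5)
Your proof is correct and follows essentially the same route as the paper: both build a degree-$i(X)$ cyclic cover of $X$ totally ramified along $B$ (the paper packages it as the index-one cover of the torsion divisor $K_X+(1-\tfrac{1}{m})B+W$, which is the same cover as your $\operatorname{Spec}_X\bigl(\bigoplus_j \mathcal{O}_X(-jW)\bigr)$), observe via Lemma~\ref{lem:FT-under-morphisms} that the cover is of Fano type with a crepant reduced boundary carrying the $\zz_{i(X)}$-action, and then apply Theorem~\ref{thm:fin-act-conic} with $\Lambda=\{0,1\}$ and $i(n):=m_0(n,\{0,1\})$ to produce a strict conic fibration, giving $k_{\rm bir}(Y,B_Y)<n$ and hence $k_{\rm alt}(X,B)<n$. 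The only quibble is your justification of irreducibility: what actually forces connectedness is that $H^0(\mathcal{O}_X(-jW))=0$ for $0<j<i$ (since $-jW$ is anti-ample as a $\qq$-divisor), together with total ramification along the nonempty divisor $B$, rather than the infinite order of $[W]$ in $\Cl(X)$ per se.
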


\begin{proof}[Proof of Theorem~\ref{theorem:weil-index}]
Set $k(n):=m_0(n,\{0,1\})$, where $m_0$ is as in Theorem~\ref{thm:fin-act-conic}.
Write $mW \sim -K_X$ where $m$ is a positive integer and $m\geq k(n)$.
Note that $K_X+B \sim 0$ 
so $B \sim mW$.
Consider the torsion $\qq$-divisor
\[
D_m:=K_X+\left(1-\frac{1}{m}\right)B + W \sim_\qq 0.
\]
We take the index one cover $Y\rightarrow X$ associated to $D_m$.
Note that $D_m$ is Weil in the complement of $\Gamma$ the divisorial branch locus of $Y\rightarrow X$
is contained in the support of $\Gamma$.
Let $(Y,B_Y)$ be the log pull-back of $(X,B)$ to $Y$.
By Lemma~\ref{lem:FT-under-morphisms}, the variety $Y$ is a Fano type variety.
The pair 
$(Y,B_Y)$ is log Calabi--Yau,
and $K_Y+B_Y\sim 0$.
Furthermore, $\zz_m$ acts on $(Y,B_Y)$.
Whenever $m\geq k(n)\geq m_0(n,\{0,1\})$, Theorem~\ref{thm:cbir-conic} implies that $k_{\rm bir}(Y,B_Y)<n$ 
so $k_{\rm alt}(X,B)<n$.
\end{proof}

If $X$ is a Fano variety of Picard rank one
and $B$ is a $1$-complement,
then the Weil index $i(X)$ measures
how far are the prime components of $B$ from forming a basis of the Class group ${\rm Cl}(X)$.
In the case that $(X,B)$ is a log Calabi--Yau pair
of index one
and maximal alteration conic complexity,
the components of $B$ are close to being an integral basis
of a sub-lattice of ${\rm Cl}(X)$.
The following theorem makes this statement precise.

\begin{theorem}\label{theorem:decomp-weil}
Let $n$ be a positive integer.
Then, there exists a constant $d(n)$, only depending on $n$, satisfying the following.
Let $X$ be a Fano variety of dimension $n$ 
that admits a $1$-complement $B$.
Assume that $k_{\rm alt}(X,B)=\dim X$. 
Then, there exists a crepant finite Galois cover $(Y,B_Y)\rightarrow (X,B)$ of degree at most $d(n)$
such that the prime components of $B_Y$ 
form a basis of the lattice
${\rm Cl}(Y) \cap \langle B_Y \rangle$.
\end{theorem}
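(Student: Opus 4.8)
The plan is to reduce the statement to two separate claims about one well-chosen cover $Y$: that the prime components of $B_Y$ are $\qq$-linearly independent in ${\rm Cl}(Y)_\qq$, and that the subgroup they generate is saturated, i.e. equals ${\rm Cl}(Y)\cap\langle B_Y\rangle$. First I would record that, since $B$ is a $1$-complement, $K_X+B\sim 0$ forces $B=(K_X+B)-K_X$ to be an integral divisor; being an effective boundary it is then reduced. Next I would take $Y\to X$ to be the finite cover attached to the group $G:=\pi_1^{\rm alg}(U)$, where $U:=X^{\rm reg}\setminus B$, which is finite of order at most $a(n)$ by Corollary~\ref{corollary:fun-group-max-alt-comp}; concretely $Y$ is the normalization of $X$ in the function field of the connected cover $\tilde U\to U$ trivializing $\pi_1^{\rm alg}$, which is Galois with group $G$. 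Then $Y\to X$ is finite Galois of degree $|G|\leq a(n)=:d(n)$, étale over $U$, and crepant: its divisorial branch locus lies in $B=\lfloor B\rfloor$, so by Riemann--Hurwitz the log pullback $B_Y$ is the reduced preimage of $B$ and $K_Y+B_Y=\pi^*(K_X+B)\sim 0$. Setting $V:=Y^{\rm reg}\setminus B_Y$, the open set $\tilde U=\pi^{-1}(U)$ differs from $V$ only along a closed subset of $\pi^{-1}(X^{\rm sing})$, which has codimension $\geq 2$; by purity for the étale fundamental group of the smooth variety $V$ this yields $\pi_1^{\rm alg}(V)=\pi_1^{\rm alg}(\tilde U)=1$.

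For linear independence I would invoke the complexity comparison. By Corollary~\ref{corollary:total-comparison} together with the bound $c_{\rm bir}(X,B)\leq n$ from~\cite[Corollary 1.4]{MM24}, the hypothesis $k_{\rm alt}(X,B)=n$ forces $\bar c_{\rm alt}(X,B)=n$; as the alteration fine complexity is an infimum over crepant finite Galois covers, the specific cover $Y$ satisfies $\bar c(Y,B_Y)\geq\bar c_{\rm bir}(Y,B_Y)\geq\bar c_{\rm alt}(X,B)=n$. Applying this to the trivial decomposition $\Sigma_0=\sum_i E_i=B_Y$ into its prime components $E_1,\dots,E_s$ gives $n+\dim_\qq\langle E_1,\dots,E_s\rangle-s=c(\Sigma_0)\geq n$, hence $\dim_\qq\langle E_i\rangle\geq s$, and therefore $=s$. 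Thus the classes $[E_i]$ are $\qq$-linearly independent, $L:=\langle E_1,\dots,E_s\rangle_\zz$ is free of rank $s$, and it has the same rank as its saturation $\bar L:={\rm Cl}(Y)\cap\langle B_Y\rangle$.

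It remains to prove $L=\bar L$, which I expect to be the main obstacle. Suppose not; then $\bar L/L$ is a nontrivial finite group, so there is a class $[D]\in\bar L\setminus L$ of prime order $p$ modulo $L$, i.e. $p[D]\in L$ but $[D]\notin L$. Using ${\rm Cl}(Y)={\rm Cl}(Y^{\rm reg})$, the identification ${\rm Cl}(V)=\Pic(V)$, and the exact sequence $\bigoplus_i\zz[E_i]\to{\rm Cl}(Y^{\rm reg})\to{\rm Cl}(V)\to 0$ whose kernel is exactly $L$, the restriction $[D]|_V$ is a nonzero class killed by $p$, hence a nontrivial $p$-torsion line bundle on the smooth connected variety $V$. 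Its associated cyclic cover is a connected finite étale $\zz/p$-cover of $V$, producing a surjection $\pi_1^{\rm alg}(V)\twoheadrightarrow\zz/p$ and contradicting $\pi_1^{\rm alg}(V)=1$. Hence $\bar L/L$ is trivial, $L=\bar L$, and the prime components of $B_Y$ form a $\zz$-basis of ${\rm Cl}(Y)\cap\langle B_Y\rangle$. The delicate points are the identification of $\ker({\rm Cl}(Y)\to\Pic(V))$ with $L$ (which uses both that the $[E_i]$ generate this kernel and that they are independent) and the passage, via Kummer theory in characteristic zero, from a nonzero torsion line bundle on $V$ to a nontrivial quotient of $\pi_1^{\rm alg}(V)$; the remaining verifications about the cover $Y$ are routine.
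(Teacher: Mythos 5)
Your proof is correct, but it takes a genuinely different route from the paper's. The paper argues iteratively: whenever the components of $B$ fail to generate the lattice $\langle B\rangle\cap{\rm Cl}(X)$, it picks a witness $W$ with $mW$ (but no smaller multiple) linearly equivalent to an integral combination of the components, passes to the index one cover of the associated torsion divisor, and repeats; termination with bounded total degree is forced by combining the Jordan property of~\cite{BFMS20} with Theorem~\ref{thm:fin-act-conic} (a large cyclic action on a crepant Galois cover would give $k_{\rm alt}(X,B)<n$), and the terminal cover generates the lattice by construction. You instead take a single canonical cover, the one attached to $\pi_1^{\rm alg}(X^{\rm reg}\setminus B)$, bounded in degree by Corollary~\ref{corollary:fun-group-max-alt-comp}, and deduce saturation of $L=\langle [E_i]\rangle_{\zz}$ in ${\rm Cl}(Y)\cap\langle B_Y\rangle$ from the simple connectedness of $V=Y^{\rm reg}\setminus B_Y$ (via excision, ${\rm Cl}(V)={\rm Cl}(Y)/L$, and Kummer theory turning a nontrivial torsion class into a nontrivial \'etale cyclic cover); the $\qq$-linear independence comes from the chain $\bar{c}(Y,B_Y)\geq \bar{c}_{\rm alt}(X,B)\geq k_{\rm alt}(X,B)=n$ applied to the trivial decomposition, exactly as in the paper. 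All the steps check out: $B$ is reduced since $B\sim -K_X$ is integral, the cover is crepant because its divisorial branch locus lies in $\lfloor B\rfloor$, $V$ and $\pi^{-1}(U)$ differ in codimension $\geq 2$ so purity gives $\pi_1^{\rm alg}(V)=1$, and $\bar{L}/L$ is torsion (indeed finite, as ${\rm Cl}(Y)$ is finitely generated for the Fano type variety $Y$), so a class of prime order modulo $L$ exists if $\bar L\neq L$. Both arguments ultimately rest on the same boundedness input (Theorem~\ref{thm:fin-act-conic} plus the Jordan property), but yours is cleaner and produces a canonical cover that dominates all of the paper's intermediate index one covers; the paper's version is more explicit about which torsion classes are being killed at each stage. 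One cosmetic remark: you do not need the upper bound $c_{\rm bir}(X,B)\leq n$ from~\cite{MM24} (which is stated there under a coregularity hypothesis); the inequality $\bar{c}_{\rm alt}(X,B)\geq n$ from Corollary~\ref{corollary:total-comparison} is all your independence argument uses, and in any case $\bar{c}(X,B)\leq n$ holds trivially via the zero decomposition.
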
 

In the previous theorem $\langle B_Y\rangle$ denotes the span over $\qq$ of the prime components of the divisor $B_Y$
in ${\rm Cl}(Y)_\qq$.

\begin{proof}[Proof of Theorem~\ref{theorem:decomp-weil}]
Let $X$ be a $n$-dimensional Fano variety
and $B$ be a $1$-complement. 
Assume that $k_{\rm alt}(X,B) = \dim X$.
By Theorem~\ref{theorem:cbir-conic}, since $k_{\rm bir}(X,B)= \dim X$, the components of $B$ form a basis 
of the $\qq$-vector space 
$\langle B \rangle \subseteq {\rm Cl}(X)_\qq$.
We define a sequence of finite covers of $(X,B)$. 
If the components of $B$ form a basis
of the lattice $\langle B \rangle \cap {\rm Cl}(X)$, then we stop the sequence of finite covers.

Write $B=\sum_{j\in J} B_j$.
Assume that the components of $B$ do not form a basis of the lattice
$\langle B \rangle \cap {\rm Cl}(X)$.
Hence, there is a Weil divisor $W$ in  $\langle B \rangle \cap {\rm Cl}(X)$ that, up to linear equivalence, is not an integral combination 
of the components of $B$.
Let $m\geq 2$ be the smallest positive integer for which $mW$ is linearly equivalent to an integral combination of the components of $B$.
Thus, we can write 
\begin{equation}\label{eq:lin-equiv} 
mW \sim \sum_{i\in I} \alpha_i B_i,
\end{equation} 
for certain integers $\alpha_i$
and a subset $I\subseteq J$.
Furthermore, $m$ is the smallest integer for which there exists a linear equivalence as in~\eqref{eq:lin-equiv}.
Consider the torsion $\qq$-divisor
\begin{equation}\label{eq:tor-div}
K_X+\sum_{i\in J\setminus I} B_i
+ \sum_{i\in I}\left(1-\frac{\alpha_i}{m}\right)B_i + W \sim_\qq 0.
\end{equation}
By construction, the torsion index 
of the divisor~\eqref{eq:tor-div} is precisely $m$.
Otherwise, we contradict the minimality of $m$ in the linear equivalence~\eqref{eq:lin-equiv}.
Let $X_1\rightarrow X$ be the index one cover associated to the divisor~\eqref{eq:tor-div}.
Let $(X_1,B_1)$ be the log pull-back of $(X,B)$ to $X_1$.
Hence, the variety $X_1$ is of Fano type and $B_1$ is a $1$-complement (see Lemma~\ref{lem:FT-under-morphisms}).
Note that $k_{\rm alt}(X_1,B_1)= \dim X$ as $k_{\rm alt}(X,B)=\dim X$. In particular, we have 
$k_{\rm bir}(X_1,B_1) = \dim X_1$, 
so the components of $B_1$ form a basis of the $\qq$-vector space $\langle B_1 \rangle \subseteq {\rm Cl}(X_1)_\qq$.
If the components of $B_1$ form a basis of the lattice
$\langle B_1 \rangle \cap {\rm Cl}(X_1)$, then we stop the sequence of finite covers. Otherwise, we define $X_2\rightarrow X_1$ by repeating the construction explained above.

Hence, we get a sequence of finite 
crepant morphisms 
\[
\xymatrix{ 
(X,B) & (X_1,B_1) \ar[l]_-{f_1} & \dots \ar[l]_-{f_2} & (X_r,B_r) \ar[l]_-{f_r} & 
\dots \ar[l]_-{f_{r+1}} 
}
\]
By construction, the degree of each $f_i$ is positive.
We write $g_i:=f_1 \circ \dots \circ f_i$.
Thus, we have $\deg(g_i)>\deg(g_{i-1})$ for each $i$.
Let $h_i\colon Y_i\rightarrow X$ be the Galois closure of $g_i$.
Then, we have $\deg(h_i)\geq \deg(g_i)$.
Furthermore, if $(Y_i,\Delta_i)$ is the log pull-back of $(X,B)$ to $Y_i$,
then $h_i\colon (Y_i,\Delta_i)\rightarrow (X,B)$ is a crepant finite Galois morphism.
By Lemma~\ref{lem:FT-under-morphisms}, $Y_i$ is of Fano type
and $\Delta_i$ is a $1$-complement.
There is a finite group $G_i$ with $|G_i|\geq 2^i$ acting on $(Y_i,\Delta_i)$.
By~\cite[Theorem 3]{BFMS20}, there is a normal abelian subgroup
$A_i \leqslant G_i$ of rank at most $n$ and index at most $k_0(n)$. 
Let $m_0:=m_0(n,\{0,1\})$ be as in Theorem~\ref{thm:cbir-conic}.
If $\deg(g_i)>k_0m_0^n$, then
$(Y_i,\Delta_i)$ admits the action
of $\zz_l$ with $l>m_0$.
Thus, by Theorem~\ref{thm:cbir-conic}, we conclude that 
$k_{\rm bir}(Y_i,\Delta_i)<n$ so 
$k_{\rm alt}(X,B)<n$. 
We conclude that the sequence of finite covers stops at $(X_s,B_s)$ for some $s$. Furthermore, the degree of the finite morphism $X_s\rightarrow X$, which we may assume Galois, is bounded above by $k_0m_0^n$.
By construction, the components of $B_s$ generate the lattice $\langle B_s \rangle \cap {\rm Cl}(X_s)$.
Thus, it suffices to set $(Y,B_Y):=(X_s,B_s)$.
\end{proof}

\subsection{Complexity of klt singularities} In this subsection, we prove an application to klt singularities.

\begin{theorem}\label{theorem:klt-sing}
Let $(X;x)$ be a $n$-dimensional $\qq$-factorial klt singularity that admits a $1$-complement $B$.
Let $b$ be the number of components of $B$ through $x\in X$.
Then, for every log canonical place $\nu$ of $(X,B)$ with center $c_X(\nu)=x$, we have  
$k_{\rm bir}(\nu) \leq n-b$.
\end{theorem}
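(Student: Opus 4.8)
The plan is to translate the statement about the log canonical place $\nu$ into a statement about the induced adjunction pair, and then to run, in a local form, the cover construction from the proof of Theorem~\ref{thm:cbir-conic}. First I would extract $\nu$: since $X$ is klt and $\nu$ is a log canonical (in particular non-canonical) place of $(X,B)$ with $c_X(\nu)=x$, I would apply the extraction in Lemma~\ref{lem:g-extraction} with $\mathcal V=\{\nu\}$ to obtain a projective birational morphism $\pi\colon Y\to X$, which we may take $\qq$-factorial, whose exceptional locus is the single prime divisor $E$ realizing $\nu$. Because $E$ is the only exceptional divisor and $c_X(E)=x$, one has $\pi^{-1}(x)=E$, and $\pi^*(K_X+B)=K_Y+\widetilde B+E$ with coefficient one along $E$, so adjunction produces a log Calabi--Yau pair $(E,B_E)$ of dimension $n-1$ and index one, with $k_{\rm bir}(\nu)=k_{\rm bir}(E,B_E)$. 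I would also record that $E$ is of Fano type: $Y$ is of Fano type over $X$ by Lemma~\ref{lem:FT-under-morphisms}.(3), and $E$, being an lc centre of the log Calabi--Yau pair $(Y,\widetilde B+E)$ over $X$ lying over the point $x$, is of Fano type over $\Spec\kk$ by the adjunction structure theory for lc centres.

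Next I would produce $b$ boundary components of $B_E$ together with a rank estimate. Let $B_1,\dots,B_b$ be the components of $B$ through $x$. Each $\widetilde B_i$ surjects onto $B_i\ni x$, hence meets $\pi^{-1}(x)=E$, so $T_i:=\widetilde B_i|_E$ is a nonzero divisor; as each $B_i$ has coefficient one in $B$, each $T_i$ is a component of $\lfloor B_E\rfloor$, and after a harmless general choice the $T_i$ are distinct. The crucial point is the rank bound: since $X$ is $\qq$-factorial, each $m_iB_i$ is Cartier near $x$, hence principal on the germ, so $m_i\widetilde B_i+c_iE\sim 0$ near $\pi^{-1}(x)$ for some integer $c_i$, the only divisorial class over $x$ being that of $E$. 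Restricting this linear equivalence of $\qq$-Cartier classes to $E$ gives $m_i[T_i]+c_i[E|_E]=0$ in $\Cl(E)_\qq$. Therefore the classes $[T_1],\dots,[T_b]$ span a subspace of rank at most one, so they satisfy at least $b-1$ independent linear relations. Clearing denominators and torsion (the class group of the Fano type variety $E$ is finitely generated) I would obtain Weil divisors $D_2,\dots,D_b$, supported on $\lfloor B_E\rfloor$, with $D_j\sim 0$ and with $T_j$ appearing in $D_j$ but in no other $D_i$; this is exactly the genericity condition needed to form the covers.

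Then I would run the cover argument. Using $D_2,\dots,D_b$ as in the proof of Theorem~\ref{thm:cbir-conic}, I would build, for a large prime $p$, a $\zz_p^{\,b-1}$-quasi-torsor $E_p\to E$ branched only along $\lfloor B_E\rfloor$, so that $(E_p,B_{E_p})$ is log Calabi--Yau, $E_p$ is of Fano type by Lemma~\ref{lem:FT-under-morphisms}.(5), and $\zz_p^{\,b-1}$ acts fibrewise. Applying Theorem~\ref{thm:fin-act-conic} to this rank-$(b-1)$ action gives a crepant birational model of $(E_p,B_{E_p})$ with a tower of Mori fibre spaces containing at least $b-1$ strict conic fibrations; descending these along the quotient by $\zz_p^{\,b-1}$, exactly as in the proof of Theorem~\ref{thm:cbir-conic} via Proposition~\ref{prop:2-ray}.(5), yields a crepant birational model $(E',B_{E'})$ of $(E,B_E)$ admitting at least $b-1$ strict conic fibrations. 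Hence $k(E',B_{E'})\le (n-1)-(b-1)=n-b$, so $k_{\rm bir}(\nu)=k_{\rm bir}(E,B_E)\le n-b$.

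The main obstacle is concentrated in the second paragraph. I must extract $\nu$ as the \emph{unique} divisor over $x$, so that there is a single exceptional class $E|_E$ and the rank bound is at most one rather than at most $s$ for $s$ exceptional divisors, while simultaneously guaranteeing that every $\widetilde B_i$ still meets $E$ and that the $T_i$ are genuine, distinct components of $B_E$. Reconciling ``only $E$ lies over $x$'' with ``all $B_i$ are visible on $E$'' is the heart of the argument, together with the verification that $E$ is of Fano type so that Theorem~\ref{thm:fin-act-conic} is applicable.
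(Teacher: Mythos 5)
Your outline follows the same strategy as the paper: extract the log canonical place over $x$, pass by adjunction to a log Calabi--Yau pair $(E,B_E)$ on the exceptional divisor, use the $\qq$-factoriality of $X$ to bound the rank of the boundary classes, and then feed this into the conic-fibration machinery. The only cosmetic difference is that you re-run the cover construction from the proof of Theorem~\ref{thm:cbir-conic} by hand, whereas the paper simply observes that your rank/norm count gives $\bar{c}(E,B_E)\leq n-b$ and then quotes Theorem~\ref{thm:cbir-conic} as a black box; your detour is harmless but redundant.

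There is, however, one genuine gap, and it is exactly the point you flag as ``the heart of the argument'' without resolving it. You insist on extracting $\nu$ as the \emph{unique} divisor over $x$, so that ${\rm Cl}(Y/X)_\qq$ is one-dimensional and the classes $[T_i]$ span a rank-one subspace of ${\rm Cl}(E)_\qq$. But the divisor $E=\pi^{-1}(x)$ produced by this extraction need not be normal --- it is only slc in general, as the paper explicitly notes --- and your entire second paragraph (restriction of $\qq$-Cartier classes to $E$, working in ${\rm Cl}(E)_\qq$, adjunction producing a log Calabi--Yau pair on a normal Fano type variety to which Theorem~\ref{thm:fin-act-conic} applies) presupposes normality. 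The paper's fix is to pass to a further $\qq$-factorial model $Z\rightarrow Y\rightarrow X$, obtained by blowing up the non-normal locus of $E$, on which the centre of $\nu$ is a normal prime divisor $E_Z$ meeting every component of $B_Z$. This costs extra exceptional divisors, so the relative Picard rank becomes some $\rho=\rho(Z/X)\geq 1$ rather than $1$; the key observation is that the bookkeeping is insensitive to $\rho$: the decomposition $\Sigma_E=\sum B_{i,Z}|_E$ has rank at most $\rho$ and norm at least $\rho+b-1$, so $\bar{c}(E,B_E)\leq (n-1)+\rho-(\rho+b-1)=n-b$ regardless. Replacing your ``single exceptional class, rank at most one'' step by this $\rho$-indexed count is the missing ingredient; once it is in place, the rest of your argument (the relations $D_2,\dots,D_b$, the $\zz_p^{b-1}$-cover, and the descent of the $b-1$ strict conic fibrations) goes through as you describe.
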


In the previous statement, the value $k_{\rm bir}(\nu)$ is defined to be the minimum among $k_{\rm bir}(E_\nu)$ where $E_\nu$ is a divisorial center of $\nu$ on some higher birational model of $X$.

\begin{proof}[Proof of Theorem~\ref{theorem:klt-sing}]
Let $(X;x)$ be a $n$-dimensional $\qq$-factorial klt singularity.
Let $B$ be a $1$-complement of the singularity. 
Let $b$ be the number of components of $B$ through $x\in X$.
Let $\nu$ be a log canonical place with $c_X(\nu)=x$.
By~\cite[Theorem 1]{Mor20}, there exists a projective birational morphism $\pi\colon Y\rightarrow X$ from a normal $\qq$-factorial variety which is an isomorphism over $X\setminus \{x\}$ and $\phi^{-1}(x)=E_\nu$ is a unique prime divisor corresponding to $\nu$.
Let $\pi^*(K_X+B)=K_Y+B_Y+E_\nu$.
The divisor $E_\nu$ may not be normal.
However, we know that $E_\nu$ is an slc variety.
Hence, there exists a projective birational morphism $\phi\colon Z \rightarrow X$ satisfying the following conditions:
\begin{itemize}
\item the variety $Z$ is normal and $\qq$-factorial,
\item the center of $\nu$ on $Z$ is a prime divisor $E_Z$, and
\item if $K_Z+B_Z=\pi^*(K_X+B)$, then every component of $B_Z$ intersects $E_Z$.
\end{itemize} 
The variety $Z$ can be obtained by blowing up the non-normal locus of $E_\nu$ in $Y$.
Let $\rho(Z/X)=\rho$.
Then, $B_Z$ has $\rho+b$ components.
Write $B_Z:=\sum_{i=1}^{\rho+b} B_{i,Z}$ with $B_{0,Z}=E_Z$.
In particular, $B_Z-E_Z$ has $\rho+b-1$ components.
Note that $\langle B\rangle =0$ as $X$ is $\qq$-factorial.
The span $\langle B_Z-E_Z\rangle$ has dimension at most $\rho$ in ${\rm Cl}(Z/X)_\qq$.
Let $(E,B_E)$ be the pair obtained by adjunction of $(Z,B_Z)$ to $E$.
Hence, the log pair $(E,B_E)$ is a log Calabi--Yau pair.
Let $\Sigma_E$ be the decomposition of $B_E$ given by $\Sigma_E :=\sum_{i=1}^{b+\rho} B_{i,Z}|_E$.
Observe that
\[
\dim_\qq \langle \Sigma_E \rangle 
\leq \rho 
\text{ and }
|\Sigma_E| \geq \rho+b-1.
\]
Therefore, we conclude
\[
\bar{c}(E,B_E) \leq 
\dim E + \dim_\qq \langle B_E \rangle - |B_E| 
\leq n-1 + \rho -(\rho+b-1) =
n-b.
\]
Hence, we have $\bar{c}(E,B_E)\leq n-b$.
Theorem~\ref{thm:cbir-conic} implies that 
$k_{\rm bir}(E,B_E)\leq n-b$. 
Thus, we have $k_{\rm bir}(\nu)\leq k_{\rm bir}(E)\leq n-b$.
\end{proof}

If $b=n$ in the setting of Theorem~\ref{theorem:klt-sing}, then it is known that $(X;x)$ is formally toric (see, e.g.,~\cite{MS21}).
Hence, every log canonical place of $(X;x)$ with center $x$ is a toric valuation.
In particular, such a valuation is extracted by a projective toric birational morphism.
Thus, 
Theorem~\ref{theorem:klt-sing} can be regarded as a generalization of the fact that
lc centers of a toric germ 
are toric valuations.

\section{Examples and Questions}
\label{sec:ex-and-quest}

In this section, we collect some examples
and propose some questions to motivate further research. In the first example, we study equivariant generalized log Calabi--Yau pairs on $\pp^1$.

\begin{example}\label{ex:p1}
{\em 
Let $(\pp^1,B,\mathbf{M})$ be a generalized log Calabi--Yau pair.
Assume $\zz_m \leqslant {\rm Aut}(\pp^1,B,\mathbf{M})$.
Let $\Omega\subset (0,1]$ be a set satisfying the DCC. 
Assume that the coefficients of $B$ belong to $\Omega$
and the coefficients of $\mathbf{M}$ belong to $\Omega$ in the quotient by $\zz_m$ where it descends.
Let $\lambda_0>0$ such that $\Omega \subset [\lambda_0,1]$.
We argue that if $m\geq \min \{7,2\lambda_0^{-1}\}$, then 
\begin{equation} 
\label{eq:isom-p1}
(\pp^1,B,\mathbf{M})
\simeq 
(\pp^1,\{0\}+\{\infty\},\mathbf{M}),
\end{equation}
where $\mathbf{M}$ is a torsion b-nef divisor.

If $m\geq 7$, then $\zz_m$ is acting on $\pp^1$ by multiplication of $m$-roots of unity. 
We show that the b-nef divisor $\mathbf{M}$ is trivial provided that $m\geq \min\{7,2\lambda_0^{-1}\}$.
Indeed, let 
$p_m \colon \pp^1 \rightarrow \pp^1$ be the quotient by the $\zz_m$-action.
Let $\mathbf{N}$ be the b-nef nef divisor on $\pp^1$ such that $\mathbf{M}=p_m^*\mathbf{N}$.
By assumption, we can write 
\[
\mathbf{N}_{\pp^1}= \sum_{i\in I} \lambda_i N_i,
\]
where $\lambda_i \in \Omega$ 
and each $N_i$ is a nef Cartier divisor. Write $J\subseteq I$ for the subset consisting on nef Cartier divisors $N_i$ that are $\qq$-trivial.
We have 
\[
\mathbf{M}_{\pp^1}=p_m^*\left(\mathbf{N}_{\pp^1}\right) =\sum_{i\in I} \lambda_i p_m^*(N_i), 
\]
so 
\begin{equation}\label{eq:deg-moduli}
\deg\left( 
\mathbf{M_{\pp^1}} 
\right) = 
\deg\left( 
\sum_{i\in I} \lambda_i p_m^*(N_i)
\right) \geq m|I\setminus J|\lambda_0 
\end{equation}
Therefore, $\mathbf{M}$ is a torsion b-nef divisor provided $m>2\lambda_0^{-1}$.
Otherwise from~\eqref{eq:deg-moduli}, we conclude $\deg\left( 
\mathbf{M_{\pp^1}} 
\right)>2$ leading to a contradiction.
On the other hand, if $m>2\lambda_0^{-1}$, then $B$ is supported on $\{0\}$ and $\{\infty\}$,
as the coefficients of $B$ are in $\Omega$.
Thus, if $m\geq \min\{7,2\lambda_0^{-1}\}$, then $\mathbf{M}$ is a torsion b-nef divisor
and $B$ is supported on $\{0\}+\{\infty\}$ which implies the isomorphism~\eqref{eq:isom-p1}.
}
\end{example} 

\begin{example}\label{ex:not-conic-bundle}
{\em
Let $X_d\subset \pp^n$ be a smooth Fano hypersurface of degree $d<n+1$.
Then, $\rho(X_d)=1$ by the Lefschetz hyperplane theorem.
If $H_1,\dots,H_{n+1-d}$ are general hyperplanes in $\pp^n$, then by adjunction 
the pair $(X_d,H_1+\dots+H_{n+1-d})$ is log Calabi--Yau.
Let $F_i$ be the restriction of $H_i$ to $X_d$.
Then, the pair
\[
(X_d,F_1+\dots+F_{n+1-d})
\]
is log Calabi--Yau
and its complexity is
$d-1$. 
So, the birational complexity of $X_d$ is at most $d-1$.
By Theorem~\ref{theorem:cbir-conic}, the variety $X_d$ has a birational model $X'_d$ that admits a sequence of Mori fiber spaces 
\[
\xymatrix{ 
X'_d \ar[r]^-{\phi_1} &
X'_{d,1} \ar[r]^-{\phi_2} & 
\dots \ar[r]^-{\phi_k} & 
X'_{d,k}
}
\]
of which at least $n-d+1$ are conic fibrations.
On the other hand, Koll\'ar proved that whenever
$X_d$ is very general 
and $d\geq 3\lceil \frac{n+3}{4}\rceil$
the variety $X_d$ does not admit a birational conic fibration (see~\cite{Kol95} and~\cite[Theorem V.5.14]{Kol96}).
Thus, the first Mori fiber space $\phi_1$ is not a conic fibration.
In the case of cubic $3$-folds $X_3\subset \pp^4$ our result simply recovers the birational pencil structure described by Pukhlikov~\cite{Puk10}.
In this case, the {\em conic fibration} is the structure morphism of the base of the pencil.
}
\end{example}

\begin{example}\label{ex:not-index-1}
{\em 
In this example, we show that the statement of Theorem~\ref{theorem:bir-comp-zero} does not hold
if we drop the condition on the index of the log Calabi--Yau pair.
Indeed, consider the log Calabi--Yau pair
\[
(T,B_T):=
\left(\pp^1\times \pp^1,
\left(\sum_{i=1}^6\frac{p_i}{3} \right)\times \pp^1+
\pp^1 \times \left(\sum_{i=1}^6\frac{p_i}{3}\right)
\right).
\]
The variety $T$ is toric and $B_T$ is a boundary divisor that is not torus invariant. 
The log Calabi--Yau pair $(T,B_T)$ is terminal and it has no non-isomorphic
crepant birational models.
Thus $(T,B_T)$ is not crepant birational equivalent to a log Calabi--Yau pair
of the form $(\pp^2,\Delta)$.
}
\end{example}

\begin{example}\label{ex:alt-comp-two-ways}
{\em
In Theorem~\ref{theorem:alt-comp-computation}, we show that the alteration complexity can be computed after 
performing crepant birational transformations
and a crepant finite Galois morphisms.
In this example, we show that in general, it is necessary to perform both kinds of geometric modifications.
Let $T$ be $(\pp^1)^3$ 
and $\Delta$ be the torus invariant divisor.
Consider the involution
$i\colon T\rightarrow T$ given by
\[
i([x_1:y_1],[x_2:y_2],[x_3:y_3])=
([y_1:x_1],[y_2:x_2],[y_3:x_3]).
\]
Let $X:=T/i$
and $B$ be the quotient of $\Delta$ on $X$.
Let $Z_1$ (resp. $Z_2$, and $Z_3$) be the curves 
defined by the vanishing of $x_3$ and $x_2y_1-x_1y_2$ (resp. $x_2$ and $x_1y_3-x_3y_1$, and $x_1$ and $x_3y_2-x_3y_2$).
Let $C_1,C_2$ and $C_3$ be the images of $Z_1,Z_2$ and $Z_3$ in $X$.
Let $W\rightarrow X$ be the blow-up of $C_1,C_2$ and $C_3$.
Let $(W,\Gamma)$ be the log pull-back of 
$(X,\Delta)$ to $W$.
Then, the following conditions are satisfied:
\begin{enumerate}
    \item[(i)] we have $\mathcal{D}(W,\Gamma)\simeq \pp^2_\rr$, 
    \item[(ii)] we have $\pi_1^{\rm reg}(W,\Gamma)\simeq \zz_2$, and 
    \item[(iii)] we have $c_{\rm alt}(W,\Gamma)=0$.
\end{enumerate}
By the first condition, we conclude that 
$c_{\rm bir}(W,\Gamma)=3$.
Thus, the alteration complexity of $(W,\Gamma)$ cannot be obtained by only performing birational transformations.
On the other hand, since $\pi_1^{\rm reg}(W,\Gamma)\simeq \zz_2$, there is just one possible cover $W_0\rightarrow W$ for the log pair.
Let $(W_0,\Gamma_0)$ be the log pull-back of $(W,\Gamma)$ to $W_0$.
By construction, the log pair $(W_0,\Gamma_0)$ is obtained form $(T,B)$ by blowing up the $i$-orbit of $Z_1,Z_2$ and $Z_3$.
Thus $(W_0,\Gamma_0)$ is not a log Calabi--Yau toric pair.
The previous implies that 
the alteration complexity
cannot be computed
by only taking crepant finite covers.
We have a commutative diagram
\[
\xymatrix{
(W_0,\Gamma_0)\ar[d]_-{/\zz_2} \ar@{-->}[r] & (T,\Delta)\ar[d]^-{/\zz_2} \\ 
(W,\Gamma)\ar@{-->}[r] & (X,B).
}
\]
The diagram shows that the alteration complexity can be computed by either performing 
a crepant birational modification
and a crepant finite Galois cover or vice-versa.
}
\end{example}

\begin{example}\label{ex:comparison-invariants}
{\em 
In this example, we show that there is a log Calabi--Yau pair $(X,B)$ for which the strict inequality $c(X,B)>c_{\rm bir}(X,B)$ holds. 

Consider the log Calabi--Yau pair $(\pp^2,H_0+H_1+H_2)$ where the $H_i$'s are the coordinate lines.
Let $(Y,B_0+B_1+B_2)$ be the log Calabi--Yau pair obtained from $(\pp^2,H_0+H_1+H_2)$ by blowing up two points in $H_0\setminus H_1\cup H_2$, 
three points in $H_1\setminus H_0\cup H_2$
and four points in $H_2\setminus H_0\cup H_1$.
Then, by construction, we have $B_0^2=-1,B_1^2=-2$, and $B_2^2=-3$.
Let $(W,C_2)$ be the log Calabi--Yau surface
obtained by contracting $B_0$ and then the strict transform of $B_1$.
Then, $W$ is a smooth projective surface of Picard rank $7$ and $C_2$ is a nodal rational curve with $C_2^2=-1$.
Hence, we may contract $C_2$ and obtain $W\rightarrow X$.
Note that $X$ has a single lc singularity and $K_X\sim_\qq 0$. We set $B=0$.
By construction, we have $c_{\rm bir}(X,B)=0$
and $c(X,B)=3$.
}
\end{example}

\begin{example}
{\em  
In this example, we show that there exists a log Calabi--Yau pair $(X,B)$ for which the inequality $c_{\rm bir}(X,B)>\bar{c}_{\rm bir}(X,B)$ holds.

Let $X\simeq (\pp^1)^n$ and $H\in |-mK_X|$ be a general hypersurface. Then, the pair $(X,H/m)$ is a terminal log Calabi--Yau pair.
Thus, we have $c_{\rm bir}(X,H/m)=2n-\frac{1}{m}$.
On the other hand, we have $\bar{c}_{\rm bir}(X,H/m)=n$ by taking the trivial decomposition of the divisor $H/m$.
} 
\end{example} 

\begin{example} 
{\em 
In this example, we show that there is a log Calabi--Yau pair $(X,B)$ for which 
the strict inequality $\bar{c}_{\rm bir}(X,B)>\bar{c}_{\rm alt}(X,B)$ holds.

Let $(X,B)$ be the log Calabi--Yau constructed in Example~\ref{theorem:alt-comp-computation}.
We know that $\bar{c}_{\rm alt}(X,B)=0$.
On the other hand,
$\bar{c}_{\rm bir}(X,B)=3$, otherwise 
the dual complex $\mathcal{D}(X,B)$
is PL-isomorphic to a $2$-dimensional sphere.
}
\end{example}

We finish this section by proposing some questions that arise after the results of this article.

\begin{question}\label{quest:what-value}
{\em 
Let $(X,B)$ be a log Calabi--Yau pair of dimension $n$, 
coregularity zero,
and index one.
What are the possible values of $c_{\rm alt}(X,B)$?
}
\end{question}

In the setting of the question, we know that $c_{\rm alt}(X,B)\in \{0,1,\dots,n\}$.
However, in the case of surfaces, the values $1$ and $2$ do not happen. In the case of threefolds, the value $2$ does not happen.
It is not clear that the alteration complexity behaves well for the product of log Calabi--Yau pairs. The following is the fundamental question.

\begin{question}\label{quest:max-alt-comp}
{\em 
Are there $n$-dimensional log Calabi--Yau pairs of coregularity zero, index one, and alteration complexity equal to $n$? 
}
\end{question}

In~\cite[Theorem 1.18]{MM24}, for each odd dimension $n$, the authors construct a log Calabi--Yau pair $(X_n,B_n)$ of dimension $n$, coregularity zero, index one, and 
birational complexity equal to $n$.
All these examples have alteration complexity zero.
The case of the alteration complexity is more complicated. 
In order to understand the alteration complexity of $(X,B)$, we also need to understand the fundamental group $\pi_1(X',B')$ for every crepant birational model $(X',B')$ of $(X,B)$.
This task is hard in practice
unless $X$ is birationally superrigid.
Thus, it is natural to expect that a positive answer to Question~\ref{quest:max-alt-comp} could be found by studying $1$-complements on smooth hypersurfaces of degree $n$ in $\pp^n$. However, it is not yet clear which hypersurfaces of degree $n$ in $\pp^n$ admit $1$-complements of coregularity zero.

Finally, Theorem~\ref{theorem:cbir-conic} states that a log Calabi--Yau pair $(X,B)$ of dimension $n$ and birational complexity $c$ 
has a birational model that admits a tower of Mori fiber spaces of which at least $n-c$ are conic fibrations. In the case of smooth Fano hypersurfaces, we can make sure that the conic fibrations are precisely the last $n-c$ Mori fiber spaces. This seems to be the optimal situation.
Indeed, in this case, the base of the fibrations is a Bott $\qq$-tower (see Theorem~\ref{theorem:conic-complexity-zero}).
It is not clear whether this is the case for other log Calabi--Yau pairs.

\begin{question}
{\em 
Let $(X,B)$ be a log Calabi--Yau pair of dimension $n$ and birational complexity $c$.
What are the possible positions of the $n-c$ conic fibrations in birational towers of Mori fiber spaces for $(X,B)$?
}
\end{question}

\bibliographystyle{habbvr}
\bibliography{references}

\end{document}